\documentclass[12pt, a4paper]{article}

\usepackage{amsmath, amsthm, amssymb, bm, graphicx, mathrsfs,array,IEEEtrantools}

\usepackage[T1]{fontenc}
\usepackage[utf8]{inputenc}
\usepackage[margin=20mm,headheight=14pt,footskip=12mm]{geometry}

\usepackage{newtxtext,newtxmath}
\newtheorem{theorem}{Theorem}[section]
\newtheorem{definition}[theorem]{Definition}
\newtheorem{lemma}[theorem]{Lemma}
\newtheorem{corollary}[theorem]{Corollary}
\newtheorem{prop}[theorem]{Proposition}
\newtheorem{ass}[theorem]{Assumption}

\newtheorem{remark}[theorem]{Remark}

\newcommand{\mf}{\mathfrak}
\newcommand{\pa}{\partial}
\newcommand{\wh}{\widehat}
\newcommand{\ol}{\overline}
\newcommand{\vph}{\varphi}
\newcommand{\vep}{\varepsilon}
\newcommand{\om}{\omega}
\newcommand{\lam}{\lambda}
\newcommand{\md}{\mathrm{d}}

\newcommand{\R}{{\mathbb R}}
\newcommand{\N}{{\mathbb N}}

\newcommand{\Z}{{\mathbb Z}}
\newcommand{\ml}{\mathcal}

\newcommand{\x}{\overline{x}}
\newcommand{\np}{\overline{\partial}}

\newcommand{\lvp}{\overline{\varepsilon}}

\usepackage{hyperref}
\makeatletter
\def\@maketitle{%
  \newpage
  \null
  \vskip 2em%
  \begin{center}%
  \let \footnote \thanks
    {\Large \bfseries \@title \par}%
    \vskip 1.5em%
    {\large
      \lineskip .5em%
      \begin{tabular}[t]{c}%
        \@author
      \end{tabular}\par}%
    \vskip 1em%
    {\large \@date}%
  \end{center}%
  \par
  \vskip 1.5em}
\def\ps@myheadings{%
  \let\@oddfoot\@empty\let\@evenfoot\@empty
  \def\@oddhead{%
    \ifodd\value{page}%
      \hfil{\small\MakeUppercase{\rightmark}}\hfil\thepage
    \else
      \thepage\hfil{\small\MakeUppercase{\leftmark}}\hfil
    \fi
  }%
  \let\@mkboth\@gobbletwo
  \let\sectionmark\@gobble
  \let\subsectionmark\@gobble
}
\makeatother

\begin{document}
	\title{The Inviscid Limit of Viscous Burgers through Shock Formation and Development}
	\author{Chao Wang, Yueyi Yang and Zhifei Zhang}

	\author{ 
		 Chao Wang \thanks{wangchao@math.pku.edu.cn} 
		\quad Yueyi Yang
 \thanks{yueyiyang25@stu.pku.edu.cn} 
		\quad Zhifei Zhang  \thanks{zfzhang@pku.edu.cn}\\[10pt]
		\footnotesize{School of Mathematical Sciences, Peking University, Beijing 100871, P. R. China}\\
		[5pt]}

	\maketitle
	\pagestyle{myheadings}
	\markboth{Chao Wang, Yueyi Yang and Zhifei Zhang}{Shock Development of Burgers}

	\begin{abstract}
 
We study the inviscid limit of the one‑dimensional viscous Burgers equation from smooth initial data through nondegenerate shock formation and subsequent shock development. Chaturvedi and Graham \cite{CG} obtained sharp estimates of order $\nu^{1/4}$ up to the shock‑formation time. We extend their analysis beyond this time, comparing the viscous solution with the inviscid entropy solution evolving from the same initial data. With the shock‑formation time normalized to $t=0$, we prove a pointwise error bound on a fixed interval $0\le t\le\bar{\varepsilon}$, where $\bar{\varepsilon}>0$ is independent of the viscosity $\nu$. The bound consists of a term of order $\nu^{1/4}$ and an exponentially localized contribution originating from the viscous shock layer. The proof is based on matching an outer expansion, an inner expansion near the formation point, and a viscous shock‑profile expansion. Combined with the results of Chaturvedi and Graham, this yields a quantitative description of the inviscid limit from the initial time through the development of the young shock.

 	\end{abstract}

	\tableofcontents

	\numberwithin{equation}{section}

		\section{Introduction}
In this paper we study the vanishing viscosity limit of the
one-dimensional Burgers equation from smooth initial data through the
formation and development of the first shock. We consider
	\begin{equation}\label{1.1}
		\left\{\begin{aligned}
			&\pa_tu^{(\nu)}+u^{(\nu)}\pa_xu^{(\nu)}=\nu u^{(\nu)}_{xx}\\
			&u^{(\nu)}(t_{\mathrm{in}},x)=u_0(x),
		\end{aligned}
		\right.
	\end{equation}
where $\nu>0$, $t_{in}<0$ and $u_0$ is smooth. Let $u^{(0)}$ denote the entropy solution of the corresponding inviscid equation with the same initial data. We assume that its first singularity is a nondegenerate shock formation, normalized to occur at $(t,x)=(0,0)$. At this point the inviscid solution is continuous but has an unbounded spatial derivative; for small positive times, its entropy continuation contains a shock whose strength grows from zero. Our aim is to quantify how the smooth viscous solution follows this transition and approximates the developing shock on a time interval independent of $\nu$.

	\subsection{Background}

	The vanishing viscosity method provides a fundamental connection between parabolic regularization and entropy solutions of conservation laws. For Burgers equations, the Cole--Hopf transformation reduces the viscous equation to the heat equation and gives an explicit route to its inviscid limit  \cite{Cole1951,Hopf1950}. The works of Oleinik and Kru\v{z}kov \cite{Kruzhkov1970,Oleinik1957} established the broader scalar theory. Quantitative results include the $\mathcal{O}(\nu\log(1/\nu))$ $L^1$ error estimate of Tang and Teng \cite{TangTeng} for piecewise smooth solutions of strictly convex scalar conservation laws, allowing shock formation and wave interactions within their hypotheses. Wang \cite{Wang1998} showed that this order is optimal for viscous Burgers and identified the occurrence of the logarithmic error at the time of spontaneous shock formation. These results already capture the influence of shock formation on integral error estimates. A pointwise analysis must additionally resolve where the error is concentrated and how its spatial scale changes as the shock develops.

For established shocks, matched asymptotic expansions provide a detailed description of the viscous approximation. Goodman and Xin \cite{Goodman} combined regular outer expansions with viscous profiles for piecewise smooth solutions containing isolated weak Lax shocks under suitable stability assumptions. Yu \cite{Yu1999} treated discontinuous initial data and the associated initial layers and nonlinear wave phenomena. Rousset \cite{Rousset2003} extended the approximation theory to strong shocks possessing linearly stable viscous profiles. In several space dimensions, Gu\`es, M\'etivier, Williams, and Zumbrun \cite{GMWZ2006} justified the Navier--Stokes regularization of Euler shock fronts under appropriate profile and spectral stability hypotheses. These constructions describe the regularization of an existing shock. Near its birth, a further transition must be resolved: the inviscid gradient becomes unbounded while the jump tends to zero, so the formation region must be connected to the emerging viscous layer.

Relative entropy methods with a time-dependent shift provide stability and contraction estimates for inviscid and viscous shocks \cite{Leger2011,LegerVasseur2011,ChoiVasseur2015,KangVasseur2017}. For systems, Bianchini and Bressan  \cite{BianchiniBressan2005} established the small-total-variation limit for fully parabolic artificial viscosity, while weighted relative entropy yields stability of small barotropic Navier--Stokes shocks and their Euler limits \cite{KangVasseur2021Contraction,KangVasseur2021Uniqueness}. Chen, Kang, and Vasseur \cite{ChenKangVasseur2024} obtained small-$BV$ barotropic Euler solutions from Navier--Stokes under suitable physical viscosity and initial-data assumptions. This theory includes shock formation and interactions. The present problem requires further information: a pointwise estimate through nondegenerate shock birth from common smooth initial data, resolving the time-dependent width of the emerging viscous layer.

The geometry of inviscid shock formation was developed by Alinhac \cite{Alinhac1999Acta,Alinhac1999Annals} for two-dimensional quasilinear wave equations, by Christodoulou \cite{Christodoulou2007} for three-dimensional fluids, and by Speck \cite{Speck2016} for classes of three-dimensional quasilinear wave equations. For Euler flows near simple plane waves, Luk and Speck \cite{LukSpeck2018,LukSpeck2024} established stability of shock formation, allowing vorticity in two dimensions and both vorticity and variable entropy in three dimensions. Buckmaster, Shkoller, and Vicol \cite{BSV2022} constructed shock formation with a $C^{1/3}$ cusp for two-dimensional isentropic Euler in an azimuthal setting.

Beyond the first singular time, Abbrescia and Speck \cite{AbbresciaSpeck2022,AbbresciaSpeck2026Horizon} described local portions of the boundary of maximal globally hyperbolic developments (MGHDs) for three-dimensional Euler flows near plane symmetry, with vorticity and entropy. Abbrescia, Speck, and Yu  \cite{AbbresciaSpeckYu2026} established existence, uniqueness, and stability of a full MGHD for small isentropic Euler data under spherical symmetry and specific asymptotic conditions. In a $1+1$-dimensional model, Abbrescia, Blue, Sbierski, and Speck \cite{AbbresciaBlueSbierskiSpeck2025} constructed both an MGHD and an entropy solution with a developing shock, clarifying the distinction between classical development and weak continuation.

For the three-dimensional compressible Euler equations with small spherically symmetric initial perturbations with constant initial entropy and a nondegeneracy condition, Yin \cite{Yin2004} constructed a weak entropy solution emerging from the first gradient blowup, with a shock whose strength starts from zero and subsequently increases. Christodoulou \cite{Christodoulou2019}  solved a restricted shock development problem formulated as a free-boundary problem with singular initial data. Buckmaster, Drivas, Shkoller, and Vicol  \cite{BDSSV2022} continued suitably prepared smooth flows beyond the first singularity for the full Euler equations with azimuthal symmetry, constructing an entropy-producing shock together with weaker contact and rarefaction singularities. For one-dimensional scalar conservation laws, Yin and Zhu  \cite{YinZhu2022} studied shock formation and the optimal regularity of the resulting shock curves for finitely degenerate and certain infinitely degenerate initial profiles. These inviscid constructions describe how smooth flows become singular and how admissible discontinuities subsequently emerge. Understanding their approximation by viscous flows is a further problem, for which Burgers provides a setting where the formation and development scales can be analyzed together.

Chaturvedi and Graham \cite{CG} resolved the approximation problem up to the first singular time. They constructed an outer expansion and a time-dependent inner expansion adapted to nondegenerate shock formation, obtaining approximations to arbitrary order and the sharp $L^\infty$ convergence rate $\nu^{1/4}$. They also proposed a three-expansion description of shock development, combining the outer and formation-region expansions with a viscous shock-profile expansion. Their large positive-time asymptotics for the leading inner profile identify its connection with the developing shock profile. Extending this description to the original Cauchy problem requires higher-order matching and error estimates that remain valid as the shock emerges.

Related progress has been made for systems and for local rescaled limits.  Bressan, Caravenna, and Shen \cite{BressanCaravennaShen2026} studied local asymptotic patterns near shocks, shock interactions, and shock formation for scalar conservation laws with uniformly convex flux. At shock formation, they proved convergence, after suitable rescaling and recentering, to an eternal viscous Burgers profile on bounded sets of the rescaled variables. Recently,  Anderson, Chaturvedi, and Graham \cite{Anderson,ACG2} analyzed nondegenerate shock formation in one-dimensional systems and obtained sharp vanishing viscosity estimates up to the first singular time, including for compressible Navier--Stokes equations with degenerate physical viscosity.
Extending such pointwise estimates through shock development is explicitly posed as an open problem in \cite[Appendix~B]{AbbresciaBlueSbierskiSpeck2025}: 
{\it  "This leaves the following interesting open problems: can you extend \cite{Anderson,ACG2}
to the shock development problem?"} Our paper addresses the scalar Burgers case and gives a positive answer to the open problem.

The distinction between bounded rescaled times and a fixed physical time is essential here. The formation scale is $t\sim\nu^{1/2}$, so reaching a fixed time $t=\bar\varepsilon>0$ requires control up to rescaled times $T=\bar\varepsilon\nu^{-1/2}\to\infty$. In this paper we carry out the three-expansion construction over this growing time range and prove a pointwise error bound for the viscous and inviscid solutions evolving from the same prescribed smooth initial data. The estimate connects the formation scale with the developing shock layer and remains valid throughout $0\leq t\leq\bar\varepsilon$.

	\subsection{Main results}
Before  stating  the main results, we first recall the structure of the inviscid
	solution $u^{(0)}$; full details are given in Section~3. So long as
	$u^{(0)}$ remains smooth, it is described by the method of
	characteristics, along which the slope $\pa_xu^{(0)}$ solves a Riccati
	equation; consequently, if $\pa_xu_0$ attains a negative minimum, then
	$u^{(0)}$ forms a shock in finite time. Throughout the paper we impose
	the following standing assumption on the initial data, as in \cite{CG}.
	\begin{ass}\label{ass:init}
		The initial data $u_0(x)$ satisfies the following:
		\begin{enumerate}
			\item[(H1)] $u_0\in C^\infty(\R)$;
			\item[(H2)] There exist $L>0$ and $c\in\R$ such that $u_0(x)\equiv c$ for $|x|\geq L$;
			\item[(H3)] $\min_{x\in\R}\pa_xu_0(x)<0$, and the minimum is attained at a unique point $x_0$;
			\item[(H4)] $\pa_x^3u_0(x_0)>0$ (nondegeneracy condition);
			\item[(H5)] Set 
			\[
			t_{\mathrm{in}}=\frac{1}{\min \pa_xu_0}<0.
			\]
  By a spatial translation and a Galilean transformation, we  assume that $x_0=0$ and $u_0(0)=0$, so that a shock forms at $(t,x)=(0,0)$.
		\end{enumerate}
	\end{ass}
	Under Assumption~\ref{ass:init}, $u^{(0)}$ is smooth on $[t_{in}, 0)$ and, at the
	formation time $t=0$, develops a $C_x^{1/3}$ cusp at $x=0$. For $t>0$, a
	single shock curve $x=\mf{s}(t)$ with $\mf{s}(0)=0$ separates the two
	smooth branches of $u^{(0)}$, and the pair $(u^{(0)},\mf{s})$ satisfies
	the Rankine--Hugoniot condition and the Lax entropy condition:
	\[
  \dot{\mf{s}}(t)
   =\frac{u^{(0)}(t,\mf{s}(t)-)+u^{(0)}(t,\mf{s}(t)+)}{2},
  \qquad
u^{(0)}(t,\mf{s}(t)-)>\dot{\mf{s}}(t)>
u^{(0)}(t,\mf{s}(t)+).
\]
	The vanishing viscosity limit thus splits into
	two stages: the pre-shock stage $t\in[t_{\mathrm{in}},0]$, on which
	$u^{(\nu)}$ converges to the continuous profile $u^{(0)}$
	(Theorem~\ref{thm} (i) below), and the shock-development stage
	$t\in[0,\ol\vep]$, on which $u^{(\nu)}$ must be compared with the
	discontinuous $u^{(0)}$ across a viscous shock layer of width
	$\sim\nu/\sqrt{t}$ about $x=\mf{s}(t)$ (Theorem~\ref{thm} (ii) below); the
	exponential term in Theorem~\ref{thm} (ii) reflects precisely the width of
	this layer.

\medskip

	Now, we are in a position to state the main results of this paper.
	\begin{theorem}\label{thm}
Suppose Assumption~\ref{ass:init} holds. There exist
$\ol\vep >0$, $\nu_0>0$, $C>0$, and $\delta>0$, independent of
$\nu$, such that the solutions with the common initial data $u_0$
satisfy the following estimates for $0<\nu\leq\nu_0$:
\begin{enumerate} 
\item[(i)] For $(t,x)\in[t_{\mathrm{in}},0]\times\R$,
			\begin{equation}\label{eq:intro-pre}
				|u^{(\nu)}(t,x)-u^{(0)}(t,x)|\lesssim\nu^{\frac14}.
			\end{equation}		
\item[(ii)] For $t\in[0,\ol{\vep}]$ and $x\in\R$,
			\begin{equation}\label{eq:intro-post} 
			\begin{aligned}
				|u^{(\nu)}(t,x)-u^{(0)}(t,x)|\lesssim \nu^{\frac 14} +\sqrt{t}e^{-\delta\frac{|x-\mf{s}(t)|\sqrt{t}}{\nu}}.
			\end{aligned}
			\end{equation}
\end{enumerate}			
			
	\end{theorem}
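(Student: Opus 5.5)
Part (i) is exactly the result of Chaturvedi and Graham \cite{CG}, so we take it as given and concentrate on (ii). The plan is to build an approximate solution $u^{\mathrm{app}}$ on $[0,\ol\vep]\times\R$ by matching three expansions, and then to close an $L^\infty$ stability estimate for $w=u^{(\nu)}-u^{\mathrm{app}}$.

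\textbf{The three expansions.}
\begin{itemize}
\item \emph{Outer expansion.} Away from the shock curve, take $u^{(0)}+\sum_{k\ge1}\nu^k u_k$ on each side of $\mf s(t)$. The terms $u_k$ are obtained by solving linearized transport equations along characteristics.
\item \emph{Inner expansion.} Near $(0,0)$, use the formation variables $X=x\nu^{-3/4}$, $T=t\nu^{-1/2}$, $u=\nu^{1/4}U(T,X)$, with $U=\sum_k \nu^{k/4}U_k$. Here $U_0$ is the eternal (Cole--Hopf explicit) viscous Burgers solution matching the cusp $-\,\mathrm{sgn}(x)|x|^{1/3}$ as $|X|\to\infty$. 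The higher profiles $U_k$ solve linear forced equations and are fixed by matching to the outer expansion.
\item \emph{Shock-layer expansion.} Near $\mf s(t)$ for $t\gg\nu^{1/2}$, set $\xi=(x-\mf s(t))/\nu$ and write $u=\dot{\mf s}+\tfrac{[u]}{2}\phi\bigl(\tfrac{[u]}{2}\xi\bigr)+\dots$, where $\phi=-\tanh(\cdot/2)$. The jump satisfies $[u]\sim\sqrt t$, so the layer width is $\nu/\sqrt t$. Corrections come from solving the linearized operator around $\phi$ with solvability conditions, which in turn fix small shifts of the layer center.
\end{itemize}
Using the large-$T$ asymptotics of $U_0$ from \cite{CG}, I would show that in the overlap $\nu^{1/2}\ll t$ the inner profile converges to a $\tanh$ front of strength $\sim\sqrt T\,\nu^{1/4}=\sqrt t$. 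This identifies the leading shock-layer profile. The three expansions are then glued with smooth cutoffs: one in $t/\nu^{1/2}$ at scale $T\sim\nu^{-\gamma}$, and one in $|x-\mf s|$ at an intermediate scale. This gives $u^{\mathrm{app}}$.

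\textbf{Consequences of the construction.} Two properties should follow.
\begin{itemize}
\item First, $|u^{\mathrm{app}}-u^{(0)}|\lesssim\nu^{1/4}+\sqrt t\,e^{-\delta|x-\mf s|\sqrt t/\nu}$. The exponential term is just $\tfrac{[u]}{2}\bigl(\phi\bigl(\tfrac{[u]}{2}\xi\bigr)\mp1\bigr)$, and the $\nu^{1/4}$ term comes from the inner region.
\item Second, the residual $R=\pa_tu^{\mathrm{app}}+u^{\mathrm{app}}\pa_xu^{\mathrm{app}}-\nu\pa_x^2u^{\mathrm{app}}$ is small. Including enough terms, I aim for $\int_0^{\ol\vep}\|R(t)\|_{L^\infty}\,dt\lesssim\nu^{N}$ for some $N>1/4$.
\end{itemize}

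\textbf{Stability estimate.} The error $w$ satisfies
\[
\pa_tw+u^{\mathrm{app}}\pa_xw+w\pa_xu^{\mathrm{app}}+w\pa_xw=\nu\pa_x^2w-R,
\]
with $w(0)$ small by part (i) and its higher-order refinement in \cite{CG}. The danger is the term $w\,\pa_xu^{\mathrm{app}}$: in the layer, $\pa_xu^{\mathrm{app}}\approx -t/\nu$, so a naive Gronwall bound in $L^\infty$ fails. Only the positive part of $\pa_xu^{\mathrm{app}}$ can cause growth in a maximum-principle argument. That positive part is $O(1)$ in the outer region, $O(\nu^{-1/2})$ only for $t\lesssim\nu^{1/2}$ in the formation region (which has an $O(1)$ time integral), and exponentially small near the compressive layer. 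So I would estimate $w$ by the comparison principle applied to the parabolic equation, integrating $\int_0^{\ol\vep}\sup(\pa_xu^{\mathrm{app}})_+\,dt\lesssim1$. Alternatively, I would work with the antiderivative $W=\int^x w$, for which the Burgers structure $\pa_tW+u^{\mathrm{app}}\pa_xW+\tfrac12(\pa_xW)^2=\nu\pa_x^2W-\int^xR$ has no zeroth-order term, and then recover $w$ by a parabolic interpolation.

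\textbf{Main obstacle.} I expect the hardest part to be the matching in the overlap region at the growing rescaled times $T\le\ol\vep\nu^{-1/2}$. There the higher inner correctors $U_k$ grow polynomially in $T$ and must be shown to reorganize exactly into the shock-layer expansion with shifted centers. Controlling this growth uniformly, so that the residual remains integrable all the way to $t=\ol\vep$, is the crux. I would attack it by deriving the large-$T$ asymptotics of the linearized inner equation through the Cole--Hopf representation, which reduces everything to Laplace-type asymptotics of heat kernels.
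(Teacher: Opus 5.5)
\textbf{There is a genuine gap in the stability step.}

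\emph{The sign is reversed.} At a positive maximum of $w$, your equation gives $\partial_t w\le -w\,\partial_x u^{\mathrm{app}}-R$. So growth is driven by the \emph{negative} part of $\partial_x u^{\mathrm{app}}$, which is exactly the compressive layer where $-\partial_x u^{\mathrm{app}}\approx t/\nu$. The resulting Gr\"onwall factor is $\exp(ct^2/\nu)$. Your sign intuition is the one that holds for the primitive. For $w$ itself, which satisfies an equation in conservation form, compression drives growth of every $L^p$ norm with $p>1$.

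\emph{The goal itself is the problem, not the maximum principle.} Suppose $u^{(\nu)}$ and $u^{\mathrm{app}}$ differ in mass by $m$ near the shock. Then the viscous shock, of strength $\sim\sqrt t$, is displaced by $m/\sqrt t$, which is $m/\nu$ layer widths. Hence $|w|\sim\sqrt t\min\{1,m/\nu\}$ in the layer, and $|w|\lesssim\nu^{1/4}$ requires $m\lesssim\nu^{5/4}$. Nothing in your construction secures this. The solvability conditions for the layer correctors are first-order ODEs in $T$ for the phases (cf.\ \eqref{432}). Their homogeneous solution is a constant $Y$-shift, i.e.\ exactly an $O(1)$ shift in layer widths, and the paper simply fixes it by a terminal condition. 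If that constant is not the one realized by the true continuation of the inner solution, the gluing term $\partial_t\zeta\,(u^{\mathrm{in}}-u^{\mathrm{shock}})$ carries an $O(\nu)$ mass. Then your residual bound $\int_0^{\ol\vep}\|R\|_{L^\infty}\,dt\lesssim\nu^N$ fails, and by the displacement mechanism $w$ becomes $O(\sqrt t)$ in the layer at later times. Fixing this constant correctly is the ``crux'' you name but do not resolve.

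\emph{The primitive route does not close the gap either.} The maximum principle for $W=\int^x w$ controls only $\|W\|_{L^\infty}$, i.e.\ this mass. Converting it into a pointwise bound on $w$ through a layer with gradient $t/\nu$ is the genuinely hard step, not a routine interpolation.

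\textbf{How the paper avoids this.} The paper gives up smallness of $w$ altogether.
\begin{itemize}
\item Decaying inner corrections require zero mass: the model \eqref{0.44} shows that nonzero mass leaves a $\sqrt T$ component in a layer of width $T^{-1/2}$. So the continued inner profiles are corrected by $-\mu_\ell\rho$, and the error at $T=1$ is $\ol{\mu}(\nu)\rho$, with $\ol\mu=O(1)$, plus a tiny remainder.
\item The error is then split as $W=W_1+W_2+W_3$.
\begin{itemize}
\item $W_1$ absorbs the mass flux of the residual.
\item $W_2=\vph_r$ from Lemma~\ref{lem6.5} is an exact solution of the profile equation carrying mass $2\ln r$. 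It is a shifted $\tanh$ minus $\tanh$, of size $\sqrt T$ at scale $T^{-1/2}$.
\item The zero-mass remainder $W_3$ is estimated through its primitive, using a $\cosh$-type functional and the weighted damping of Lemma~\ref{lem:damping}; this is where compression helps rather than hurts.
\end{itemize}
\item The result $|W|\lesssim 1+\sqrt T e^{-\delta\sqrt T|X|}$ already has the form of \eqref{eq:intro-post}, so exact mass matching is never needed.
\end{itemize}

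To repair your proof you must do one of two things. Either adopt such a mass-carrying profile correction. Or genuinely compute the shock phase to within $O(\nu^{1/4})$ in rescaled mass, and then prove a pointwise estimate through the layer.
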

	
	\medskip

	\begin{remark}
		Part (i) is due to \cite{CG}: on the pre-shock interval
		$[t_{\mathrm{in}},0]$, Chaturvedi and Graham established the sharp two-sided
		estimate
		\begin{equation*}
			C^{-1}\nu^{\frac14}\le\left\|u^{(\nu)}-u^{(0)}\right\|_{L^\infty([t_{\mathrm{in}},0]\times\R)}\le C\nu^{\frac14},
		\end{equation*}
which is proved in \cite{CG}. The contribution of the paper is the second assertion (ii).
	\end{remark}
	\begin{remark}
		Part (ii) uses the smallness of $\lvp$ only to ensure that the inviscid
		solution $u^{(0)}$ has the single-shock structure $x=\mf{s}(t)$ on
		$[0,\lvp]$. For $t>\lvp$, $u^{(0)}$ may generate additional shocks or
		rarefaction waves, and the shock $x=\mf{s}(t)$ may interact with these
		waves, producing new shocks or leading to the annihilation of the
		original shock. The smallness of $\lvp$ is used solely to fix this
		structural property of the inviscid solution.
	\end{remark}

	\begin{remark}
Estimate~\eqref{eq:intro-post} separates the error at the formation scale from the contribution of the developing shock layer. The first term retains the scale $\nu^{1/4}$, while the second has the amplitude $\sqrt{t}$ of the inviscid jump and decays exponentially on the spatial scale $\nu/\sqrt{t}$. For $t\lesssim\sqrt{\nu}$, the entire right-hand side is $\mathcal{O}(\nu^{1/4})$. When $t\sim\sqrt{\nu}$, the layer scale agrees with the formation scale $\nu^{3/4}$; for $t\gg\sqrt{\nu}$, it describes a progressively thinner viscous transition. At fixed positive times, the localized term allows an error comparable to the jump near the shock, as is necessary when comparing a smooth function with a discontinuous entropy solution. The estimate therefore gives a uniform description through formation without asserting uniform convergence across a shock of positive strength.
	\end{remark}
	
	\begin{remark}
		Although we mainly focus on the Burgers equation, we think our methods can be extended to a general scalar viscous conservation law $u_t+f(u)_x=\nu u_{xx}$ in a similar fashion, with the requirement that $0<f''(u)\leq M$ for some constant $M$. The extension to systems [esp. compressible Euler equations] remains a topic for future work.
	\end{remark}

\section{Proof Overview}
In this section we outline our main strategy.
\subsection{Inviscid structure}
We first recall the inviscid structure near shock formation. The inviscid solution $u=u^{(0)}$ satisfies
	\begin{equation*}
	\left\{\begin{aligned}
		&\pa_tu+u\pa_xu=0\\
		&u(t_{\mathrm{in}},x)=u_0(x).
	\end{aligned}
	\right
	.
\end{equation*}
Under Assumption~\ref{ass:init} (as in \cite{CG}), $u$ forms a single
nondegenerate shock at $(t,x)=(0,0)$. To capture the singularity, we
introduce the characteristic flow $\pa_t\eta(t,y)=u\circ\eta$ with
$\eta(t_{\mathrm{in}},y)=y$, which yields
	\begin{equation}\label{0.2}
	\begin{aligned}
		u(t,\eta(t,y))&=u_0(y),\\
		\eta(t,y)&=y+(t-t_{\mathrm{in}})u_0(y).\\
	\end{aligned}
\end{equation}
The Eulerian solution $u$ loses regularity at $(t,x)=(0,0)$, where $\pa_y\eta=0$. Moreover, Taylor expansion near $(0,0)$ gives
	\begin{equation}\label{0.3}
	\begin{aligned}
	x=\eta(t,y)=-aty+cy^3+\ml{O}(t^2+y^4),
	\end{aligned}
\end{equation}
with constants $a,c>0$. For later use, we introduce the gauge $\mf{y}(t,x)$ by solving
	\begin{equation}\label{0.4}
	\begin{aligned}
	x=-at\mf{y}+c\mf{y}^3.
	\end{aligned}
\end{equation}
For $t\leq 0$, \eqref{0.4} uniquely determines $\mf{y}$, and we have $\mf{y}\sim y$. Moreover, the scaling identity $\mf{y}(\lam^2 t,\lam^3 x)=\lam\mf{y}(t,x)$ ($\lam>0$) captures the $C^{\frac13}_x$ cusp at $t=0$.
\par
We are interested in the shock development stage, i.e.,\ $t\in[0,\ol{\vep}]$ for some small $\ol{\vep}>0$. Let $\mf{s}:[0,\ol{\vep}]\to\R$, with $\mf{s}(0)=0$, denote the shock curve. Then $u(t,x)$ is smooth on $[0,\ol{\vep}]\times\R\setminus\{(t,x):x=\mf{s}(t)\}$. At $t=0$, it is continuous with a $C_x^{1/3}$ cusp at $x=0$; for $0<t\leq\ol{\vep}$, it has a jump discontinuity across the shock $\{x=\mf{s}(t)\}$. Moreover, $(u,\mf{s})$ satisfies the Rankine--Hugoniot condition:
	\begin{equation*}
	\begin{aligned}
	\dot{\mf{s}}(t)=\frac{1}{2}[u(t,\mf{s}(t)^-)+u(t,\mf{s}(t)^+)].
	\end{aligned}
\end{equation*}
Under the Lax entropy condition $u(t,\mf{s}(t)^-)>\dot{\mf{s}}(t)>u(t,\mf{s}(t)^+)$ for $0<t\leq\ol{\vep}$, we solve for $(u,\mf{s})$ by a standard fixed point argument and obtain $\mf{s}(t)=\ml{O}(t^2)$ (see Proposition~\ref{prop sdb}).\par
With $(u,\mf{s})$ at hand, we describe the inviscid structure in shock development. Introduce the shifted variable $\x=x-\mf{s}(t)$, so that the shock lies at $\x=0$; we henceforth work in the $(t,\x)$ coordinates. Rewriting \eqref{0.3} yields 
	\begin{equation}\label{0.6}
	\begin{aligned}
		\x+\mf{s}(t)=-aty+cy^3+\ml{O}(t^2+y^4).
	\end{aligned}
\end{equation}
	Since $\mf{s}(t)=\ml{O}(t^2)$, \eqref{0.6} admits a unique most negative root $y_-(t)$ and a unique most positive root $y_+(t)$ at $\x=0$. Define the set of labels
		\begin{equation*}
		\begin{aligned}
			\Upsilon=(-\infty,y_-(t))\cup(y_+(t),+\infty).
		\end{aligned}
	\end{equation*}
Then $y(t,\x)$ is uniquely determined by \eqref{0.6} for $\x\not=0$ and $y\in\Upsilon$. Moreover, the identity $u(t,\eta(t,y))=u_0(y)$ in \eqref{0.2} persists on each branch for $t>0$, while the jump discontinuity across the shock originates from the discontinuity of $y(t,\x)$. Following \cite{CG}, we extend $\mf{y}$ to $t\geq 0$ by solving 
		\begin{equation}\label{0.8}
		\begin{aligned}
			\x=-at\mf{y}+c\mf{y}^3.
		\end{aligned}
	\end{equation}
Then \eqref{0.8} uniquely determines $\mf{y}(t,\x)$ for $\x\not=0$ and $\mf{y}\in(-\infty,-\sqrt{\frac{at}c})\cup(\sqrt{\frac{at}c},+\infty)$, with $\mf{y}\sim y$ near $(t,x)=(0,0)$. Moreover, the scaling identity $\mf{y}(\lam^2 t,\lam^3\x)=\lam\mf{y}(t,\x)$ ($\lam>0$) agrees with the one for $t\leq 0$.
\subsection{Approximate solution at shock formation}
	For $t\leq 0$, i.e.,\ before shock formation, the authors in \cite{CG} obtained an $L^{\infty}$ error estimate between the inviscid and viscous solutions.
For later use, we briefly recall their proof. The main effort is devoted to constructing an approximation $u_{K,L}^{\mathrm{app}}$ to $u^{(\nu)}$, with truncation orders $K,L\in\N_0$, having an explicit profile. To this end, they impose the outer expansion
	\begin{equation}\label{0.10}
	\begin{aligned}
	u^{(\nu)}\sim\sum\limits_{k\geq 0} \nu^k u_k^{\mathrm{out}}.
	\end{aligned}
\end{equation}
Substituting \eqref{0.10} into \eqref{1.1} and collecting terms of order $\nu^k$ yields
		\begin{equation}\label{0.11}
		\begin{aligned}
			\pa_tu_k^{\mathrm{out}}+\frac12\pa_x \sum\limits_{k_1+k_2=k}u_{k_1}^{\mathrm{out}}u_{k_2}^{\mathrm{out}}=\pa_x^2u_{k-1}^{\mathrm{out}}.
		\end{aligned}
	\end{equation}
Note that $u_0^{\mathrm{out}}$ agrees with the inviscid solution $u^{(0)}$. The forcing term in \eqref{0.11} is small away from the pre-shock $(0,0)$, but becomes
nonperturbative as $(t,x)\to (0,0)$. The breakdown region is $\{\mf{d}\lesssim \nu^{\frac14}\}$, where 
\begin{equation*}
	\begin{aligned}
		\mf{d}=(a\pa_x\mf{y})^{-\frac12}\sim (|t|+\mf{y}^2)^{\frac12}.
	\end{aligned}
\end{equation*}
In view of the anisotropic scaling of $\mf{y}$, we introduce the inner coordinates
\begin{equation*}
	\begin{aligned}
	T=\nu^{-\frac12}t,\quad X=\nu^{-\frac34}x,\quad U=\nu^{-\frac14}u,
	\end{aligned}
\end{equation*}
in which \eqref{1.1} becomes 
\begin{equation}\label{0.14}
	\begin{aligned}
	\pa_TU^{(\nu)}+U^{(\nu)}\pa_XU^{(\nu)}=\pa_X^2U^{(\nu)}.
	\end{aligned}
\end{equation}
The authors in \cite{CG} impose an inner expansion:
\begin{equation}\label{0.15}
	\begin{aligned}
	U^{(\nu)}\sim \sum\limits_{\ell\geq 0}\nu^{\frac \ell4}U_\ell^{\mathrm{in}}.
	\end{aligned}
\end{equation}
Substituting \eqref{0.15} into \eqref{0.14} and collecting terms of order $\nu^{\frac \ell4}$ yields
\begin{equation}\label{0.16}
	\begin{aligned}
	\pa_TU_\ell^{\mathrm{in}}+\frac12\pa_X\sum\limits_{\ell_1+\ell_2=\ell}U_{\ell_1}^{\mathrm{in}}U_{\ell_2}^{\mathrm{in}}-\pa_X^2U_\ell^{\mathrm{in}}=0.
	\end{aligned}
\end{equation}
To construct $u_{K,L}^{\mathrm{app}}$, we need the outer expansion \eqref{0.10} and the inner expansion \eqref{0.15} to agree in the matching zone $\{\mf{d}\sim\nu^{\frac14-\epsilon}\}$ for small $\epsilon>0$. To bridge the gap, they introduce the grid terms $\{u_{k,\ell}\}_{k,\ell\in\N_0}$. More precisely, let $u_\ell^{\mathrm{in}}$ denote the rescaling of $U_\ell^{\mathrm{in}}$ back to the $(t,x)$ coordinates. Then the inner expansion becomes
\begin{equation}\label{0.17}
	\begin{aligned}
		u^{(\nu)}\sim \sum\limits_{\ell\geq 0}u_\ell^{\mathrm{in}}.
	\end{aligned}
\end{equation}
In the matching zone, we expand each $u_\ell^{\mathrm{in}}$ in powers of $\nu$:
\begin{equation}\label{0.18}
	\begin{aligned}
		u_\ell^{\mathrm{in}}\sim \sum\limits_{k\geq 0}\nu^k u_{k,\ell}.
	\end{aligned}
\end{equation}
Combining \eqref{0.10}, \eqref{0.17} and \eqref{0.18}, we expect
\begin{equation}\label{0.19}
	\begin{aligned}
		u_k^{\mathrm{out}}\sim \sum\limits_{\ell\geq 0} u_{k,\ell}.
	\end{aligned}
\end{equation}
In \cite{CG}, they show that both \eqref{0.18} and \eqref{0.19} hold in the matching zone, so that the outer and inner expansions agree in the matching zone $\{\mf{d}\sim\nu^{\frac14-\epsilon}\}$. Introduce the truncated sums
\begin{equation*}
	\begin{aligned}
		u_{[K]}^{\mathrm{out}}=\sum\limits_{k= 0}^K \nu^ku_k^{\mathrm{out}},\quad u_{[L]}^{\mathrm{in}}=\sum\limits_{\ell= 0}^L u_\ell^{\mathrm{in}}.
	\end{aligned}
\end{equation*}
The approximate solution is then given by 
\begin{equation}\label{0.20}
	\begin{aligned}
		u_{K,L}^{\mathrm{app}}=(1-\xi) u_{[K]}^{\mathrm{out}}+\xi u_{[L]}^{\mathrm{in}},
	\end{aligned}
\end{equation}
where $\xi$ is a cutoff function whose gradient $D\xi$ is supported in the matching zone. The following estimate uses the inner--outer construction and energy argument of \cite[Section~5]{CG}, with the exponent written as in \cite[Theorem~8.7]{ACG2}; the estimates for our cutoff are detailed in Appendix~\ref{app:prop61}:
\begin{prop}\label{prop0.1}
	For $\vep<\frac16$ and $\beta\in (\frac1{2+\vep},\frac12)$, there exists a constant $C>0$ such that for all $K,L\in \N_0$ and $\nu\in (0,1]$:
	\begin{equation*}
		\begin{aligned}
			\left\|u^{(\nu)}-u_{K,L}^{\mathrm{app}}\right\|_{L^{\infty}_tH^3_x([t_{\mathrm{in}},0]\times \R)}\lesssim \nu^{\min\{(1-2\beta)K,L\beta/3\}-C}.
		\end{aligned}
	\end{equation*}
\end{prop}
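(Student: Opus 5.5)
The proof is the standard stability argument for an approximate solution: estimate the residual of $u_{K,L}^{\mathrm{app}}$ and then control the difference $w=u^{(\nu)}-u_{K,L}^{\mathrm{app}}$ by energy estimates. I would proceed in three steps, following \cite[Section~5]{CG}.

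\textbf{Step 1: residual.} Set $\mathcal{R}=\pa_tu_{K,L}^{\mathrm{app}}+u_{K,L}^{\mathrm{app}}\pa_xu_{K,L}^{\mathrm{app}}-\nu\pa_x^2u_{K,L}^{\mathrm{app}}$ and write it as a sum of three pieces, using \eqref{0.20}.
\begin{itemize}
\item On $\{\xi=0\}$ the outer recursion \eqref{0.11} leaves only the tail $\nu^{K+1}\pa_x^2u_K^{\mathrm{out}}$ plus quadratic terms of order higher than $K$. Since the outer terms satisfy $|\pa_x^ju_k^{\mathrm{out}}|\lesssim \mf{d}^{1-4k-2j}$ with $\mf{d}\gtrsim\nu^{\beta/2}$ there, this piece is bounded by $\nu^{(1-2\beta)K-C}$ after taking $H^3$ norms.
\item On $\{\xi=1\}$ the inner recursion \eqref{0.16} leaves only terms of order $\nu^{(L+1)/4}$ in inner variables, using the polynomial growth bounds of $U_\ell^{\mathrm{in}}$ in $(T,X)$. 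Restricted to $\mf{d}\lesssim\nu^{\beta/2}$, this gives $\nu^{L\beta/3-C}$.
\item The commutator terms $(\pa_t\xi)(u^{\mathrm{in}}_{[L]}-u^{\mathrm{out}}_{[K]})$ and their analogues live on the matching zone. There I use the grid expansions \eqref{0.18}--\eqref{0.19}: the difference $u^{\mathrm{in}}_{[L]}-u^{\mathrm{out}}_{[K]}$ is bounded by the truncation errors of the double series $\sum_{k\le K,\ell\le L}\nu^ku_{k,\ell}$. This is again $\nu^{\min\{(1-2\beta)K,L\beta/3\}-C}$, while derivatives of $\xi$ cost only powers of $\nu^{-C}$.
\end{itemize}

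\textbf{Step 2: energy estimate for $w$.} The difference satisfies $\pa_tw+u^{\mathrm{app}}\pa_xw+w\pa_xu^{\mathrm{app}}+w\pa_xw-\nu\pa_x^2w=-\mathcal{R}$, with $w(t_{\mathrm{in}})=0$. This holds because near $t_{\mathrm{in}}$ we have $\xi=0$ and all correctors $u_k^{\mathrm{out}}$, $k\ge1$, are chosen with zero data.

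Taking $\pa_x^j$, $j\le3$, and pairing with $\pa_x^jw$ gives
\[
\tfrac{d}{dt}\|w\|_{H^3}^2\lesssim \|\pa_xu^{\mathrm{app}}\|_{W^{3,\infty}}\|w\|_{H^3}^2+\|w\|_{H^3}^3+\|\mathcal R\|_{H^3}\|w\|_{H^3},
\]
where the cubic term is handled by a bootstrap. The dangerous coefficient is $\|\pa_xu^{\mathrm{app}}\|_{L^\infty}$, which grows like $(|t|+\nu^{1/2})^{-1}$ near formation. I would therefore use the Gronwall factor $\exp\int|\pa_xu^{\mathrm{app}}|$ only on the part where $\pa_xu^{\mathrm{app}}>0$; its negative part has a good sign in the $L^2$ estimate. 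The positive part is bounded since the solution is compressive near the shock. The remaining bad terms at higher derivative level give factors like $\int_{t_{\mathrm{in}}}^0(|t|+\nu^{1/2})^{-1}\,dt\lesssim\log\nu^{-1}$, or at worst powers $\nu^{-C}$. Both are absorbed into the constant $C$ in the exponent.

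\textbf{Step 3: closing.} Gronwall plus the bootstrap, valid as long as the residual bound is $\ll\nu^{C'}$, yields the claim for $K,L$ large. For small $K,L$ the claimed bound exceeds a fixed power of $\nu^{-1}$, so it follows trivially from a priori $H^3$ bounds on $u^{(\nu)}$ and $u^{\mathrm{app}}$, each of size $\nu^{-C}$.

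The main obstacle is Step 1's matching-zone estimate. It requires the precise asymptotics of $U_\ell^{\mathrm{in}}$ as $|T|+|X|^{2/3}\to\infty$ and the identification of these asymptotics with the grid terms $u_{k,\ell}$. I would take those from \cite{CG}, so the work reduces to checking that our cutoff's derivative bounds (Appendix~\ref{app:prop61}) fit the choice $\beta\in(\frac1{2+\vep},\frac12)$.
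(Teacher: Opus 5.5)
Your overall plan is the argument behind the paper's citation of \cite[Section~5]{CG} and Appendix~\ref{app:prop61}: a residual bound, an $H^3$ energy estimate for $w$ with a logarithmic Gronwall loss, a bootstrap, and separate handling of small $K,L$. However, Step~2 does not close as written, for two reasons.

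\emph{The sign is reversed.} Pair $\pa_x^j$ of the $w$-equation with $\pa_x^jw$. The transport and stretching terms combine into
\[
\tfrac12\tfrac{\md}{\md t}\|\pa_x^jw\|_{L^2}^2+\nu\|\pa_x^{j+1}w\|_{L^2}^2=-\bigl(j+\tfrac12\bigr)\int_{\R}\pa_xu^{\mathrm{app}}\,(\pa_x^jw)^2\,\md x+\cdots .
\]
So it is the compressive part $\pa_xu^{\mathrm{app}}<0$ that produces growth, and this is exactly the part of size $|t|^{-1}$ near formation; the expansive part is the damping one. You cannot discard it, and the logarithmic loss is already present at $j=0$. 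To keep the constant $C$ in the statement independent of $K,L$, you need a sharp slope bound: $\|\pa_xu^{\mathrm{app}}(t)\|_{L^\infty}\le|t|^{-1}+(\text{integrable, }K,L\text{-dependent error})$ for $t\le-\nu^{1/2}$, together with $\|\pa_xu^{\mathrm{app}}\|_{L^\infty}\lesssim\nu^{-1/2}$ for $|t|\le\nu^{1/2}$. Then $\int_{t_{\mathrm{in}}}^0\|\pa_xu^{\mathrm{app}}\|_{L^\infty}\,\md t\le\frac12|\log\nu|+C_{K,L}$, which is \eqref{eq:Cslope} in rescaled form, and the Gronwall factor at level $j$ is $\nu^{-(2j+1)/2}$. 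A bound $\lesssim(|t|+\nu^{1/2})^{-1}$ with a $K,L$-dependent constant would instead make the loss depend on $K,L$.

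\emph{The $W^{3,\infty}$ coefficient cannot go into Gronwall.} For $k\ge2$ one has $\|\pa_x^ku^{\mathrm{app}}(t)\|_{L^\infty}\sim(|t|+\nu^{1/2})^{(1-3k)/2}$, whose time integral is of order $\nu^{-3(k-1)/4}$ (for example $\nu^{-9/4}$ when $k=4$). Gronwall then produces $\exp(c\,\nu^{-9/4})$, not a power of $\nu$. Your remark that such terms cost ``at worst powers $\nu^{-C}$'' confuses a power in the exponent with a power as a prefactor.

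The missing idea is the triangular structure used in Appendix~\ref{app:prop61}. In the $\pa_x^j$ estimate, each $\pa_x^ku^{\mathrm{app}}$ with $k\ge2$ multiplies only $\pa_x^{j+1-k}w$, which has order at most $j-1$. After Young's inequality these terms become forcing controlled by the previous step of an induction on $j$, costing only multiplicative powers of $\nu$. Only $(2j+1)\|\pa_xu^{\mathrm{app}}\|_{L^\infty}$ then enters the exponent.

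With these two corrections your argument coincides with the paper's. Note also that the matching-zone residual you flag as the main obstacle is simply imported from \cite[Proposition~4.5]{CG}; the delicate step is really the energy estimate.
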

For fixed truncation orders $K,L$ satisfying $\min\{(1-2\beta)K,L\beta/3\}>C$, Proposition~\ref{prop0.1} and the Sobolev embedding theorem imply $\|u^{(\nu)}-u_{K,L}^{\mathrm{app}}\|_{L^{\infty}([t_{\mathrm{in}},0]\times\R)}\to0$ as $\nu\to0$.
\subsection{Three-zone structure in shock development}
\begin{figure}[htbp]
	\centering
	\includegraphics[width=0.72\textwidth]{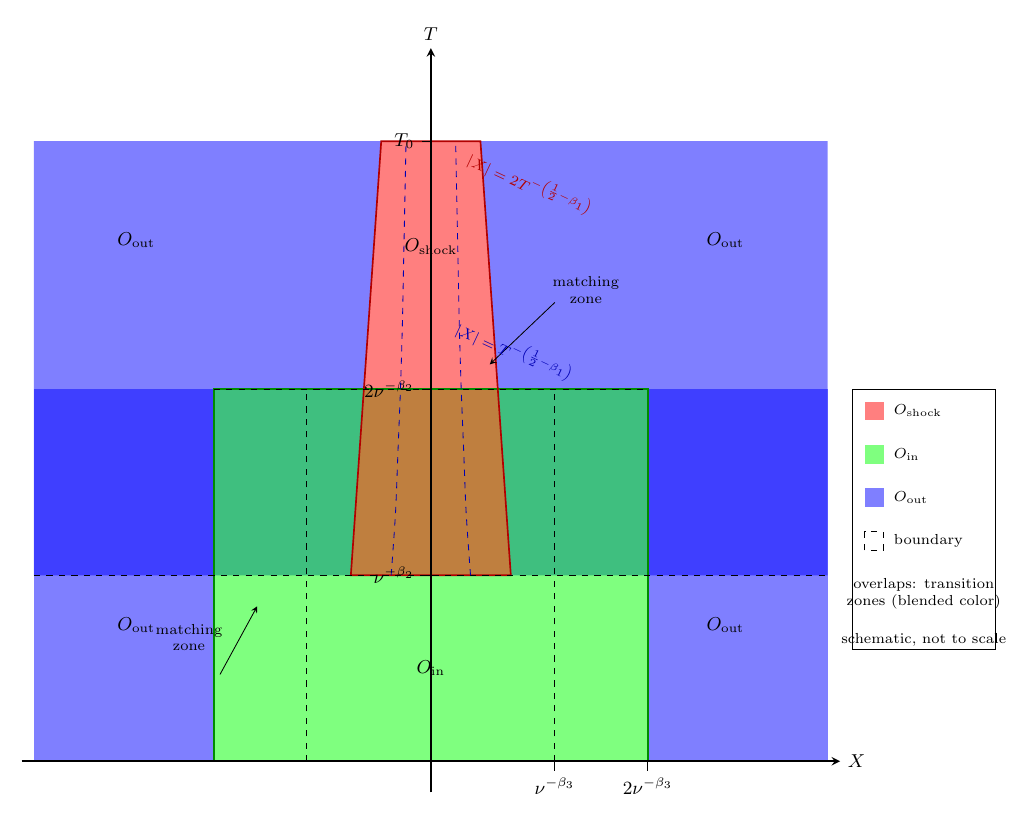}
	\caption{A schematic illustration of the regions $O_{\mathrm{shock}}$, $O_{\mathrm{in}}$ and $O_{\mathrm{out}}$ in the $(T,X)$ plane, with the boundaries given in \eqref{0.25}.}
	\label{fig1}
\end{figure}
	We now turn to the inviscid limit for shock development, i.e.,\ for $t\geq 0$. For convenience, we work in the inner variables given by
	\begin{equation}\label{0.23}
		\begin{aligned}
			T=\nu^{-\frac12}t,\quad X=\nu^{-\frac34}\x,\quad U=\nu^{-\frac14}u,\quad \mf{S}=\nu^{-\frac34}\mf{s}.
		\end{aligned}
	\end{equation}
	In these variables, \eqref{1.1} becomes
	\begin{equation}\label{0.24}
		\begin{aligned}
			\pa_TU-\dot{\mf{S}}\pa_XU+U\pa_XU=\pa_X^2U.
		\end{aligned}
	\end{equation}
We study \eqref{0.24} for $T\in[0,T_0]$, where $T_0=\ol{\vep}\nu^{-\frac12}$, and impose three distinct asymptotic expansions in the following regions:
	\begin{equation}\label{0.25}
		\begin{aligned}
			&O_{\mathrm{shock}}=\{(T,X): \nu^{-\beta_2}< T\leq T_0,\ |X|T^{\frac12-\beta_1}< 2\}\\
			&O_{\mathrm{in}}=\{(T,X): 0\leq T< 2\nu^{-\beta_2},\ |X|< 2\nu^{-\beta_3}\}\\
			&O_{\mathrm{out}}=\{(T,X): 0\leq T< 2\nu^{-\beta_2},\ |X|> \nu^{-\beta_3}\}
			\cup\{(T,X): \nu^{-\beta_2}< T\leq T_0,\ |X|T^{\frac12-\beta_1}> 1\},
		\end{aligned}
	\end{equation}
where $\beta_1,\beta_2,\beta_3\in(0,\frac1{100})$ are positive constants. Then $\{O_{\mathrm{shock}},O_{\mathrm{in}},O_{\mathrm{out}}\}$ forms an open covering of $[0,T_0]\times\R$ (see Figure~\ref{fig1}), and the matching zone consists of the intersections of these regions. The corresponding asymptotic expansions are described in the following subsections.\par

\subsection{Outer and shock-profile expansions}
	Returning to the $(t,\x)$ coordinates, we impose the asymptotic expansion:
	\begin{equation}\label{0.26}
		\begin{aligned}
			u^{(\nu)}\sim\sum\limits_{k\geq 0} \nu^k u_k^{\mathrm{out}}.
		\end{aligned}
	\end{equation}
	This is the outer expansion \eqref{0.10}, now continued in the shifted variables and solved with a jump across $\ol{x}=0$.
Rewriting \eqref{0.24} in the $(t,\x)$ coordinates and substituting \eqref{0.26}, we obtain
	\begin{equation}\label{0.27}
		\begin{aligned}
			\np_tu_k^{\mathrm{out}}-\dot{\mf{s}}\np_xu_k^{\mathrm{out}}+\frac12\np_x \sum\limits_{k_1+k_2=k}u_{k_1}^{\mathrm{out}}u_{k_2}^{\mathrm{out}}=\np_x^2u_{k-1}^{\mathrm{out}}.
		\end{aligned}
	\end{equation}
Under the Lax entropy condition $u(t,\mf{s}(t)^-)>\dot{\mf{s}}(t)>u(t,\mf{s}(t)^+)$, \eqref{0.27} can be solved as in \cite{CG}, with a jump discontinuity at $\x=0$ (see Proposition~\ref{prop3.2}). We expect \eqref{0.26} to hold in $O_{\mathrm{out}}$.\par
	Next, we seek an appropriate shock-profile expansion in $O_{\mathrm{shock}}$. It follows from \eqref{0.2}, \eqref{0.8} and \eqref{0.23} that the shock strength $U^{(0)}(T,0^-)-U^{(0)}(T,0^+)$ is of order $\ml{O}(T^{\frac12})$; moreover, a $\tanh$ profile analysis shows that the shock layer has width $|X|\sqrt{T}=\ml{O}(1)$. Hence we introduce the new variables
	\begin{equation}\label{0.28}
		\begin{aligned}
			V=\frac1{\sqrt{T}}U,\qquad Y=\sqrt{T}X+\Theta(T),
		\end{aligned}
	\end{equation}
	where $\Theta$ is a function to be chosen later. A direct computation then gives
	\begin{equation}\label{0.29}
		\begin{aligned}
			\frac1T\pa_TV+(\frac{Y-\Theta}{2T^2}+\frac{{\dot{\Theta}}}{T}-\frac{\dot{\mf{S}}}{\sqrt{T}})\pa_YV+V\pa_YV+\frac{1}{2T^2}V={\pa_Y^2V}.
		\end{aligned}
	\end{equation}
	We make the following shock-profile expansion for $V$:
	\begin{equation}\label{0.30}
		\begin{aligned}
			V(T,Y)\sim \sum\limits_{j\geq 0} T^{-2j}V_j^{\mathrm{shock}}(T,Y),
		\end{aligned}
	\end{equation}
	and, accordingly, for $\Theta$:
	\begin{equation}\label{0.31}
		\begin{aligned}
			\Theta(T)\sim \sum\limits_{j\geq 0}T^{-2j}\Theta_j^{\mathrm{S}}(T),
		\end{aligned}
	\end{equation}
	where the phase coefficients are chosen to ensure solvability, and we expect $\Theta_j^{\mathrm{S}}=\ml{O}(1)$. Collecting terms of order $T^{-2j}$ yields the equation for $V_j^{\mathrm{shock}}$. Moreover, $V_0^{\mathrm{shock}}$ satisfies
	\begin{equation}\label{0.32}
		\begin{aligned}
			\pa_Y^2V_0^{\mathrm{shock}}=V_0^{\mathrm{shock}}\pa_YV_0^{\mathrm{shock}}-\frac{\dot{\mf{S}}}{\sqrt{T}}\pa_YV_0^{\mathrm{shock}}.
		\end{aligned}
	\end{equation}
This is the viscous shock profile. At fixed physical time $t>0$, we have $T\sim\nu^{-\frac12}$; after the corresponding rescalings, these expansions have the outer and shock-layer structure of \cite{Goodman}. Their construction uses carefully chosen viscous initial data. Here both solutions evolve from the same prescribed smooth initial data, so we must also control the regime $T\ll\nu^{-\frac12}$, i.e.,\ $t\ll1$, as the shock strength tends to zero.
	\subsection{Inner expansion and grids}
At fixed $(T,X)$, we seek the expansions
\begin{equation}\label{0.33}
\begin{aligned}
U^{(\nu)}\sim\sum_{\ell\ge0}\nu^{\ell/4}U_\ell^{\mathrm{in}},
\qquad
\frac{\dot{\mf S}}{\sqrt T}\sim\sum_{\ell\ge0}\nu^{(\ell+1)/4}\mf S_\ell.
\end{aligned}
\end{equation}
Substitution into \eqref{0.24} gives
\begin{equation}\label{0.34}
\begin{aligned}
\pa_TU_\ell^{\mathrm{in}}
&-\sqrt T\sum_{\ell_1+\ell_2=\ell-1}\mf S_{\ell_1}\pa_XU_{\ell_2}^{\mathrm{in}}
+\frac12\pa_X\sum_{\ell_1+\ell_2=\ell}U_{\ell_1}^{\mathrm{in}}U_{\ell_2}^{\mathrm{in}}
=\pa_X^2U_\ell^{\mathrm{in}}.
\end{aligned}
\end{equation}
We extend the inner profiles of \cite{CG} through $0\le T\le1$ and use
their traces at $T=1$ for the subsequent construction.\footnote{Standard parabolic local well-posedness and continuation in spaces allowing polynomial growth give this extension; cf.\ \cite[proof of Proposition~4.2]{CG}. Proposition~\ref{prop6.1} gives a weighted refinement of the energy estimates in \cite[Section~5]{CG}; see Appendix~\ref{app:prop61}. Together with the initial corrections in Section~6, it yields \eqref{6.10} at $T=1$ and supplies the initial error bounds on $[1,T_0]$. The short-time bound and the subsequent estimates together cover $[0,T_0]$.}

Section~5 introduces two grids connecting the inner expansion with the
outer and shock-profile expansions. The outer--inner grid has partial sums
\begin{equation}\label{0.36}
U_{[K],\ell}^{\mathrm{OI}}=\sum_{k=0}^KU_{k,\ell}^{\mathrm{OI}},
\qquad
U_{k,[L]}^{\mathrm{OI}}=\sum_{\ell=0}^L\nu^{\ell/4}U_{k,\ell}^{\mathrm{OI}}.
\end{equation}
The latter approximates the rescaled outer profile $U_k^{\mathrm{out}}$.
For the shock--inner grid, we expand $V_j^{\mathrm{shock}}$ and $\Theta$
in powers of $\nu^{1/4}$, with coefficients $V_{j,\ell}^{\mathrm{SI}}$
and $\Theta_\ell^{\mathrm I}$. Taylor expansion of the composite shock
profile at fixed $(T,X)$ about $Y=\sqrt TX+\Theta_0^{\mathrm I}$ defines
$U_{j,\ell}^{\mathrm{SI}}$; see \eqref{4.36}. In particular,
\begin{equation}\label{0.38}
\begin{aligned}
U_{j,0}^{\mathrm{SI}}
&=T^{\frac12-2j}V_{j,0}^{\mathrm{SI}}(T,\sqrt TX+\Theta_0^{\mathrm I}),\\
U_{j,1}^{\mathrm{SI}}
&=T^{\frac12-2j}\left[V_{j,1}^{\mathrm{SI}}
+\Theta_1^{\mathrm I}\pa_YV_{j,0}^{\mathrm{SI}}\right]
_{Y=\sqrt TX+\Theta_0^{\mathrm I}}.
\end{aligned}
\end{equation}
Their partial sums approximate $U_j^{\mathrm{shock}}$
(Proposition~\ref{prop:SI-reconstruction}). The two grids match by
Proposition~\ref{prop4.9}, and we combine them as
\begin{equation}\label{0.39}
U_{[J],[K],\ell}^{\mathrm{SOI}}
=\zeta U_{[J],\ell}^{\mathrm{SI}}+(1-\zeta)U_{[K],\ell}^{\mathrm{OI}},
\qquad U_{[J],\ell}^{\mathrm{SI}}=\sum_{j=0}^JU_{j,\ell}^{\mathrm{SI}}.
\end{equation}
Here $\zeta$ selects the shock region. The phase terms in \eqref{4.43}
combine into truncation tails controlled by Lemma~\ref{lem5.8}; together
with grid matching, this gives residual decay of increasing order as
$J,K$ grow.

The remaining difficulty is the spatial mass. The normalized linear model
\begin{equation}\label{0.44}
\pa_TP+\pa_X(FP)-\pa_X^2P=G,
\qquad F=-\sqrt T\tanh(\sqrt TX/2)
\end{equation}
illustrates why localized initial data alone do not ensure time decay:
nonzero mass can produce a component of size $\sqrt T$ in a layer of
width $T^{-1/2}$. With zero-mass initial data and forcing, decay is established by weighted estimates for the spatial primitive below.

On $1\le T\le2\nu^{-\beta_2}$, Section~6 solves the hierarchy
\eqref{0.34} exactly for $U_{\ell,0}^{\mathrm{in}}$, with
\begin{equation}\label{0.40}
\begin{aligned}
U_{\ell,0}^{\mathrm{in}}
&=U_{[J],[K],\ell}^{\mathrm{SOI}}+P_{J,K,\ell,0},\\
U_{\ell,0}^{\mathrm{in}}(1,X)
&=U_\ell^{\mathrm{in}}(1,X)-\mu_\ell\rho(X),
\qquad |\mu_\ell|\lesssim\nu^{-\beta_2\ell/2},
\end{aligned}
\end{equation}
where $\rho\in C_c^\infty(\R)$ is fixed and $\int_\R\rho\,\md X=1$.
The correction equation is nonlinear for $\ell=0$ and linear at each
subsequent induction step. Subtracting a suitable time-dependent multiple
of $\rho$ gives an auxiliary equation with zero-mean initial data and
forcing. The Cole--Hopf transform at $\ell=0$ and weighted estimates for
spatial primitives yield, for fixed $\ell,m,n$ and sufficiently large $J,K$,
\begin{equation}\label{0.43}
|\pa_T^m\pa_X^nP_{J,K,\ell,0}|
\lesssim\nu^{-\beta_2\ell/2}
(T+X^2)^{-\psi_\ell(J,K)+a_{m,n}^{(\ell)}}.
\end{equation}
The index function $\psi_\ell$ tends to infinity with $J,K$, while
$a_{m,n}^{(\ell)}$ is independent of $\nu,J,K$.

Finally, Section~7 uses a partition of unity subordinate to \eqref{0.25}
to assemble
\[
U^{\mathrm{app}}_{J,K,L}
=\varsigma_oU_{[K]}^{\mathrm{out}}
+\varsigma_sU_{[J]}^{\mathrm{shock}}
+\varsigma_i\sum_{\ell=0}^L\nu^{\ell/4}U_{\ell,0}^{\mathrm{in}}.
\]
Matching makes its residual arbitrarily small in $\nu$ by choosing the
truncation orders large. For $W=U^{(\nu)}-U^{\mathrm{app}}_{J,K,L}$,
we write $W=W_1+W_2+W_3$: $W_1$ accounts for the mass change induced by the residual forcing,
and $W_2$ carries the remaining initial mass. Although $W_2$ can grow in amplitude,
it remains localized near the shock and decays exponentially away from it.
The zero-mass remainder $W_3$ satisfies a correction equation similar to those in
Section~6 and is controlled by analogous weighted estimates. Together, these bounds give
\[
|W(T,X)|\lesssim1+\sqrt T\,e^{-\delta\sqrt T|X|},
\qquad 1\le T\le T_0.
\]
Comparing $U^{\mathrm{app}}_{J,K,L}$ with $U^{(0)}$ in the three regions
and returning to $(t,x)$ yields Theorem~\ref{thm} (ii).

%

	\section{Inviscid Structure}
	In this section we describe the structure of the inviscid solution
	$u=u^{(0)}$ under Assumption~\ref{ass:init}, using the characteristic flow
	\eqref{2.2} below; the homogeneous expansions obtained here underlie the
	matched asymptotics of Sections~4--5. Section~3.1 constructs the shock
	development for $t>0$ by a fixed-point argument, with the Rankine--Hugoniot
	and Lax conditions and estimates on the jump and mean of $u$. Section~3.2
	introduces the homogeneous gauge $\mf{y}$, the distance $\mf{d}$, and the
	homogeneous expansions describing the $C_x^{1/3}$ cusp.
		We consider the inviscid equation
	\begin{equation}\label{2.1}
		\left\{\begin{aligned}
			&\pa_tu+u\pa_xu=0\\
			&u(t_{\mathrm{in}},x)=u_0(x).
		\end{aligned}
		\right
		.
	\end{equation}
	Under Assumption~\ref{ass:init}, $u$ develops a single nondegenerate
	shock at $(t,x)=(0,0)$; as in Section~2, the characteristic flow
	$\pa_t\eta(t,y)=u\circ\eta$, $\eta(t_{\mathrm{in}},y)=y$, yields
		\begin{equation}\label{2.2}
		\begin{aligned}
			u(t,\eta(t,y))&=u_0(y),\\
			\eta(t,y)&=y+(t-t_{\mathrm{in}})u_0(y).\\
		\end{aligned}
	\end{equation}
For later use, set $u^*(x)=u(0,x)$.

	\subsection{Shock development of Burgers equation}
	In this subsection, we consider the inviscid solution for $t>0$, which corresponds to the shock development stage. 
	The shock curve $\Gamma_s=\{(t,\mf{s}(t)):t\geq 0\}$ satisfies the Rankine--Hugoniot condition, which states that
	\begin{equation}\label{2.10}
		\begin{aligned}
			\dot{\mf{s}}(t)\,[[u]]=\frac{1}{2}[[u^2]].
		\end{aligned}
	\end{equation}
	We construct $\mf{s}(t)$ by a fixed-point argument. To this end, we first introduce some notation.\par

	\begin{definition}[Jump, mean, left/right limits, domain]
		Given a smooth shock curve $\mf{s}:[0,T]\to\R$, we denote by
		\[
			D_T=\bigl([0,T]\times\R\bigr)\setminus\{(t,\mf{s}(t)):t\in[0,T]\}
		\]
		the space-time domain excluding the shock curve. For any function $f:D_T\to\R$, the left and right limits at the shock are
		\[
			f^-(t)=\lim_{x\to\mf{s}(t)^-}f(t,x),\qquad 
			f^+(t)=\lim_{x\to\mf{s}(t)^+}f(t,x).
		\]
		The jump and the mean of $f$ across the shock are respectively
		\[
			[[f]]=[[f(t)]]=f^-(t)-f^+(t),\qquad
			\langle\!\langle f\rangle\!\rangle=\langle\!\langle f(t)\rangle\!\rangle
			=\frac{f^-(t)+f^+(t)}{2},
		\]
		for all $t\in[0,T]$. The dependence of $f^\pm$, $[[f]]$, and $\langle\!\langle f\rangle\!\rangle$ on $\mf{s}$ is suppressed.
	\end{definition}
	
	Next, we introduce the notion of a regular shock curve, which will be used throughout this subsection.
	\begin{definition}[Regular shock curve]\label{def3.2}
		Define the constant
			\begin{equation}\label{b}
			\begin{aligned}
			b=\sqrt{\frac ac}.
			\end{aligned}
		\end{equation}
		
		Throughout, $T_s>0$ is an arbitrary given number, not a special value.
		For every $t\in [0,T_s]$, we define
		\begin{equation*}
			\begin{aligned}
				\Sigma(t)&=[-\frac 12C_1t^2,\frac 12C_1t^2]\\
				\Omega(t)&=[-2bt^{\frac12},-\frac b2t^{\frac 12}]\cup[\frac b2t^{\frac12},2bt^{\frac 12}]. \\
			\end{aligned}
		\end{equation*}
For a given $T\in(0,T_s]$, we say that $\mf{s}:[0,T]\to\R$ is a regular shock curve if $\mf{s}$ satisfies
\begin{equation}\label{2.12}
	\begin{aligned}
	\mf{s}\in \Sigma(t),\qquad |\dot{\mf{s}}(t)|\leq C_1t,\qquad  |\ddot{\mf{s}}(t)|\leq C_1
	\end{aligned}
\end{equation}
for all $t\in (0,T]$.
	\end{definition}
We note that $C_1>0$ is a sufficiently large constant, to be chosen later. Our main result of this subsection is given as follows:
\begin{prop}[Shock development for Burgers equation]\label{prop sdb}
Suppose that the initial data $u_0$ of \eqref{2.1} satisfies Assumption \ref{ass:init}. Then there exists a sufficiently small constant $\ol{\vep}>0$, a regular shock curve $\mf{s}:[0,\ol{\vep}]\to \R$ satisfying \eqref{2.12}, and a function $u:D_{\ol{\vep}}\to \R$ such that 
	\[
\pa_tu+u\pa_xu=0\quad\text{in }D_{\ol{\vep}},\qquad u|_{t=0}=u^*.
\]
Moreover, $(u,\mf{s})$ satisfies the Rankine--Hugoniot condition \eqref{2.10} and the Lax entropy condition, i.e.,\ $u(t,\mf{s}(t)^-)>\dot{\mf{s}}(t)>u(t,\mf{s}(t)^+)$ for $0<t\leq\ol{\vep}$.
\end{prop}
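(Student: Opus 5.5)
We construct $(u,\mf{s})$ by a fixed-point argument on the set of regular shock curves. Given a regular shock curve $\mf{s}$ on $[0,T]$, we define $u$ on $D_T$ by characteristics. For $x\neq\mf{s}(t)$ we set $u(t,x)=u_0(y)$, where $y$ is the unique root of $x=\eta(t,y)=y+(t-t_{\mathrm{in}})u_0(y)$ with $y\in(-\infty,y_-(t))$ if $x<\mf{s}(t)$ and $y\in(y_+(t),\infty)$ if $x>\mf{s}(t)$. Here $y_\pm(t)$ are the extreme roots of $\eta(t,y)=\mf{s}(t)$. By (H3), $\pa_y\eta>0$ outside a small neighbourhood of $y=0$, and by \eqref{0.3} the equation $\mf{s}(t)=-aty+cy^3+\ml O(t^2+y^4)$ with $|\mf{s}|\le \frac12 C_1t^2$ has exactly three roots near $0$ once $t\le\ol\vep$ is small. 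Rescaling $y=t^{1/2}z$ reduces it to $-az+cz^3=\ml O(C_1t^{1/2})$, so $y_\pm(t)=\pm bt^{1/2}(1+\ml O(C_1 t^{1/2}))\in\Omega(t)$, with $b=\sqrt{a/c}$ as in \eqref{b}. Since $\pa_y\eta(t,y_\pm)\approx 2at>0$, the implicit function theorem gives smooth dependence of $y_\pm$ on $t$, and $u$ solves Burgers classically in $D_T$ with $u|_{t=0}=u^*$ (the two branches coalesce at $t=0$). Then $u^\pm(t)=u_0(y_\mp(t))$ and, by \eqref{2.10},
\[
\dot{\mf{s}}_{\mathrm{new}}(t)=\langle\!\langle u\rangle\!\rangle=\tfrac12\bigl(u_0(y_-(t))+u_0(y_+(t))\bigr),\qquad \mf{s}_{\mathrm{new}}(t)=\int_0^t\dot{\mf{s}}_{\mathrm{new}},
\]
which defines the map $\Phi:\mf{s}\mapsto\mf{s}_{\mathrm{new}}$.

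The key step is to show that $\Phi$ maps regular shock curves to regular shock curves. With $u_0(0)=0$ and $u_0'(0)=1/t_{\mathrm{in}}$, we write $u_0(y)=y/t_{\mathrm{in}}+\frac16u_0'''(0)y^3+\ml O(y^4)$. Since $y_++y_-=\ml O(t)$ (the quadratic term in the cubic expansion shifts the symmetric pair by $\ml O(y^2)$ relative to $y$, and $\mf{s}$ contributes $\ml O(C_1t^{2}/t)=\ml O(C_1 t)$ with a small constant coming from the division by $2at$), we expect $\langle\!\langle u\rangle\!\rangle=\ml O(t)$ with a constant $C_0$ determined by $u_0$, plus a contribution of order $C_1t^{3/2}$. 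Hence $|\dot{\mf{s}}_{\mathrm{new}}|\le (C_0+C_1 t^{1/2})t\le C_1t$ once $C_1\ge 2C_0$ and $\ol\vep$ is small, and similarly $|\mf{s}_{\mathrm{new}}|\le \frac12C_1t^2$. For $\ddot{\mf{s}}_{\mathrm{new}}$ we differentiate: $\dot y_\pm=(\dot{\mf{s}}-\pa_t\eta)/\pa_y\eta$ at $y_\pm$, and $\pa_t\eta=u_0(y_\pm)$, $\pa_y\eta\approx 2at$. The symmetric combination $u_0'(y_+)\dot y_++u_0'(y_-)\dot y_-$ must be $\ml O(1)$, even though each term is of size $t^{-1/2}$. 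Establishing this cancellation, uniformly in the regular class, is the main obstacle. I would handle it by writing everything in the scaled variable $z=y/t^{1/2}$ and expanding to one order beyond leading, so that the odd leading parts cancel and the remainders carry the factors $t^{1/2}$ needed to offset $1/\pa_y\eta\sim t^{-1}$. The $\ddot{\mf{s}}$ bound is then $C_0'+C_1\ml O(t^{1/2})\le C_1$.

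Contraction follows from the same scaled analysis. We use the norm $\|\mf{s}\|=\sup_t t^{-1}|\dot{\mf{s}}|$ (note that $\mf{s}(0)=0$), or equivalently a weighted sup norm on $\dot{\mf{s}}$. The map $\mf{s}\mapsto y_\pm$ has Lipschitz constant $\ml O(1/t)$, since $\pa_y\eta\sim t$. The difference of the means is therefore bounded by $\sup|u_0'|\cdot|\mf{s}_1-\mf{s}_2|/t\lesssim t\,\|\mf{s}_1-\mf{s}_2\|$, and integrating in time gains smallness, since $|\mf{s}_1-\mf{s}_2|\le \frac12t^2\|\cdot\|$. Thus we obtain a factor $\ml O(t)\le\ml O(\ol\vep)$, and more precisely we can check that $\sup t^{-1}|\dot{\mf{s}}_{1,\mathrm{new}}-\dot{\mf{s}}_{2,\mathrm{new}}|\lesssim\ol\vep\|\mf{s}_1-\mf{s}_2\|$. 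The regular class is closed under this norm, since uniform $C^{1,1}$ bounds pass to the limit. Banach's theorem then gives a fixed point, whose regularity is upgraded by the smooth dependence of $y_\pm$ on $(t,\mf{s})$.

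Finally, the entropy condition holds because $u^--u^+=u_0(y_-)-u_0(y_+)\approx -(y_+-y_-)/t_{\mathrm{in}}\approx 2bt^{1/2}/|t_{\mathrm{in}}|>0$. Since $\dot{\mf{s}}$ is the mean, we get $u^->\dot{\mf{s}}>u^+$. The Rankine--Hugoniot condition holds by construction.
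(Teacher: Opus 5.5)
There is a genuine gap in the quantitative core of your argument. You treat the dependence of the mean $\langle\!\langle u\rangle\!\rangle$ on the curve $\mf{s}$ as lower order, but it is scale-critical.

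Since $\eta(t,y_\pm)=\mf{s}(t)$, a variation of the curve gives $\delta y_\pm=\delta\mf{s}/\pa_y\eta(t,y_\pm)$, with $\pa_y\eta(t,y_\pm)=2at\,(1+\ml O(t^{1/2}))$. At the same time $u_0'(y_\pm)=u_0'(0)+\ml O(t)=-a+\ml O(t)$, because $a=-1/t_{\mathrm{in}}$ and $u_0''(0)=0$. Hence
\[
\langle\!\langle u(t)\rangle\!\rangle=Bt-\frac{\mf{s}(t)}{2t}+\ml O(C_1t^{3/2}),
\]
which is exactly the expansion \eqref{2.29} of Proposition~\ref{prop2.7} that the paper relies on.

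The consequences for your invariance estimates are as follows.
\begin{itemize}
\item The term $-\mf{s}/(2t)$ has size up to $\frac14C_1t$, not $C_1t^{3/2}$. For the admissible curve $\mf{s}=-\frac12C_1t^2$ you get $\dot{\mf{s}}_{\mathrm{new}}\approx(B+\frac14C_1)t$. This contradicts your bound $(C_0+C_1t^{1/2})t$ once $C_1$ is large and $t$ is small.
\item In $\dot y_\pm=(\dot{\mf{s}}-u_0(y_\pm))/\pa_y\eta$, the $\dot{\mf{s}}$ part is of size $C_1/(2a)$ with no factor $t^{1/2}$. After the odd $t^{-1/2}$ parts cancel, you are left with $\frac{d}{dt}\langle\!\langle u\rangle\!\rangle=B-\frac{\dot{\mf{s}}}{2t}+\frac{\mf{s}}{2t^2}+\ml O(C_1t^{1/2})$. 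This is bounded by $|B|+\frac34C_1$, not by $C_0'+C_1\ml O(t^{1/2})$.
\item Invariance of the regular class therefore holds only because the resulting bounds $\frac{C_1}{8}t^2$, $\frac{C_1}{4}t$ and $\frac34C_1$ fall below $\frac{C_1}{2}t^2$, $C_1t$ and $C_1$ (with $C_1\gg|B|$). Your proposal never identifies these coefficients, and the smallness mechanisms it invokes do not exist.
\end{itemize}

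The same issue breaks your contraction step. Your own chain gives $|\delta\langle\!\langle u\rangle\!\rangle|\lesssim|\delta\mf{s}|/t\lesssim t\|\delta\mf{s}\|$, that is, $\sup_t t^{-1}|\delta\dot{\mf{s}}_{\mathrm{new}}|\lesssim\|\delta\mf{s}\|$ with an $\ml O(1)$ constant. No time integration occurs in this norm, so no factor $\ol\vep$ appears, and any scale-invariant norm behaves the same way.

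What rescues the argument is the sharp constant:
\[
|\delta\dot{\mf{s}}_{\mathrm{new}}(t)|\le\frac{1+Ct^{1/2}}{2t}\,|\delta\mf{s}(t)|\le\frac{t}{4}\,(1+Ct^{1/2})\,\|\delta\mf{s}\|.
\]
So the map is a contraction with factor $\frac14+\ml O(\ol\vep^{1/2})$. With this computation inserted, your Banach route works, the regular class is indeed closed in your norm, and you even get uniqueness within that class.

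The paper uses the same explicit expansion only to prove invariance. It then obtains a fixed point by Schauder's theorem, using convexity of the class and precompactness in $W^{1,\infty}$ coming from the uniform $W^{2,\infty}$ bounds. This avoids any Lipschitz estimate in $\mf{s}$ but gives no uniqueness. Your Lax-condition argument and the construction of $y_\pm$ are fine.
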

\begin{remark}
Although the construction requires small $\lvp$, \eqref{2.10} allows continuation beyond $\lvp$ while the two branches remain smooth up to the shock and the strict Lax condition persists. The conclusions of Proposition~\ref{prop sdb} then remain valid, with $C_1$ in \eqref{2.12} enlarged if necessary. The argument for Theorem~\ref{thm}(ii) applies on any fixed finite interval with this single-shock structure, with constants depending on the interval but independent of $\nu$.
\end{remark}

As a first step toward the proof of Proposition \ref{prop sdb}, we construct the solution for a prescribed shock curve.

\begin{prop}[Burgers solution with a prescribed shock location]\label{prop2.7}
	Under Assumption~\ref{ass:init}, let $\mf{s}:[0,T_s]\to\R$ be a regular shock curve. There exist $\ol{\vep}\in(0,T_s]$ and a function $u:D_{\ol{\vep}}\to\R$ solving
	\[
		\pa_tu+u\pa_xu=0\quad\text{in }D_{\ol{\vep}},\qquad u|_{t=0}=u^*,
	\]
	which is $C^2$ in $D_{\ol{\vep}}$ and has a jump discontinuity across $\Gamma_s$ for $t>0$. Moreover, the jump $[[u(t)]]$ and the mean $\langle\!\langle u(t)\rangle\!\rangle$ satisfy the following estimates for all $t\in(0,\ol{\vep}]$:
	\[
		\begin{aligned}
		\bigl|[[u(t)]]-2abt^{1/2}\bigr| &\leq CC_1 t,\quad
		\bigl|\langle\!\langle u(t)\rangle\!\rangle-Bt+\frac{\mf{s}(t)}{2t}\bigr|  \leq CC_1 t^{\frac 32},\\
		\bigl|\tfrac{d}{dt}[[u(t)]]-abt^{-1/2}\bigr| &\leq CC_1,\quad
		\bigl|\tfrac{d}{dt}(\langle\!\langle u(t)\rangle\!\rangle+\frac{\mf{s}(t)}{2t})-B\bigr| \leq CC_1t^{\frac 12},\\
		\bigl|\tfrac{d^2}{dt^2}[[u(t)]]+\tfrac12abt^{-3/2}\bigr| &\leq CC_1 t^{-1},\quad
		\bigl|\tfrac{d^2}{dt^2}(\langle\!\langle u(t)\rangle\!\rangle+\frac{\mf{s}(t)}{2t})\bigr|  \leq CC_1 t^{-\frac 12},
		\end{aligned}
	\]
	where the constants $B$ and $C>0$ depend only on $u_0$.
\end{prop}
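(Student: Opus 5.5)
The solution is built by characteristics, with the prescribed curve used only to decide which characteristics are still alive. For $t>0$ and $x\neq\mf{s}(t)$, we set $u(t,x)=u_0(y)$, where $y$ is the root of $x=\eta(t,y)=y+(t-t_{\mathrm{in}})u_0(y)$ chosen on the correct side. For $x<\mf{s}(t)$ we take the root with $y<y_-(t)$; for $x>\mf{s}(t)$ we take the root with $y>y_+(t)$. Here $y_\pm(t)$ are the extreme roots of $\eta(t,y)=\mf{s}(t)$. This function solves $\pa_tu+u\pa_xu=0$ classically off $\Gamma_s$, equals $u^*$ at $t=0$, and has the required regularity, provided three facts hold on a short interval $(0,\ol\vep]$.
\begin{itemize}
\item[(a)] The extreme roots $y_\pm(t)$ exist and are simple.
\item[(b)] $\pa_y\eta>0$ on the two label sets $\{y<y_-(t)\}$ and $\{y>y_+(t)\}$.
\item[(c)] The inverse maps are $C^2$ up to the shock.
\end{itemize}

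To check these facts, I would write $\eta(t,y)=-aty+cy^3+R(t,y)$ with $R=\mathcal O(t^2+y^4)$, using \eqref{0.3}. The smoothness of $u_0$, (H4) and $u_0(0)=0$ give $a=-1/t_{\mathrm{in}}\cdot$(normalization), $c=-t_{\mathrm{in}}\pa_x^3u_0(0)/6>0$, and also the derivative bounds $\pa_yR=\mathcal O(t y+y^3)$ (using $\pa_y^2\eta(0,0)=0$). Next I rescale with $y=t^{1/2}z$ and use $\mf{s}\in\Sigma(t)$, so that $|\mf{s}|\le \tfrac12 C_1t^2$. The root equation becomes
\[
-az+cz^3=t^{-3/2}\mf{s}(t)-t^{-3/2}R(t,t^{1/2}z)=\mathcal O(C_1t^{1/2}).
\]
For $t\le\ol\vep$, with $\ol\vep$ small depending on $C_1$, the implicit function theorem applies near the simple roots $z=\pm b$ of $-az+cz^3$. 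This gives $y_\pm(t)=\pm bt^{1/2}+\mathcal O(C_1t)$, which lie in $\Omega(t)$. The same rescaling shows that $\pa_y\eta\ge c'(y^2-b^2t)>0$ on the label sets, which yields (a) and (b). On $|y|\ge L$ the map $\eta$ is a translation, and in between $\pa_y\eta>0$ holds by (H3) for $t$ small. Property (c) follows because $u_0$ is smooth and $\pa_y\eta$ stays bounded below away from $y=0$.

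The jump and mean estimates come from expanding $u^\pm(t)=u_0(y_\mp(t))$. Note the orientation: the left state uses $y_-$ and the right state uses $y_+$. We have $u_0(y)=\tfrac{1}{-t_{\mathrm{in}}}\cdot(-ay)\cdot(\ldots)$. More precisely, $\eta(0,y)=y-t_{\mathrm{in}}u_0(y)=cy^3+\dots$ forces $u_0(y)=-ay+d y^2+e y^3+\mathcal O(y^4)$, with the linear coefficient tied to $a$. To get the next order of $y_\pm$, I would expand the root to one more term:
\[
y_\pm=\pm bt^{1/2}+y^{(1)}+\frac{\mf{s}(t)}{2at}+\mathcal O(C_1t^{3/2}),
\]
where $y^{(1)}=\mathcal O(t)$ is a symmetric correction coming from the $y^4$ and $t^2$ terms. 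Substituting gives the jump $[[u]]=a(y_+-y_-)+\mathcal O(t)=2abt^{1/2}+\mathcal O(C_1t)$. In the mean, the odd parts cancel, leaving $Bt-\mf{s}/(2t)+\mathcal O(C_1t^{3/2})$, with $B$ built from $d$, $b^2$ and $y^{(1)}$. The sign of the $\mf{s}/(2t)$ term is a bookkeeping check I would verify carefully. For the derivative bounds I would differentiate the root equation $\eta(t,y_\pm(t))=\mf{s}(t)$ implicitly:
\[
\dot y_\pm=\frac{\dot{\mf{s}}-u_0(y_\pm)}{\pa_y\eta(t,y_\pm)},\qquad \pa_y\eta(t,y_\pm)\approx 2at.
\]
I then use $|\dot{\mf{s}}|\le C_1t$ and, at the next derivative, $|\ddot{\mf{s}}|\le C_1$. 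Equivalently, I work with the rescaled variable $z_\pm(t)=t^{-1/2}y_\pm$, which is a smooth function of $(t^{1/2},t^{-3/2}\mf{s})$. Then each $t$-derivative costs $t^{-1}$ relative to the leading term, which reproduces $abt^{-1/2}$ and $-\tfrac12abt^{-3/2}$ with the stated errors.

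The main obstacle is the second-derivative estimates, because the quotient $\mf{s}/t$ appears in the mean. Since $\frac{d^2}{dt^2}(\mf{s}/t)$ involves $\ddot{\mf{s}}/t$, which is only bounded by $C_1t^{-1}$, this is precisely why the statement controls $\langle\!\langle u\rangle\!\rangle+\mf{s}/(2t)$ rather than $\langle\!\langle u\rangle\!\rangle$ itself. I would therefore isolate the $\mf{s}$-dependence exactly. The linear term in $\mf{s}$ is $-\mf{s}/(2t)$ up to a factor $(1+\mathcal O(t^{1/2}))$. The remainder terms are products of $t^{-3/2}\mf{s}$ with smooth functions of $t^{1/2}$, and these are $\mathcal O(C_1t^{1/2})$ with bounded $t\partial_t$-derivatives. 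This gives the $CC_1t^{-1/2}$ bound. The constants depend only on $u_0$, after absorbing powers of $C_1$ into the choice of $\ol\vep$.
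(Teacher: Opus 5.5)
Your proposal is correct and follows essentially the paper's route. The rescaling $\tau=t^{1/2}$, $z=yt^{-1/2}$, $\zeta=\mf{s}t^{-3/2}$ with the implicit function theorem at the simple roots $\pm b$ of $cz^3-az$ is exactly Lemma~\ref{lem2.8}. The paper then Taylor-expands the jump and mean in $(t^{1/2},\mf{s}/t^2)$ and uses the symmetry $Z_\pm(-\tau,-\zeta)=-Z_\mp(\tau,\zeta)$ (your ``odd parts cancel'') to gain a factor $t^{1/2}$ in the remainders, so that powers of $C_1$ are absorbed by shrinking $\ol{\vep}$; the derivative bounds come from the chain rule with $|(t\pa_t)^k(\mf{s}/t^2)|\lesssim C_1$ for $k\le2$.

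Two small slips need fixing. First, the left state is $u^-=u_0(y_-)$; your formula $u^\pm=u_0(y_\mp)$ contradicts your own orientation remark. Second, the lower bound $c'(y^2-b^2t)$ need not be positive for $y$ between $y_+(t)$ and $bt^{1/2}$. Use instead $3cy^2-at\ge\frac13at$ for $|y|\ge\frac23bt^{1/2}$, which gives $\pa_y\eta\gtrsim t+y^2$ on the label sets.
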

We will also need the following lemma on the invertibility of the flow map.

\begin{lemma}[Local inversion of the Burgers flow map]\label{lem2.8}
	Assume that $\mf{s}$ is a regular shock curve on $[0,T_s]$. Then there exists a sufficiently small constant $\ol{\vep}\in (0,T_s]$, depending only on $u_0$ and $C_1$, such that for all $t\in (0,\ol{\vep}]$ the following holds. There exists a unique largest $y_{+}=y_+(t)>0$ and a unique smallest $y_-=y_-(t)$ such that
	\begin{equation}\label{2.13}
		\begin{aligned}
			\mf{s}(t)=\eta(t,y_{\pm}(t)),
		\end{aligned}
	\end{equation}
	and moreover we have
		\begin{equation}\label{2.15}
		\begin{aligned}
			|y_{\pm}(t)\mp bt^{\frac 12}|\leq \mathsf{m}C_1 t\qquad \Rightarrow   \qquad \frac 23 bt^{\frac12}\leq |y_{\pm}(t)|\leq \frac 32 bt^{\frac12},
		\end{aligned}
	\end{equation}
	where $\mathsf{m}$ depends only on $u_0$. Define $y_{\pm}(0)=0$. Note that $y_{\pm}(t)\in \Omega(t)$ for all $t\in [0,\ol{\vep}]$. Moreover, define the set of labels
	\begin{equation*}
		\begin{aligned}
		\Upsilon(t)=\R\setminus[y_-(t),y_+(t)];
		\end{aligned}
	\end{equation*}
	then the map $\eta(t,\cdot):\Upsilon(t)\to\R\setminus\{\mf{s}(t)\}$ is a bijection. Setting $J(t,y)=\pa_y\eta(t,y)$, for $t\in [0,\ol{\vep}]$ and $y\in \Upsilon(t)\cap [-1,1]$ we have
	\begin{equation}\label{jty}
		\begin{aligned}
			J(t,y)\gtrsim t+y^2,
		\end{aligned}
	\end{equation}
and, for all $s\in [0,t)$ and $y\in \Upsilon(t)$, we have
\begin{equation}\label{tra}
	\begin{aligned}
		|\eta(s,y)-\mf{s}(s)|\gtrsim t^{1/2}|t-s|.
	\end{aligned}
\end{equation}
The above estimate implies that, for every $y\in \Upsilon(t)$, the trajectory $\{\eta(s,y)\}_{s\in [0,t]}$ cannot intersect the shock curve before time $s=t$.
\end{lemma}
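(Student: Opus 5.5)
We work directly with the explicit flow map $\eta(t,y)=y+(t-t_{\mathrm{in}})u_0(y)$ and its Taylor expansion \eqref{0.3}. Because $\pa_y\eta(0,0)=\pa_y^2\eta(0,0)=0$ and $\pa_y^3\eta(0,0)>0$, \eqref{0.3} can be upgraded to the derivative statements
\[
\pa_y\eta(t,y)=-at+3cy^2+\ml{O}(t^2+|t||y|+|y|^3),\qquad \pa_t\eta(t,y)=u_0(y)=-ay+\ml{O}(y^3).
\]
These hold for $|t|+|y|\le r_0$, where $r_0$ is small and depends only on $u_0$. We then split the label line into two parts. Away from the origin, $|y|\ge r_0$, we use (H3). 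Since the minimum of $\pa_xu_0$ is unique and attained at $0$, we have $1+t_{\mathrm{in}}^{-1}\ldots$; more precisely $J(0,y)=1-t_{\mathrm{in}}\pa_xu_0(y)\ge c_0>0$ for $|y|\ge r_0$. The same bound holds, with $c_0/2$, for $t\le\ol\vep$ small. By (H2) we also have $J\equiv1$ for $|y|\ge L$. Hence $\eta(t,\cdot)$ is strictly increasing on $\{|y|\ge r_0\}$, and it maps each of these two pieces onto a set at distance $\gtrsim1$ from $\mf{s}(t)=\ml{O}(t^2)$.

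Near the origin, we solve $\eta(t,y)=\mf{s}(t)$ in the scaled variable $y=t^{1/2}z$. Dividing by $t^{3/2}$ gives
\[
-az+cz^3+\ml{O}(t^{1/2})=0,
\]
where the error includes $\mf{s}(t)t^{-3/2}=\ml{O}(C_1t^{1/2})$ by \eqref{2.12}. The roots are perturbations of $0,\pm b$. At $z=\pm b$ the derivative is $-a+3cb^2=2a\neq0$, so the implicit function theorem, or a monotonicity argument on $[\tfrac23 b,\tfrac32 b]$, gives unique roots $z_\pm$ with $|z_\pm\mp b|\le \mathsf m C_1 t^{1/2}$. This is \eqref{2.15}. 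To see that these are the extreme roots, note that for $|y|\ge 2bt^{1/2}$ we have $J\gtrsim y^2>0$. Combined with the global monotonicity on $|y|\ge r_0$, this shows $\eta(t,\cdot)-\mf{s}(t)$ is strictly increasing on $\Upsilon(t)$ and has no zero there. It is negative on $(-\infty,y_-)$ and positive on $(y_+,\infty)$, with limits $\pm\infty$ as $y\to\pm\infty$, since $\eta=y+\mathrm{const}$ for $|y|\ge L$. The bijection onto $\R\setminus\{\mf{s}(t)\}$ follows.

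For \eqref{jty}, take $y\in\Upsilon(t)$ with $|y|\le r_0$. Then $|y|\ge|y_\pm|\ge b t^{1/2}(1-\mathsf m C_1 t^{1/2}/b)$. We need $3cy^2-at\gtrsim t+y^2$. At $|y|=bt^{1/2}$ this quantity equals $2at$, and it is increasing in $|y|$. The losses are of size $\ml{O}(C_1t^{3/2})$ and $\ml{O}(t^2+|t||y|+|y|^3)$, and both are absorbed once $\ol\vep$ and $r_0$ are small. For $r_0\le|y|\le1$ the bound is just $J\ge c_0/2$.

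The main obstacle is \eqref{tra}. Fix $y\in\Upsilon(t)$, say $y>y_+(t)$, and set $g(s)=\eta(s,y)-\mf{s}(s)$ for $s\in[0,t]$. Then $g(t)>0$ and
\[
g'(s)=u_0(y)-\dot{\mf{s}}(s)=-ay+\ml{O}(y^3)+\ml{O}(C_1s).
\]
For $y\in(y_+(t),r_0]$ we have $y\gtrsim t^{1/2}\ge s^{1/2}$, so $g'(s)\le -\tfrac a2 y$ once $C_1\ol\vep^{1/2}$ is small. Therefore
\[
g(s)\ge g(t)+\tfrac a2 y(t-s)\gtrsim t^{1/2}(t-s).
\]
The case $y<y_-(t)$ is symmetric: there $g<0$ and $g'\ge \tfrac a2|y|$. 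For $|y|\ge r_0$, the first step gives $|g(s)|\gtrsim1\ge t^{1/2}|t-s|$ directly, because at every $s\in[0,t]$ we have $\mf{s}(s)=\ml{O}(s^2)$ while $\eta(s,y)$ stays at distance $\gtrsim 1$ from it. Finally, the constant $\ol\vep$ is chosen at the end, depending on $u_0$ and $C_1$, so that all the smallness requirements above hold simultaneously.
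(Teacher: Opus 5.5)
Your proposal is correct and follows essentially the same route as the paper: a Taylor expansion of $J=\pa_y\eta$ near the origin, the rescaling $y=t^{1/2}z$, $\zeta=\mf{s}(t)t^{-3/2}$ with an implicit-function/monotonicity argument at the simple roots $\pm b$ of $cz^3-az$, positivity of $J$ on $\Upsilon(t)$ for the bijection and \eqref{jty}, and integration of $u_0(y)-\dot{\mf s}$ in time for \eqref{tra}. The only difference is cosmetic: you run that last integration directly for each label $y\in\Upsilon(t)$, using $g(t)>0$, whereas the paper does it only for the extreme labels $y_\pm(t)$ and then invokes the monotonicity of $\eta(s,\cdot)$.
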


\begin{proof}
	We first note that from Assumption~\ref{ass:init}, $\pa_y^k\eta(0,0)=0$ for $k\leq 2$ and $\pa_y^3\eta(0,0)>0$. Taking a Taylor expansion of $J=\pa_y\eta$ near $(t,y)=(0,0)$ gives, for some positive constants $a,c>0$,
		\begin{equation}\label{J}
		\begin{aligned}
			J(t,y)=-at+3cy^2+\ml{O}(|y|^3+|ty|).
		\end{aligned}
	\end{equation}
	Integrating \eqref{J} once and taking $x=\eta(t,y)$, we obtain, for some smooth functions $g_1(t,y)$ and $g_2(t,y)$,
		\begin{equation}\label{xy}
		\begin{aligned}
		x=-aty+cy^3+ty^3g_1(t,y)+y^4g_2(t,y).
		\end{aligned}
	\end{equation}
	Hence \eqref{2.13} is equivalent to 
	\begin{equation}\label{2.14}
		\begin{aligned}
			\mf{s}(t)=-aty+cy^3+ty^3g_1(t,y)+y^4g_2(t,y).
		\end{aligned}
	\end{equation}
Introduce
	\begin{equation*}
		\begin{aligned}
			\tau=t^{\frac 12}, \quad z=y t^{-\frac 12},\quad \zeta=\mf{s}(t)t^{-\frac32}.
		\end{aligned}
	\end{equation*}
Then \eqref{2.14} reduces to
	\begin{equation}\label{2.16}
		\begin{aligned}
			\zeta=-az+cz^3+G(\tau,z),
		\end{aligned}
	\end{equation}
where $G(\tau,z)=\tau^2z^3g_1(\tau^2,z\tau)+\tau z^4g_2(\tau^2,z\tau)$. If $\tau=0$, then for $\zeta\in(-\zeta_{\rm crit},\zeta_{\rm crit})$ with $\zeta_{\rm crit}=\frac{2a^{3/2}}{3\sqrt{3c}}$, the cubic equation $cz^3-az-\zeta=0$ has three distinct real roots, with a unique largest root $Z_+=Z_+(0,\zeta)$ and a unique smallest root $Z_-(0,\zeta)$, given by
	\begin{equation*}
		\begin{aligned}
			Z_+(0,\zeta)&=2\sqrt{\frac{a}{3c}}\,
			\cos\!\Bigl(\frac13\arccos\!\Bigl(\frac{3\zeta}{2}\sqrt{\frac{3c}{a^3}}\Bigr)\Bigr),\\
			Z_-(0,\zeta)&=2\sqrt{\frac{a}{3c}}\,
			\cos\!\Bigl(\frac13\arccos\!\Bigl(\frac{3\zeta}{2}\sqrt{\frac{3c}{a^3}}\Bigr)+\frac{2\pi}{3}\Bigr).
		\end{aligned}
	\end{equation*}
The proof is then completed by an application of the implicit function theorem, provided we show that
	\begin{equation*}
		\begin{aligned}
		\pa_z[-az+cz^3+G(\tau,z)]|_{(\tau,z)=(0,Z_{\pm})}=3cZ^2_{\pm}-a>0.
		\end{aligned}
	\end{equation*}
Using the trigonometric form,
	\[
		\begin{aligned}
		3cZ_+^2-a &= a\bigl(1+2\cos\!\bigl(\tfrac23\arccos\!\bigl(\tfrac{3\zeta}{2}\sqrt{\tfrac{3c}{a^3}}\bigr)\bigr)\bigr),\\
		3cZ_-^2-a &= a\bigl(1+2\cos\!\bigl(\tfrac23\arccos\!\bigl(\tfrac{3\zeta}{2}\sqrt{\tfrac{3c}{a^3}}\bigr)+\tfrac{4\pi}{3}\bigr)\bigr),
		\end{aligned}
	\]
	which are both strictly positive, we obtain positive and negative roots $Z_{\pm}(\tau,\zeta)$. Now we take
	\begin{equation}\label{yeq}
		\begin{aligned}
		y_{\pm}(t)=t^{\frac{1}{2}}Z_{\pm}(t^{\frac12},\mf{s}(t)t^{-\frac32}).
		\end{aligned}
	\end{equation}
Using \eqref{2.12}, we see that $y_{\pm}$ solves \eqref{2.13}.\par
Now we investigate the location of $y_{\pm}(t)$. Note that $Z_{+}(0,0)=-Z_{-}(0,0)$, $Z_{\pm}(\tau,\zeta)$ is smooth near $(\tau,\zeta)=(0,0)$, and $\mf{s}$ is a regular shock curve. Since $Z_+(0,0)=\sqrt{\frac ac}=b$, from \eqref{yeq} we obtain
	\begin{equation}\label{2.24}
		\begin{aligned}
			|y_{\pm}(t)\mp bt^{\frac 12}|\leq CC_1 t.\qquad
		\end{aligned}
	\end{equation}
For later use, we compute $y_{\pm}(t)$ more explicitly. By the implicit function theorem, we obtain
	\begin{equation*}
		\begin{aligned}
		&\pa_{\zeta}{Z}_{\pm}(0,0)=\frac{1}{2a}\\
		&\pa_{\tau}{Z}_{\pm}(0,0)=-\frac{b^4}{2a}g_2(0,0).\\
		\end{aligned}
	\end{equation*}
Hence we obtain
		\begin{equation*}
		\begin{aligned}
		|y_{\pm}\mp bt^{\frac12}+\frac{b^4}{2a}g_2(0,0)t-\frac{\mf{s}(t)}{2at}|\leq CC_1t^{\frac 32}.
		\end{aligned}
	\end{equation*}
We note that $\eta(0,\cdot)$ is a bijection on $\R$. To prove that $\eta(t,\cdot)$ is a bijection for $t>0$, it suffices to show that $J(t,y)>0$ on $\Upsilon(t)$. 
From \eqref{yeq} and \eqref{J}, we have
	\begin{equation*}
		\begin{aligned}
		\lim\limits_{y\to y_{\pm}}J(t,y)\gtrsim t\gtrsim t+y_{\pm}^2.\\
		\end{aligned}
	\end{equation*}
Taking $\ol{\vep}\ll 1$, we deduce from \eqref{J} that there exists a constant $\delta>0$, independent of $t$, such that $J(t,y)\gtrsim t+y^2$ for $y\in [y_--\delta,y_-]\cup[y_+,y_++\delta]$. Moreover, $J(t,y)$ has a positive lower bound for $|y|\geq \delta$ provided $\ol{\vep}$ is sufficiently small. Hence \eqref{jty} holds and $\eta(t,\cdot)$ is a bijection.\par
Now we prove \eqref{tra}. Using \eqref{2.12}, \eqref{2.24} and Assumption \ref{ass:init}, we obtain, for $s\in [0,t]$,
\begin{equation*}
	\begin{aligned}
		\mf{s}(s)-\eta(s,y_-(t))&=-\int_s^t\bigl(\dot{\mf{s}}(\tau)-(\pa_t\eta)(\tau,y_-(t))\bigr)\md \tau\\
		&=\int_s^t\bigl(u_0(y_-(t))-\dot{\mf{s}}(\tau)\bigr)\md \tau\\
		&\gtrsim \int_s^t\bigl(|y_-(t)|-C\tau\bigr)\md \tau\\
		&\gtrsim t^{1/2}|t-s|.
	\end{aligned}
\end{equation*}
Similarly, we can prove that $\eta(s,y_+(t))-\mf{s}(s)\gtrsim t^{1/2}|t-s|$. Combining these estimates with the monotonicity of $\eta$, we obtain \eqref{tra}.

\end{proof}
\begin{proof}[Proof of Proposition \ref{prop2.7}]
The smooth roots in \eqref{yeq} satisfy $Z_\pm(-\tau,-\zeta)=-Z_\mp(\tau,\zeta)$.
Thus, with $\mf{s}(t)/t^2$ held fixed, the jump and mean are respectively odd and even in $\tau=\sqrt t$.
Taylor expansion gives
\begin{equation}\label{2.29}
\begin{aligned}
[[u(t)]]&=2abt^{\frac12}+t^{\frac32}E_1\Bigl(t^{\frac12},\frac{\mf{s}(t)}{t^2}\Bigr),\\
\langle\!\langle u(t)\rangle\!\rangle
&=Bt-\frac{\mf{s}(t)}{2t}+t^2E_2\Bigl(t^{\frac12},\frac{\mf{s}(t)}{t^2}\Bigr),
\end{aligned}
\end{equation}
where $B=\frac12b^4g_2(0,0)$ and $E_1,E_2$ are smooth in both arguments.
By \eqref{2.12}, $|(t\pa_t)^k(\mf{s}(t)/t^2)|\lesssim C_1$ for $k=0,1,2$.
The chain rule therefore yields
\[
\left|\pa_t^k E_i\Bigl(t^{\frac12},\frac{\mf{s}(t)}{t^2}\Bigr)\right|
\leq C(C_1)t^{-k},\qquad i=1,2,\quad k=0,1,2,
\]
where $C(C_1)$ also depends on $u_0$.
Differentiating \eqref{2.29} and taking $\ol{\vep}$ sufficiently small gives all the stated estimates.
\end{proof}
It remains to prove Proposition \ref{prop sdb}.

\begin{proof}[Proof of Proposition~\ref{prop sdb}]
	Let $\ml{R}_{\ol{\vep}}=\{\mf{s}\in W^{2,\infty}[0,\ol{\vep}]:\mf{s}\text{ is a regular shock curve}\}$. The reader can check $\ml{R}_{\lvp}$ is a convex set (cf. Definition~\ref{def3.2}).
	 To distinguish the means corresponding to different regular shock curves, we write $\langle\!\langle u(t)\rangle\!\rangle_{\mf{s}}$ for the mean defined by $\mf{s}$.
For any $\mf{s}\in \ml{R}_{\ol{\vep}}$, we define the nonlinear mapping $\ml{A}$:
	\begin{equation*}
		\begin{aligned}
		\ml{A}[\mf{s}](t)=\int_0^t \langle\!\langle u(\tau)\rangle\!\rangle_{\mf{s}}\md \tau;
		\end{aligned}
	\end{equation*}
 then $\ml{A}$ is continuous on $\ml{R}_{\lvp}$. We wish to show that $\ml{A}$ has a fixed point in $\ml{R}_{\ol{\vep}}$ provided that $C_1$ is chosen sufficiently large and $\ol{\vep}$ sufficiently small.\par
	First, we show that $\ml{A}$ maps $\ml{R}_{\ol{\vep}}$ into itself. Given $\mf{s}\in \ml{R}_{\ol{\vep}}$, we clearly have $\ml{A}[\mf{s}](0)=0$. Also, from \eqref{2.12} and Proposition \ref{prop2.7} we obtain
	\begin{equation}\label{2.33}
		\begin{aligned}
			|\ml{A}[\mf{s}](t)|&=|\int_0^t \langle\!\langle u(\tau)\rangle\!\rangle_{\mf{s}}\md \tau|\leq \int_0^t\bigl(|B|\tau+|\frac{\mf{s}(\tau)}{2\tau}|+CC_1\tau^{\frac 32}\bigr)\md \tau\\
			&\leq  \frac{|B|}{2}t^2+\frac{C_1}{8}t^2+CC_1t^{\frac 52}\\
			|\frac{\md}{\md  t}\ml{A}[\mf{s}](t)|&=|\langle\!\langle u(t)\rangle\!\rangle_{\mf{s}}|
			\leq |B|t+ |\frac {{\mf{s}}(t)}{2t}|+CC_1t^{\frac 32}\\
			&\leq |B|t+\frac{1}{4}C_1t+CC_1t^{\frac 32}\\
				|\frac{\md^2}{\md  t^2}\ml{A}[\mf{s}](t)|&=|\frac{\md}{\md t}\langle\!\langle u(t)\rangle\!\rangle_{\mf{s}}|
				\leq |B|+ |\frac {\dot{\mf{s}}(t)}{2t}|+|\frac {{\mf{s}}(t)}{2t^2}|+CC_1t^{\frac 12}\\
				&\leq |B|+\frac{3}{4}C_1+CC_1t^{\frac 12}.\\
		\end{aligned}
	\end{equation}
Taking $C_1>20|B|$ and $\ol{\vep}$ sufficiently small, we obtain $\ml{A}[\mf{s}]\in \ml{R}_{\ol{\vep}}$. Moreover, if we denote by $\ol{\ml{R}}_{\ol{\vep}}$ the closure of $\ml{R}_{\ol{\vep}}$ in the $W^{1,\infty}[0,\ol{\vep}]$ topology, then $\ml{A}(\ol{\ml{R}}_{\ol{\vep}})$ is uniformly bounded in $W^{2,\infty}[0,\ol{\vep}]$ and hence precompact in the $W^{1,\infty}$ topology. By Schauder's fixed point theorem, there exists $\mf{s}\in \ol{\ml{R}}_{\ol{\vep}}$ such that $$\ml{A}[\mf{s}]=\mf{s}.$$
Following the proof of \eqref{2.33}, we obtain $\mf{s}\in \ml{R}_{\ol{\vep}}$, which completes the proof.
	
\end{proof}

\subsection{Homogeneous gauge}
In this subsection we introduce an appropriate gauge to capture the singularity of $u$. Define $\mf{y}(t,\x)$ by
\begin{equation}\label{gt}
	\begin{aligned}
		\x=-at\mf{y}+c\mf{y}^3,
	\end{aligned}
\end{equation}
where
 \begin{equation}\label{2.38}
	\begin{aligned}
		\x=x \text{ for }t\leq0,\quad \x=x-\mf{s}(t) \text{ for }t>0.
	\end{aligned}
\end{equation}
If $t\leq 0$, then $\mf{y}$ is uniquely determined by \eqref{gt}. For $t>0$, we need to impose an additional requirement to determine it.
Introduce $\mf{y}_{\pm}(t)$ as the positive [resp. negative] roots of the equation:
\begin{equation*}
	\begin{aligned}
		-at\mf{y}+c\mf{y}^3=0.
	\end{aligned}
\end{equation*}
A direct computation shows that $\mf{y}_{\pm}(t)=\pm\sqrt{\frac {at}c}=\pm b t^{\frac 12}$  for $t\geq 0$ (cf. \eqref{b}). Define the homogeneous map
\begin{equation*}
	\begin{aligned}
	\tilde{\eta}(t,\mf{y})=-at\mf{y}+c\mf{y}^3+\mf{s}(t)
	\end{aligned}
\end{equation*}
and the set of labels
\begin{equation*}
	\begin{aligned}
		\tilde{\Upsilon}(t)=\R\setminus[\mf{y}_-(t),\mf{y}_+(t)].
	\end{aligned}
\end{equation*}
For $t>0$, $\tilde{\eta}(t,\cdot):\tilde{\Upsilon}(t)\to\R\setminus\{\mf{s}(t)\}$ is a bijection. In the straightened coordinate $\x$ of \eqref{2.38}, we define $\mf{y}(t,\x)=\tilde{\eta}^{-1}(t,\x+\mf{s}(t))$, so that $\mf{y}(t,\x)$ takes values in $\tilde{\Upsilon}(t)$.

We introduce the following definition to capture the anisotropic scaling property of $\mf{y}$:
	\begin{definition}
	Given $r\in \R$, a function $f:\R\times \R\to \R$ is $r$-homogeneous if
	\begin{equation*}
		\begin{aligned}
			f(\lam^2t,\lam^3 \x)=\lam^rf(t,\x)
		\end{aligned}
	\end{equation*}
	for all $t\in \R, \x\in \R\setminus\{0\}$, and $\lam \in \R_+$.
\end{definition}
From \eqref{gt}, $\mf{y}(t,\x)$ is a 1-homogeneous function. Moreover, for $t>0$, $\mf{y}$ has a jump discontinuity at $\x=0$:
\begin{equation*}
	\begin{aligned}
	\lim\limits_{\x\to 0^-}\mf{y}(t,\x)=\mf{y}_-(t),\qquad \lim\limits_{\x\to 0^+}\mf{y}(t,\x)=\mf{y}_+(t).
	\end{aligned}
\end{equation*}
Hence it is convenient to work in the straightened coordinates $(t,\x)$. Let $(\np_t,\np_{x})$ denote the partial derivative operators in the $(t,\x)$ coordinates; time derivatives at $t=0$ are understood as one-sided. For $t>0$, a direct computation shows 
\begin{equation*}
	\begin{aligned}
	\np_t=\pa_t+\dot{\mf{s}}\pa_x,\qquad \np_x=\pa_x.
	\end{aligned}
\end{equation*}
Define
\begin{equation}\label{2.43}
	\begin{aligned}
		\mf{m}=a\np_x\mf{y}=\frac{a}{-at+3c\mf{y}^2}, \qquad \mf{d}=\mf{m}^{-\frac12}.
	\end{aligned}
\end{equation}
Then $\mf{m}$ is $-2$-homogeneous and $\mf{d}$ is 1-homogeneous. The following lemma shows that $\mf{d}$ is a ``distance'' function.
\begin{lemma}\label{lem2.11}
	For all $(t,\x)\in \R\times \R\setminus\{0\}$, we have
	\begin{equation}\label{2.44}
		\begin{aligned}
			\mf{d}\sim (|t|+\mf{y}^2)^{\frac12}.
		\end{aligned}
	\end{equation}
\end{lemma}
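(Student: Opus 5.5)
Since $\mf{d}=\mf{m}^{-1/2}$ and $\mf{m}=a/(-at+3c\mf{y}^2)$ by \eqref{2.43}, we have $\mf{d}^2=(-at+3c\mf{y}^2)/a$. So \eqref{2.44} is the same as the two-sided bound
\[
-at+3c\mf{y}^2\sim |t|+\mf{y}^2 .
\]
Both sides are $2$-homogeneous under $(t,\x)\mapsto(\lam^2t,\lam^3\x)$, because $\mf{y}$ is $1$-homogeneous. One could therefore restrict to a compact "unit sphere" such as $|t|+\mf{y}^2=1$ and argue by continuity and positivity. It is simpler, though, to bound the expression directly by splitting on the sign of $t$.

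\emph{Case $t\le 0$.} Here $-at=a|t|\ge 0$. Hence
\[
\min\{a,3c\}\,(|t|+\mf{y}^2)\le a|t|+3c\mf{y}^2\le \max\{a,3c\}\,(|t|+\mf{y}^2).
\]
Positivity of the middle term needs $(t,\mf{y})\neq(0,0)$. This holds because $\x\neq0$ forces $\mf{y}\neq0$ through \eqref{gt}.

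\emph{Case $t>0$.} The upper bound $3c\mf{y}^2-at\le 3c\mf{y}^2\le 3c(t+\mf{y}^2)$ is immediate. The lower bound is the only step with content. There we use that $\mf{y}(t,\x)\in\tilde\Upsilon(t)$, that is, $|\mf{y}|> bt^{1/2}$, so $\mf{y}^2> b^2t=at/c$. This gives $at<c\mf{y}^2$, and therefore
\[
3c\mf{y}^2-at> 2c\mf{y}^2 .
\]
On the other hand, $t<(c/a)\mf{y}^2$ yields $t+\mf{y}^2<(1+c/a)\mf{y}^2$. Combining the two,
\[
3c\mf{y}^2-at> \frac{2c}{1+c/a}\,(t+\mf{y}^2)=\frac{2ac}{a+c}\,(t+\mf{y}^2).
\]

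Taking square roots and dividing by $a$ gives \eqref{2.44}, with constants depending only on $a$ and $c$. The one point that needs care is the lower bound for $t>0$. The term $-at$ has the unfavorable sign there, and it is controlled only because the label set $\tilde\Upsilon(t)$ excludes the interval $[-bt^{1/2},bt^{1/2}]$. This is exactly where the choice of the outer roots $\mf{y}_\pm(t)$ in defining $\mf{y}$ for $t>0$ enters.
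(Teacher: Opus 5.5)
Your proof is correct and follows essentially the same route as the paper. The case $t\le 0$ is immediate. For $t>0$ you use $\mf{y}\in\tilde\Upsilon(t)$, i.e.\ $|\mf{y}|>bt^{1/2}$, to get $3c\mf{y}^2-at>2c\mf{y}^2$; the paper then bounds this below by $c(b^2t+\mf{y}^2)$, which does the same job as your $\frac{2ac}{a+c}(t+\mf{y}^2)$. Your extra remark that $\x\neq0$ forces $\mf{y}\neq0$, so that $\mf{m}$ is defined when $t\le0$, is a detail the paper leaves implicit.
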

\begin{proof} For $t\le0$ the claim is immediate. For $t>0$,
	since $\mf{y}(t,\x)$ takes values in $\tilde{\Upsilon}(t)$, we have $|\mf{y}(t,\x)|\geq b t^{\frac12}$ and hence $-at+3c\mf{y}^2>2c\mf{y}^2>c(b^2t+\mf{y}^2)$. Then \eqref{2.44} follows from \eqref{2.43}.
	
\end{proof}
In analogy with Lemma \ref{lem2.8}, we define
\begin{equation*}
	\begin{aligned}
	y(t,\x)=\eta^{-1}(t,\x+\mf{s}(t)).
	\end{aligned}
\end{equation*}
This implies that $y(t,\x)$ takes values in $\Upsilon(t)$. Moreover, $y(t,\cdot)$ has a jump discontinuity at $\x=0$ for $t>0$. Introduce 
\begin{equation}\label{2.47}
	\begin{aligned}
		\ml{D}=\{(t,\x):0\leq t\leq \kappa, 0<|\x|\leq \kappa\},
	\end{aligned}
\end{equation}
where $\kappa$ is a sufficiently small constant. 
 We expect that $y(t,\x)=\mf{y}(t,\x)+\ml{O}(\mf{d}^2)$.
\begin{prop}\label{prop2.12}
	Suppose $t\in [t_{\mathrm{in}},\lvp]$. For every $N\in \N$, there exists a polynomial $q_N$ in $(t,\mf{y},\mf{m})$ whose terms have homogeneity at most $N$ such that for all $m,n\in \N_0$ and $(t,\x)\in \ml{D}$,
	\begin{equation*}
		\begin{aligned}
			\np_t^m\np_x^ny=\np_t^m\np_x^nq_N+\ml{O}(\mf{d}^{N+1-2m-3n}).
		\end{aligned}
	\end{equation*}
	Moreover, $y=\mf{y}+\ml{O}(\mf{d}^2)$.
\end{prop}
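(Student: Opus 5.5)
The plan is to view $y$ as a perturbation of $\mf{y}$ in the anisotropic scaling and to expand the perturbation in homogeneous pieces. Write the exact flow relation \eqref{xy} in straightened coordinates as
\[
\x+\mf{s}(t)=-aty+cy^3+R(t,y),\qquad R(t,y)=ty^3g_1(t,y)+y^4g_2(t,y),
\]
and Taylor expand $R$ and $\mf{s}$ into finite sums of monomials $t^iy^j$ and $t^i$, each of homogeneity at least $4$ (with $\mf{s}=\ml{O}(t^2)$ of homogeneity $4$; by the proof of Proposition~\ref{prop sdb} we may expand $\mf{s}(t)/t^2$ to the needed order, or simply carry $\mf{s}$ as a smooth remainder of homogeneity $4$). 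Subtracting \eqref{gt} gives
\[
(-at+3c\mf{y}^2)(y-\mf{y})+3c\mf{y}(y-\mf{y})^2+c(y-\mf{y})^3=\mf{s}(t)-R(t,y).
\]
The prefactor is $a\mf{m}^{-1}=a\mf{d}^2$. Dividing by it, we get a fixed-point equation for $w=y-\mf{y}$:
\[
w=\frac{\mf{m}}{a}\Bigl(\mf{s}(t)-R(t,\mf{y}+w)-3c\mf{y}w^2-cw^3\Bigr).
\]

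The first step is the leading bound $y=\mf{y}+\ml{O}(\mf{d}^2)$. Here I use that $y,\mf{y}=\ml{O}(\mf{d})$, which follows from Lemma~\ref{lem2.11}, the analogous bound from \eqref{jty} and \eqref{2.15}, and the fact that both $y$ and $\mf{y}$ lie on the same branch of $\Upsilon$ or $\tilde\Upsilon$. The right-hand side is then $\mf{m}\cdot\ml{O}(\mf{d}^4)+\mf{m}\cdot\ml{O}(\mf{d}|w|^2+|w|^3)$. A continuity/bootstrap argument in $\ml{D}$, with $\kappa$ small, gives $|w|\lesssim\mf{d}^2$.

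One subtlety is the lower bound $|-at+3c\mf{y}^2|\gtrsim\mf{d}^2$ near the shock for $t>0$. It holds on $\tilde\Upsilon$ by Lemma~\ref{lem2.11}, but one must check that the correct root is selected. I would handle this using $y_\pm=\pm bt^{1/2}+\ml{O}(t)$ from \eqref{2.15} together with $|\mf{y}|\ge bt^{1/2}$.

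The second step is to iterate. Substituting $w=q^{(1)}+\dots$ recursively, i.e. Picard iteration $N$ times, produces $q_N=\mf{y}+\sum$ of monomials $t^i\mf{y}^j\mf{m}^k$. Each iteration gains one homogeneity order, because $\mf{m}$ has homogeneity $-2$ and each term of $R$ beyond the cubic has homogeneity $\ge4$. Truncating the terms of homogeneity $>N$ leaves a remainder of size $\ml{O}(\mf{d}^{N+1})$. This is the case $m=n=0$.

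The third step handles the derivatives, which I expect to be the main obstacle. The difficulty is the loss of $\mf{d}^{-2}$ per $\np_t$ and $\mf{d}^{-3}$ per $\np_x$, uniformly up to the shock and through $t=0$. The approach is to differentiate the identity $F(t,y)=\x+\mf{s}(t)$. Then $\np_x y=1/J$ and $\np_t y=(\dot{\mf{s}}-\pa_tF)/J$ with $J\sim\mf{d}^2$, and similarly for the homogeneous gauge, where $\np_x\mf{y}=\mf{m}/a$ and $\np_t\mf{y}=a\mf{y}\cdot\mf{m}/a$. By induction, $\np_t^m\np_x^n y$ is a polynomial in $(t,y,J^{-1},\dot{\mf{s}},\ldots)$, and the analogous expression for $\mf{y}$ involves $\mf{m}$.

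On each term of the remainder $y-q_N$, an $r$-homogeneous smooth symbol $f(t,\mf{y},\mf{m})$ satisfies $\np_t f=\ml{O}(\mf{d}^{r-2})$ and $\np_x f=\ml{O}(\mf{d}^{r-3})$. A cleaner route is to rescale. Fix a point, set $\lam=\mf{d}$, and use the scaling $(t,\x)\mapsto(\lam^{-2}t,\lam^{-3}\x)$ to map it to the unit-distance region. There $y$ and $q_N$, rescaled by $\lam^{-1}$, are smooth functions of $(t',\x',\lam)$ with uniform bounds. The remainder is $\lam^{N+1}$ times a smooth function, so by interior derivative estimates (or Cauchy estimates in $\lam$ after the implicit function argument in Lemma~\ref{lem2.8}) each derivative costs exactly $\lam^{-2}$ or $\lam^{-3}$.

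The care needed near $\x=0$, $t>0$, is to work on each branch separately. In rescaled variables the boundary $\x'=0$ corresponds to the roots $Z_\pm$, where the implicit function theorem gave smoothness up to the boundary. So I would do the derivative estimates one-sidedly, using the smooth dependence $Z_\pm(\tau,\zeta)$ from the proof of Lemma~\ref{lem2.8}.
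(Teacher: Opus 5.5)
Your proposal is correct and takes a genuinely different route from the paper. The paper writes out only $N=1$, $m=n=0$. It defers $t\le0$ to \cite{Anderson} and says $N\ge2$ and derivatives are similar. For $\x>0$ it separates two regions.
\begin{itemize}
\item In the strip $\x<M_1t^2$, it compares $y$ and $\mf y$ with their shock traces $y_+(t)$, $\mf y_+(t)$ through the smooth root $Z_+$.
\item In the region $\x\ge M_1t^2$, it shows $\x-|\mf s|-|ty^3g_1+y^4g_2|>\x/2$, writes $y=\mf y(t,\x+q)$ with $q=\mf s-R(t,y)=\ml O(\mf d^4)$, and applies the mean value theorem to $\mf y(t,\cdot)$.
\end{itemize}
The split is forced because $\mf y(t,\cdot)$ jumps at $\x=0$, so that representation needs $\x+q$ to stay on the same side.

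You use the same identity but linearize around $\mf y$ instead of inverting $\tilde\eta$. The coefficient of $w$ is identically $-at+3c\mf y^2=a\mf d^2$ on $\tilde\Upsilon$, so the nondegeneracy you flag is automatic. The bound $|w|\le C\mf d^2+C\mf d^{-1}|w|^2+C\mf d^{-2}|w|^3$ (with $|y|\lesssim\mf d$ from Lemma~\ref{lem2.14}) rules out $2C\mf d^2\le|w|<\epsilon\mf d$ once $\kappa$ is small. Continuity in $\x$ on each side then gives $|w|\lesssim\mf d^2$ on all of $\ml D$ with no case split. The seed is the shock, via \eqref{2.15}, where $|w(t,0^\pm)|\lesssim t\sim\mf d(t,0^\pm)^2$, or $\x\to0$ when $t=0$.

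The $\ml O(\mf d)$ contraction constant makes the higher-order expansion systematic. The rescaling $\lam=\mf d$ with the implicit function theorem at $\lam=0$ gives exactly the losses $\mf d^{-2}$ and $\mf d^{-3}$. This works one-sidedly at $\x'=0$, where $-at'+3c\mf y'^2=2at'>0$, and Step~1 identifies the true root with the implicit-function branch. The paper's route is quicker for the leading bound; yours actually carries out the cases the paper leaves to ``similar'' arguments.

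One input needs a different justification from the one you give. Set $\sigma=\mf s/t^2$. Two of your steps require $\sigma\in C^\infty([0,\ol\vep])$:
\begin{itemize}
\item the polynomial dependence of $q_N$ on $t$;
\item the smoothness in $\lam$ of $\lam^{-3}\mf s(\lam^2t')=\lam t'^2\sigma(\lam^2t')$.
\end{itemize}
A single $t^{5/2}$ term in $\mf s$ would already break the polynomial form and the bounds for $m\ge3$. The Schauder argument of Proposition~\ref{prop sdb} gives only $W^{2,\infty}$ control. Carrying $\mf s$ as an unexpanded remainder does not suffice either.

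Use \eqref{2.29} instead. At fixed $\sigma$ the mean is even in $\tau=\sqrt t$, so $E_2$ is a smooth function of $(t,\sigma)$. Then $\sigma$ solves $t\dot\sigma=B-\frac52\sigma+tE_2$, and its unique bounded solution is smooth up to $t=0$. In particular, the odd-index terms in Proposition~\ref{prop2.16} vanish. The paper's own ``$N\ge2$ is similar'' tacitly relies on the same fact.
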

The proof of Proposition~\ref{prop2.12} for $t\le0$ is due to \cite{Anderson}; hence we focus on $t\in[0,\ol{\vep}]$.
To prove this proposition, we need some lemmas.
\begin{lemma}\label{lem2.13}
	In $\ml{D}$, we have $|y|,|\mf{y}|\lesssim \kappa^{\frac13}$ provided $\kappa$ is sufficiently small.
\end{lemma}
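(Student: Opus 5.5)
We need to show that in $\ml{D}$ both $|\mf{y}|$ and $|y|$ are $\lesssim\kappa^{1/3}$. We treat the homogeneous gauge $\mf{y}$ first, then transfer the bound to the Lagrangian label $y$ using the Taylor form \eqref{xy} of the flow map. Throughout, $\kappa\le\ol{\vep}$ is small.

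\emph{Bound for $\mf{y}$.} For $t\le 0$ the claim follows from \eqref{gt}: both terms $-at\mf{y}$ and $c\mf{y}^3$ have the sign of $\mf{y}$. Hence $|\x|\ge c|\mf{y}|^3$, which gives $|\mf{y}|\le (\kappa/c)^{1/3}$. For $t>0$ we have $\mf{y}\in\tilde{\Upsilon}(t)$, so $|\mf{y}|\ge bt^{1/2}$. We distinguish two cases.
\begin{itemize}
\item If $|\mf{y}|\ge 2bt^{1/2}$, then $at|\mf{y}|=c b^2 t|\mf{y}|\le \tfrac c4|\mf{y}|^3$. It follows that $|\x|\ge \tfrac{3c}{4}|\mf{y}|^3$, hence $|\mf{y}|\lesssim\kappa^{1/3}$.
\item If $|\mf{y}|<2bt^{1/2}$, then $|\mf{y}|\lesssim t^{1/2}\le\kappa^{1/2}\le\kappa^{1/3}$, since $\kappa<1$.
\end{itemize}

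\emph{Bound for $y$.} We have $\eta(t,y)=\x+\mf{s}(t)$, with $|\mf{s}(t)|\le \tfrac12 C_1t^2\lesssim\kappa$ by \eqref{2.12}. So it suffices to show that $|\eta(t,y)|\le C\kappa$ with $y\in\Upsilon(t)$ forces $|y|\lesssim\kappa^{1/3}$.

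First we rule out large labels. By Lemma~\ref{lem2.8}, $J(t,\cdot)$ has a positive lower bound for $|y|\ge\delta$, and $\eta(t,\cdot)$ is increasing on each component of $\Upsilon(t)$. Therefore $|\eta(t,y)|\gtrsim 1$ for $|y|\ge\delta_0$, uniformly in $t\le\ol{\vep}$. For $\kappa$ small this excludes $|y|\ge\delta_0$, and we may restrict to $|y|\le\delta_0$.

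On this range we use \eqref{xy} and repeat the two-case argument.
\begin{itemize}
\item If $|y|\ge 2bt^{1/2}$, then $at|y|\le \tfrac c4|y|^3$. The error terms satisfy $|ty^3g_1|+|y^4g_2|\lesssim(t+|y|)|y|^3\le \tfrac c4|y|^3$ once $\delta_0,\ol{\vep}$ are small. This gives $|\eta(t,y)|\ge \tfrac c2|y|^3$, hence $|y|\lesssim\kappa^{1/3}$.
\item If $|y|<2bt^{1/2}$, then $|y|\lesssim\kappa^{1/2}$ directly.
\end{itemize}

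The only delicate point is the first step for $y$, namely excluding large labels. It needs the global monotonicity from Lemma~\ref{lem2.8} together with (H2)–(H3), which give $\eta(t,y)\to\pm\infty$ and $J>0$ away from the origin. With that step in place, the rest is elementary cubic comparison.
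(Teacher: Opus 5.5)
Your proof is correct and is the argument the paper has in mind. The paper only remarks that the lemma follows directly from \eqref{xy} and \eqref{gt}, and your cubic comparison with the case split at $2bt^{1/2}$ is exactly that; your exclusion of large labels fills in the step needed because \eqref{xy} is only a local expansion.

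One small tidying point: since $t\le\kappa$ in $\ml{D}$, impose the smallness conditions you place on $\ol{\vep}$ on $\kappa$ instead (after fixing $\delta_0$). The large-label step can also be done more simply from $\eta(t,y)=\eta(0,y)+tu_0(y)$, using that $\eta(0,\cdot)$ is strictly increasing and $u_0$ is bounded.
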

This follows directly from \eqref{xy} and \eqref{gt}, so we omit the proof. In what follows, we always assume that $\kappa$ is sufficiently small.

\begin{lemma}\label{lem2.14}
	In $\ml{D}$, we have $|y|\lesssim \mf{d}$.

\end{lemma}	
\begin{proof}
From \eqref{jty}, the map $\x\mapsto y(t,\x)$ is increasing. We treat the case $\x>0$; the case $\x<0$ follows in a similar fashion. Choose $M$ to be a large parameter. For $\x>0$, we have $y(t,\x)>y_{+}(t)=bt^{\frac 12}+\ml{O}(t)$. Note that
$y(t,\x)$ satisfies
\begin{equation}\label{2.48}
	\begin{aligned}
		\x+\mf{s}(t)=-aty+cy^3+ty^3g_1(t,y)+y^4g_2(t,y),
	\end{aligned}
\end{equation}
where $g_1,g_2$ are smooth. If $y(t,\x)\leq Mt^{\frac 12}$, then the bound follows directly from Lemma \ref{lem2.11}; hence we suppose that $y(t,\x)>Mt^{\frac 12}$. Choosing $M$ sufficiently large and $\kappa$ sufficiently small, we have
\begin{equation}\label{2.49}
	\begin{aligned}
		\x+\mf{s}(t)&=y(cy^2-at)+ty^3g_1(t,y)+y^4g_2(t,y)\\
		&\geq \frac c2 y^3.
	\end{aligned}
\end{equation}
It then follows from Proposition \ref{prop2.7} that
\begin{equation}\label{2.50}
	\begin{aligned}
	|y|&\lesssim \x^{\frac 13} +t^{\frac 23}\lesssim \x^{\frac 13}+\mf{d}.
	\end{aligned}
\end{equation}
To bound $\x$, we notice that from \eqref{2.49} we have $$\x\geq \frac{cM^3}2t^{\frac32}-\mf{s}(t)\geq \frac{cM^3}{4}t^{\frac32},$$
where $M$ is chosen sufficiently large. Moreover, \eqref{gt} implies that, for some constant $\mathrm{r}>0$,
$$\mf{y}(t,\x)\geq \mathrm{r}Mt^{\frac 12},$$
and hence, in view of Lemma \ref{lem2.11},
\begin{equation}\label{2.51}
	\begin{aligned}
	 |\x|^{\frac 13}&\leq |-at\mf{y}+c\mf{y}^3|^{\frac 13}
	\lesssim \mf{d}.
		\end{aligned}
	\end{equation}
Combining \eqref{2.50} and \eqref{2.51}, we complete the proof.

\end{proof}

\begin{proof}[Proof of Proposition \ref{prop2.12}]
	From \eqref{2.15} we have $y_{\pm}(t)=\mf{y}_{\pm}(t)+\ml{O}(\mf{d}^2)$. We treat the case $\x>0$; the case $\x<0$ is analogous. Following the proof of Lemma \ref{lem2.8}, we obtain
\begin{equation}\label{2.52}
	\begin{aligned}
		y_+(t,\x)=t^{\frac12}Z_+(t^{\frac12},\frac{\x+\mf{s}(t)}{t^{\frac32}}).
	\end{aligned}
\end{equation}
We also choose $M_1$ to be a large parameter. If $\x<M_1t^2$, then from \eqref{2.52} and \eqref{2.43} we obtain
\begin{equation}\label{2.53}
	\begin{aligned}
		y(t,\x)=y_+(t)+\ml{O}(t)\\
		\mf{y}(t,\x)=\mf{y}_+(t)+\ml{O}(t).\\
	\end{aligned}
\end{equation}
Hence $y(t,\x)=\mf{y}(t,\x)+\ml{O}(\mf{d}^2)$ for $0<\x<M_1t^2$. We claim that, by choosing $M_1$ large and $\kappa$ small, we have, for $\x\in [M_1t^2,\kappa]$,
\begin{equation}\label{2.54}
	\begin{aligned}
	\x-|\mf{s}(t)|-|ty^3g_1+y^4g_2|>\frac12 \x.
	\end{aligned}
\end{equation}
Let $M$ be the large parameter chosen in the proof of Lemma \ref{lem2.14}. If $y<Mt^{\frac 12}$, then for $\x>M_1t^2$ we have
\begin{equation*}
	\begin{aligned}
		|\mf{s}(t)|+|ty^3g_1+y^4g_2|\lesssim (1+M^4)t^2\lesssim \frac{1+M^4}{M_1}\x.
	\end{aligned}
\end{equation*}
If $y>Mt^{\frac 12}$, from \eqref{2.50} we get
\begin{equation*}
	\begin{aligned}
		|\mf{s}(t)|+|ty^3g_1+y^4g_2|\lesssim t^2+\x^{\frac{4}3}\lesssim (\frac{1}{M_1}+\kappa^{\frac13})\x.
	\end{aligned}
\end{equation*}
Choosing $M_1\gg 1$ and $\kappa\ll 1$, we get \eqref{2.54}.\par
Now suppose that $\x>M_1t^2$ and write $y(t,\x)=\mf{y}(t,\x+q(t,\x))$. In view of \eqref{gt}, \eqref{2.48}, \eqref{2.54} and Lemma \ref{lem2.14}, $q(t,\x)$ is well-defined and $\x+q(t,\x)>0$. Moreover,
\begin{equation*}
	\begin{aligned}
	|q(t,\x)|&\leq |\mf{s}(t)|+|ty^3g_1+y^4g_2|\lesssim \mf{d}^4\\
	|\x+q(t,\x)|&\geq \frac 12 \x.
	\end{aligned}
\end{equation*}
Hence, applying the mean value theorem, we obtain
\begin{equation}
	\begin{aligned}
		y(t,\x)&=\mf{y}(t,\x+q(t,\x))\\
		&=\mf{y}(t,\x)+\np_x\mf{y}(t,\x+\theta q)q(t,\x)\\
		&=\mf{y}(t,\x)+\ml{O}(\mf{d}^2).
	\end{aligned}
\end{equation}
This completes the proof of Proposition \ref{prop2.12} for $N=1$. The case $N\geq 2$ follows in a similar way.

\end{proof}	
From \eqref{2.2} and Proposition \ref{prop2.12}, we get the desired expansion, whose proof is similar to that in \cite{Anderson}. 
	\begin{prop}\label{prop2.15}
	Suppose $t\in[t_{\mathrm{in}},\ol{\vep}]$ and $\x\ne0$. There exists a sequence $\{\ol{u}_{\ell}\}_{{\ell}\in \N_0}$ of $(\ell+1)$-homogeneous functions such that for all $L,m,n\in \N_0$
	\begin{equation*}
		\begin{aligned}
			\np_t^m\np_x^nu(t,\x)=\np_t^m\np_x^n\sum\limits_{\ell=0}^L\ol{u}_\ell+\ml{O}(\mf{d}^{L+2-2m-3n}).
		\end{aligned}
	\end{equation*}
	Moreover, $\ol{u}_0=-a \mf{y}$.
\end{prop}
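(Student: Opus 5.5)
We reduce Proposition~\ref{prop2.15} to Proposition~\ref{prop2.12} through the identity $u(t,\x)=u_0(y(t,\x))$ from \eqref{2.2}, which holds on each branch $\x\gtrless0$ for $t>0$ and everywhere for $t\le0$. The case $t\le 0$ is covered by \cite{Anderson}, so we work on $\ml{D}$ with $t\in[0,\ol{\vep}]$.

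Outside $\ml{D}$, i.e.\ for $|\x|\ge\kappa$ or $t\ge\kappa$, we have $\mf{d}\gtrsim1$ by Lemma~\ref{lem2.11}. Indeed, $|\x|\ge\kappa$ forces $|\mf{y}|\gtrsim\kappa^{1/3}$. There the claimed remainder is $\ml{O}(1)$ (or larger for negative exponents), so it suffices that $u$ and $\sum_{\ell\le L}\ol u_\ell$ have bounded derivatives. For $u$ this follows from smoothness of each branch up to the shock (Lemma~\ref{lem2.8}, with $J\gtrsim t+y^2$). For each homogeneous $\ol u_\ell$ it follows from homogeneity once the $\ol u_\ell$ are built as polynomials in $(t,\mf{y},\mf{m})$, since $\mf{m}\lesssim\mf{d}^{-2}\lesssim1$ there.

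Inside $\ml{D}$, we Taylor expand $u_0$ at $0$: $u_0(y)=\sum_{j\ge1}\frac{u_0^{(j)}(0)}{j!}y^j$, where $u_0(0)=0$ and $u_0'(0)=1/t_{\mathrm{in}}=-a$ (this value of $a$ is consistent with \eqref{J}). We substitute $y=q_N+r_N$ from Proposition~\ref{prop2.12}, with $q_N$ a polynomial in $(t,\mf{y},\mf{m})$ whose leading term is $\mf{y}$ and whose terms have homogeneity at most $N$, and $r_N=\ml{O}(\mf{d}^{N+1})$ together with its derivatives in the stated graded sense. Expanding $u_0(q_N)$ and sorting monomials by homogeneity, we define $\ol u_\ell$ as the sum of all $(\ell+1)$-homogeneous monomials for $\ell\le L$, taking $N=L+1$. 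This is consistent because the polynomial structure is independent of $N$: $q_{N+1}-q_N$ consists only of monomials of homogeneity $N+1$, so $\ol u_\ell$ stabilizes. The leading term is $u_0'(0)\mf{y}=-a\mf{y}$, giving $\ol u_0=-a\mf{y}$.

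The remainder has three pieces: monomials of homogeneity at least $L+2$ in $u_0(q_N)$, the Taylor tail $\ml{O}(|y|^{M})$ with $|y|\lesssim\mf{d}$ by Lemma~\ref{lem2.14}, and the cross terms $u_0'(\theta)r_N$. Each is $\ml{O}(\mf{d}^{L+2})$ before differentiation.

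For derivatives we need the symbol calculus: $\np_t$ lowers homogeneity by $2$ and $\np_x$ by $3$ on $r$-homogeneous functions, and each application costs a factor $\mf{d}^{-2}$ or $\mf{d}^{-3}$. This follows from differentiating $r$-homogeneous smooth functions of $(t,\mf{y})$ on $\{\mf{d}=1\}$. We need $\np_x\mf{y}=\mf{m}/a$ and $\np_t\mf{y}=a\mf{y}\np_x\mf{y}$, the latter from \eqref{gt}, so that derivatives of polynomials in $(t,\mf{y},\mf{m})$ remain such polynomials. For $t>0$ the set $\{\mf{d}=1\}$ is restricted to $\mf{y}\in\tilde\Upsilon$, where Lemma~\ref{lem2.11} keeps $\mf{m}$ bounded; this is the one place the $t>0$ setting differs from \cite{Anderson}. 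The derivative bounds for the composite remainder then follow from Faà di Bruno together with the derivative estimates for $r_N$ in Proposition~\ref{prop2.12}. Each derivative falling on a factor loses at most the corresponding power of $\mf{d}$.

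The main obstacle is obtaining uniform derivative control in Proposition~\ref{prop2.12} near the shock, $\x\to0^{\pm}$ with $t>0$. There $\mf{y}\to\mf{y}_\pm$ and $\mf{d}\sim t^{1/2}$ rather than tending to zero, so the region $0<\x<M_1t^2$ has to be handled via the representation \eqref{2.52} through the smooth roots $Z_\pm$. Assuming that proposition as stated, the remaining step is bookkeeping.
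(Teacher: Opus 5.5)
Your route is the paper's. The paper proves Proposition~\ref{prop2.15} in a single sentence: the expansion follows from $u=u_0(y)$ in \eqref{2.2} together with Proposition~\ref{prop2.12}, as in \cite{Anderson}. Your near-field bookkeeping fills that sentence in correctly:
\begin{itemize}
\item a finite Taylor expansion of $u_0$ with $u_0'(0)=1/t_{\mathrm{in}}=-a$;
\item the substitution $y=q_N+r_N$, followed by sorting by homogeneity;
\item closure of polynomials in $(t,\mf y,\mf m)$ under $\np_t,\np_x$, via $\np_x\mf y=\mf m/a$ and $\np_t\mf y=\mf y\mf m$;
\item the bound $|t^i\mf y^j\mf m^k|\lesssim\mf d^{2i+j-2k}$.
\end{itemize}

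One step fails as written: the far-field paragraph. The complement of $\ml D$ is unbounded in $\x$, and there $\mf d\sim|\x|^{1/3}\to\infty$. This breaks your argument in two ways.
\begin{itemize}
\item The $\ol u_\ell$ do not have bounded derivatives; already $\ol u_0=-a\mf y$ grows like $|\x|^{1/3}$.
\item When $L+2-2m-3n<0$, the target $\mf d^{L+2-2m-3n}$ tends to zero, so bounded derivatives would not suffice anyway. Your parenthetical ``or larger for negative exponents'' has the inequality backwards once $\mf d\gtrsim1$.
\end{itemize}
The repair is short. On $\{\mf d\gtrsim1\}$, use homogeneity directly: $|\np_t^m\np_x^n\ol u_\ell|\lesssim\mf d^{\ell+1-2m-3n}\le\mf d^{L+2-2m-3n}$ for $\ell\le L$. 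Then use (H2): $u$ is constant outside a compact $\x$-set, uniformly in $t$. Hence $|u|\lesssim1\lesssim\mf d^{L+2}$, and for $m+n\ge1$ the derivatives of $u$ are supported where $\mf d\sim1$ and are bounded there.

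A smaller point concerns the stabilization claim. Proposition~\ref{prop2.12} does not say that $q_{N+1}-q_N$ consists only of $(N+1)$-homogeneous monomials. Moreover, polynomial representatives are not unique, because of the relation $\mf m(-at+3c\mf y^2)=a$. What you actually need is a single sequence $\{\ol u_\ell\}$ independent of $L$. This follows from uniqueness of homogeneous expansions:
\begin{itemize}
\item suppose $\sum_{h\le N}f_h=\ml O(\mf d^{N+1})$ on $\ml D$, with each $f_h$ being $h$-homogeneous;
\item $\ml D$ is invariant under $(t,\x)\mapsto(\lambda^2t,\lambda^3\x)$ for $\lambda\le1$;
\item letting $\lambda\to0$ then forces every $f_h\equiv0$.
\end{itemize}
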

To end this section, we give the homogeneous expansion of the shock $\mf{s}(t)$, which follows directly from Propositions \ref{prop sdb} and \ref{prop2.15}.
	\begin{prop}\label{prop2.16}
	Suppose $t\in (0,\ol{\vep} ]$. There exists a sequence $\{\mf{s}_{\ell}\}_{{\ell}\in \N_0}$ of $(\ell+4)$-homogeneous functions such that for all $L,m\in \N_0$
	\begin{equation}\label{2.60}
		\begin{aligned}
			\pa_t^m\mf{s}=\pa_t^m\sum\limits_{\ell=0}^L\mf{s}_{\ell}+\ml{O}(t^{\frac{L+5}{2}-m}).
		\end{aligned}
	\end{equation}
\end{prop}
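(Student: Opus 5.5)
The plan is to obtain the expansion of $\mf{s}$ by integrating the Rankine--Hugoniot relation $\dot{\mf{s}}=\langle\!\langle u\rangle\!\rangle$ from Proposition~\ref{prop sdb}, and to expand the one-sided traces $u^\pm(t)$ with the homogeneous expansion of Proposition~\ref{prop2.15}. Since $\x=0$ on the shock and $\mf{y}(t,0^\pm)=\mf{y}_\pm(t)=\pm bt^{1/2}$, each $(\ell+1)$-homogeneous $\ol{u}_\ell$ restricted to $\x=0^\pm$ has the form $\ol{u}_\ell(t,0^\pm)=c_\ell^\pm t^{(\ell+1)/2}$. To see this, apply homogeneity with $\lam=t^{1/2}$: then $\ol{u}_\ell(t,0^\pm)=t^{(\ell+1)/2}\ol{u}_\ell(1,0^\pm)$, where the traces at $t=1$ exist because $\ol{u}_\ell$ is built from polynomials in $(t,\mf{y},\mf{m})$ and $\mf{m}$ is bounded on $\tilde\Upsilon$.

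\textbf{Trace expansion.} Taking $m=n=0$ in Proposition~\ref{prop2.15} and using $\mf{d}(t,0^\pm)\sim t^{1/2}$ (Lemma~\ref{lem2.11}), I get
\[
\langle\!\langle u(t)\rangle\!\rangle=\sum_{\ell=0}^{L}\tfrac12(c_\ell^++c_\ell^-)\,t^{\frac{\ell+1}{2}}+\ml{O}(t^{\frac{L+2}{2}}).
\]
The $\ell=0$ mean vanishes because $\ol{u}_0=-a\mf{y}$ is odd across the shock, which is consistent with $\mf{s}=\ml{O}(t^2)$. There is one subtlety. The coefficients $\ol{u}_\ell$ themselves depend on $\mf{s}$ through $\x=x-\mf{s}(t)$ and the construction in Proposition~\ref{prop2.12}. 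If this is a concern, I would instead use \eqref{2.29} directly, namely
\[
\langle\!\langle u\rangle\!\rangle=Bt-\frac{\mf{s}}{2t}+t^2E_2\bigl(t^{1/2},\mf{s}/t^2\bigr),
\]
with $E_2$ smooth. This yields an implicit ODE for $\sigma(t):=\mf{s}(t)/t^2$.

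\textbf{Bootstrap on $\sigma$.} Write $\tau=t^{1/2}$. The relation becomes
\[
t\dot\sigma+\tfrac52\sigma=B+tE_2(\tau,\sigma),
\]
that is, $\tfrac12\tau\pa_\tau\sigma+\tfrac52\sigma=B+\tau^2E_2(\tau,\sigma)$. This is a Fuchsian ODE in $\tau$ with indicial operator $\tfrac12\tau\pa_\tau+\tfrac52$, whose only homogeneous solution $\tau^{-5}$ is excluded by the boundedness $|\sigma|\lesssim C_1$ from \eqref{2.12}. I then determine the Taylor coefficients $\sigma=\sum_\ell \sigma_\ell\tau^\ell$ recursively: at order $\tau^\ell$ the operator acts as the multiplier $\tfrac{\ell}{2}+\tfrac52\neq0$, so each $\sigma_\ell$ is uniquely solvable. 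Next, set $\sigma^{(L)}=\sum_{\ell\le L}\sigma_\ell\tau^\ell$ and $r=\sigma-\sigma^{(L)}$. The remainder satisfies
\[
\tfrac12\tau r_\tau+\tfrac52 r=\ml{O}(\tau^{L+1})+\ml{O}(\tau^2|r|).
\]
Integrating with the factor $\tau^5$, using $\tau^5r\to0$, gives $|r|\lesssim\tau^{L+1}$ for small $\tau$. Differentiating the equation gives $|(\tau\pa_\tau)^mr|\lesssim\tau^{L+1}$, hence $|\pa_t^m r|\lesssim t^{\frac{L+1}{2}-m}$.

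\textbf{Conclusion and main obstacle.} Set $\mf{s}_\ell(t)=\sigma_\ell t^{2+\ell/2}$. This is a function of $t$ alone with $\mf{s}_\ell(\lam^2t)=\lam^{\ell+4}\mf{s}_\ell(t)$, so it is $(\ell+4)$-homogeneous. The error is $t^2r$, and Leibniz's rule gives
\[
\pa_t^m(t^2r)=\ml{O}(t^{\frac{L+5}{2}-m}),
\]
which is \eqref{2.60}. I expect the main obstacle to be justifying higher $t$-derivatives of $E_2(t^{1/2},\mf{s}/t^2)$, which requires smoothness of $Z_\pm$ from Lemma~\ref{lem2.8} jointly in $(\tau,\zeta)$. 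I would also need the a priori bounds $|(t\pa_t)^k\sigma|\lesssim1$ for all $k$, not only $k\le2$; these I would bootstrap from the ODE itself, by induction on $k$.
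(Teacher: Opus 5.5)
Your argument is correct, but its main line differs from the paper's. The paper gives no separate proof; it says the expansion ``follows directly'' from Proposition~\ref{prop sdb} and Proposition~\ref{prop2.15}. In other words, one integrates $\dot{\mf s}=\langle\!\langle u\rangle\!\rangle$ after inserting the traces at $\x=0^\pm$ of the homogeneous expansion $\sum_\ell\ol u_\ell$, which is your first paragraph.

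Your actual proof bypasses Proposition~\ref{prop2.15}. You use the explicit structure \eqref{2.29} from the proof of Proposition~\ref{prop2.7}, which turns Rankine--Hugoniot into the Fuchsian equation $\tfrac12\tau\pa_\tau\sigma+\tfrac52\sigma=B+\tau^2E_2(\tau,\sigma)$ for $\sigma=\mf s/t^2$, $\tau=t^{1/2}$. The multipliers $\tfrac\ell2+\tfrac52$ never vanish, so the $\sigma_\ell$ are fixed recursively. The integrating factor $\tau^5$, together with $|\sigma|\le C_1/2$ from \eqref{2.12}, then controls the remainder.

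The subtlety you flag is genuine, and avoiding it is what your route buys. Through $\x+\mf s(t)=\eta(t,y)$, the comparison of $y$ with $\mf y$ in Proposition~\ref{prop2.12} already involves $\mf s$. So for $\ell\ge1$ the profiles $\ol u_\ell$ contain the coefficients of $\mf s$; for instance, the mean of the traces of $\ol u_1$ must reproduce $Bt-\mf s(t)/(2t)$ from \eqref{2.29} at leading order. The paper's one-line derivation therefore implicitly needs a joint induction over Propositions~\ref{prop2.12}, \ref{prop2.15} and~\ref{prop2.16}. In that induction each $\mf s_\ell$ is determined by an implicit relation, which is exactly your recursion $(\tfrac\ell2+\tfrac52)\sigma_\ell=\dots$.

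The paper's phrasing is shorter and stays inside the homogeneous framework. Yours makes the hidden fixed-point structure explicit and non-circular. As a by-product, the symmetry $Z_\pm(-\tau,-\zeta)=-Z_\mp(\tau,\zeta)$ from the proof of Proposition~\ref{prop2.7} makes $E_2$ even in $\tau$. Your recursion then gives $\sigma_\ell=0$ for odd $\ell$, which is the parity used later in the proof of Lemma~\ref{lem:52}.

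The items you list as remaining are routine, and your plan for them is sound. These are joint smoothness of $Z_\pm$ (hence of $E_2$), and bounds on $(t\pa_t)^k\sigma$ for all $k$ obtained by differentiating the ODE. The latter also gives $C^\infty$ regularity of $\mf s$ on $(0,\ol\vep]$.
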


\section{Outer and Shock-Profile Expansions and Matching}
In this section we construct the outer expansion and the shock-profile
expansion of the viscous solution, and we prove that the two expansions
match on their overlapping region. We work in the shifted coordinates
$\x=x-\mf{s}(t)$ introduced in \eqref{2.38}; away from the shock the
estimates are expressed in terms of the distance $\mf{d}$, while near the
shock we use the inner variables \eqref{inner} and the layer variables
\eqref{3.8} on the interval $T\in[1,T_0]$, with
$T_0=\ol\vep\nu^{-1/2}$. Section~4.1 derives the outer expansion
$u\sim\sum_k\nu^k u_k^{\mathrm{out}}$ and records the estimates for the
outer profiles. Section~4.2 introduces the shock-profile expansion for $V$
and the phase coefficients $\Theta_j^{\mathrm{S}}$; Sections~4.3 and 4.4
then construct the principal profile $V_0^{\mathrm{shock}}$ and,
inductively, the higher-order profiles $V_j^{\mathrm{shock}}$ and phases
$\Theta_{j-1}^{\mathrm{S}}$. Section~4.5 proves the matching of the two
expansions in the regime $T\gg1$, $|Y|\gg1$; together with the inner-grid
expansion of Section~5, these expansions form the three-layer ansatz that
is used in Section~7 to approximate $U^{(\nu)}$.
\subsection{Derivation of outer expansion}
We consider the viscous Burgers equation
\begin{equation*}
	\begin{aligned}
		\pa_tu^{(\nu)}+u^{(\nu)}\pa_xu^{(\nu)}=\nu\pa_x^2u^{(\nu)}
	\end{aligned}
\end{equation*}
and rewrite it in the $(t,\x)$ coordinates introduced in \eqref{2.38}:
\begin{equation}\label{3.2}
	\begin{aligned}
		\np_tu^{(\nu)}-\dot{\mf{s}}\np_xu^{(\nu)}+u^{(\nu)}\np_xu^{(\nu)}=\nu \np_x^2u^{(\nu)}.
	\end{aligned}
\end{equation}
For convenience, we set $\mf{s}(t)\equiv 0$ for $t\in [t_{\mathrm{in}},0]$.

We seek an outer expansion of the form
$$u^{(\nu)}\sim \sum\limits_{k\geq 0}\nu^k u_k^{\mathrm{out}}.$$
Substituting this ansatz into \eqref{3.2} and collecting terms of like order in $\nu$, we obtain the following hierarchy of equations for the coefficients $u_k^{\mathrm{out}}$.
\begin{prop}\label{prop3.1}
	For every $k\in \N_0$, the function $u_k^{\mathrm{out}}$ satisfies
	\begin{equation}\label{3.3}
		\begin{aligned}
		\np_tu_k^{\mathrm{out}}-\dot{\mf{s}}\np_xu_k^{\mathrm{out}}+\frac12\np_x \sum\limits_{k_1+k_2=k}u_{k_1}^{\mathrm{out}}u_{k_2}^{\mathrm{out}}=\np_x^2u_{k-1}^{\mathrm{out}},
		\end{aligned}
	\end{equation}
	with initial data 
	\begin{equation*}
		\begin{aligned}
		u_0^{\mathrm{out}}(t_{\mathrm{in}},\x)=u_0(\x), \qquad 	u_k^{\mathrm{out}}(t_{\mathrm{in}},\x)=0 \text{ for }k\in \N,
		\end{aligned}
	\end{equation*}
	where we define $u_{-1}^{\mathrm{out}}\equiv0$.
\end{prop}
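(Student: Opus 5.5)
This is a formal computation: substitute the ansatz into \eqref{3.2}, expand the nonlinear term as a Cauchy product in powers of $\nu$, and match coefficients. Write $u^{(\nu)}\sim\sum_{k\ge0}\nu^k u_k^{\mathrm{out}}$ and insert it into each term of \eqref{3.2}. The linear terms give
\[
\sum_{k\ge0}\nu^k\bigl(\np_t u_k^{\mathrm{out}}-\dot{\mf{s}}\np_x u_k^{\mathrm{out}}\bigr).
\]
The viscous term gives $\sum_{k\ge0}\nu^{k+1}\np_x^2u_k^{\mathrm{out}}=\sum_{k\ge0}\nu^k\np_x^2u_{k-1}^{\mathrm{out}}$, using the convention $u_{-1}^{\mathrm{out}}\equiv0$.

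For the nonlinear term, write $u\np_x u=\frac12\np_x(u^2)$. Then use the Cauchy product
\[
u^2=\sum_k\nu^k\sum_{k_1+k_2=k}u_{k_1}^{\mathrm{out}}u_{k_2}^{\mathrm{out}},
\]
which gives the term $\frac12\np_x\sum_{k_1+k_2=k}u_{k_1}^{\mathrm{out}}u_{k_2}^{\mathrm{out}}$ at order $\nu^k$. Requiring each coefficient of $\nu^k$ to vanish gives \eqref{3.3} for every $k\in\N_0$. For $t\le0$ we have $\mf{s}\equiv0$, so $\np=\pa$ and the hierarchy reduces to \eqref{0.11}.

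The initial conditions follow from the $\nu$-independence of the data. Since $u^{(\nu)}(t_{\mathrm{in}})=u_0$ for all $\nu$, matching powers of $\nu$ at $t=t_{\mathrm{in}}$ (where $\x=x$) gives $u_0^{\mathrm{out}}(t_{\mathrm{in}})=u_0$ and $u_k^{\mathrm{out}}(t_{\mathrm{in}})=0$ for $k\ge1$.

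There is no real obstacle, since the statement is an identification of the formal hierarchy rather than a solvability claim. Solvability across the jump at $\x=0$ is deferred to Proposition~\ref{prop3.2}. The only point needing care is to confirm that the change of variables $\np_t=\pa_t+\dot{\mf{s}}\pa_x$ produces exactly the $-\dot{\mf{s}}\np_x$ term at every order. This holds because $\mf{s}$ does not depend on $\nu$, so that term is linear and appears unchanged at each order.
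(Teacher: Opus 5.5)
Your proposal is correct and is the same derivation the paper gives: substitute $u^{(\nu)}\sim\sum_k\nu^k u_k^{\mathrm{out}}$ into \eqref{3.2}, write the nonlinearity as $\frac12\np_x(u^2)$ via the Cauchy product, shift the index in the viscous term, and collect powers of $\nu$. As you note, the initial data follow from the $\nu$-independence of $u_0$ at $t=t_{\mathrm{in}}$, where $\mf{s}\equiv0$ and hence $\x=x$.
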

In \cite{CG}, estimates for all $u_k^{\mathrm{out}}$ are proved for $t\leq 0$; the extension to $t\in [0,\ol{\vep}]$ is straightforward and we do not repeat the details.
\begin{prop}\label{prop3.2}
	For $t\in [0,\lvp]$ and $\x\ne0$, the following estimate holds for all $k,m,n\in \N_0$:
	$$|\np_t^m\np_x^nu_k^{\mathrm{out}}(t,\x)|\lesssim \mf{d}^{1-4k-2m-3n}(t,\x).$$
\end{prop}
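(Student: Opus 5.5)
We argue by induction on $k$, solving \eqref{3.3} as a linear transport equation along the characteristics of $u_0^{\mathrm{out}}=u$. This is the route of \cite{CG}, now run on each side of the shock separately.

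\textbf{Base case and structure of the equation.} For $k=0$ the function $u_0^{\mathrm{out}}$ is the inviscid solution $u$ of Proposition~\ref{prop sdb}. The bound $|\np_t^m\np_x^nu|\lesssim\mf{d}^{1-2m-3n}$ in $\ml{D}$ follows from Proposition~\ref{prop2.15} with $L=0$, using $\ol{u}_0=-a\mf{y}$ and the homogeneity of $\ol u_0$. Outside $\ml{D}$ we have $\mf{d}\gtrsim1$, and the bound reduces to smoothness of $u$ up to the shock from each side; this comes from \eqref{2.2}, \eqref{jty} and the regularity of $\mf{s}$.

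For $k\ge1$, equation \eqref{3.3} is linear in $u_k^{\mathrm{out}}$:
\[
\np_tu_k^{\mathrm{out}}+(u-\dot{\mf{s}})\np_xu_k^{\mathrm{out}}+(\np_xu)\,u_k^{\mathrm{out}}=F_k ,
\qquad
F_k=\np_x^2u_{k-1}^{\mathrm{out}}-\tfrac12\np_x\!\!\sum_{k_1+k_2=k,\ k_i\ge1}\!\!u_{k_1}^{\mathrm{out}}u_{k_2}^{\mathrm{out}} .
\]
By the inductive hypothesis, $|\np_t^m\np_x^nF_k|\lesssim\mf{d}^{-1-4k-2m-3n}$. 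We pass to Lagrangian labels $y\in\Upsilon(t)$ via $x=\eta(t,y)$. There the equation becomes the ODE
\[
\pa_t\bigl(J\,u_k^{\mathrm{out}}\circ\eta\bigr)=J\,F_k\circ\eta ,
\]
so that
\[
u_k^{\mathrm{out}}(t,\eta(t,y))=J(t,y)^{-1}\int_{t_{\mathrm{in}}}^tJ(s,y)\,F_k(s,\eta(s,y))\,\md s .
\]
This formula is well defined on each branch because of \eqref{tra}: a label $y\in\Upsilon(t)$ has a trajectory that never meets the shock curve before time $t$. Hence $u_k^{\mathrm{out}}$ is determined on each side of $\x=0$ with no boundary data on the shock, which is exactly the Lax condition at work. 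For $t\le0$ the estimate is already in \cite{CG}, and we restart the integral from $t=0$ with the trace $|u_k^{\mathrm{out}}(0,\cdot)|\lesssim\mf{d}^{1-4k}$.

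\textbf{Zeroth-order estimate.} By \eqref{jty} and Lemma~\ref{lem2.11}, $J(s,y)\sim s+y^2\sim\mf{d}(s,\eta(s,y))^2$ along the trajectory. The bound then reduces to
\[
\int_0^t\bigl(s+y^2\bigr)^{-1-2k}\,\md s\lesssim(t+y^2)^{-2k}
\]
for $k\ge1$, together with the bound on the $t=0$ trace. This gives $|u_k^{\mathrm{out}}|\lesssim(t+y^2)^{1-1-2k}\cdot(t+y^2)^{1/2}=\mf{d}^{1-4k}$, once we use that $\mf{d}$ along the backward trajectory is comparable to $(s+y^2)^{1/2}$. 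This comparability is the step that uses $y\sim\mf{y}$ (Proposition~\ref{prop2.12}) and $\mf{s}=\ml{O}(t^2)$.

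\textbf{Derivatives.} Here we commute with the vector fields. For $\np_x$ we use $\np_x=J^{-1}\pa_y$ at fixed $t$, and for $\np_t$ we use $\np_t=(\pa_t)_y-(u-\dot{\mf s})\np_x$. Each $\pa_y$ falling on $J^{-1}$ or on $J$ costs a factor $\mf{d}^{-1}$, since $\pa_yJ\sim y\lesssim\mf{d}$; multiplying by $J^{-1}$ then gives the $\mf{d}^{-3}$ loss per $\np_x$. Likewise, each $\np_t$ costs $\mf{d}^{-2}$. Alternatively, we can differentiate the equation and induct on $(m,n)$, noting that the commutator terms $(\np_x^{n'}u)\np_x^{n''}u_k^{\mathrm{out}}$ have exactly the right homogeneity.

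\textbf{Main obstacle.} The hardest step is showing that labels near $y_\pm(t)$ behave uniformly as $t\downarrow0$, uniformly up to $\x=0^\pm$. There the backward trajectory runs close to the shock, and $J(s,y)$ for $s<t$ is only bounded below by $s+y^2$ because $|y|\gtrsim t^{1/2}$. I would treat this by splitting into $|y|\ge M s^{1/2}$ versus the region near $y_\pm$, exactly as in Lemma~\ref{lem2.14}. In the second region I would use the explicit form \eqref{J} to check that $J(s,y)\gtrsim t$ for all $s\le t$ when $y\in\Upsilon(t)$; this holds because $3cy^2-as\ge3c\cdot\frac49b^2t-at>0$.

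Away from $\ml{D}$, $\mf{d}\sim1$, and the estimate is just boundedness of smooth solutions of a linear transport equation on $[0,\ol\vep]$ on each side of the shock.
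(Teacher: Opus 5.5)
Your proposal is correct and is essentially the argument the paper intends. The paper gives no proof, saying only that the estimates of \cite{CG} for $t\le0$ extend straightforwardly, and your Duhamel formula along characteristics on each side of the shock, restarted at $t=0$ and using $y^2\gtrsim t$ on $\Upsilon(t)$, is that extension; note that $y^2\gtrsim t$ makes $J(s,y)$ and $\mathfrak{d}^2$ comparable to $y^2$ for all $s\in[0,t]$, so the near-shock labels are harmless rather than the main obstacle.

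Two small fixes are needed. First, $\mathfrak{d}$ is not $\sim1$ away from $\mathcal{D}$: it grows like $|\bar x|^{1/3}$, so the negative-power bounds at large $|\bar x|$ require the compact support coming from (H2), not mere boundedness. Second, for $m\ge3$ your conversion $\bar\partial_t=(\partial_t)_y-(u-\dot{\mathfrak{s}})\bar\partial_x$ needs bounds on higher derivatives of $\mathfrak{s}$ beyond the $W^{2,\infty}$ control in \eqref{2.12}. These bounds hold because the odd-$\ell$ terms in \eqref{2.60} vanish (as used in the proof of Lemma~\ref{lem:52}), so $\mathfrak{s}$ is smooth on $[0,\bar\varepsilon]$.
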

For later use, we calculate $u_1^{\mathrm{out}}$ explicitly. We solve for it in the original $(t,x)$ coordinates. Hence $u_1^{\mathrm{out}}(t,x)$ satisfies
	\begin{equation}\label{u1}
	\begin{aligned}
	\pa_tu_1^{\mathrm{out}}+u_0^{\mathrm{out}}\pa_xu_1^{\mathrm{out}}+\pa_xu_0^{\mathrm{out}}u_1^{\mathrm{out}}=\pa_x^2u_0^{\mathrm{out}}.
	\end{aligned}
\end{equation}

Recall we have solved for $u_0^{\mathrm{out}}$ in \eqref{2.2}. Set $J(t,y)=\pa_y\eta(t,y)=1+(t-t_{\mathrm{in}})u'_0(y)$. Then we can use $\eta$ to rewrite \eqref{u1} in a more convenient form:
\begin{equation}\label{u10}
	\begin{aligned}
		\pa_t(Ju_1^{\mathrm{out}}\circ\eta)&=J(\pa_x^2u_0^{\mathrm{out}})\circ\eta
		=\pa_y(\frac{u_0'}{J})=\frac{u_0''}{J^2}\\
		&=\pa_t(\frac{(t-t_{\mathrm{in}})u_0''}{J}).
	\end{aligned}
\end{equation}
Hence 
\begin{equation}\label{u1s}
	\begin{aligned}
		u_1^{\mathrm{out}}(t,\eta(t,y))&=\frac{(t-t_{\mathrm{in}})u_0''(y)}{(J(t,y))^2}.
	\end{aligned}
\end{equation}
From Lemma~\ref{lem2.8}, we obtain for $t\in [0,\lvp]$,
 \begin{equation}\label{48}
 	\begin{aligned}
 	\langle \langle u_1^{\mathrm{out}}\rangle \rangle=\ml{O}(t^{-1}).
 	\end{aligned}
 \end{equation}
\subsection{Derivation of the shock-profile expansion}
The outer expansion fails in a small neighborhood of the shock, where the profiles $u_k^{\mathrm{out}}$ have jump discontinuities across the shock. To describe the solution in this region, we introduce the inner variables
\begin{equation}\label{inner}
	\begin{aligned}
		t=\nu^{1/2}T,\quad \x=\nu^{3/4}X,\quad u=\nu^{1/4}U,\quad \mf{s}=\nu^{3/4}\mf{S}
	\end{aligned}
\end{equation}
together with the associated homogeneous rescalings of the gauges $\mf{y}$ and $\mf{d}$:
\begin{equation}\label{D}
	\begin{aligned}
			\mf{y}(\nu^{1/2}T,\nu^{3/4}X)=\nu^{1/4}\mf{Y}(T,X)\\
		\mf{d}(\nu^{1/2}T,\nu^{3/4}X)=\nu^{1/4}\mf{D}(T,X).
	\end{aligned}
\end{equation}
In these variables, equation \eqref{3.2} becomes
\begin{equation}\label{3.7}
	\begin{aligned}
		\pa_TU-\dot{\mf{S}}\pa_XU+U\pa_XU=\pa_X^2U.
	\end{aligned}
\end{equation}
Setting $T_0=\lvp \nu^{-1/2}$, we study \eqref{3.7} on the interval $[0,T_0]$. Note that $\dot{\mf{S}}=\ml{O}(\nu^{\frac14}T)$.
Note that the shock profiles constructed below are defined on
$T\in[1,T_0]$, although they are used in the approximation only for
$T\ge\nu^{-\beta_2}$; the regime $T\le2\nu^{-\beta_2}$ near the formation
point is covered by the inner-grid and outer expansions, as described in
Sections~5--7.
To capture the emergence of the shock profile, we introduce the change of variables
\begin{equation}\label{3.8}
	\begin{aligned}
		V=\frac1{\sqrt{T}}U,\qquad Y=\sqrt{T}X+\Theta(T).
	\end{aligned}
\end{equation}
A direct computation then gives
\begin{equation}\label{3.9}
	\begin{aligned}
		\frac1T\pa_TV+(\frac{Y-\Theta}{2T^2}+\frac{{\dot{\Theta}}}{T}-\frac{\dot{\mf{S}}}{\sqrt{T}})\pa_YV+V\pa_YV+\frac{1}{2T^2}V={\pa_Y^2V}.
	\end{aligned}
\end{equation}
We make the following shock-profile expansion for $V$:
\begin{equation}\label{3.10}
	\begin{aligned}
		V(T,Y)\sim \sum\limits_{j\geq 0} T^{-2j}V_j^{\mathrm{shock}}(T,Y),
	\end{aligned}
\end{equation}
and, accordingly, for $\Theta$:
\begin{equation}\label{3.11}
	\begin{aligned}
		\Theta(T)\sim \sum\limits_{j\geq 0}T^{-2j}\Theta_j^{\mathrm{S}}(T),
	\end{aligned}
\end{equation}
where we expect $\Theta_j^{\mathrm{S}}=\ml{O}(1)$.
Substituting \eqref{3.10} and \eqref{3.11} into \eqref{3.9} and collecting terms of like order in $T^{-2j}$, we obtain the following hierarchy of equations for the profiles $V_j^{\mathrm{shock}}$.

\begin{prop}\label{prop3.3}
	For each $j\in \N$, the profile $V_j^{\mathrm{shock}}$ satisfies
	\begin{equation}\label{3.12}
		\begin{aligned}
			\pa_Y^2&V_j^{\mathrm{shock}}-\pa_Y(V_0^{\mathrm{shock}}V_j^{\mathrm{shock}})+\frac{\dot{\mf{S}}}{\sqrt{T}}\pa_YV_j^{\mathrm{shock}}=\frac12\pa_Y\sum\limits_{\substack{j_1+j_2=j\\j_1,j_2>0}}V_{j_1}^{\mathrm{shock}}V_{j_2}^{\mathrm{shock}}+\frac Y2\pa_YV_{j-1}^{\mathrm{shock}}\\
			&+T\pa_TV_{j-1}^{\mathrm{shock}}
			-(2j-\frac52)V_{j-1}^{\mathrm{shock}}+\sum\limits_{j_1+j_2=j-1}[T\pa_T\Theta_{j_1}^{\mathrm{S}}-(2j_1+\frac 12)\Theta_{j_1}^{\mathrm{S}}]\pa_YV_{j_2}^{\mathrm{shock}}.
		\end{aligned}
	\end{equation}
	Moreover, the leading profile $V_0^{\mathrm{shock}}$ satisfies
	\begin{equation}\label{3.13}
		\begin{aligned}
			\pa_Y^2V_0^{\mathrm{shock}}=V_0^{\mathrm{shock}}\pa_YV_0^{\mathrm{shock}}-\frac{\dot{\mf{S}}}{\sqrt{T}}\pa_YV_0^{\mathrm{shock}}.
		\end{aligned}
	\end{equation}
\end{prop}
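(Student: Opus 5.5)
The plan is a direct formal computation: substitute the ansatz \eqref{3.10}--\eqref{3.11} into the transformed equation \eqref{3.9}, and then sort terms by the explicit powers $T^{-2j}$. We adopt the bookkeeping convention implicit in \eqref{3.10}: the profiles $V_j^{\mathrm{shock}}(T,Y)$ and phases $\Theta_j^{\mathrm{S}}(T)$ are treated as $\ml{O}(1)$ quantities, possibly depending on $T$. The operator $T\pa_T$ preserves this order. The coefficient $\dot{\mf{S}}/\sqrt{T}$ is also counted as order $T^0$, since $\dot{\mf{S}}=\ml{O}(\nu^{1/4}T)$ and $T\le T_0=\ol\vep\nu^{-1/2}$ give $\dot{\mf{S}}/\sqrt T=\ml{O}(\nu^{1/4}T^{1/2})=\ml{O}(1)$. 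Hence only explicit powers of $T$ determine the level at which a term enters.

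First we would expand each term of \eqref{3.9} separately. The time derivative gives
\[
\frac1T\pa_TV=\sum_{j\ge0}T^{-2j-2}\bigl[T\pa_TV_j^{\mathrm{shock}}-2jV_j^{\mathrm{shock}}\bigr].
\]
The zeroth-order term gives $\frac1{2T^2}V=\sum_j T^{-2j-2}\tfrac12V_j^{\mathrm{shock}}$. The transport coefficient splits into three parts: $\frac{Y}{2T^2}$, the combination $-\frac{\Theta}{2T^2}+\frac{\dot\Theta}{T}$, and $-\frac{\dot{\mf S}}{\sqrt T}$. Differentiating \eqref{3.11} termwise yields
\[
-\frac{\Theta}{2T^2}+\frac{\dot\Theta}{T}=\sum_{j_1\ge0}T^{-2j_1-2}\bigl[T\pa_T\Theta_{j_1}^{\mathrm{S}}-(2j_1+\tfrac12)\Theta_{j_1}^{\mathrm{S}}\bigr].
\]
Multiplying this by $\pa_YV=\sum_{j_2}T^{-2j_2}\pa_YV_{j_2}^{\mathrm{shock}}$ produces a Cauchy product at level $T^{-2(j_1+j_2)-2}$. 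The nonlinearity is $V\pa_YV=\frac12\pa_Y V^2=\frac12\pa_Y\sum_{j_1,j_2}T^{-2(j_1+j_2)}V_{j_1}^{\mathrm{shock}}V_{j_2}^{\mathrm{shock}}$. The terms $\pa_Y^2V$ and $-\frac{\dot{\mf S}}{\sqrt T}\pa_YV$ contribute at the same level as the profile they act on.

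Next we would collect coefficients of $T^{-2j}$. At $j=0$, every term carrying an extra $T^{-2}$ drops out. What remains is $\pa_Y^2V_0^{\mathrm{shock}}=V_0^{\mathrm{shock}}\pa_YV_0^{\mathrm{shock}}-\frac{\dot{\mf S}}{\sqrt T}\pa_YV_0^{\mathrm{shock}}$, which is \eqref{3.13}. For $j\ge1$, the terms at level $T^{-2j}$ are of two kinds.

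The first kind involves $V_j^{\mathrm{shock}}$ itself: $\pa_Y^2V_j^{\mathrm{shock}}$, the diagonal part $\pa_Y(V_0^{\mathrm{shock}}V_j^{\mathrm{shock}})$ of the quadratic term (this collects the two cross terms $j_1=0$ and $j_2=0$), and $\frac{\dot{\mf S}}{\sqrt T}\pa_YV_j^{\mathrm{shock}}$. These form the linearized profile operator on the left of \eqref{3.12}.

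The second kind is everything else, which we move to the right-hand side:
\begin{itemize}
\item[(a)] the remaining quadratic terms with $j_1,j_2>0$;
\item[(b)] the $Y/(2T^2)$ transport term evaluated on $V_{j-1}^{\mathrm{shock}}$;
\item[(c)] from the time derivative and zeroth-order term at index $j-1$, the combination $T\pa_TV_{j-1}^{\mathrm{shock}}-2(j-1)V_{j-1}^{\mathrm{shock}}+\tfrac12V_{j-1}^{\mathrm{shock}}=T\pa_TV_{j-1}^{\mathrm{shock}}-(2j-\tfrac52)V_{j-1}^{\mathrm{shock}}$;
\item[(d)] the phase Cauchy product with $j_1+j_2=j-1$.
\end{itemize}
Matching signs then gives exactly \eqref{3.12}.

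The only delicate point is the bookkeeping, not any analysis. We must track which terms carry the extra factor $T^{-2}$ and hence shift from level $j-1$ to level $j$. We must also justify treating $\dot{\mf S}/\sqrt T$ and $T\pa_T$ as order-zero. Both follow from the regular-shock-curve bounds \eqref{2.12}, so we expect this to be routine. The hierarchy is formal; solvability, which is where the phases $\Theta_{j-1}^{\mathrm S}$ are fixed, is deferred to the subsequent subsections.
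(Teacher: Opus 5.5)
Your proposal is correct and follows the paper's derivation: you substitute \eqref{3.10}--\eqref{3.11} into \eqref{3.9} and collect the coefficients of $T^{-2j}$. The index shifts coming from $\frac1T\pa_T$, $\frac1{2T^2}$ and $-\frac{\Theta}{2T^2}+\frac{\dot\Theta}{T}$ then produce exactly the coefficients $-(2j-\frac52)$ and $-(2j_1+\frac12)$ that you computed. One minor inaccuracy: counting $T\pa_T$ as order zero is part of the formal ansatz, verified afterwards by Proposition~\ref{prop3.6}; it is not a consequence of \eqref{2.12}, which only controls $\dot{\mf S}/\sqrt T$.
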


It remains to prescribe the boundary conditions for the profiles $V_j^{\mathrm{shock}}$, which are obtained by matching with the outer expansion. For the outer expansion, a Taylor expansion near $\x=0$ gives
\begin{equation}\label{3.14}
	\begin{aligned}
		u(t,\x)\sim \sum\limits_{k}\nu^ku_k^{\mathrm{out}}(t,\x)\sim\sum\limits_{i,k} \nu^k \x^i\np_x^iu_{k,\pm}^{\mathrm{out}}(t,0)(i!)^{-1},
	\end{aligned}
\end{equation}
where $\np_x^iu_{k,\pm}^{\mathrm{out}}(t,0)$ denotes the one-sided derivative of $u_k^{\mathrm{out}}$ at $\x=0$, with the sign $\pm$ chosen according to $\pm\x>0$. Define
\begin{equation}\label{vik}
	\begin{aligned}
	{V}_{i,k}^{\pm}(T)=\frac1{i!}\nu^{-\frac{1-4k-3i}{4}}\np_x^iu_{k,\pm}^{\mathrm{out}}(\nu^{1/2}T,0)|T|^{-\frac{1-4k-3i}{2}}.
	\end{aligned}
\end{equation}
 In view of Proposition \ref{prop3.1} and \eqref{D}, we have
\begin{equation*}
	\begin{aligned}
		|{V}^{\pm}_{i,k}|&\lesssim [\mf{D}(T,0)]^{{1-4k-3i}}|T|^{-\frac{1-4k-3i}{2}}\lesssim 1.
	\end{aligned}
\end{equation*}
Combining \eqref{inner}, \eqref{3.8}, \eqref{3.11} and \eqref{3.14} with the above, we obtain the following expansion for $V$:
\begin{equation}\label{3.16}
	\begin{aligned}
		V&\sim \sum\limits_{i,k\geq 0} T^{-2i-2k}[Y-\Theta(T)]^i {V}_{i,k}^{\pm}(T)\\
		&= \sum\limits_{i,k\geq 0} T^{-2i-2k} {V}_{i,k}^{\pm}(T)\sum\limits_{p+q=i}(-1)^{p}C_{i}^{p}\Theta^{p}Y^{q}\\
		&= \sum\limits_{i,k\geq 0} T^{-2i-2k} {V}_{i,k}^{\pm}(T)\sum\limits_{p+q=i}(-1)^{p}C_{i}^{p}Y^{q}\sum\limits_{j_1,...,j_p\geq 0}T^{-2(j_1+...+j_p)}\prod_{r=1}^p\Theta_{j_r}^{\mathrm{S}}\\
			&= \sum\limits_{j\geq 0}T^{-2j}\sum\limits_{\substack{i,k\geq 0\\0\leq p\leq i\\ j_1,...,j_p\geq 0\\i+k+j_1+...+j_p=j}}(-1)^{p}C_{i}^{p}Y^{i-p}{V}_{i,k}^{\pm}(T)\prod_{r=1}^p\Theta_{j_r}^{\mathrm{S}},
	\end{aligned}
\end{equation}
where we use the convention that the empty product $\prod_{r=1}^p\Theta_{j_r}^{\mathrm{S}}$ equals $1$ for $p=0$. Matching the orders in $T$ of \eqref{3.10} and \eqref{3.16}, we obtain
\begin{equation}\label{3.17}
	\begin{aligned}
	V_j^{\mathrm{shock}}\sim \sum\limits_{\substack{i,k\geq 0\\0\leq p\leq i\\ j_1,...,j_p\geq 0\\i+k+j_1+...+j_p=j}}(-1)^{p}C_{i}^{p}Y^{i-p}{V}_{i,k}^{\pm}(T)\prod_{r=1}^p\Theta_{j_r}^{\mathrm{S}} \qquad \text {as }Y\to \pm \infty.
	\end{aligned}
\end{equation}
Hence, we will construct solutions of \eqref{3.12} and \eqref{3.13} that satisfy the matching conditions \eqref{3.17}.\par
\begin{remark}\label{rem3.4}
	Here $\Theta_j^{\mathrm{S}}$ is used to ensure that \eqref{3.12} together with \eqref{3.17} can be solved. Although we have no explicit equation for $\Theta$, we note that, since only finitely many terms of the expansion are needed, it suffices to determine finitely many $\Theta_j^{\mathrm{S}}$. Choosing a sufficiently large $J$ and fixing it, we define $\Theta=\sum\limits_{j=0}^JT^{-2j}\Theta_j^{\mathrm{S}}$. Then we can solve \eqref{3.12} and \eqref{3.17} for $V_{\leq J+1}^{\mathrm{shock}}$, and no convergence issue arises in \eqref{3.11}. We use this observation throughout the paper.
\end{remark}
\subsection{Construction of $V_0^{\mathrm{shock}}$}
In this subsection, we construct the principal shock profile $V_0^{\mathrm{shock}}$ for $T\in[1,T_0]$. Notice that, in view of the Rankine--Hugoniot condition \eqref{2.10}, $V_{0,0}^+$ is determined by $\dot{\mf{S}}$ and $V_{0,0}^-$; hence we set $V_0^{\mathrm{shock}}(T,Y)=\Phi(Y,\frac{\dot{\mf{S}}}{\sqrt{T}},V_{0,0}^-)$.
From \eqref{3.13} and the matching condition \eqref{3.17}, the profile $\Phi$ satisfies
\begin{equation}\label{3.18}
	\begin{aligned}
	\pa_Y\Phi=\frac{1}{2}|\Phi|^2-\frac{1}{2}|V_{0,0}^-|^2-\frac{\dot{\mf{S}}}{\sqrt{T}}(\Phi-V_{0,0}^-).
	\end{aligned}
\end{equation}
The Rankine--Hugoniot condition \eqref{2.10}, together with \eqref{inner} and \eqref{3.8}, implies
\begin{equation}\label{3.19}
	\begin{aligned}
	\frac{\dot{\mf{S}}}{\sqrt{T}}=\frac{V_{0,0}^-+V_{0,0}^+}{2}.
	\end{aligned}
\end{equation}
Substituting it into \eqref{3.18}, we obtain
\begin{equation*}
	\begin{aligned}
		\pa_Y\Phi&=\frac{1}{2}(\Phi-V_{0,0}^-)(\Phi+V_{0,0}^--2\frac{\dot{\mf{S}}}{\sqrt{T}})\\
		&=\frac{1}{2}(\Phi-V_{0,0}^-)(\Phi-V_{0,0}^+).
	\end{aligned}
\end{equation*}
Solving this equation, we obtain the explicit profile
\begin{equation}\label{3.21}
	\begin{aligned}
		\Phi(Y,\frac{\dot{\mf{S}}}{\sqrt{T}},V_{0,0}^-)=\frac{V_{0,0}^-+V_{0,0}^+}{2}-\frac{V_{0,0}^--V_{0,0}^+}{2}\tanh\frac{(V_{0,0}^--V_{0,0}^+)Y}{4}.
	\end{aligned}
\end{equation}

Introduce
\begin{equation}\label{om}
	\begin{aligned}
	\om(T)=\frac{V_{0,0}^-(T)-V_{0,0}^+(T)}{2}.
	\end{aligned}
\end{equation} 
 From Proposition~\ref{prop2.7}, \eqref{gt}, Proposition~\ref{prop2.15}, \eqref{inner} and \eqref{3.8}, we have
\begin{equation}\label{4.22}
	\begin{aligned}
\om(T)&\gtrsim ab>0.
	\end{aligned}
\end{equation}
Using \eqref{3.19} and \eqref{3.21}, we derive
\begin{equation}\label{pro}
	\begin{aligned}
		V_0^{\mathrm{shock}}(T,Y)=\frac{\dot{\mf{S}}}{\sqrt{T}}-\om(T)\tanh\frac{\om(T)Y}{2}.
	\end{aligned}
\end{equation}
We introduce the cutoff profile
\begin{equation}\label{3.24}
	V_0^{\mathrm{cut}}(T,Y)=\chi(Y)\left\{\begin{aligned}
		&V_{0,0}^-(T),\quad Y<0\\
		&V_{0,0}^+(T),\quad Y\geq0.\\
	\end{aligned}
	\right
	.
\end{equation}
Here $\chi$ is a smooth even cutoff function such that $\chi=0$ for $|Y|\leq \frac 12$ and $\chi=1$ for $|Y|\geq 1$. Then we have the following estimate:
\begin{prop}\label{prop3.4}
	For all $m,n\in \N_0$ and $T\in [1,T_0]$, we have
	\begin{equation*}
		\begin{aligned}
			|\pa_T^m\pa_Y^n(V_0^{\mathrm{shock}}-V_{0}^{\mathrm{cut}})|\lesssim T^{-m}e^{-\frac34\om(T)|Y|}.
		\end{aligned}
	\end{equation*}
\end{prop}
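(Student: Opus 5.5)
The plan is to use the explicit formulas \eqref{pro} and \eqref{3.24}, and to treat the two half-lines $Y<0$ and $Y\ge0$ separately. Write $\sigma(T)=\dot{\mf S}/\sqrt T=\tfrac12(V_{0,0}^-+V_{0,0}^+)$ by \eqref{3.19}. Then $V_{0,0}^\mp=\sigma\pm\om$. For $Y\geq 0$ this gives
\[
V_0^{\mathrm{shock}}-V_0^{\mathrm{cut}}=(1-\chi)V_{0,0}^+ +\om\bigl(1-\tanh\tfrac{\om Y}{2}\bigr)=(1-\chi)V_{0,0}^+ +\frac{2\om}{e^{\om Y}+1},
\]
and the analogous identity holds for $Y<0$, using oddness of $\tanh$. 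The term $(1-\chi)V_{0,0}^\pm$ is supported in $|Y|\le1$. There $e^{-\frac34\om|Y|}\ge e^{-\frac34\sup\om}$, so that piece is handled by bounds on $V_{0,0}^\pm$ and their $T$-derivatives alone. The main term $2\om(e^{\om Y}+1)^{-1}$ is bounded by $2\om e^{-\om Y}$. Its $Y$-derivatives bring down factors of $\om$ times bounded rational functions of $e^{\om Y}$, and each of these is still $\lesssim e^{-\om Y}$.

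The inputs needed are the following two bounds, for all $m$ and $T\in[1,T_0]$:
\[
ab\lesssim \om(T)\lesssim 1,\qquad |\pa_T^m V_{0,0}^\pm(T)|+|\pa_T^m\om(T)|+|\pa_T^m\sigma(T)|\lesssim T^{-m}.
\]
The lower bound on $\om$ is \eqref{4.22}. For the rest, use \eqref{vik} with $i=k=0$:
\[
V_{0,0}^\pm(T)=\nu^{-1/4}T^{-1/2}\,u_{0,\pm}^{\mathrm{out}}(\nu^{1/2}T,0).
\]
Since $t=\nu^{1/2}T$, we have $T\pa_T=t\pa_t$. So it suffices to show $|(t\pa_t)^m[t^{-1/2}u^{\pm}(t)]|\lesssim 1$, where $u^\pm(t)=u(t,\mf s(t)^\pm)$. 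This follows from Proposition~\ref{prop2.7}, written as $u^\pm=\langle\!\langle u\rangle\!\rangle\pm\frac12[[u]]$. The jump satisfies $[[u]]\approx 2abt^{1/2}$, with derivative bounds of the matching homogeneity. The mean satisfies $\langle\!\langle u\rangle\!\rangle=\mathcal O(t)$, using $\mf s=\mathcal O(t^2)$ with $|\dot{\mf s}|\lesssim t$ and $|\ddot{\mf s}|\lesssim1$. Proposition~\ref{prop2.7} gives these to second order. For arbitrary $m$, use the representation \eqref{2.29}: there $[[u]]$ and $\langle\!\langle u\rangle\!\rangle$ are smooth functions of $(t^{1/2},\mf s/t^2)$. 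Proposition~\ref{prop2.16} supplies the homogeneous expansion of $\mf s$, which gives $|(t\pa_t)^k(\mf s/t^2)|\lesssim1$ for every $k$. I expect this to be the main technical step: the needed derivative bounds hold to all orders only once the higher-order regularity of $\mf s$ is available.

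Finally, for the $T$-derivatives, differentiate $2\om(e^{\om Y}+1)^{-1}$ in $T$. By the chain rule, each $\pa_T$ produces either a factor $\pa_T^r\om=\mathcal O(T^{-r})$, or a factor $Y\pa_T^r\om$ multiplying a derivative of the logistic function. The first kind is harmless. The second kind costs a power $|Y|^p$, which is absorbed by sacrificing part of the decay: $|Y|^pe^{-\om|Y|}\lesssim_p e^{-\frac34\om|Y|}$, uniformly because $\om\gtrsim ab$. This loss of a quarter of the exponent is exactly why the statement has $\frac34\om$. Collecting the terms gives $|\pa_T^m\pa_Y^n(V_0^{\mathrm{shock}}-V_0^{\mathrm{cut}})|\lesssim T^{-m}e^{-\frac34\om(T)|Y|}$.

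One point needs care: $\om$ depends on $T$, so the weight on the right is itself $T$-dependent. It is also not a priori clear that $\om$ is bounded above. Both points are covered by the display above, since $\om$ is bounded above and below, and no derivative ever falls on the weight.
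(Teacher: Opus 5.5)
Your argument is correct and is essentially the proof the paper has in mind: Proposition~\ref{prop3.4} is stated without proof immediately after the explicit formula \eqref{pro}. Your split into $(1-\chi)V_{0,0}^{\pm}$ plus the logistic tail $2\om(e^{\om Y}+1)^{-1}$, with the coefficient bounds $|\pa_T^m V_{0,0}^\pm|\lesssim T^{-m}$ and $ab\lesssim\om\lesssim1$ obtained from \eqref{2.29} and the all-order regularity of $\mf s/t^2$, supplies exactly the omitted details. The only slip is a sign: since $[[u]]=u^--u^+$, one has $u^\pm=\langle\!\langle u\rangle\!\rangle\mp\frac12[[u]]$, which does not affect any of the bounds.
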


\subsection{Construction of $V_j^{\mathrm{shock}}$ and $\Theta_{j-1}^{\mathrm{S}}$ for $j\in\N$}
In this subsection we construct, by induction, the shock profile $V_j^{\mathrm{shock}}$ and the phase coefficient $\Theta_{j-1}^{\mathrm{S}}$ for $j\in\N$ and $T\in[1,T_0]$. Note that, by \eqref{3.12}, $V_j^{\mathrm{shock}}$ solves a linear equation for $j\in\N$. In order to satisfy the matching condition \eqref{3.17}, we need to choose $\Theta_{j-1}^{\mathrm{S}}$ appropriately. We shall give estimates for $V_j^{\mathrm{shock}}$ and $\Theta_j^{\mathrm{S}}$. For later use, we define $V_j^{\mathrm{cut}}$ as the smooth truncation of the right-hand side of \eqref{3.17}:
\begin{equation}\label{3.27}
	V_j^{\mathrm{cut}}(T,Y)=\chi(Y)\sum\limits_{\substack{i,k\geq 0\\0\leq p\leq i\\ j_1,...,j_p\geq 0\\i+k+j_1+...+j_p=j}}(-1)^{p}C_{i}^{p}Y^{i-p}{V}_{i,k}^{\pm}(T)\prod_{r=1}^p\Theta_{j_r}^{\mathrm{S}},
\end{equation}
where the sign $\pm$ is chosen according to $\pm Y>0$, and $\chi$ is the smooth even cutoff function given in \eqref{3.24}.
\begin{prop}\label{prop3.6}
	There exist functions $V_j^{\mathrm{shock}}$ and $\Theta_j^{\mathrm{S}}$ such that for all $j,m,n\in\N_0$ and $T\in[1,T_0]$ we have
	\begin{equation*}
		\begin{aligned}
			|\pa_T^m\pa_Y^n(V_j^{\mathrm{shock}}-V_{j}^{\mathrm{cut}})|\lesssim T^{-m}e^{-\frac{1}{2}\om(T)|Y|}\\
			|\pa_T^m\Theta_{j}^{\mathrm{S}}|\lesssim T^{-m}.
		\end{aligned}
	\end{equation*}
	Moreover, $V_j^{\mathrm{shock}}$ solves \eqref{3.12} and satisfies \eqref{3.17}.
\end{prop}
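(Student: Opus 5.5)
We argue by induction on $j$, the case $j=0$ being Proposition~\ref{prop3.4}, for which $\Theta_0^{\mathrm{S}}$ does not yet enter. Assume $V_0^{\mathrm{shock}},\dots,V_{j-1}^{\mathrm{shock}}$ and $\Theta_0^{\mathrm{S}},\dots,\Theta_{j-2}^{\mathrm{S}}$ have been constructed with the stated bounds.

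\textbf{Step 1: reduce to a first-order equation.} Equation \eqref{3.12} for $V_j^{\mathrm{shock}}$ is linear. Its operator
\[
\mathcal{L}W=\pa_Y\bigl(\pa_YW-(V_0^{\mathrm{shock}}-\tfrac{\dot{\mf S}}{\sqrt T})W\bigr)
\]
is an exact $Y$-derivative, and by \eqref{pro} we have $V_0^{\mathrm{shock}}-\frac{\dot{\mf S}}{\sqrt T}=-\om\tanh\frac{\om Y}{2}$. Write $V_j^{\mathrm{shock}}=V_j^{\mathrm{cut}}+W_j$ and let $R_j$ denote the right-hand side of \eqref{3.12}. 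Then $W_j$ solves
\[
\mathcal{L}W_j=R_j-\mathcal{L}V_j^{\mathrm{cut}}=:F_j .
\]
We must show that, for a suitable choice of $\Theta_{j-1}^{\mathrm S}$, the forcing $F_j$ decays like $e^{-\frac12\om|Y|}$. The polynomial parts as $Y\to\pm\infty$ cancel exactly, because \eqref{3.17} was obtained by formally substituting the outer Taylor expansion \eqref{3.14}, which solves the outer hierarchy \eqref{3.3}. Rewriting \eqref{3.3} at $\x=0^\pm$ in the variables $(T,Y)$ reproduces \eqref{3.12} with $V_0$ replaced by its limits $V_{0,0}^\pm$. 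The errors then come only from $\chi'$, from the inductive remainders $W_{j'}$ with $j'<j$, and from $V_0^{\mathrm{shock}}-V_0^{\mathrm{cut}}$, and all of these are exponentially small by Proposition~\ref{prop3.4} and the induction hypothesis. The distinct cutoff constants ($\frac34$ versus $\frac12$) leave room to absorb polynomial factors $Y^p$.

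\textbf{Step 2: integrate and choose the phase.} Integrating once, we require
\[
\pa_YW_j+\om\tanh\tfrac{\om Y}{2}\,W_j=\int_{-\infty}^YF_j+\kappa_j(T).
\]
Decay at $Y=-\infty$ forces $\kappa_j=0$. Decay at $+\infty$ forces the solvability condition
\[
\int_{\R}F_j\,\md Y=0 .
\]
The integrating factor is $\cosh^2\frac{\om Y}{2}$, so the solution is
\[
W_j=\cosh^{-2}\tfrac{\om Y}{2}\int_{-\infty}^Y\cosh^{2}\tfrac{\om s}{2}\int_{-\infty}^sF_j .
\]
For this to decay at $+\infty$ we need one further condition, namely that the total $\int\cosh^2(\cdot)\int F_j$ be controlled; this is exactly where the homogeneous kernel $\cosh^{-2}=\frac{2}{\om}\pa_Y(-V_0^{\mathrm{shock}})$ enters. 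I expect the main obstacle to be matching these conditions against the free parameters.

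\begin{itemize}
\item The kernel direction $\pa_YV_0^{\mathrm{shock}}$ can be added freely. This fixes the normalization, and this is what $\Theta_{j-1}^{\mathrm S}$ (appearing through $[T\pa_T-\frac12]\Theta_{j-1}^{\mathrm S}\pa_YV_0^{\mathrm{shock}}$) parametrizes.
\item The mass condition $\int F_j=0$ should be a consequence of a conservation law. Integrating \eqref{3.12} in $Y$ and using the jump relations of the outer profiles (the higher-order Rankine--Hugoniot-type identities implied by \eqref{3.3} and the fact that $\mf s$ is defined by \eqref{2.10}), the total mass of $F_j$ reduces to a term involving $\Theta_{j-1}^{\mathrm S}$ times the jump $2\om$.
\item Since $\om\gtrsim ab$ by \eqref{4.22}, this yields a linear ODE of the form
\[
2\om\,(T\pa_T-c_j)\Theta_{j-1}^{\mathrm S}=g_j(T),
\]
with $g_j$ bounded by the induction hypothesis. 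Choosing the bounded solution (integrating from $T=1$ or from $T_0$, depending on the sign of $c_j$) gives $|\pa_T^m\Theta_{j-1}^{\mathrm S}|\lesssim T^{-m}$, provided $g_j$ satisfies the same symbol-type bounds.
\end{itemize}

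\textbf{Step 3: estimates.} Once the mass vanishes, $\int_{-\infty}^YF_j$ decays like $e^{-\frac12\om|Y|}$ on both sides. The explicit double integral then gives $|W_j|\lesssim e^{-\frac12\om|Y|}$, after slightly shrinking the exponent to absorb polynomial factors. $Y$-derivatives follow from the equation itself. $T$-derivatives follow by differentiating the explicit formula: each $\pa_T$ falls on $\om(T)$, on $\dot{\mf S}/\sqrt T$, on the $V_{i,k}^\pm(T)$, or on $\Theta$'s, each of which obeys $|\pa_T^m\cdot|\lesssim T^{-m}$ by Proposition~\ref{prop2.7}, \eqref{vik} with Proposition~\ref{prop3.2} (homogeneity), and the induction hypothesis. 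This gives the factor $T^{-m}$. Finally, \eqref{3.17} holds because $W_j$ decays exponentially, which completes the induction.
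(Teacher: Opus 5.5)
Your architecture matches the paper's. You split off $V_j^{\mathrm{cut}}$, use that the operator is an exact $Y$-derivative, integrate once with the factor $\cosh^2(\om Y/2)$, and fix $\Theta_{j-1}^{\mathrm S}$ by the mass condition through a first-order ODE in $T$. The gap is the phase estimate, which you leave as ``$2\om(T\pa_T-c_j)\Theta_{j-1}^{\mathrm S}=g_j$ with $g_j$ bounded''; this fails at $j=1$.

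The phase enters $\int_{\R}F_j$ in two places, and you must track both:
\begin{itemize}
\item the explicit forcing $[T\pa_T-(2j-\tfrac32)]\Theta_{j-1}^{\mathrm S}\,\pa_YV_0^{\mathrm{shock}}$ (coefficient $2j-\frac32$, not $\frac12$), whose mass is $-2\om[\cdots]$;
\item the term $-\chi V_{1,0}^{\pm}\Theta_{j-1}^{\mathrm S}$ inside $V_j^{\mathrm{cut}}$, which contributes $\om(V_{1,0}^++V_{1,0}^-)\Theta_{j-1}^{\mathrm S}$.
\end{itemize}
By \eqref{430} the condition becomes
\[
2T\pa_T(\om\Theta_{j-1}^{\mathrm S})-2(2j-2)\,\om\Theta_{j-1}^{\mathrm S}=\wh{\ml F}_j ,
\]
so $c_j=2j-2$. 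For $j\ge2$ your argument works, provided you integrate from $T_0$. For $j=1$ the constant vanishes. With $\Theta_0^{\mathrm S}(T_0)=0$ you get $\Theta_0^{\mathrm S}(T)=-\frac{1}{2\om(T)}\int_T^{T_0}\wh{\ml F}_1(\tau)\tau^{-1}\,\md\tau$. A merely bounded $\wh{\ml F}_1$ then gives only $|\Theta_0^{\mathrm S}|\lesssim\log(T_0)\sim|\log\nu|$ somewhere on $[1,T_0]$, whichever endpoint you integrate from. Since $T_0=\ol{\vep}\nu^{-1/2}$, this is not the $\nu$-uniform bound the proposition requires.

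The missing idea is that this mass is small: $\wh{\ml F}_1=-\om(V_{0,1}^++V_{0,1}^-)+V_{1,0}^--V_{1,0}^+=\ml O(\nu^{1/4}\sqrt T)$. Two facts give this:
\begin{itemize}
\item $V_{1,0}^--V_{1,0}^+=\nu^{1/2}T[[\pa_xu_0^{\mathrm{out}}]]$, and the leading homogeneous term $\ol u_0=-a\mf y$ is odd in $\x$, so its $x$-derivative has no jump at $\x=0$;
\item $V_{0,1}^++V_{0,1}^-$ is small because $\langle\!\langle u_1^{\mathrm{out}}\rangle\!\rangle=\ml O(t^{-1})$ by \eqref{48}.
\end{itemize}
Hence $|\Theta_0^{\mathrm S}|\lesssim\nu^{1/4}\sqrt{T_0}\lesssim1$. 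This also shows that your expectation that the non-phase mass vanishes by higher-order Rankine--Hugoniot identities is wrong. The correctors $u_k^{\mathrm{out}}$, $k\ge1$, obey no jump condition, and that mass defect is exactly what the phase absorbs.

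Three smaller corrections:
\begin{itemize}
\item $\Theta_{j-1}^{\mathrm S}$ does not parametrize the kernel. It can fix the mass precisely because $\int_{\R}\pa_YV_0^{\mathrm{shock}}\,\md Y=-2\om\neq0$.
\item The kernel $\cosh^{-2}(\om Y/2)$ decays at both ends, so there is no extra condition at $+\infty$. It is removed by the normalization $W_j(T,0)=0$.
\item Your formula with $\int_{-\infty}^Y\cosh^2(\om s/2)(\cdots)\,\md s$ diverges; the outer integral must start at $0$.
\end{itemize}
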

The proof of Proposition~\ref{prop3.6} for general $j\in\N$ is given in
Appendix~\ref{app:prop46}. In the main text we prove only the case $j=1$, as
an example illustrating the induction; the profiles $V_1^{\mathrm{shock}}$ and
$\Theta_0^{\mathrm{S}}$ are constructed below. To this end, we need the
following lemma.

\begin{lemma}\label{lem3.5}
	For $k\in \N_0$ and $i\in \N_0$, the scaled Taylor coefficients $V_{i,k}^{\pm}(T)$ satisfy
	\begin{equation}\label{429}
		\begin{aligned}
			\Big(\frac{1-4k-3i}{2}V_{i,k}^{\pm}+T\pa_TV_{i,k}^{\pm}\Big)+\frac{i+1}{2}\sum\limits_{\substack{k_1+k_2=k\\i_1+i_2=i+1}}V_{i_1,k_1}^{\pm}V_{i_2,k_2}^{\pm}&=(i+2)(i+1)V_{i+2,k-1}^{\pm}\\
			&+\dot{\mf{S}}(i+1)T^{-\frac12}V_{i+1,k}^{\pm},
		\end{aligned}
	\end{equation}
	where we use the convention that any term with a negative index vanishes. 
	Moreover, we have 
		\begin{equation}\label{430}
		\begin{aligned}
			V_{1,0}^{+}+V_{1,0}^-=-1-2T\frac{\om'(T)}{\om(T)}.
		\end{aligned}
	\end{equation}
\end{lemma}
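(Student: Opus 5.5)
The plan is to derive \eqref{429} directly from the outer hierarchy \eqref{3.3}. We differentiate it $i$ times in $\x$ and take one-sided traces at $\x=0^\pm$. After that we rescale according to \eqref{vik}. Identity \eqref{430} is then the case $i=k=0$ of \eqref{429}, combined across the two sides of the shock using the Rankine--Hugoniot relation \eqref{3.19}.

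\emph{Step 1 (Taylor coefficients at the shock).} By Proposition~\ref{prop3.2}, each $u_k^{\mathrm{out}}$ is smooth up to $\x=0$ from either side for $t>0$, so the one-sided derivatives exist. Set $c_{i,k}^{\pm}(t)=\frac1{i!}\np_x^iu_{k,\pm}^{\mathrm{out}}(t,0)$. Apply $\np_x^i$ to \eqref{3.3} and expand $\frac12\np_x^{i+1}(u_{k_1}u_{k_2})$ by the Leibniz rule. Dividing by $i!$ and evaluating at $\x=0^\pm$ gives
\[
\dot c_{i,k}^{\pm}-(i+1)\dot{\mf{s}}\,c_{i+1,k}^{\pm}+\frac{i+1}{2}\sum_{\substack{k_1+k_2=k\\ i_1+i_2=i+1}}c_{i_1,k_1}^{\pm}c_{i_2,k_2}^{\pm}=(i+2)(i+1)c_{i+2,k-1}^{\pm}.
\]
The combinatorics work because $\frac{(i+1)!}{i_1!\,i_2!\,i!}=(i+1)\frac{1}{i_1!\,i_2!}$. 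The time derivative commutes with the trace because $\np_t$ is taken at fixed $\x$.

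\emph{Step 2 (rescaling).} Write $r=\frac{1-4k-3i}{2}$. Then $V_{i,k}^{\pm}(T)=\nu^{-r/2}T^{-r}c_{i,k}^{\pm}(\nu^{1/2}T)$. Differentiating,
\[
T\pa_TV_{i,k}^{\pm}=-rV_{i,k}^{\pm}+\nu^{\frac12-\frac r2}T^{1-r}\dot c_{i,k}^{\pm}.
\]
We multiply the identity of Step 1 by $\nu^{\frac12-\frac r2}T^{1-r}$ and check that every term becomes a rescaled coefficient. For the quadratic term, $i_1+i_2=i+1$ and $k_1+k_2=k$ give
\[
r_{i_1,k_1}+r_{i_2,k_2}=\frac{2-4k-3(i+1)}{2}=r-1,
\]
which is exactly the required weight. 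For the viscous term, $r_{i+2,k-1}=r-1$ as well. For the transport term, $\dot{\mf{s}}=\nu^{1/4}\dot{\mf{S}}$ and $r_{i+1,k}=r-\frac32$, which produces the factor $\dot{\mf{S}}T^{-1/2}$. Collecting terms yields \eqref{429}. This step is pure bookkeeping of exponents, and it is where care is needed. I expect no conceptual obstacle; the main risk is sign and power errors.

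\emph{Step 3 (identity \eqref{430}).} Take $i=k=0$ in \eqref{429}. Since the quadratic sum is $2V_{0,0}^{\pm}V_{1,0}^{\pm}$, this gives
\[
\tfrac12V_{0,0}^{\pm}+T\pa_TV_{0,0}^{\pm}+V_{0,0}^{\pm}V_{1,0}^{\pm}=\sigma V_{1,0}^{\pm},\qquad \sigma=\dot{\mf{S}}T^{-1/2}.
\]
Subtract the $+$ equation from the $-$ equation. By \eqref{3.19} and \eqref{om}, $V_{0,0}^--\sigma=\om$ and $V_{0,0}^+-\sigma=-\om$. The difference therefore reads
\[
\om+2T\om'+\om\,(V_{1,0}^-+V_{1,0}^+)=0.
\]
Dividing by $\om>0$, which is legitimate by \eqref{4.22}, gives \eqref{430}.
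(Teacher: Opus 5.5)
Your proof is correct and follows the same route as the paper: differentiate the outer hierarchy \eqref{3.3} $i$ times, take one-sided traces at $\x=0^\pm$, and rescale via \eqref{vik}; all your exponent bookkeeping checks out. For \eqref{430} the paper only says to take $i=k=0$. Your subtraction of the two one-sided identities using \eqref{3.19}, followed by division by $\om>0$ from \eqref{4.22}, is exactly the computation left implicit there.
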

\begin{proof}
	We prove \eqref{429} for $Y>0$; the case $Y<0$ follows in a similar fashion. Starting from \eqref{3.2} and writing the Taylor coefficients
	$\widetilde{u}_{k,i}(t):=\frac1{i!}\np_x^i u_{k,+}^{\mathrm{out}}(t,0)$,
	we apply $\pa_x^i$ at $\x=0$ and divide by $i!$.
	Collecting terms of like order in $x$, we derive
	\[
	\np_t\widetilde{u}_{k,i}-\dot{\mf{s}}(i+1)\widetilde{u}_{k,i+1}
	+\frac{i+1}{2}\sum_{\substack{k_1+k_2=k\\i_1+i_2=i+1}}\widetilde{u}_{k_1,i_1}\widetilde{u}_{k_2,i_2}
	=(i+2)(i+1)\widetilde{u}_{k-1,i+2}.
	\]
	By the definition of the scaled coefficients $V_{i,k}$, we have $$\widetilde{u}_{k,i}(t)=\nu^{\frac{1-4k-3i}{4}}T^{\frac{1-4k-3i}{2}}V_{i,k}(T).$$
	 By \eqref{inner}, $\np_t=\nu^{-1/2}\pa_T$.
	After these relations are substituted into the equation above, all four terms carry the common factor $\nu^{\frac{-1-4k-3i}{4}}$; factoring out the lowest power of $T$, namely $T^{\frac{-1-4k-3i}{2}}$, we obtain \eqref{429}. The identity \eqref{430} comes from \eqref{429} by choosing $i=k=0$.
\end{proof}

\begin{proof}[Proof of Proposition~\ref{prop3.6}]

We now carry out the construction of $V_1^{\mathrm{shock}}$ and
$\Theta_0^{\mathrm{S}}$ in detail. When $j=1$ in the profile
equation \eqref{3.12}, the sum over $j_1,j_2>0$ is empty and the term
$-(2j-\frac52)V_{j-1}^{\mathrm{shock}}$ becomes
$\frac12V_0^{\mathrm{shock}}$; hence, moving the terms containing
$V_1^{\mathrm{shock}}$ to the left, we obtain
\[
\begin{aligned}
\pa_Y^2V_1^{\mathrm{shock}}-\pa_Y[(V_0^{\mathrm{shock}}-\tfrac{\dot{\mf{S}}}{\sqrt{T}})V_1^{\mathrm{shock}}]
&=\tfrac12V_0^{\mathrm{shock}}+\tfrac Y2\pa_YV_0^{\mathrm{shock}}+T\pa_TV_0^{\mathrm{shock}}\\
&\quad+[T\pa_T\Theta_0^{\mathrm{S}}-\tfrac12\Theta_0^{\mathrm{S}}]\pa_YV_0^{\mathrm{shock}}.
\end{aligned}
\]
By \eqref{3.27} with $j=1$, the cutoff profile takes the explicit form
\[
V_1^{\mathrm{cut}}(T,Y)=\chi(Y)\bigl(V_{0,1}^{\pm}(T)+YV_{1,0}^{\pm}(T)-V_{1,0}^{\pm}(T)\Theta_0^{\mathrm{S}}(T)\bigr),
\]
where the sign $\pm$ is chosen according to $\pm Y>0$ and $\chi$ is the
cutoff function of \eqref{3.24}. Set
$\ol V_1:=V_1^{\mathrm{shock}}-V_1^{\mathrm{cut}}$; by the matching
condition \eqref{3.17} we require $\ol V_1(T,Y)\to0$ as $Y\to\pm\infty$.
Writing $V_0^{\mathrm{shock}}=\ol V_0+V_0^{\mathrm{cut}}$ with
$\ol V_0:=V_0^{\mathrm{shock}}-V_0^{\mathrm{cut}}$ (so that $\ol V_0$ and its
derivatives are exponentially decaying by Proposition~\ref{prop3.4}) and
substituting $V_1^{\mathrm{shock}}=\ol V_1+V_1^{\mathrm{cut}}$ into the
equation above, we obtain
\[
\pa_Y^2\ol V_1-\pa_Y[(V_0^{\mathrm{shock}}-\tfrac{\dot{\mf{S}}}{\sqrt{T}})\ol V_1]
=g_{\ge1}+g_0+[T\pa_T\Theta_0^{\mathrm{S}}-\tfrac12\Theta_0^{\mathrm{S}}]\pa_YV_0^{\mathrm{shock}},
\]
where
\begin{align*}
g_{\ge1}&=\tfrac12\ol V_0+\tfrac Y2\pa_Y\ol V_0+T\pa_T\ol V_0+\pa_Y(\ol V_0V_1^{\mathrm{cut}}),\\
g_0&=\tfrac12V_0^{\mathrm{cut}}+\tfrac Y2\pa_YV_0^{\mathrm{cut}}+T\pa_TV_0^{\mathrm{cut}}
+\pa_Y[(V_0^{\mathrm{cut}}-\tfrac{\dot{\mf{S}}}{\sqrt{T}})V_1^{\mathrm{cut}}]-\pa_Y^2V_1^{\mathrm{cut}}.
\end{align*}
By Proposition~\ref{prop3.4} the term $g_{\ge1}$ is exponentially decaying.
To see that $g_0$ vanishes on $|Y|\ge1$, note that there $\chi\equiv1$, so
that $V_0^{\mathrm{cut}}=V_{0,0}^{\pm}$ with
$\pa_YV_0^{\mathrm{cut}}=\pa_Y^2V_0^{\mathrm{cut}}=0$, while
$V_1^{\mathrm{cut}}=V_{0,1}^{\pm}+YV_{1,0}^{\pm}-V_{1,0}^{\pm}\Theta_0^{\mathrm{S}}$
with $\pa_YV_1^{\mathrm{cut}}=V_{1,0}^{\pm}$ and
$\pa_Y^2V_1^{\mathrm{cut}}=0$; consequently for $|Y|\geq 1$,
\[
g_0=\tfrac12V_{0,0}^{\pm}+T\pa_TV_{0,0}^{\pm}+(V_{0,0}^{\pm}-\tfrac{\dot{\mf{S}}}{\sqrt{T}})V_{1,0}^{\pm}=0,
\]
where the last identity is the case $(k,i)=(0,0)$ of Lemma~\ref{lem3.5}
(the term involving $V_{2,-1}^{\pm}$ is absent). Hence $g_0$ is supported in
$\{\tfrac12\le|Y|\le1\}$, and $F_1:=g_{\ge1}+g_0$ decays exponentially as
$|Y|\to\infty$.

Integrating the equation for $\ol V_1$ once in $Y$ over
$(-\infty,Y]$ and using $\ol V_1\to0$, $\pa_Y\ol V_1\to0$ as $Y\to-\infty$,
we obtain
\begin{equation}\label{431}
	\begin{aligned}
		\pa_Y\ol V_1-(V_0^{\mathrm{shock}}-\tfrac{\dot{\mf{S}}}{\sqrt{T}})\ol V_1
		=\int_{-\infty}^{Y}F_1(T,Y')\,\md Y'
		+[T\pa_T\Theta_0^{\mathrm{S}}-\tfrac12\Theta_0^{\mathrm{S}}](V_0^{\mathrm{shock}}-V_{0,0}^{-}).
	\end{aligned}
\end{equation}

We need to extract the contribution of $\Theta_0^{\mathrm{S}}$ in the $\int F_1$ term, which comes from the presence of $\pa_Y[\chi (V_0^{\mathrm{shock}}-\frac{\dot{\mf{S}}}{\sqrt{T}})V_{1,0}^{\pm}\Theta_0^{\mathrm{S}}]$.  To cancel the boundary contribution at $Y=+\infty$ (recall that
$V_0^{\mathrm{shock}}\to V_{0,0}^{+}$ as $Y\to+\infty$ and
$V_{0,0}^{-}-V_{0,0}^{+}=2\om(T)$), we choose $\Theta_0^{\mathrm{S}}$ and use \eqref{430} to ensure
\begin{equation}\label{432}
	\begin{aligned}
		0&=\int_{\R}F_1(T,Y')\,\md Y'-2\om(T)[T\pa_T\Theta_0^{\mathrm{S}}-\tfrac12\Theta_0^{\mathrm{S}}]\\
		&=\int_{\R}F_1|_{\Theta_0^{\mathrm{S}}=0}(T,Y')\,\md Y'-2\om(T)[T\pa_T\Theta_0^{\mathrm{S}}-\tfrac12\Theta_0^{\mathrm{S}}-\frac{V_{1,0}^++V_{1,0}^-}{2}\Theta_0^{\mathrm{S}}]\\
		&=\int_{\R}F_1|_{\Theta_0^{\mathrm{S}}=0}(T,Y')\,\md Y'-2T\pa_T(\om\Theta_0^{\mathrm{S}}).
	\end{aligned}
\end{equation}
Setting $\wh{\ml F}_1(T)=\int_{\R}F_1|_{\Theta_0^{\mathrm{S}}=0}(T,Y')\,\md Y'$, we solve for $\Theta_0^{\mathrm{S}}$ from \eqref{432} to obtain
 \begin{equation}\label{t0}
	\begin{aligned}
	\Theta_0^{\mathrm{S}}(T)=-\frac1{2\om(T)}\int_{T}^{T_0}\frac{\wh{\ml F}_1(\tau)}{\tau}\,\md\tau.
	\end{aligned}
\end{equation}

 To show $|\Theta_0^{\mathrm{S}}|\lesssim 1$, we need to bound $\wh{\ml F}_1(T)$ carefully. Indeed, introduce $W=V_1^{\mathrm{cut}}|_{\Theta_0^{\mathrm{S}}=0}$. Then we rewrite $F_1|_{\Theta_0^{\mathrm{S}}=0}$ as  
 \begin{equation*}
 	\begin{aligned}
 	F_1|_{\Theta_0^{\mathrm{S}}=0}=\frac12 V_0^{\mathrm{shock}}+\frac Y2\pa_YV_0^{\mathrm{shock}}+T\pa_TV_0^{\mathrm{shock}}+\pa_Y[(V_0^{\mathrm{shock}}-\frac{\dot{\mf{S}}}{\sqrt{T}})W]-\pa_Y^2W.
 	\end{aligned}
 \end{equation*}
For $R>1$, introduce
 \begin{equation*}
 	\begin{aligned}
 		J(T,R)=\int_{-R}^R F_1|_{\Theta_0^{\mathrm{S}}=0}(T,Y')\md Y'.
 	\end{aligned}
 \end{equation*}
 After tedious calculation, we obtain
  \begin{equation*}
 	\begin{aligned}
 		J(T,R)&=R[\frac{\dot{\mf{S}}}{\sqrt{T}}+2T\pa_T(\frac{\dot{\mf{S}}}{\sqrt{T}})]-\om R\tanh\frac{\om R}{2}(V_{1,0}^+-V_{1,0}^-)\\
 		&-\om \tanh\frac{\om R}{2}(V_{0,1}^++V_{0,1}^-)-(V_{1,0}^+-V_{1,0}^-).
 	\end{aligned}
 \end{equation*}
Using \eqref{om}, \eqref{pro}, \eqref{429} and letting $R\to +\infty$, we obtain
 \begin{equation*}
	\begin{aligned}
		\wh{\ml F}_1=-\om (V_{0,1}^++V_{0,1}^-)+V_{1,0}^--V_{1,0}^+.
	\end{aligned}
\end{equation*}
We claim that 
 \begin{equation}\label{442}
	\begin{aligned}
		|\wh{\ml F}_1(T)|\lesssim \nu^{\frac14}\sqrt{T}.
	\end{aligned}
\end{equation}
Indeed, from Proposition~\ref{prop2.15} and \eqref{vik} we have
\begin{equation*}
	\begin{aligned}
	V_{1,0}^-(T)-V_{1,0}^+(T)&=\nu^{\frac12}T[[\pa_xu_0^{\mathrm{out}}(\nu^{\frac12}T,0)]]\\
	&=\nu^{\frac12}T [[\pa_x\ol{u}_0(\nu^{\frac12}T,0)]]+\ml{O}(\nu^{\frac14}\sqrt{T})\\
	&=\ml{O}(\nu^{\frac14}\sqrt{T}).
	\end{aligned}
\end{equation*}
Here we use the fact that $\ol{u}_0(t,x)$ is odd in $\x$. From \eqref{48}, we can also obtain $V_{0,1}^++V_{0,1}^-=\ml{O}(\nu^{\frac14}\sqrt{T})$. Hence \eqref{442} is true. We use it to estimate \eqref{t0} to get
\begin{equation*}
	\begin{aligned}
	|\Theta_0^{\mathrm{S}}(T)|\lesssim \nu^{\frac14}(\sqrt{T_0}-\sqrt{T})\lesssim 1.
	\end{aligned}
\end{equation*}

Similarly $|\pa_T^m\Theta_0^{\mathrm{S}}|\lesssim T^{-m}$ for all
$m\in\N_0$. With this choice the right-hand side of the integrated equation
decays exponentially as $Y\to+\infty$; setting $\ol V_1(T,0)=0$ and choosing
$\eta\in(0,\frac12)$, we obtain
\[
|\ol V_1(T,Y)|\lesssim e^{-(\frac12+\eta)\om(T)|Y|},
\]
and the estimates for $\pa_T^m\pa_Y^n\ol V_1$, $m,n\in\N_0$, follow in the
same way. Together with the explicit form of $V_1^{\mathrm{cut}}$, this
proves Proposition~\ref{prop3.6} for $j=1$.

\end{proof}

Combining Remark~\ref{rem3.4} with Proposition~\ref{prop3.6}, we obtain the following estimate:
\begin{corollary}\label{cor3.8}
	For all $m,J\in \N_0$, we have
	\begin{equation*}
		\begin{aligned}
		|\pa_T^m(\Theta-\Theta_{[J]}^{\mathrm{S}})|\lesssim T^{-2J-2-m}.
		\end{aligned}
	\end{equation*}
\end{corollary}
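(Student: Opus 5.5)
Corollary~\ref{cor3.8} is essentially a bookkeeping consequence of Remark~\ref{rem3.4} and the bound $|\pa_T^m\Theta_j^{\mathrm{S}}|\lesssim T^{-m}$ from Proposition~\ref{prop3.6}. The plan has three steps: fix the meaning of $\Theta$, write the difference as a finite tail, and differentiate that tail term by term.

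\textbf{Setting up $\Theta$.} By Remark~\ref{rem3.4}, $\Theta$ is not an infinite series. We fix a large truncation order $J_*$, define
\[
\Theta=\sum_{j=0}^{J_*}T^{-2j}\Theta_j^{\mathrm{S}},
\]
and construct all profiles $V_{\le J_*+1}^{\mathrm{shock}}$ with this $\Theta$. Here $\Theta_{[J]}^{\mathrm{S}}$ denotes the partial sum $\sum_{j=0}^{J}T^{-2j}\Theta_j^{\mathrm{S}}$. For $J\ge J_*$ the difference vanishes identically, so the estimate is trivial. For $J<J_*$,
\[
\Theta-\Theta_{[J]}^{\mathrm{S}}=\sum_{j=J+1}^{J_*}T^{-2j}\Theta_j^{\mathrm{S}},
\]
which is a finite sum with a number of terms independent of $\nu$ and $T$.

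\textbf{Differentiating the tail.} Apply $\pa_T^m$ to each term and use the Leibniz rule:
\[
\pa_T^m\bigl(T^{-2j}\Theta_j^{\mathrm{S}}\bigr)=\sum_{r=0}^m\binom{m}{r}\,\pa_T^{r}\bigl(T^{-2j}\bigr)\,\pa_T^{m-r}\Theta_j^{\mathrm{S}}.
\]
We have $|\pa_T^r T^{-2j}|\lesssim T^{-2j-r}$, and Proposition~\ref{prop3.6} gives $|\pa_T^{m-r}\Theta_j^{\mathrm{S}}|\lesssim T^{-(m-r)}$. Each product is therefore $\lesssim T^{-2j-m}$. Since $T\ge1$ on $[1,T_0]$, we have $T^{-2j-m}\le T^{-2J-2-m}$ for every $j\ge J+1$. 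Summing the finitely many terms gives the claim.

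\textbf{Uniformity.} The implicit constants in Proposition~\ref{prop3.6} are independent of $\nu$ and of $T\in[1,T_0]$; for $\Theta_0^{\mathrm{S}}$ this follows from the bound $|\wh{\ml F}_1|\lesssim\nu^{1/4}\sqrt T$ together with $\nu^{1/4}\sqrt{T_0}\lesssim1$. Hence the constant in the corollary depends only on $m$, $J$ and the fixed $J_*$.

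The only delicate point is conceptual rather than computational. The statement must be read with $\Theta$ being the finite truncation of Remark~\ref{rem3.4}, so no convergence issue arises. The $\nu$-uniformity of the $\Theta_j^{\mathrm{S}}$ bounds is taken directly from Proposition~\ref{prop3.6}; that is where the real work lies, namely the choice of $\Theta_{j}^{\mathrm{S}}$ through integrals analogous to \eqref{t0}.
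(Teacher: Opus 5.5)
Your argument is correct and is exactly the paper's: the corollary is obtained by combining Remark~\ref{rem3.4}, which makes $\Theta=\sum_{j\le J_*}T^{-2j}\Theta_j^{\mathrm S}$ a finite sum so that $\Theta-\Theta_{[J]}^{\mathrm S}$ is a finite tail, with the bound $|\pa_T^m\Theta_j^{\mathrm S}|\lesssim T^{-m}$ from Proposition~\ref{prop3.6}. One small precision: for $J>J_*$ the difference vanishes only if $\Theta_{[J]}^{\mathrm S}$ is read as truncated at $J_*$, which is the paper's convention (stated after \eqref{5.43}) of suppressing the restriction $J\le J_*$; otherwise the extra terms $\sum_{J_*<j\le J}$ give only $T^{-2J_*-2-m}$.
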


\subsection{Outer-Shock Matching}
For convenience, we switch to the inner variables and set
\begin{equation*}
	\begin{aligned}
		U_j^{\mathrm{shock}}(T,X)&=T^{\frac12-2j}V_j^{\mathrm{shock}}(T,\sqrt{T}X+\Theta(T))
		\\
		U_j^{\mathrm{cut}}(T,X)&=T^{\frac12-2j}V_j^{\mathrm{cut}}(T,\sqrt{T}X+\Theta(T)).
	\end{aligned}
\end{equation*}
We first rewrite the equation for $V_j^{\mathrm{shock}}$ given in Proposition~\ref{prop3.3} in these variables.
\begin{prop}\label{shockeq}
	For $j\in\N_0$ we have
	\begin{equation}\label{348}
		\begin{aligned}
		\pa_X^2U_{j}^{\mathrm{shock}}&+\dot{\mf{S}}\pa_XU_j^{\mathrm{shock}}=\frac{1}{2}\pa_X\sum\limits_{j_1+j_2=j}U_{j_1}^{\mathrm{shock}}U_{j_2}^{\mathrm{shock}}+\pa_TU_{j-1}^{\mathrm{shock}}\\
		&-\pa_T(\frac{\Theta}{T^{\frac12}})\pa_XU_{j-1}^{\mathrm{shock}}+\sum\limits_{j_1+j_2=j-1}\pa_T(\frac{\Theta_{j_1}^{\mathrm{S}}}{T^{2j_1+\frac12}})\pa_XU_{j_2}^{\mathrm{shock}}.
		\end{aligned}
	\end{equation}
\end{prop}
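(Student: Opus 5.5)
This is a change of variables: I will substitute $U_j^{\mathrm{shock}}(T,X)=T^{\frac12-2j}V_j^{\mathrm{shock}}(T,Y)$ with $Y=\sqrt T X+\Theta(T)$ into \eqref{3.12}–\eqref{3.13}. The plan is to compute how each term of \eqref{348} transforms and then recognise the coefficients of \eqref{3.12}.

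\textbf{Derivative rules.} With $Y=\sqrt TX+\Theta$ we have $\pa_X=\sqrt T\pa_Y$, and at fixed $X$,
\[
\pa_T=\pa_T|_Y+\Bigl(\tfrac{X}{2\sqrt T}+\dot\Theta\Bigr)\pa_Y
=\pa_T|_Y+\Bigl(\tfrac{Y-\Theta}{2T}+\dot\Theta\Bigr)\pa_Y .
\]
Hence the left side of \eqref{348} is
\[
T^{\frac32-2j}\Bigl(\pa_Y^2V_j^{\mathrm{shock}}+\tfrac{\dot{\mf S}}{\sqrt T}\pa_YV_j^{\mathrm{shock}}\Bigr).
\]
The quadratic term is
\[
\tfrac12\pa_X\sum U_{j_1}U_{j_2}=T^{\frac32-2j}\,\tfrac12\pa_Y\sum_{j_1+j_2=j}V_{j_1}V_{j_2},
\]
where the two terms with $j_1$ or $j_2$ equal to zero give $\pa_Y(V_0V_j)$. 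All terms of \eqref{348} therefore carry the common factor $T^{\frac32-2j}$. For $j=0$ the terms with index $j-1$ vanish, and after dividing by $T^{3/2}$ we recover \eqref{3.13}.

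\textbf{The $\pa_T U_{j-1}$ terms for $j\ge1$.} By the rules above,
\[
\pa_TU_{j-1}^{\mathrm{shock}}=T^{\frac52-2j-1}\Bigl[(\tfrac52-2j)V_{j-1}+T\pa_TV_{j-1}+\bigl(\tfrac{Y-\Theta}{2}+T\dot\Theta\bigr)\pa_YV_{j-1}\Bigr].
\]
The prefactor $T^{\frac32-2j}$ matches. Next,
\[
-\pa_T(\Theta T^{-\frac12})\,\pa_XU_{j-1}=-(\dot\Theta-\tfrac{\Theta}{2T})\,T^{3-2j}\pa_YV_{j-1}.
\]
Combined with the previous line, this cancels the full $\Theta$ and $\dot\Theta$ dependence, leaving $\tfrac Y2\pa_YV_{j-1}$. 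This is the purpose of that term.

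\textbf{The truncated phase sum.} For the last sum,
\[
\pa_T\bigl(\Theta_{j_1}T^{-2j_1-\frac12}\bigr)\pa_XU_{j_2}=T^{-2j_1-\frac32}\bigl[T\pa_T\Theta_{j_1}-(2j_1+\tfrac12)\Theta_{j_1}\bigr]\,T^{1-2j_2}\pa_YV_{j_2}.
\]
Using $j_1+j_2=j-1$, this equals
\[
T^{\frac32-2j}\bigl[T\pa_T\Theta_{j_1}-(2j_1+\tfrac12)\Theta_{j_1}\bigr]\pa_YV_{j_2},
\]
which is exactly the last term of \eqref{3.12}.

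\textbf{Collecting and the main check.} Moving $\pa_Y(V_0V_j)$ to the left and dividing by $T^{\frac32-2j}$ reproduces \eqref{3.12} with the sign $-(2j-\tfrac52)V_{j-1}$. The main obstacle is bookkeeping: confirming that the $\Theta$ in the coordinate $Y$ is the full phase, which makes the $-\pa_T(\Theta/\sqrt T)$ term cancel exactly. In contrast, \eqref{3.12} itself contains only the expanded coefficients $\Theta_{j_1}^{\mathrm S}$, which appear through the final sum. Verifying this cancellation is where sign errors are most likely, so I would check it first.
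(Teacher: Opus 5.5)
Your proposal is correct and is exactly the computation the paper intends: the paper gives no separate proof and presents the statement as \eqref{3.12}--\eqref{3.13} rewritten through $U_j^{\mathrm{shock}}=T^{\frac12-2j}V_j^{\mathrm{shock}}(T,\sqrt{T}X+\Theta)$, and the same chain-rule identity appears, for $j=1$, in the proof of \eqref{4.43}. One slip needs fixing: since $\pa_T(\Theta T^{-\frac12})=T^{-\frac12}(\dot\Theta-\frac{\Theta}{2T})$, the phase term is $-(\dot\Theta-\frac{\Theta}{2T})T^{\frac52-2j}\pa_YV_{j-1}^{\mathrm{shock}}=-T^{\frac32-2j}(T\dot\Theta-\frac{\Theta}{2})\pa_YV_{j-1}^{\mathrm{shock}}$, not the expression with $T^{3-2j}$ that you wrote, and only with this power does the claimed cancellation leaving $\frac{Y}{2}\pa_YV_{j-1}^{\mathrm{shock}}$ go through.
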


We next restate Proposition~\ref{prop3.6} in the variables $(T,X)$.
\begin{prop}\label{prop3.8}
	There exists a constant $\mu>0$ such that for all $j,m,n\in\N_0$ and $T\in[1,T_0]$ we have
	\begin{equation*}
		\begin{aligned}
			|\pa_T^m\pa_X^n(U_j^{\mathrm{shock}}-U_{j}^{\mathrm{cut}})|\lesssim T^{\frac12-2j-m+\frac n2}e^{-\mu\sqrt{T}|X|}.\\
		\end{aligned}
	\end{equation*}
\end{prop}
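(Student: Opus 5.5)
This is a change of variables applied to Proposition~\ref{prop3.6}. Write $\ol V_j=V_j^{\mathrm{shock}}-V_j^{\mathrm{cut}}$. Then
\[
U_j^{\mathrm{shock}}-U_j^{\mathrm{cut}}=T^{\frac12-2j}\,\ol V_j\bigl(T,Y(T,X)\bigr),\qquad Y(T,X)=\sqrt T X+\Theta(T),
\]
and we differentiate this composite in $(T,X)$ by the chain rule. Here $\Theta$ is the truncated phase $\sum_{j\le J}T^{-2j}\Theta_j^{\mathrm S}$ of Remark~\ref{rem3.4}. Spatial derivatives are immediate: $\pa_X=\sqrt T\,\pa_Y$, so each $\pa_X$ costs exactly a factor $T^{1/2}$. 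This accounts for the $T^{n/2}$ in the bound.

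For time derivatives, at fixed $X$ we have
\[
\pa_T=\pa_T|_Y+\bigl(\tfrac{X}{2\sqrt T}+\dot\Theta\bigr)\pa_Y=\pa_T|_Y+\Bigl(\tfrac{Y-\Theta}{2T}+\dot\Theta\Bigr)\pa_Y .
\]
By Proposition~\ref{prop3.6}, $|\pa_T^m\Theta_j^{\mathrm S}|\lesssim T^{-m}$, and hence $|\pa_T^m\Theta|\lesssim T^{-m}$. Iterating the formula, $\pa_T^m\pa_X^n$ applied to the composite is a finite sum of terms
\[
T^{\frac n2-m}\,c(T)\,Y^p\,(\pa_T|_Y)^{m'}\pa_Y^{n'}\ol V_j ,
\]
where $c$ is a bounded coefficient satisfying symbol-type bounds, $p\le m$, $n'\le n+m$, and $m'\le m$. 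The weights $T^{-m}$ all line up because each $T$-derivative of $T^{\frac12-2j}$, of $\Theta$, or of $\tfrac{1}{2T}$ costs one power of $T^{-1}$. Proposition~\ref{prop3.6} gives $|(\pa_T|_Y)^{m'}\pa_Y^{n'}\ol V_j|\lesssim T^{-m'}e^{-\frac12\om|Y|}$, so every such term is bounded by $T^{\frac12-2j-m+\frac n2}\,|Y|^p e^{-\frac12\om|Y|}$.

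Next we absorb the polynomial factor. By \eqref{4.22}, $\om(T)\gtrsim ab>0$ uniformly. Also $\om$ is bounded above, since $V_{0,0}^\pm$ are bounded as noted after \eqref{vik}. Hence $|Y|^p e^{-\frac12\om|Y|}\lesssim e^{-\mu_1|Y|}$ for some $\mu_1>0$ independent of $T$ and $\nu$. Finally we pass from $Y$ to $X$: since $|\Theta|\lesssim1$, we have $|Y|\ge \sqrt T|X|-C$, so $e^{-\mu_1|Y|}\lesssim e^{-\mu_1\sqrt T|X|}$. Taking $\mu=\mu_1$ gives the claimed bound.

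The main obstacle is the bookkeeping in the chain-rule expansion. We must check two things: that the constants stay uniform in $\nu$ on the long interval $[1,T_0]$, and that the coefficients $\om$, $V_{i,k}^\pm$, $\Theta_j^{\mathrm S}$ really obey $T\pa_T$-stable bounds of the form $|(T\pa_T)^m f|\lesssim1$. For $\Theta_0^{\mathrm S}$ this follows from the integral formula \eqref{t0} together with \eqref{442}. For $\om$ and $V_{i,k}^\pm$ it follows from Proposition~\ref{prop3.2} combined with the homogeneity of $\mf D(T,0)\sim T^{1/2}$. Once these symbol bounds are in hand, the estimate is a direct substitution.
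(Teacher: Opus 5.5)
Your proof is correct and is exactly the change of variables the paper has in mind: the paper states this proposition simply as a restatement of Proposition~\ref{prop3.6} in the $(T,X)$ variables, without further argument, and your chain-rule bookkeeping supplies that argument. Only the phase bounds $|\pa_T^m\Theta|\lesssim T^{-m}$ and the lower bound \eqref{4.22} on $\om$ are needed here, since Proposition~\ref{prop3.6} already gives the $(T,Y)$-derivative bounds on $\ol V_j$; the symbol bounds on $\om$ and $V_{i,k}^{\pm}$ discussed in your last paragraph, and the upper bound on $\om$, are therefore not needed for this step.
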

We introduce the rescaled outer profiles $U_k^{\mathrm{out}}$ by
\begin{equation}\label{3.60}
	\begin{aligned}
	u_k^{\mathrm{out}}(t,\x)=\nu^{\frac14-k}U_k^{\mathrm{out}}(\nu^{-1/2}t,\nu^{-3/4}\x).
	\end{aligned}
\end{equation}

Define the truncated expansions
\begin{equation*}
	\begin{aligned}
		U_{[J]}^{\mathrm{shock}}=\sum\limits_{j=0}^JU_j^{\mathrm{shock}},\quad U_{[K]}^{\mathrm{out}}=\sum\limits_{k=0}^KU_k^{\mathrm{out}},\quad 	U_{[J]}^{\mathrm{cut}}=\sum\limits_{j=0}^JU_j^{\mathrm{cut}}.
	\end{aligned}
\end{equation*}
We will show that $U_{[J]}^{\mathrm{shock}}$ and $U_{[K]}^{\mathrm{out}}$ match in $O_{\mathrm{out}}\cap O_{\mathrm{shock}}$ as $J,K\to\infty$.
\begin{prop}\label{prop3.9}
Suppose $T\in[1,T_0]$, $|X|\geq 2T^{-1/2}$,
$|X+T^{-\frac12}\Theta|\geq T^{-\frac12}$ and
$X(X+T^{-\frac12}\Theta)>0$. Then for all $m,n,J,K\in\N_0$, we have
	\begin{equation*}
		\begin{aligned}
			|\pa_T^m\pa_X^n(U_{[J]}^{\mathrm{shock}}-U_{[K]}^{\mathrm{out}})|\lesssim |T|^{\frac12-m+\frac n2}(e^{-\mu\sqrt{T}|X|}+(\frac{|X|}{T^{\frac32}})^{K+1}+(\frac{|X|}{T^{\frac32}})^{J+1}).
		\end{aligned}
	\end{equation*}
\end{prop}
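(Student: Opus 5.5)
We prove Proposition~\ref{prop3.9} by a triangle inequality through the cutoff profiles, writing
\[
U_{[J]}^{\mathrm{shock}}-U_{[K]}^{\mathrm{out}}
=\bigl(U_{[J]}^{\mathrm{shock}}-U_{[J]}^{\mathrm{cut}}\bigr)
+\bigl(U_{[J]}^{\mathrm{cut}}-U_{[K]}^{\mathrm{out}}\bigr).
\]

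The first bracket is controlled directly by Proposition~\ref{prop3.8}. Summing over $j\le J$, the $j=0$ term dominates, since $T\ge1$. This gives the bound $T^{\frac12-m+\frac n2}e^{-\mu\sqrt T|X|}$.

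For the second bracket we use the hypotheses. Since $|X+T^{-1/2}\Theta|\ge T^{-1/2}$, we have $|Y|=|\sqrt T X+\Theta|\ge1$, so $\chi\equiv1$. The sign condition $X(X+T^{-1/2}\Theta)>0$ guarantees that the branch $\pm$ selected by $Y$ coincides with the side of the shock on which $X$ lies. Hence $U_{[J]}^{\mathrm{cut}}$ is exactly the polynomial obtained by rewriting \eqref{3.16}, truncated at total order $j\le J$, in the variables $(T,X)$. Undoing the $\Theta$-expansion (the binomial sums in \eqref{3.27}) recovers $\sum_{i+k\le J}T^{\frac12-2i-2k}(\sqrt T X)^iV^{\pm}_{i,k}(T)$, up to terms coming from the truncation $\Theta-\Theta^{\mathrm S}_{[J']}$. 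By Corollary~\ref{cor3.8}, those terms are of size $T^{-2J-2}$ times powers of $|X|\sqrt T$, which is absorbed into $(|X|/T^{3/2})^{J+1}$. One has to be slightly careful here, because powers $Y^{i-p}$ grow. However, $i\le J$, and the crossing terms carry $T^{-2(i+k+\dots)}$, so every such term is bounded by $T^{1/2}(|X|T^{-3/2})^{i}$ times $T^{-2(\cdot)}$. By the definition \eqref{vik}, this truncated double sum equals $\nu^{-1/4}\sum_{i+k\le J}\nu^k\x^i\np_x^iu^{\mathrm{out}}_{k,\pm}(t,0)/i!$, which is a two-parameter Taylor polynomial of $U^{\mathrm{out}}_{[K]}$ in $\x$.

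It then remains to compare $U^{\mathrm{out}}_k$ with its one-sided Taylor polynomial of degree $J-k$ (if $k\le J$) and to account for the missing orders. We use Taylor's theorem on the one-sided domain, which is legitimate because $X$ and the shock side agree. The remainder is bounded with Proposition~\ref{prop3.2} and the homogeneous rescaling \eqref{D}: $|\np_x^{i}u_k^{\mathrm{out}}|\lesssim\mf d^{1-4k-3i}$, and on the segment between $0$ and $\x$ we have $\mf d\gtrsim t^{1/2}$ (Lemma~\ref{lem2.11}, since $|\mf y|\ge bt^{1/2}$). In inner variables this yields the bound
\[
T^{\frac12-2k}\bigl(|X|T^{-3/2}\bigr)^{J-k+1}.
\]
Terms with $k>K$, present in the cut sum but absent in $U^{\mathrm{out}}_{[K]}$, and terms with $k\le K$ but $i+k>J$ are bounded the same way, by $T^{1/2}T^{-2k}$ and $T^{1/2}(|X|T^{-3/2})^{i}$ respectively. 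We split according to whether $|X|T^{-3/2}\le1$. In the complementary region, each of the two error terms already exceeds $T^{1/2}$ times a constant, which bounds both functions individually via Propositions~\ref{prop3.2} and~\ref{prop3.8}. Combining the pieces gives the $(|X|/T^{3/2})^{K+1}$ and $(|X|/T^{3/2})^{J+1}$ terms, using $T^{-2}\lesssim|X|T^{-3/2}$ when $|X|\ge2T^{-1/2}$ to convert pure powers of $T^{-2}$.

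Derivatives are handled identically. Each $\pa_X$ costs $\mf d^{-3}\lesssim T^{1/2}$ in the inner scaling, and each $\pa_T$ costs $T^{-1}$, using $T\pa_T V^{\pm}_{i,k}=\ml O(1)$ from Lemma~\ref{lem3.5} and the symbol-type bounds in Propositions~\ref{prop3.2} and~\ref{prop3.6}. The main obstacle is the bookkeeping in the second step: we must verify that the reordering in \eqref{3.16}, combined with the finite truncation of $\Theta$, reproduces exactly the Taylor polynomial up to errors of the claimed order, and that no low-order term survives unmatched. We would organize this by assigning the homogeneity weight $T^{-2}$ to each of $i$, $k$ and $j_r$, and counting total weight greater than $J$.
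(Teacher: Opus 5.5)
Your proposal is correct and is essentially the paper's argument. The ingredients match: Proposition~\ref{prop3.8} handles $U_{[J]}^{\mathrm{shock}}-U_{[J]}^{\mathrm{cut}}$; the binomial reorganization of the cut profiles into a one-sided Taylor polynomial, with phase-truncation errors controlled by Corollary~\ref{cor3.8}, is exactly the paper's Lemma~\ref{lem3.10}; and the one-sided Taylor remainders are bounded via Proposition~\ref{prop3.2} with $\mf D\gtrsim T^{1/2}$ and $T^{-2}\lesssim |X|T^{-3/2}$. The paper merely splits your second bracket into $U_{[K]}^{\mathrm{out}}-U_{[K]}^{\mathrm{cut}}$ and $U_{[J]}^{\mathrm{cut}}-U_{[K]}^{\mathrm{cut}}$. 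Your case split at $|X|T^{-3/2}=1$ is unnecessary, since the Taylor-remainder bounds hold for all $|X|$ in the region; this also avoids your slightly inaccurate claim that both functions are individually $O(T^{1/2})$ there.
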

To prove Proposition~\ref{prop3.9}, we need the following lemma. For convenience we set
\begin{equation}\label{th}
	\begin{aligned}
		\wh{\Theta}_j^{\mathrm{S}}=T^{-2j}\Theta_j^{\mathrm{S}},\quad \wh{\Theta}_{[J]}^{\mathrm{S}}=\sum\limits_{j=0}^JT^{-2j}\Theta_j^{\mathrm{S}}.
	\end{aligned}
\end{equation}
\begin{lemma}\label{lem3.10}
Suppose $T\in[1,T_0]$, $|X|\geq 2T^{-1/2}$,
$|X+T^{-\frac12}\Theta|\geq T^{-\frac12}$ and
$X(X+T^{-\frac12}\Theta)>0$. Then for all $J,m,n\in\N_0$ we have
		\begin{equation*}
		\begin{aligned}
			|\pa_T^m\pa_X^n(U_{[J]}^{\mathrm{cut}}-&\sum_{\substack{i+k\leq J}}	\frac{1}{i!}\pa_X^iU_k^{\mathrm{out}}(T,0)[X+T^{-\frac12}(\Theta-\wh{\Theta}_{[J-i-k]}^{\mathrm{S}})]^i)|\\
			&\lesssim T^{-\frac 32-m+\frac n2}(T^{-2J}+(\frac{|X|}{T^{\frac32}})^J).
		\end{aligned}
	\end{equation*}
\end{lemma}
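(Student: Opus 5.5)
The approach is direct algebraic expansion. In the stated region, $Y=\sqrt T X+\Theta$ satisfies $|Y|=\sqrt T|X+T^{-1/2}\Theta|\ge1$, so $\chi(Y)=1$. The condition $X(X+T^{-1/2}\Theta)>0$ ensures that the sign $\pm$ chosen by $Y$ agrees with the sign of $X$, which is the side on which $\pa_X^iU_k^{\mathrm{out}}(T,0)$ is taken. Hence on this region each $V_j^{\mathrm{cut}}$ equals the explicit polynomial in $Y$ from \eqref{3.27}, with no cutoff.

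First I rewrite $V_{i,k}^\pm$ via \eqref{vik} and \eqref{3.60} as
\[
V_{i,k}^\pm(T)=\frac1{i!}T^{-\frac{1-4k-3i}{2}}\pa_X^iU_{k,\pm}^{\mathrm{out}}(T,0).
\]
Then I substitute $Y-\Theta_{[J-i-k]}$-type combinations back. The sum in \eqref{3.27} over $p$ and $j_1,\dots,j_p$ is exactly the binomial expansion of $T^{-2i}(Y-\sum_{j'}T^{-2j'}\Theta_{j'}^{\mathrm S})^i$, truncated at total order $T^{-2j}$. Multiplying by $T^{1/2-2j}$ and summing over $j\le J$, I regroup by $(i,k)$. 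For fixed $(i,k)$ with $i+k\le J$, the admissible $\Theta$-indices satisfy $j_1+\dots+j_p\le J-i-k$.

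I will compare this truncated product with the full expression $(Y-\wh\Theta^{\mathrm S}_{[J-i-k]})^i$, where $\wh\Theta^{\mathrm S}_{[J-i-k]}$ is the truncated phase \eqref{th}. The difference consists of monomials containing $\Theta$-factors of total order $>J-i-k$, each of size $\lesssim T^{-2(J-i-k+1)}$ times powers of $|Y|$.

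Now observe that $Y-\wh\Theta^{\mathrm S}_{[J-i-k]}=\sqrt T\,[X+T^{-1/2}(\Theta-\wh\Theta^{\mathrm S}_{[J-i-k]})]$. Combined with the prefactors, $T^{1/2-2i-2k}\,T^{-\frac{1-4k-3i}{2}}\,T^{i/2}=1$, this exactly produces the main term $\frac1{i!}\pa_X^iU_k^{\mathrm{out}}(T,0)[X+T^{-1/2}(\Theta-\wh\Theta^{\mathrm S}_{[J-i-k]})]^i$. So $U_{[J]}^{\mathrm{cut}}$ minus the main sum is a finite sum of the discarded monomials. Each of these has the form
\[
T^{\frac12-2j}\,V_{i,k}^\pm\,Y^{i-p}\prod_r\Theta_{j_r}^{\mathrm S}\quad\text{with } i+k+\sum_r j_r>J,\ i+k\le J.
\]

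Next I bound these remainders using $|V_{i,k}^\pm|\lesssim1$ and $|\Theta_j^{\mathrm S}|\lesssim1$ (Proposition~\ref{prop3.6}), together with
\[
|Y|\lesssim \sqrt T|X|+1\lesssim \sqrt T|X|,
\]
where the last step uses $|X|\ge 2T^{-1/2}$. A monomial of total $T$-order $j>J$ with $q=i-p$ powers of $Y$ is then bounded by
\[
T^{1/2-2j}(\sqrt T|X|)^q=T^{1/2-2j+2q}\,(|X|/T^{3/2})^q.
\]
Since $q\le i$ and $j\ge q+(j-q)$ with $j-q\ge$ (the number of $\Theta$- and $k$-contributions), I will interpolate between the two extreme cases. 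If all the excess order comes from $\Theta$, the factor $T^{-2j}$ with $j\ge J+1$ and $q\le J$ gives a bound by $T^{-3/2}T^{-2J}$ when $|X|\le T^{3/2}$. If $q$ is large, the bound is $(|X|/T^{3/2})^J$. This convex-combination step is the main bookkeeping obstacle: I must verify that $T^{1/2-2j+2q}(|X|T^{-3/2})^q\lesssim T^{-3/2}(T^{-2J}+(|X|T^{-3/2})^J)$. I would split according to whether $|X|T^{-3/2}\le T^{-2}$ or not and use $j\ge J+1$.

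Finally, derivatives: $\pa_X$ costs a factor $\sqrt T/|Y|\cdot\sqrt T\lesssim T^{1/2}$ relative to each bound (it lowers the power of $Y$ and produces $\sqrt T$). $\pa_T$ costs $T^{-1}$, using $|\pa_T^mV_{i,k}^\pm|,|\pa_T^m\Theta_j^{\mathrm S}|\lesssim T^{-m}$ and Corollary~\ref{cor3.8}; the $T^{-m}$ bound for $V_{i,k}^\pm$ follows from homogeneity in Proposition~\ref{prop3.2}. These give the factors $T^{-m+n/2}$.
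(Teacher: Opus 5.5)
Your proposal is correct and is essentially the paper's proof. On the region, $\chi(Y)=1$ and $\operatorname{sgn}Y=\operatorname{sgn}X$. For each $(i,k)$ with $i+k\le J$, the truncated sum regroups into $T^{\frac12-2i-2k}V_{i,k}^{\pm}(Y-\wh\Theta^{\mathrm S}_{[J-i-k]})^i$, which equals the stated main term. What remains are monomials of total phase order greater than $J-i-k$, each bounded by $T^{-\frac32-2J}(\sqrt T|X|)^q$ with $q\le J$; this is exactly the paper's $I_1/I_2$ split.

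The paper first expands $(\sqrt TX+\Theta)^{i-p}$ in powers of $X$ before resumming, whereas you keep $Y$ and use $|Y|\lesssim\sqrt T|X|$; this is cosmetic. Your final check $T^{-2J+2q}Z^q\le T^{-2J}+Z^J$, with $Z=|X|T^{-3/2}$ and $0\le q\le J$, is the right one. The intermediate remark about the case $|X|\le T^{3/2}$ is not the relevant split, though.
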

\begin{proof}
The profiles $U_j^{\mathrm{cut}}$ serve as intermediate objects connecting the outer expansion with the shock-profile expansion, so we compute them explicitly. 
From \eqref{3.27} a direct computation gives
	\begin{equation*}
	\begin{aligned}
	U_j^{\mathrm{cut}}&=\chi(\sqrt{T}X+\Theta)\sum\limits_{p=0}^j\sum_{\substack{i\geq p,k\geq 0\\ i+k+h_1+...+h_p=j}}\frac{(-1)^p}{i!}C_i^p(\sqrt{T}X+\Theta)^{i-p}\pa_X^iU_k^{\mathrm{out}}(T,0)\\
	&\qquad \times T^{2k+\frac 32i-2j}\prod_{r=1}^p\Theta_{h_r}^{\mathrm{S}}.
	\end{aligned}
\end{equation*}
To reorganize the expression for $U_j^{\mathrm{cut}}$, we expand $(\sqrt{T}X+\Theta)^{i-p}$ by means of the binomial theorem:
\[
(\sqrt{T}X+\Theta)^{i-p}=\sum_{q=0}^{i-p}C_{i-p}^{q}\,T^{q/2}X^{q}\,\Theta^{\,i-p-q}.
\]

Substituting \eqref{th} into the formula for $U_j^{\mathrm{cut}}$, we obtain
\begin{align*}
U_j^{\mathrm{cut}}&=\chi(\sqrt{T}X+\Theta)\sum_{p=0}^{j}\sum_{\substack{i\geq p,k\geq0\\\sum h_r=j-i-k}}
\frac{(-1)^p}{i!}C_i^{p}
\Bigl(\sum_{q=0}^{i-p}C_{i-p}^{q}T^{q/2}X^{q}\Theta^{\,i-p-q}\Bigr)\notag\displaybreak[1]\\
&\qquad\times\pa_X^iU_k^{\mathrm{out}}(T,0)\,T^{2k+\frac32i-2j}\prod_{r=1}^{p}\Theta_{h_r}^{\mathrm{S}}\notag\displaybreak[1]\\
&=\chi(\sqrt{T}X+\Theta)\sum_{p=0}^{j}\sum_{\substack{k\geq0\\0\leq q\leq i-p\\\sum h_r=j-i-k}}
\frac{(-1)^p}{p!q!(i-p-q)!}T^{\frac q2-\frac i2}X^{q}\pa_X^iU_k^{\mathrm{out}}(T,0) \times\Theta^{\,i-p-q}\prod_{r=1}^{p}\wh{\Theta}_{h_r}^{\mathrm{S}},
\end{align*}
where we used the identity $C_i^{p}C_{i-p}^{q}=i!/(p!\,q!\,(i-p-q)!)$. We also define the truncated cutoff profiles $U_{[J]}^{\mathrm{cut}}$ and $\wh{\Theta}_{[J]}^{\mathrm{S}}$ by
\begin{equation}\label{3.61}
	\begin{aligned}
		\wh{\Theta}_{[J]}^{\mathrm{S}}=\sum\limits_{j=0}^J\wh{\Theta}_j^{\mathrm{S}}.
	\end{aligned}
\end{equation}
Notice that the sums can be reorganized as follows:
\begin{equation*}
	\begin{aligned}
	&\sum\limits_{j=0}^J\sum_{p=0}^{j}\sum_{\substack{k\geq0\\0\leq q\leq i-p\\i+k+\sum h_r=j}}=\sum_{p=0}^{J}\sum\limits_{j=p}^J\sum_{\substack{k\geq0\\0\leq q\leq i-p\\i+k+\sum h_r=j}}
	=\sum_{q=0}^{J}\sum_{\substack{k\geq0\\0\leq p\leq i-q\\i+k+\sum h_r\leq J}}.\\
	\end{aligned}
\end{equation*}
We next record the combinatorial identity (valid for $0\leq q\leq i$, $0\leq p\leq i-q$):
\[
\frac{1}{p!\,q!\,(i-p-q)!}=\frac{C_i^{q}}{i!}\,C_{i-q}^{p},
\]
since $C_i^{q}=i!/(q!(i-q)!)$ and $C_{i-q}^{p}=(i-q)!/(p!(i-p-q)!)$. Summing over $p$ and using the above identity, we obtain
\begin{align*}
U_{[J]}^{\mathrm{cut}}&=\chi(\sqrt{T}X+\Theta)\sum_{q=0}^{J}\sum_{\substack{k\geq0\\0\leq p\leq i-q\\i+k+\sum h_r\leq J}}
\frac{(-1)^p}{p!q!(i-p-q)!}T^{\frac q2-\frac i2}X^{q}
\pa_X^iU_k^{\mathrm{out}}(T,0)\,\notag\displaybreak[1]\\
&\qquad \times\Theta^{\,i-p-q}\prod_{r=1}^{p}\wh{\Theta}_{h_r}^{\mathrm{S}}\notag\displaybreak[1]\\
&=\chi(\sqrt{T}X+\Theta)\sum_{q=0}^{J}\sum_{\substack{k\geq0\\0\leq p\leq i-q\\i+k+\sum h_r\leq J}}
\frac{1}{i!}C_i^qT^{\frac q2-\frac i2}X^{q}
\pa_X^iU_k^{\mathrm{out}}(T,0)C_{i-q}^p(-1)^p  \times\Theta^{\,i-p-q}\prod_{r=1}^{p}\wh{\Theta}_{h_r}^{\mathrm{S}}.
\label{U_partial}
\end{align*}
We note the following splitting of the binomial expansion:
\begin{equation}
	\begin{aligned}
	&\sum_{\substack{0\leq p\leq i-q\\\sum h_r\leq J-i-k}}C_{i-q}^p(-1)^p
	\Theta^{\,i-p-q}\prod_{r=1}^{p}\wh{\Theta}_{h_r}^{\mathrm{S}}\\
	&=(\Theta-\wh{\Theta}_{[J-i-k]}^{\mathrm{S}})^{i-q}-\sum_{\substack{0\leq p\leq i-q\\\sum h_r>J-i-k\\h_r\leq J-i-k}}C_{i-q}^p(-1)^p
	\Theta^{\,i-p-q}\prod_{r=1}^{p}\wh{\Theta}_{h_r}^{\mathrm{S}}.\\
	\end{aligned}
\end{equation}
Substituting this identity into \eqref{U_partial}, we obtain
\begin{align*}
U_{[J]}^{\mathrm{cut}}
		&=\chi(\sqrt{T}X+\Theta)\sum_{q=0}^{J}[\sum_{\substack{k\geq0\\0\leq i-q\\i+k\leq J}}
		\frac{1}{i!}C_i^qT^{\frac q2-\frac i2}X^{q}
		\pa_X^iU_k^{\mathrm{out}}(T,0)
		(\Theta-\wh{\Theta}_{[J-i-k]}^{\mathrm{S}})^{i-q}\notag\displaybreak[1]\\
		&\quad-\sum_{\substack{k\geq0\\0\leq p\leq i-q\\\sum h_r> J-i-k\\h_r\leq J-i-k}}
		\frac{1}{i!}C_i^qT^{\frac q2-\frac i2}X^{q}
		\pa_X^iU_k^{\mathrm{out}}(T,0)C_{i-q}^p(-1)^p
		\Theta^{\,i-p-q}\prod_{r=1}^{p}\wh{\Theta}_{h_r}^{\mathrm{S}}]\notag\\*
		&:=I_1+I_2.
\end{align*}
We now simplify $I_1$ and estimate $I_2$:
\begin{align*}
I_1&=\sum_{\substack{i+k\leq J}}	\frac{1}{i!}T^{-\frac i2}\pa_X^iU_k^{\mathrm{out}}(T,0)\sum_{\substack{0\leq q\leq i}}C_i^qT^{\frac q2}X^q(\Theta-\wh{\Theta}_{[J-i-k]}^{\mathrm{S}})^{i-q}\notag\displaybreak[1]\\
	&=\sum_{\substack{i+k\leq J}}	\frac{1}{i!}\pa_X^iU_k^{\mathrm{out}}(T,0)[X+T^{-\frac12}(\Theta-\wh{\Theta}_{[J-i-k]}^{\mathrm{S}})]^i,\notag\displaybreak[1]\\
	|I_2|&\lesssim \sum\limits_{0\leq q\leq J} T^{-\frac32 -2J}(\sqrt{T}X)^q\lesssim T^{-\frac32 -2J}+ T^{-\frac32}(\frac{X}{T^{\frac32}})^J.
\end{align*}
The identity for $I_1$ produces exactly the main term in the statement of Lemma~\ref{lem3.10}, while the estimate for $I_2$ bounds the remainder. Combining the two, we obtain the desired bound for $m=n=0$; the estimates for general $m,n$ follow by differentiating the identities above with respect to $T$ and $X$. This completes the proof of the lemma.
\end{proof}

\begin{proof}[Proof of Proposition~\ref{prop3.9}]
	Fix $J,K\in\N_0$. We split the difference into three parts:
	\begin{equation*}
		\begin{aligned}
			|U_{[J]}^{\mathrm{shock}}-U_{[K]}^{\mathrm{out}}|&\leq|U_{[J]}^{\mathrm{shock}}-U_{[J]}^{\mathrm{cut}}|+|U_{[K]}^{\mathrm{out}}-U_{[K]}^{\mathrm{cut}}|+|U_{[J]}^{\mathrm{cut}}-U_{[K]}^{\mathrm{cut}}|\\
			&:=I_1+I_2+I_3.
		\end{aligned}
	\end{equation*}
	For $I_1$, Proposition~\ref{prop3.8} gives
	\begin{equation*}
		\begin{aligned}
			|I_1|\lesssim |T|^{\frac12}e^{-\mu\sqrt{T}|X|}.
		\end{aligned}
	\end{equation*}
For $I_2$, combining the Taylor expansion of $U_k^{\mathrm{out}}$ at $X=0$ with Proposition~\ref{prop3.2}, \eqref{inner}, \eqref{3.60}, and Lemma~\ref{lem3.10}, we obtain
	\begin{align*}
|I_2|\leq& \sum\limits_{i+k\leq K}|\frac{1}{i!}\pa_X^iU_k^{\mathrm{out}}(T,0)[X^i-[X+T^{-\frac12}(\Theta-\wh{\Theta}_{[K-i-k]}^{\mathrm{S}})]^i]|\notag\displaybreak[1]\\
		&+\sum\limits_{i+k= K+1}\frac{1}{i!}|\pa_X^iU_k^{\mathrm{out}}(T,\xi_k)||X|^i\notag\displaybreak[1]\\
		&+|U_{[K]}^{\mathrm{cut}}-\sum_{\substack{i+k\leq K}}	\frac{1}{i!}\pa_X^iU_k^{\mathrm{out}}(T,0)[X+T^{-\frac12}(\Theta-\wh{\Theta}_{[K-i-k]}^{\mathrm{S}})]^i|\notag\\*
		&\lesssim T^{\frac12}(\frac{|X|}{T^{\frac32}})^{K+1},
\end{align*}
	where $\xi_k$ lies between $0$ and $X$. For $I_3$, assuming $J<K$ without loss of generality, we have
	\begin{equation*}
		\begin{aligned}
			|I_3|&\lesssim \sum\limits_{J+1\leq i+k\leq K} |X^{i}\pa_X^{i}U_k^{\mathrm{out}}(T,0)|
			\lesssim T^{\frac12}((\frac{|X|}{T^{\frac32}})^{J+1}+(\frac{|X|}{T^{\frac32}})^{K+1}).
		\end{aligned}
	\end{equation*}

	Combining the estimates for $I_1$, $I_2$, and $I_3$, we obtain the desired bound for $m=n=0$; the general case follows by differentiating with respect to $T$ and $X$. This completes the proof of Proposition~\ref{prop3.9}.
	
\end{proof}

\section{Inner Expansion and Matching of Two Grids}
In this section we introduce the inner expansion and the two intermediate
grids that connect it with the outer and shock-profile expansions of
Section~4, and we establish the corresponding matching estimates. We work
in the inner variables \eqref{inner}; the grid functions are constructed
and matched on the interval $[1,2\nu^{-\beta_2}]$ for some small
$\beta_2>0$. Section~5.1 introduces the inner expansion
$V\sim\sum_{\ell}\nu^{\ell/4}V_\ell^{\mathrm{in}}$ and the rescaled
profiles $U_\ell^{\mathrm{in}}$, and derives formally the equations they
satisfy; the modified inner equations, on which the actual construction of
the profiles is based, are stated in Section~6.
Sections~5.2 and 5.3 introduce the outer--inner and shock--inner grid
functions, which match the inner expansion to the outer expansion as
$X\to\infty$ and to the shock-profile expansion as $T\to\infty$,
respectively. Section~5.4 constructs the shock--inner grid functions with
their prescribed asymptotic behavior and proves the shock-grid and
outer--shock matching estimates; the finite sums of these grids yield the
approximate inner profile $U_{[J],[K],\ell}^{\mathrm{SOI}}$ used in the
error analysis of Section~6.
\subsection{The inner expansion}
We make the following inner expansion:

\begin{equation}\label{4.1}
	\begin{aligned}
	V\sim \sum\limits_{\ell\geq 0}\nu^{\frac \ell4}V_\ell^{\mathrm{in}},\qquad \Theta\sim \sum\limits_{\ell\geq 0}\nu^{\frac \ell4}\Theta_{\ell}^{\mathrm{I}},\qquad \frac{\dot{\mf{S}}}{\sqrt{T}}\sim \sum\limits_{\ell\geq 0}\nu^{\frac {\ell+1}4} \mf{S}_\ell.
	\end{aligned}
\end{equation}
Substituting this expansion into \eqref{3.9} and collecting terms of like order in $\nu^{\frac \ell4}$, we obtain
\begin{equation*}
	\begin{aligned}
		\frac1T\pa_TV_\ell^{\mathrm{in}}+\frac{Y}{2T^2}\pa_YV_\ell^{\mathrm{in}}-\frac 1{2T^2}\sum\limits_{\ell_1+\ell_2=\ell}\Theta_{\ell_1}^{\mathrm{I}}\pa_YV_{\ell_2}^{\mathrm{in}}+\frac 1{T}\sum\limits_{\ell_1+\ell_2=\ell}\dot{\Theta}_{\ell_1}^{\mathrm{I}}\pa_YV_{\ell_2}^{\mathrm{in}}\\
		-\sum\limits_{\ell_1+\ell_2=\ell-1}{\mf{S}}_{\ell_1}\pa_YV_{\ell_2}^{\mathrm{in}}+\sum\limits_{\ell_1+\ell_2=\ell}V_{\ell_1}^{\mathrm{in}}\pa_YV_{\ell_2}^{\mathrm{in}}+\frac{1}{2T^2}V_\ell^{\mathrm{in}}=\pa_Y^2V_\ell^{\mathrm{in}}.
	\end{aligned}
\end{equation*}
Returning to the $(T,X)$ variables, we seek the expansion
\begin{equation*}
	\begin{aligned}
	U^{(\nu)}\sim \sum\limits_{\ell\geq 0}\nu^{\frac{\ell}4}U_\ell^{\mathrm{in}}(T,X).
	\end{aligned}
\end{equation*}
Substituting this expansion, together with the expansion of
$\dot{\mf S}/\sqrt T$ in \eqref{4.1}, into \eqref{3.7} and collecting
terms of order $\nu^{\ell/4}$ yields, for $\ell\in\N_0$,
	\begin{equation}\label{4.4}
		\begin{aligned}
		\pa_TU_\ell^{\mathrm{in}}
		&-T^{\frac12}\sum_{\ell_1+\ell_2=\ell-1}
		\mf{S}_{\ell_1}\pa_XU_{\ell_2}^{\mathrm{in}}+\frac12\pa_X\sum_{\ell_1+\ell_2=\ell}
		U_{\ell_1}^{\mathrm{in}}U_{\ell_2}^{\mathrm{in}}
		=\pa_X^2U_\ell^{\mathrm{in}}.
		\end{aligned}
	\end{equation}

Since the inner expansion must be matched with the outer expansion as $X\to\infty$ and with the shock-profile expansion as $T\to\infty$, we introduce two distinct grids below.
\subsection{The outer--inner grid}
We first introduce the standard outer--inner grid functions ${u}_{k,\ell}^{\mathrm{OI}}$.

For each $k\in\N_0$ we assume the expansion
\begin{equation*}
	\begin{aligned}
		u_k^{\mathrm{out}}\sim \sum\limits_{\ell\geq 0} {u}_{k,\ell}^{\mathrm{OI}}.
	\end{aligned}
\end{equation*}
Substituting this expansion together with \eqref{2.60} into \eqref{3.3}, we find that the grid functions satisfy
\begin{equation}\label{4.3}
	\begin{aligned}
		\np_t{u}_{k,\ell}^{\mathrm{OI}}=\sum\limits_{\ell_1+\ell_2=\ell-1}\dot{\mf{s}}_{\ell_1}\np_x{u}_{k,\ell_2}^{\mathrm{OI}}-\frac12\np_x\sum\limits_{\substack{k_1+k_2=k\\\ell_1+\ell_2=\ell}}{u}_{k_1,\ell_1}^{\mathrm{OI}}{u}_{k_2,\ell_2}^{\mathrm{OI}}+\np_x^2{u}_{k-1,\ell}^{\mathrm{OI}}.
	\end{aligned}
\end{equation}
We next introduce the rescaled grid functions
\begin{equation*}
	\begin{aligned}
		{U}_{k,\ell}^{\mathrm{OI}}(T,X)=\nu^{k-\frac{\ell+1}{4}}{u}_{k,\ell}^{\mathrm{OI}}(\nu^{1/2}T,\nu^{3/4}X).
	\end{aligned}
\end{equation*}
Accordingly, we expect the following expansion:
\begin{equation*}
	\begin{aligned}
	{U}_k^{\mathrm{out}}\sim \sum\limits_{\ell\geq 0}\nu^{\frac \ell4}{U}_{k,\ell}^{\mathrm{OI}}.
	\end{aligned}
\end{equation*}
From \eqref{4.3} we deduce
\begin{equation}\label{4.5}
	\begin{aligned}
		\pa_T{U}_{k,\ell}^{\mathrm{OI}}=T^{\frac12}\sum\limits_{\ell_1+\ell_2=\ell-1}\mf{S}_{\ell_1}\pa_X{U}_{k,\ell_2}^{\mathrm{OI}}-\frac12\pa_X\sum\limits_{\substack{k_1+k_2=k\\\ell_1+\ell_2=\ell}}{U}_{k_1,\ell_1}^{\mathrm{OI}}{U}_{k_2,\ell_2}^{\mathrm{OI}}+\pa_X^2{U}_{k-1,\ell}^{\mathrm{OI}}.
	\end{aligned}
\end{equation}
Define the truncated sums
\begin{equation*}
	\begin{aligned}
		{U}_{k,[L]}^{\mathrm{OI}}=\sum\limits_{\ell= 0}^L\nu^{\frac \ell4} U_{k,\ell}^{\mathrm{OI}}.
	\end{aligned}
\end{equation*}
We shall use the following estimates, which are due to \cite{CG}:
\begin{prop}\label{prop4.2}
	For all $k,\ell,m,n\in\N_0$ and $(T,X)\in [0,T_0]\times (\R\setminus\{0\})$, we have
	\begin{equation*}
		\begin{aligned}
			|\pa_T^m\pa_X^n{U}_{k,\ell}^{\mathrm{OI}}|\lesssim \mf{D}^{\ell+1-4k-2m-3n}.
		\end{aligned}
	\end{equation*}
\end{prop}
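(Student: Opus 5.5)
The plan is to prove the estimate for the unscaled grid functions $u_{k,\ell}^{\mathrm{OI}}$ in the $(t,\x)$ variables and then rescale. By the definition of $U_{k,\ell}^{\mathrm{OI}}$ and by \eqref{D}, the claim is equivalent to showing that $u_{k,\ell}^{\mathrm{OI}}$ is $(\ell+1-4k)$-homogeneous, together with the bound $|\np_t^m\np_x^n u_{k,\ell}^{\mathrm{OI}}|\lesssim \mf{d}^{\ell+1-4k-2m-3n}$. Indeed, homogeneity of degree $r$ turns $\nu^{k-\frac{\ell+1}4}u_{k,\ell}^{\mathrm{OI}}(\nu^{1/2}T,\nu^{3/4}X)$ into $u_{k,\ell}^{\mathrm{OI}}(T,X)$ with $r=\ell+1-4k$. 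Each $\pa_T$ and $\pa_X$ then removes two, respectively three, units of homogeneity. The estimate follows from Lemma~\ref{lem2.11} on the unit ``sphere'' $\{\mf{d}=1\}$ and scaling.

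I will construct the grid by induction on $(k,\ell)$, following \cite{CG}, but with the jump at $\x=0$ built in.

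\textbf{Base row $k=0$.} Take $u_{0,\ell}^{\mathrm{OI}}=\ol u_\ell$ from Proposition~\ref{prop2.15}. These are $(\ell+1)$-homogeneous, smooth away from $\x=0$, and given by polynomial expressions in $(t,\mf y,\mf m)$. Their derivatives satisfy the required bounds because $\np_x\mf y=\mf m/a\sim\mf d^{-2}$ and $\np_t\mf y=\mf y\mf m/(3c\mf y^2-at)\cdot(\cdots)$, with $\mf m\sim \mf d^{-2}$. They solve \eqref{4.3} at $k=0$: insert the homogeneous expansions of $u$ and $\mf s$ (Propositions~\ref{prop2.15}, \ref{prop2.16}) into $\np_tu-\dot{\mf s}\np_xu+u\np_xu=0$ and collect by homogeneity, which is legitimate since both sides differ by $\ml O(\mf d^{N})$ for arbitrary $N$.

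\textbf{Rows $k\ge1$.} Given all $u_{k',\ell'}^{\mathrm{OI}}$ with $k'<k$, or with $k'=k$ and $\ell'<\ell$, equation \eqref{4.3} is linear in $u_{k,\ell}^{\mathrm{OI}}$ with principal part $\np_t+\np_x(\ol u_0\,\cdot)$. This is transport along the homogeneous characteristics of $-a\mf y$. I pull back by the homogeneous flow $\tilde\eta$, $\x=-at\mf y+c\mf y^3$, which has Jacobian $\mf m^{-1}a$. As in \eqref{u10}, the equation becomes $\pa_t(\mf m^{-1}\,u_{k,\ell}^{\mathrm{OI}}\circ\tilde\eta)=\mf m^{-1}\,(\text{forcing})\circ\tilde\eta$. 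The forcing is homogeneous of degree $\ell+1-4k-2$ by induction: the term $\np_x^2u_{k-1,\ell}$ has degree $\ell+1-4(k-1)-6$, and the $\dot{\mf s}_{\ell_1}$-terms and the products match as well.

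The homogeneous solution is then obtained by integrating in $t$ along each label $\mf y$. The unique $(\ell+1-4k)$-homogeneous solution is fixed by requiring homogeneity, equivalently by matching the outer profile's growth as $t\to-\infty$ at fixed $\mf y$, exactly as in \cite{CG} for $t\le0$. For $t>0$ the labels lie in $\tilde\Upsilon(t)$, so each characteristic reaches $\x=0^\pm$ only at its own time, by the homogeneous analogue of \eqref{tra}. Hence the integration extends to $t\in[0,\ol\vep]$ with no boundary data needed on the shock, thanks to the Lax condition. The two one-sided branches are defined separately. Derivative bounds follow by differentiating the explicit integral formula and using homogeneity.

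\textbf{Main obstacle.} The hardest step is showing that the time integral defining $u_{k,\ell}^{\mathrm{OI}}$ converges and stays homogeneous uniformly up to the shock for $t>0$. Near $\x=0^\pm$ one has $\mf d\sim t^{1/2}$, and negative homogeneity degrees could make the integrand non-integrable as the characteristic approaches the shock. I would handle this by working in the similarity variable $z=\mf y t^{-1/2}$, restricted to $|z|\ge b$. There the equation reduces to an ODE in $\log t$ with coefficients bounded since $\mf m t\lesssim1$, and the Lax gap $3cz^2-a\ge 2a$ gives the uniform lower bound needed to close the estimates.
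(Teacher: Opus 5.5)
Your proposal is correct and is essentially the argument the paper relies on when it cites \cite{CG}. In both, $u_{k,\ell}^{\mathrm{OI}}$ is $(\ell+1-4k)$-homogeneous and smooth up to $\x=0^\pm$ and $t=0^+$, so the bound follows from compactness of $\{\mf d=1\}$ and scaling; your extension to $t>0$, by transport along homogeneous characteristics that enter the shock by the Lax condition, is the step the paper leaves implicit.

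Two small corrections. First, homogeneity alone does not determine $u_{k,\ell}^{\mathrm{OI}}$ when $\ell\ge 4k-3$. The function $v=\mf m\,\mf y^{\ell+3-4k}$ is $(\ell+1-4k)$-homogeneous, smooth for $t<0$, and solves $\np_tv+\np_x(\ol u_0v)=0$. Its constant is fixed by matching with $u_k^{\mathrm{out}}$, which does not affect the bound. Second, what you call the main obstacle is not one. The characteristic with label $\mf y\neq0$ reaches $\x=0$ at $t=\mf y^2/b^2>0$, where $\mf m^{-1}=2c\mf y^2/a>0$. So the only singular point is the origin, and homogeneity already handles it.
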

\begin{prop}\label{prop4.3}
	For all $k,L,m,n\in\N_0$ and $(T,X)\in [0,T_0]\times (\R\setminus\{0\})$, we have
	\begin{equation*}
		\begin{aligned}
			|\pa_T^m\pa_X^n(U_k^{\mathrm{out}}-{U}_{k,[L]}^{\mathrm{OI}})|\lesssim \nu^{\frac{L+1}{4}}\mf{D}^{L+2-4k-2m-3n}.
		\end{aligned}
	\end{equation*}
\end{prop}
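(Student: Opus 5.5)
The statement to prove is Proposition~\ref{prop4.3}: the truncation error $U_k^{\mathrm{out}}-U_{k,[L]}^{\mathrm{OI}}$ is $\ml{O}(\nu^{\frac{L+1}{4}}\mf{D}^{L+2-4k-2m-3n})$. I would first rescale back to $(t,\x)$. By \eqref{3.60} and the definition of $U_{k,\ell}^{\mathrm{OI}}$, the difference equals $\nu^{k-\frac14}\bigl(u_k^{\mathrm{out}}-\sum_{\ell\le L}u_{k,\ell}^{\mathrm{OI}}\bigr)$ evaluated at $(\nu^{1/2}T,\nu^{3/4}X)$. By \eqref{D}, the claimed bound is then equivalent to
\[
\Bigl|\np_t^m\np_x^n\Bigl(u_k^{\mathrm{out}}-\sum_{\ell=0}^L u_{k,\ell}^{\mathrm{OI}}\Bigr)\Bigr|\lesssim \mf{d}^{L+2-4k-2m-3n}.
\]
This is a $\nu$-free statement: the outer profile $u_k^{\mathrm{out}}$ admits a homogeneous expansion whose $\ell$-th term $u_{k,\ell}^{\mathrm{OI}}$ is $(\ell+1-4k)$-homogeneous. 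Proposition~\ref{prop4.2} is the rescaled form of this homogeneity.

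The plan is induction on $k$.

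\textbf{Base case $k=0$.} Here $u_0^{\mathrm{out}}=u$. I would identify $u_{0,\ell}^{\mathrm{OI}}$ with the homogeneous functions $\ol u_\ell$ of Proposition~\ref{prop2.15}, checking that they solve \eqref{4.3} with $\dot{\mf{s}}$ replaced by its homogeneous pieces $\dot{\mf{s}}_\ell$ from Proposition~\ref{prop2.16}. The estimate then follows directly from Proposition~\ref{prop2.15}.

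\textbf{Inductive step.} For $k\ge1$, set $R_{k,L}=u_k^{\mathrm{out}}-\sum_{\ell\le L}u_{k,\ell}^{\mathrm{OI}}$. Subtract the grid equations \eqref{4.3}, summed over $\ell\le L$, from \eqref{3.3}. This gives a linear transport equation
\[
\np_t R_{k,L}+(u_0^{\mathrm{out}}-\dot{\mf{s}})\np_x R_{k,L}+(\np_x u_0^{\mathrm{out}})R_{k,L}=F_{k,L}.
\]
The forcing $F_{k,L}$ collects four kinds of terms: the lower-order remainders $R_{k',L'}$ with $k'<k$ (controlled by the induction hypothesis), the remainder of the shock expansion \eqref{2.60}, the remainder of $u_0^{\mathrm{out}}$, and products of grid terms whose total index exceeds $L$. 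Each has homogeneity at least $L+2-4k-2$ after one $\np_x$, so $|F_{k,L}|\lesssim \mf{d}^{L-4k}$, with the corresponding derivative bounds.

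I would then integrate along the characteristics $\eta$ of $u_0^{\mathrm{out}}$, exactly as in \eqref{u10}: in Lagrangian form $\pa_t(JR_{k,L}\circ\eta)=JF_{k,L}\circ\eta$. By Lemma~\ref{lem2.8}, characteristics with labels in $\Upsilon(t)$ never meet the shock, so the two sides of $\x=0$ are handled separately. Away from $(0,0)$ all quantities are smooth and bounded, so only the region $\ml D$ matters. There the weighted Lagrangian integral uses $J\sim\mf{d}^2$ from \eqref{jty} and Lemma~\ref{lem2.11}, and integrating $\mf{d}^{L-4k}$ in time gains $\mf{d}^2$ along the characteristic. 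This gives $|R_{k,L}|\lesssim\mf{d}^{L+2-4k}$.

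\textbf{Expected main obstacle.} Propagating these bounds uniformly into $t>0$ is the hardest part. Along a characteristic, $\mf{d}$ is not monotone: it is comparable to its value at the endpoint only up to constants, and for $t>0$ the time integral near $\x=0$ produces $\int_0^t(s+\mf{y}^2)^{\alpha}\,\md s$ with $\alpha$ possibly very negative. To handle this I would use the lower bound \eqref{tra} together with $|\mf{y}|\ge bt^{1/2}$ on $\tilde\Upsilon(t)$, which keeps $\mf{d}\gtrsim t^{1/2}$ and makes the integral comparable to $\mf{d}^{2\alpha+2}$ with no logarithmic loss. For $t\le0$ the bound is already known from \cite{CG}.

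\textbf{Derivative bounds.} Each $\np_t$ or $\np_x$ must cost exactly $\mf{d}^{-2}$ or $\mf{d}^{-3}$. I would obtain this by commuting the vector fields $\np_x$ and $\np_t+(u_0^{\mathrm{out}}-\dot{\mf{s}})\np_x$ through the equation and rerunning the same Lagrangian argument at each order.
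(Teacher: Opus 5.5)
Your argument is correct in outline, but it is not the paper's route. The paper gives no proof of Proposition~\ref{prop4.3}: it imports the estimate from \cite{CG} and treats the passage to $t>0$ as routine. With the tools of Section~3 (Propositions~\ref{prop2.12} and \ref{prop2.15} for $t>0$), that extension amounts to redoing the expansion on each branch. One writes $u_k^{\mathrm{out}}\circ\eta$ through $y$ and $J$ as in \eqref{u1s}, inserts the gauge expansions in $(t,\mf y,\mf m)$, and reads off $u_{k,\ell}^{\mathrm{OI}}$ as the homogeneous pieces, so the grid functions are constructed and estimated at once.

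You instead take the grid functions as given: solving \eqref{4.3}, obeying Proposition~\ref{prop4.2}, and having the same $t=0$ traces as in \cite{CG}. You use \cite{CG} only as data at $t=0$ and propagate the remainder forward by Duhamel along $\eta$ inside $\ml D$. This buys an extension that needs neither a Lagrangian formula for $u_k^{\mathrm{out}}$ nor the $t>0$ gauge expansion, only $J\sim\mf d^2$ and a lower bound on the labels. The expansion route, in exchange, produces the grid functions rather than presupposing them.

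Two points should be made explicit. First, the mechanism. By \eqref{2.15}, every $y\in\Upsilon(t)$ has $|y|\ge\frac23bt^{1/2}$, so $J(s,y)\sim y^2$ and $\mf d\sim|y|$ along $\{(s,\eta(s,y)):0\le s\le t\}$. The forcing then contributes $J(t,y)^{-1}\int_0^tJ|F_{k,L}|\,\md s\lesssim t|y|^{L-4k}\lesssim|y|^{L+2-4k}$ directly. Your concern about non-monotone $\mf d$ and $\int_0^t(s+\mf y^2)^\alpha\,\md s$ does not arise.

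Second, your inductive step mentions only the forcing. The data term $J(0,y)R_{k,L}(0,\eta(0,y))/J(t,y)\lesssim|y|^{L+2-4k}$ is the essential one, and it is the only place \cite{CG} enters. It is also why this method cannot replace \cite{CG} for $t\le0$. Started at $t_{\mathrm{in}}$, where $R_{k,L}=-\sum_{\ell\le L}u_{k,\ell}^{\mathrm{OI}}=\ml O(1)$, the transported data only gives $\ml O(J^{-1})=\ml O(\mf d^{-2})$. The required vanishing of $J\,R_{k,L}\circ\eta$ at the origin cannot be seen from the forcing bound alone.
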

Although these estimates are stated for $0\leq T\leq T_0$, the grid
functions are used in the construction only on $[1,2\nu^{-\beta_2}]$; the
short interval $T\in[0,1]$ is covered by the initial data and by the
estimate of Lemma~\ref{lem:52} in Section~6.

\subsection{The shock--inner grid}
For the shock-profile expansion, we assume that for each $j\in\N_0$,
\begin{equation}\label{4.7}
	\begin{aligned}
	V_j^{\mathrm{shock}}&\sim \sum\limits_{\ell\geq 0}\nu^{\frac \ell4}V_{j,\ell}^{\mathrm{SI}},\qquad	\Theta_j^{\mathrm{S}}&\sim \sum\limits_{\ell\geq 0}\nu^{\frac \ell4}\Theta_{j,\ell}.\qquad 
	\end{aligned}
\end{equation}
Substituting \eqref{4.7} into Proposition~\ref{prop3.3}, we obtain the following equations for $V_{j,\ell}^{\mathrm{SI}}$.
\begin{prop}\label{prop4.4}
	For all $j,\ell\in\N_0$ we have
	\begin{equation}\label{4.8}
		\begin{aligned}
			\pa_Y^2V_{j,\ell}^{\mathrm{SI}}&=\frac12\pa_Y\sum_{\substack{j_1+j_2=j\\\ell_1+\ell_2=\ell}}V_{j_1,\ell_1}^{\mathrm{SI}}V_{j_2,\ell_2}^{\mathrm{SI}}-\sum\limits_{\ell_1+\ell_2=\ell-1}\mf{S}_{\ell_1}\pa_YV_{j,\ell_2}^{\mathrm{SI}}+T\pa_TV_{j-1,\ell}^{\mathrm{SI}}+\frac Y2\pa_YV_{j-1,\ell}^{\mathrm{SI}}\\
		&-(2j-\frac52)V_{j-1,\ell}^{\mathrm{SI}}+\sum_{\substack{j_1+j_2=j-1\\\ell_1+\ell_2=\ell}}[T\pa_T\Theta_{j_1,\ell_1}-(2j_1+\frac 12)\Theta_{j_1,\ell_1}]\pa_YV_{j_2,\ell_2}^{\mathrm{SI}}.
		\end{aligned}
	\end{equation}
	Moreover, for $j=0,\ell\in\N_0$ we have
		\begin{equation}\label{0si}
		\begin{aligned}
			\pa_Y^2V_{0,\ell}^{\mathrm{SI}}&=\frac12\pa_Y\sum_{\substack{\ell_1+\ell_2=\ell}}V_{0,\ell_1}^{\mathrm{SI}}V_{0,\ell_2}^{\mathrm{SI}}-\sum\limits_{\ell_1+\ell_2=\ell-1}\mf{S}_{\ell_1}\pa_YV_{0,\ell_2}^{\mathrm{SI}}.\\
		\end{aligned}
	\end{equation}
\end{prop}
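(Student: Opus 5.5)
The statement is Proposition~\ref{prop4.4}: the formal hierarchy for the shock--inner grid coefficients $V_{j,\ell}^{\mathrm{SI}}$. The plan is to substitute the double expansions \eqref{4.7} into the shock-profile hierarchy of Proposition~\ref{prop3.3}, together with the inner expansion of $\dot{\mf S}/\sqrt T$ from \eqref{4.1}, and collect terms of order $\nu^{\ell/4}$. No estimates are needed; this is bookkeeping of powers of $\nu^{1/4}$.

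First rewrite \eqref{3.12} in symmetric form. Move $-\pa_Y(V_0^{\mathrm{shock}}V_j^{\mathrm{shock}})$ to the right-hand side and combine it with $\frac12\pa_Y\sum_{j_1,j_2>0}$. Using $\pa_Y(V_0V_j)=\frac12\pa_Y(V_0V_j+V_jV_0)$, this gives the full sum $\frac12\pa_Y\sum_{j_1+j_2=j}V_{j_1}^{\mathrm{shock}}V_{j_2}^{\mathrm{shock}}$. The case $j=0$ of this symmetric form is exactly \eqref{3.13}, since then the terms indexed by $j-1$ are absent. Likewise move the drift $\frac{\dot{\mf S}}{\sqrt T}\pa_YV_j^{\mathrm{shock}}$ to the right with a minus sign. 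The result, valid for all $j\in\N_0$, is
\[
\pa_Y^2V_j=\tfrac12\pa_Y\sum_{j_1+j_2=j}V_{j_1}V_{j_2}-\tfrac{\dot{\mf S}}{\sqrt T}\pa_YV_j+\tfrac Y2\pa_YV_{j-1}+T\pa_TV_{j-1}-(2j-\tfrac52)V_{j-1}+\sum_{j_1+j_2=j-1}\bigl[T\pa_T\Theta_{j_1}^{\mathrm S}-(2j_1+\tfrac12)\Theta_{j_1}^{\mathrm S}\bigr]\pa_YV_{j_2}.
\]

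Next insert $V_j^{\mathrm{shock}}\sim\sum_\ell\nu^{\ell/4}V_{j,\ell}^{\mathrm{SI}}$, $\Theta_j^{\mathrm S}\sim\sum_\ell\nu^{\ell/4}\Theta_{j,\ell}$ and $\dot{\mf S}/\sqrt T\sim\sum_\ell\nu^{(\ell+1)/4}\mf S_\ell$. All operators ($\pa_Y$, $\pa_Y^2$, $T\pa_T$, multiplication by $Y$) are $\nu$-independent at fixed $(T,Y)$, so the terms pair off as follows.

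\begin{itemize}
\item The linear terms contribute their $\ell$-th coefficients directly.
\item The quadratic term gives the Cauchy product over $\ell_1+\ell_2=\ell$.
\item The $\Theta$-product gives the Cauchy product over $\ell_1+\ell_2=\ell$, because $T\pa_T$ acts coefficientwise.
\item The drift term carries the extra factor $\nu^{1/4}$. Its $\nu^{\ell/4}$ coefficient is therefore $-\sum_{\ell_1+\ell_2=\ell-1}\mf S_{\ell_1}\pa_YV_{j,\ell_2}^{\mathrm{SI}}$.
\end{itemize}

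This yields \eqref{4.8}, and setting $j=0$ gives \eqref{0si}.

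The one point that needs care is that the expansions in \eqref{4.7} are taken at fixed $(T,Y)$, not at fixed $(T,X)$. The phase shift $\Theta$ enters \eqref{3.12} only through the explicit coefficients $\Theta_{j_1}^{\mathrm S}$, and never through the argument $Y$. Hence no Taylor reexpansion in $Y$ (as in \eqref{0.38}) enters at this stage; it is deferred to the passage to $U_{j,\ell}^{\mathrm{SI}}$. I would also check that the $\nu$-dependence hidden in $V_{i,k}^\pm$ and $\dot{\mf S}$ is consistently expanded, which is assumed by \eqref{4.1} and \eqref{4.7}.
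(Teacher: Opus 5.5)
Your proposal is correct and is the same argument as the paper's, which obtains \eqref{4.8} in one line by substituting \eqref{4.7}, together with the expansion of $\dot{\mf S}/\sqrt T$ in \eqref{4.1}, into Proposition~\ref{prop3.3} and collecting powers of $\nu^{1/4}$. The paper leaves implicit the bookkeeping you supply: absorbing $-\pa_Y(V_0^{\mathrm{shock}}V_j^{\mathrm{shock}})$ into the full symmetric Cauchy sum, shifting the drift index to $\ell_1+\ell_2=\ell-1$, and deferring the Taylor reexpansion in $Y$ to \eqref{4.36}.
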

As in Section~4, we prescribe the boundary conditions for \eqref{4.8}. To this end, we introduce
\begin{equation}\label{4.15}
	\begin{aligned}
	V_{i,k,\ell}^{\pm}(T)=\frac{1}{i!} \pa_X^i{U}_{k,\ell}^{\mathrm{OI}}(T,0^{\pm})T^{-\frac{1-4k-3i}{2}}.
	\end{aligned}
\end{equation}
In analogy with \eqref{3.17}, we impose the matching conditions
\begin{equation}\label{4.11}
	\begin{aligned}
		V_{j,\ell}^{\mathrm{SI}}\sim \sum\limits_{\substack{0\leq p\leq i,k\geq 0\\ j_1,...,j_p\geq 0\\i+k+j_1+...+j_p=j\\\ell_0+\ell_1+...+\ell_p=\ell}}(-1)^{p}C_{i}^{p}Y^{i-p}{V}_{i,k,\ell_0}^{\pm}(T)\prod_{r=1}^p\Theta_{j_r,\ell_r} \qquad \text {as }Y\to \pm \infty.
	\end{aligned}
 \end{equation}

\subsection{Shock-grid estimate and matching}
In this subsection we construct the solution of \eqref{4.8} with the prescribed asymptotic behavior and prove the corresponding estimates. To this end, we introduce
\begin{equation*}
	V_{j,\ell}^{\mathrm{cut}}(T,Y)=\chi(Y)\sum\limits_{\substack{0\leq p\leq i,k\geq 0\\ j_1,...,j_p\geq 0\\i+k+j_1+...+j_p=j\\\ell_0+\ell_1+...+\ell_p=\ell}}\left\{\begin{aligned}
		&(-1)^{p}C_{i}^{p}Y^{i-p}{V}_{i,k,\ell_0}^{-}\prod_{r=1}^p\Theta_{j_r,\ell_r},\quad Y<0\\
		&(-1)^{p}C_{i}^{p}Y^{i-p}{V}_{i,k,\ell_0}^{+}\prod_{r=1}^p\Theta_{j_r,\ell_r},\quad Y\geq0.\\
	\end{aligned}
	\right.
\end{equation*}
We shall prove the following estimates.
\begin{prop}\label{prop4.5} Let $\beta_2\in (0,\frac1{100})$.
	There exists a constant $\mu>0$ such that for all $T\in[\frac12,2\nu^{-\beta_2}]$ and $j,\ell,m,n\in\N_0$ we have
	\begin{equation}\label{4.14}
		\begin{aligned}
			|\pa_T^m\pa_Y^n(V_{j,\ell}^{\mathrm{SI}}-V_{j,\ell}^{\mathrm{cut}})|\lesssim \nu^{-\beta_2\ell/2}T^{-m}e^{-\mu|Y|}\\
			|\pa_T^m\Theta_{j,\ell}|\lesssim \nu^{-\beta_2\ell/2}T^{-m}.
		\end{aligned}
	\end{equation}
\end{prop}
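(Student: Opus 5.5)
We argue by a double induction: first on $\ell$, and for fixed $\ell$ on $j$. This mirrors the proof of Proposition~\ref{prop3.6}, with the role of $V_j^{\mathrm{shock}}$ played by the $\nu^{\ell/4}$-coefficients $V_{j,\ell}^{\mathrm{SI}}$. The input data are the Taylor coefficients $V^\pm_{i,k,\ell}$ from \eqref{4.15}. By Proposition~\ref{prop4.2} we have $|\pa_X^iU^{\mathrm{OI}}_{k,\ell}(T,0^\pm)|\lesssim \mf{D}(T,0)^{\ell+1-4k-3i}$ with $\mf{D}(T,0)\sim T^{1/2}$. Hence on $[\frac12,2\nu^{-\beta_2}]$ we get
\[
|(T\pa_T)^mV^\pm_{i,k,\ell}|\lesssim T^{\ell/2}\lesssim \nu^{-\beta_2\ell/2}.
\]
This is the origin of the loss $\nu^{-\beta_2\ell/2}$ in \eqref{4.14}. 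The grid coefficients satisfy the analogue of Lemma~\ref{lem3.5}: apply $\pa_X^i$ to \eqref{4.5} at $X=0^\pm$ and divide by $i!$. This gives the same recursion \eqref{429}, now with the extra $\ell$-convolution and with $\dot{\mf S}T^{-1/2}$ replaced by $\sum\nu^{(\ell_1+1)/4}\mf S_{\ell_1}$, split order by order.

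\textbf{Base level $\ell=0$.} Equation \eqref{0si} with $\ell=0$ is $\pa_Y^2V^{\mathrm{SI}}_{0,0}=V^{\mathrm{SI}}_{0,0}\pa_YV^{\mathrm{SI}}_{0,0}$. The matching condition gives end states $V^\pm_{0,0,0}$. Rankine--Hugoniot at leading order forces $V^+_{0,0,0}=-V^-_{0,0,0}$, since $\mf S_\ell$ enters only at order $\ell-1$ and $\bar u_0$ is odd. So $V^{\mathrm{SI}}_{0,0}=-\om_0\tanh(\om_0Y/2)$ with $\om_0\gtrsim ab$, and the exponential bound is as in Proposition~\ref{prop3.4}.

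\textbf{Higher levels.} For $(j,\ell)\neq(0,0)$, equation \eqref{4.8} is linear in $V^{\mathrm{SI}}_{j,\ell}$:
\[
\pa_Y^2 V-\pa_Y(V^{\mathrm{SI}}_{0,0}V)=F_{j,\ell},
\]
where $F_{j,\ell}$ involves only previously constructed terms, plus the phase term $[T\pa_T\Theta_{j-1,\ell}-(2j-\frac32)\Theta_{j-1,\ell}]\pa_YV^{\mathrm{SI}}_{0,0}$. The steps are as follows.
\begin{itemize}
\item Write $V=\ol V+V^{\mathrm{cut}}_{j,\ell}$.
\item Check that $F_{j,\ell}$ minus the contribution of $V^{\mathrm{cut}}_{j,\ell}$ vanishes on $|Y|\ge1$ up to exponentially decaying terms. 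This uses the recursion for $V^\pm_{i,k,\ell}$ together with the combinatorics of \eqref{4.11}, exactly as $g_0=0$ was checked via Lemma~\ref{lem3.5}.
\item Integrate once from $-\infty$.
\item Solvability at $+\infty$ (zero total mass) fixes $\Theta_{j-1,\ell}$ through a first-order ODE of the form $2T\pa_T(\om_0\Theta_{j-1,\ell})-\lambda\,\om_0\Theta_{j-1,\ell}=\wh{\ml F}_{j,\ell}(T)$. Here $\lambda$ comes from the weight $2j-\frac32$ and from $V^\pm_{1,0,0}$ via the $\ell=0$ analogue of \eqref{430}.
\item Then solve $\pa_Y\ol V-V^{\mathrm{SI}}_{0,0}\ol V=\text{(decaying)}$ with the explicit integrating factor $\cosh^{2}(\om_0Y/2)$, normalizing by $\ol V(T,0)=0$. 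This gives decay $e^{-\mu|Y|}$ with any $\mu<\om_0$ minus a margin.
\end{itemize}
Derivatives $\pa_T^m\pa_Y^n$ follow by differentiating the construction. The loss $T^{-m}$ holds because every coefficient is a function of $T$ obeying symbol-type bounds $|(T\pa_T)^m\cdot|\lesssim$ size.

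\textbf{Main obstacle: the phase ODE.} The delicate step is solving the ODE for $\Theta_{j-1,\ell}$ with the bound $\nu^{-\beta_2\ell/2}T^{-m}$ on the finite interval $[\frac12,2\nu^{-\beta_2}]$. I would mimic \eqref{t0}, integrating from the right endpoint $2\nu^{-\beta_2}$ (or choosing the direction according to the sign of $\lambda$). This requires an improved bound on $\wh{\ml F}_{j,\ell}$ from a cancellation like \eqref{442}: the parts of $F_{j,\ell}$ not already $\lesssim T^{\ell/2}$ must combine into jumps of even functions or means of odd ones. The target bound is
\[
|\wh{\ml F}_{j,\ell}|\lesssim T^{(\ell-1)/2}\cdot(\text{previous sizes}),
\]
so that $\int^{2\nu^{-\beta_2}}_T\tau^{-1}|\wh{\ml F}_{j,\ell}|\,\md\tau\lesssim\nu^{-\beta_2\ell/2}$. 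If this cancellation fails, I would fall back on absorbing a logarithm into the slack between $T^{\ell/2}$ and $\nu^{-\beta_2\ell/2}$ by slightly shrinking the exponent. The remaining bookkeeping — tracking that each product in \eqref{4.11} carries total loss $\nu^{-\beta_2\sum\ell_r/2}$ — is routine.
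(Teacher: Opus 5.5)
Your overall architecture matches Appendix~B: the cut-off decomposition, cancellation of the pure polynomial part on $|Y|\ge1$ via the grid analogue of Lemma~\ref{lem3.5}, solvability fixing the phase, and the integrating factor $\cosh^2(\om_0Y/2)$. Two steps, however, are not actually supplied.

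\textbf{The case $j=0$, $\ell\ge1$.} Equation \eqref{0si} contains no phase, so your step ``solvability at $+\infty$ fixes $\Theta_{j-1,\ell}$'' has nothing to act on. The zero-mass condition must instead hold automatically, and you only invoke Rankine--Hugoniot at leading order. What is needed is the $\nu^{\ell/4}$-coefficient of the identity $\frac12(V_{0,0}^-)^2-\frac{\dot{\mf S}}{\sqrt T}V_{0,0}^-=\frac12(V_{0,0}^+)^2-\frac{\dot{\mf S}}{\sqrt T}V_{0,0}^+$, which follows from \eqref{3.19} and \eqref{4.19}. The paper sidesteps this by Taylor-expanding the explicit profile \eqref{3.21} in its parameters. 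That also fixes the translation mode as $V_{0,\ell}^{\mathrm{SI}}(T,0)=\mf S_{\ell-1}$, rather than your normalization $\ol V(T,0)=0$. Your choice is harmless for \eqref{4.14}, but it would spoil the later comparison of $V_{0,[L]}^{\mathrm{SI}}$ with $V_0^{\mathrm{shock}}$ in Proposition~\ref{prop4.7}.

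\textbf{The phase estimate.} You leave this conditional and aim at the wrong bound.
\begin{itemize}
\item For $j\ge2$ no cancellation is needed. In \eqref{4.41}, $T^{2j-2}\int_T^\infty\tau^{1-2j}\,\md\tau=\frac1{2j-2}$, so $|\wh{\ml F}_{j,\ell}|\lesssim\nu^{-\beta_2\ell/2}$ suffices.
\item For $j=1$ (where $\lam=0$) and $\ell\ge1$, your target $T^{(\ell-1)/2}$ is neither proved nor needed. The $\Theta$-free part of $\wh{\ml F}_{1,\ell}$ is $\ml O(T^{\ell/2})$ by Proposition~\ref{prop4.2}, and $\int_T^{2\nu^{-\beta_2}}\tau^{\ell/2-1}\,\md\tau\lesssim\nu^{-\beta_2\ell/2}$ with no logarithm. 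The lower-order phases enter only through the exact derivative $T\pa_T\sum_{\ell_1>0}(V_{0,0,\ell_1}^--V_{0,0,\ell_1}^+)\Theta_{0,\ell_2}$.
\item The one place a cancellation is indispensable is $j=1$, $\ell=0$, and there your fallback fails. Comparing $T^{0}$ with $\nu^{0}$ leaves no slack, so a $\log\nu^{-\beta_2}$ loss contradicts $|\Theta_{0,0}|\lesssim1$.
\end{itemize}
The missing fact is that $\wh{\ml F}_{1,0}=-ab(V_{0,1,0}^++V_{0,1,0}^-)+V_{1,0,0}^--V_{1,0,0}^+=0$ exactly, by the parity you guessed. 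Indeed, $\pa_XU_{0,0}^{\mathrm{OI}}=-a\pa_X\mf Y$ is even in $\mf Y$, and $U_{1,0}^{\mathrm{OI}}$ is odd (cf.\ \eqref{48}); both are evaluated at $\mf Y=\pm bT^{1/2}$. Hence $\Theta_{0,0}$ is constant, equal to its terminal value. With that, the induction closes with the exact power $\nu^{-\beta_2\ell/2}$ and no loss of exponent.
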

The proof of the estimates for $V_{j,\ell}^{\mathrm{SI}}$ and $\Theta_{j-1,\ell}$ for $j\in \N$, $\ell\in \N_0$ is analogous to that of Proposition~\ref{prop3.6}. The proof of these estimates is given in Appendix B. For $V_{0,\ell}^{\mathrm{SI}}$, the same argument may not apply due to the absence of $\Theta$ terms in \eqref{4.8}. Here we give a sketch of the proof for $V_{0,\ell}^{\mathrm{SI}}$ to overcome this difficulty. 
From Proposition~\ref{prop2.16}, \eqref{vik}, \eqref{4.1}, Proposition~\ref{prop4.3} and \eqref{4.15}, we obtain for any $m,L\in \N_0$,
\begin{equation}\label{4.19}
\begin{aligned}
	\pa_T^mV_{0,0}^{\pm}(T)&=\pa_T^m\sum\limits_{\ell=0}^{L}\nu^{\frac{\ell}{4}}V_{0,0,\ell}^{\pm}(T)+O(\nu^{\frac{L+1}{4}}T^{\frac{L+1}{2}-m})\\
	\pa_T^m(\frac{\dot{\mf{S}}}{\sqrt{T}})(T)&=\pa_T^m\sum\limits_{\ell=0}^{L}\nu^{\frac{\ell+1}{4}}\mf{S}_{\ell}(T)+O(\nu^{\frac{L+2}{4}}T^{\frac{L+2}{2}-m}).\\
	\end{aligned}
\end{equation}
Using \eqref{3.19},\eqref{3.21}, \eqref{4.7} and \eqref{4.19}, we can get the desired result for $V_{0,\ell}^{\mathrm{SI}}$.\par
Define the truncated expansions
\begin{equation}\label{4.32}
	\begin{aligned}
		V_{j,[L]}^{\mathrm{SI}}=\sum\limits_{\ell=0}^L \nu^{\frac \ell4}V_{j,\ell}^{\mathrm{SI}},\qquad \Theta_{j,[L]}=\sum\limits_{\ell=0}^L \nu^{\frac \ell4}\Theta_{j,\ell},\quad \mf{S}_{[L]}=\sum\limits_{\ell=0}^{L-1}\nu^{\frac{\ell+1}4}\mf{S}_\ell
	\end{aligned}
\end{equation}
and the corresponding remainders

\begin{equation}\label{4.33}
\begin{aligned}
	Q_{j,L}=V_j^{\mathrm{shock}}-V_{j,[L]}^{\mathrm{SI}},\quad 
	Q_{j,L}^{\mathrm{cut}}=V_j^{\mathrm{cut}}-V_{j,[L]}^{\mathrm{cut}}\\
	\ol{\mf{S}}_L=\frac{\dot{\mf{S}}}{\sqrt{T}}-\mf{S}_{[L]},\quad \ol{\Theta}_{j,L}={\Theta_{j}^{\mathrm{S}}-\Theta_{j,[L]}}.
\end{aligned}
\end{equation}
\begin{prop}\label{prop4.7}
	Fix $\beta_2\in(0,\frac1{100}) $. There exists $\mu>0$ such that, in the region $\{(T,X):T\in[1,2\nu^{-\beta_2}]\}$, the following estimates hold for all $j,L,m,n\in\N_0$:
	\begin{equation}\label{4.34}
		\begin{aligned}
			|\pa_T^m\pa_Y^n(Q_{j,L}-Q_{j,L}^{\mathrm{cut}})|&\lesssim \nu^{(L+1)(\frac14-\frac{\beta_2}{2})}T^{-m}e^{-\mu|Y|}\\
			|\pa_T^{m}({\Theta_{j}^{\mathrm{S}}-\Theta_{j,[L]}})|&\lesssim \nu^{(L+1)(\frac14-\frac{\beta_2}{2})}T^{-m}.
		\end{aligned}
	\end{equation}
\end{prop}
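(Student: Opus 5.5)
The plan is to argue by induction on $L$, with the base case $L=0$ handled separately for $j=0$ and for $j\ge1$. The remainders $Q_{j,L}$ satisfy exactly the same linearized shock-profile hierarchy as the $V_j^{\mathrm{shock}}$ themselves. Concretely, subtract from \eqref{3.12} (resp.\ \eqref{3.13}) the partial sums over $\ell\le L$ of \eqref{4.8} (resp.\ \eqref{0si}), multiplied by $\nu^{\ell/4}$. The result is a linear equation
\[
\pa_Y^2 Q_{j,L}-\pa_Y\big[(V_0^{\mathrm{shock}}-\tfrac{\dot{\mf S}}{\sqrt T})Q_{j,L}\big]=\mathcal F_{j,L}.
\]
The forcing $\mathcal F_{j,L}$ collects three kinds of terms: terms involving $Q_{j',L'}$ with $j'<j$, or with $j'=j$ and lower order; products of $\ol{\mf S}_L$ and $\ol\Theta_{j',L}$ with profiles; and the ``tail'' terms $\nu^{\ell/4}$ with $\ell>L$ that arise from truncating the bilinear sums. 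The matching conditions \eqref{3.17} and \eqref{4.11} show that $Q_{j,L}-Q^{\mathrm{cut}}_{j,L}\to0$ as $Y\to\pm\infty$. So we are in exactly the setting of the proof of Proposition~\ref{prop3.6} (and Appendix~B), with the data now small.

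The smallness comes from a single mechanism. Since $T\le2\nu^{-\beta_2}$, each factor $\nu^{1/4}$ in the inner expansion is accompanied by at most $T^{1/2}\lesssim\nu^{-\beta_2/2}$. This is the content of \eqref{4.19} and of Proposition~\ref{prop4.3}, the latter evaluated at $X=0^\pm$ where $\mf D\sim T^{1/2}$. It gives
\[
|\pa_T^m(V^\pm_{i,k}-\textstyle\sum_{\ell\le L}\nu^{\ell/4}V^\pm_{i,k,\ell})|\lesssim \nu^{(L+1)/4}T^{(L+1)/2-m}\lesssim\nu^{(L+1)(\frac14-\frac{\beta_2}2)}T^{-m},
\]
and the same bound for $\ol{\mf S}_L$. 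Combined with Proposition~\ref{prop4.5}, which bounds each $\nu^{\ell/4}V^{\mathrm{SI}}_{j,\ell}$ by $\nu^{\ell(\frac14-\frac{\beta_2}2)}$, every tail in $\mathcal F_{j,L}$ is $O(\nu^{(L+1)(\frac14-\frac{\beta_2}2)})$ times an exponentially decaying or cut-off polynomial factor.

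For $j=0$ I would avoid the ODE analysis altogether and use the explicit $\tanh$ formula \eqref{pro}. Here $V_0^{\mathrm{shock}}=\Phi(Y,\dot{\mf S}/\sqrt T,V_{0,0}^-)$ is analytic in its parameters, uniformly on $\omega\gtrsim ab$ by \eqref{4.22}. The functions $V^{\mathrm{SI}}_{0,\ell}$ are by construction its Taylor coefficients in the parameters $\nu^{1/4}$-expanded via \eqref{4.19}. Taylor's theorem with remainder, applied to $\Phi-V_0^{\mathrm{cut}}$, then gives the $j=0$ estimate, with decay $e^{-\mu|Y|}$ for any $\mu<\frac34\omega$. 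The mild polynomial-in-$Y$ losses from differentiating $\tanh(\omega Y/2)$ in $\omega$ are absorbed by slightly lowering $\mu$.

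For $j\ge1$ I would repeat the solvability argument of the $j=1$ case. Integrate once from $-\infty$ and impose the vanishing of the total integral (compare \eqref{432}). This forces $\ol\Theta_{j-1,L}$ to solve $2T\pa_T(\omega\,\ol\Theta_{j-1,L})=\widehat{\mathcal F}$, where $\widehat{\mathcal F}$ is the mass of the $\Theta$-free part of the forcing. The phase must be treated consistently with the inner definition of $\Theta_{j-1,\ell}$, which is integrated backward from a fixed reference time as in \eqref{t0}. I expect this step to be the \textbf{main obstacle}. The point is that $\ol\Theta_{j-1,L}$ is obtained by integrating $\widehat{\mathcal F}/\tau$ in time, and the bound must not lose a factor of $\log T$ or $T^{1/2}$. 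I would first try to show that the mass $\widehat{\mathcal F}$ obeys the improved bound $\nu^{(L+1)(\frac14-\frac{\beta_2}2)}T^{-\gamma}$ for some $\gamma>0$, or that it is an exact $T\pa_T$-derivative up to such terms, mimicking the parity cancellation behind \eqref{442}. Once $\ol\Theta_{j-1,L}$ is controlled, the first-order equation for $Q_{j,L}-Q^{\mathrm{cut}}_{j,L}$ is solved with the integrating factor of $V_0^{\mathrm{shock}}-\dot{\mf S}/\sqrt T$, exactly as for $\ol V_1$. This yields $e^{-\mu|Y|}$ decay with the stated prefactor. The $T$-derivatives follow by differentiating the equations, using $|T\pa_T|$-stability of all coefficients.
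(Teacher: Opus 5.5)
Your overall architecture matches the paper's. You subtract the $\nu^{\ell/4}$-weighted partial sums of \eqref{4.8} from \eqref{3.12}, pass to $\ol Q_{j,L}=Q_{j,L}-Q^{\mathrm{cut}}_{j,L}$, fix the phase difference by the zero-mass condition, and invert with the integrating factor $\cosh^2(\om Y/2)$. Your $j=0$ step, via the parameter Taylor remainder of the explicit $\tanh$ profile, is also the paper's. One organizational correction: the induction that closes is in $j$ at fixed $L$, not in $L$. The forcing contains $\pa_Y(Q_{0,L}V^{\mathrm{SI}}_{j,[L]})$ and $Q_{j',L}$ with $j'<j$ at the same $L$, and telescoping in $L$ cannot gain the extra factor $\nu^{\frac14-\frac{\beta_2}{2}}$.

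The gap is the phase estimate, which you rightly call the crux but leave open, and your setup of it is inaccurate. The ODE $2T\pa_T(\om\ol\Theta_{j-1,L})=\wh{\ml F}$ holds only for $j=1$. For general $j$, the coefficient $-(2j-\frac32)$ of $\Theta_{j-1}$ in \eqref{3.12}, combined with \eqref{430}, gives
\[
2T\pa_T(\om\ol\Theta_{j-1,L})-2(2j-2)\om\ol\Theta_{j-1,L}=\int_\R H_{j,L}|_{\ol\Theta_{j-1,L}=0}\,\md Y .
\]
Impose the common terminal value $\ol\Theta_{j-1,L}(2\nu^{-\beta_2})=0$; this requires giving the grid phases the terminal data $\delta_{0,\ell}\Theta^{\mathrm S}_{j-1}(2\nu^{-\beta_2})$. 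The solution then carries the factor $T^{2j-2}\int_T^{2\nu^{-\beta_2}}\tau^{1-2j}\,\md\tau\le\frac1{2j-2}$. So for $j\ge2$ a uniform bound $\nu^{(L+1)(\frac14-\frac{\beta_2}2)}$ on the mass already suffices, and there is no logarithm to fight.

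For $j=1$ the damping coefficient vanishes and the logarithm is a real danger, but your first remedy is not available. The mass contains the truncation error of the outer Taylor coefficients, e.g.\ $V_{1,0}^\pm-\sum_{\ell\le L}\nu^{\ell/4}V_{1,0,\ell}^\pm$. This error is in general of size $\nu^{(L+1)/4}T^{(L+1)/2}$, which is the full size $\nu^{(L+1)(\frac14-\frac{\beta_2}2)}$ at $T\sim\nu^{-\beta_2}$, with no $T^{-\gamma}$ decay.

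The mechanism that works is the opposite one, and you had it in hand before weakening it. Keep the growing bound $\nu^{(L+1)/4}T^{(L+1)/2}$, so that $\nu^{(L+1)/4}\int_T^{2\nu^{-\beta_2}}\tau^{\frac{L-1}2}\,\md\tau\lesssim\nu^{(L+1)(\frac14-\frac{\beta_2}2)}$; the positive power puts all the weight at the upper endpoint. This is also how \eqref{442} works: it is a growth bound $\nu^{1/4}\sqrt T$, not a decay bound.

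To make this usable, the paper does not bound the generic forcing $H_{1,L}$. It proceeds in three steps:
\begin{itemize}
\item It uses the exact, phase-free mass identity $2T\pa_T(\om\Theta_0^{\mathrm S})=-\om(V_{0,1}^++V_{0,1}^-)+V_{1,0}^--V_{1,0}^+$, together with its coefficient-by-coefficient grid analogue for $T\pa_T\sum_{\ell_1+\ell_2\le L}\nu^{\frac{\ell_1+\ell_2}4}(V_{0,0,\ell_1}^--V_{0,0,\ell_1}^+)\Theta_{0,\ell_2}$.
\item It subtracts the two; their right-hand sides differ by $O(\nu^{(L+1)/4}T^{(L+1)/2})$.
\item It treats the replacement of $2\om\Theta_{0,[L]}$ by that product-truncated sum as a pointwise, non-integrated error, controlled via \eqref{4.14} and \eqref{4.19}.
\end{itemize}
Without this step your argument only yields the phase bound with an extra factor $\log(\nu^{-\beta_2})$. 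That loss then enters $H_{j,L}$ through the terms $[T\pa_T\ol\Theta_{0,L}-\tfrac12\ol\Theta_{0,L}]\pa_YV^{\mathrm{SI}}_{j-1,[L]}$, and so spoils the $\ol Q_{j,L}$ estimates for all $j\ge1$.
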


\begin{proof}
	We first treat $Q_{0,L}$. From Proposition~\ref{prop4.4}, we have
	\begin{equation*}
		\begin{aligned}
		&\pa_Y^2V_{0,[L]}^{\mathrm{SI}}-V_{0,[L]}^{\mathrm{SI}}\pa_YV_{0,[L]}^{\mathrm{SI}}+\frac{\dot{\mf{S}}}{\sqrt{T}}\pa_YV_{0,[L]}^{\mathrm{SI}}\\
		&=-\frac12\pa_Y\sum_{\substack{\ell_1,\ell_2\leq L\\\ell_1+\ell_2>L}}\nu^{\frac{\ell_1+\ell_2}4}V_{0,\ell_1}^{\mathrm{SI}}V_{0,\ell_2}^{\mathrm{SI}} +\sum\limits_{\ell=0}^{L}\nu^{\frac{\ell}4}\ol{\mf{S}}_{L-\ell}\pa_YV_{0,\ell}^{\mathrm{SI}}\\
		&:=H_{0,L}.
		\end{aligned}
	\end{equation*}
We have
	\begin{equation*}
		\begin{aligned}
		|H_{0,L}|\lesssim \nu^{(L+1)(\frac14-\frac{\beta_2}{2})}.
		\end{aligned}
	\end{equation*}
We split $V_{0,[L]}^{\mathrm{SI}}=V_{1}+V_2$, where
		\begin{equation*}
		\begin{aligned}
			\pa_Y^2V_1-V_1\pa_YV_1&+\mf{S}_{[L]}\pa_YV_1=0,
		\end{aligned}
	\end{equation*}
with $V_1(T,Y)\to T^{-\frac12}U_{0,[L]}^{\mathrm{OI}}(T,0^{\pm})$ as $Y\to \pm\infty$.
We take $V_1$ centered as in \eqref{3.21}. The Taylor construction gives
$V_{0,[L]}^{\mathrm{SI}}(T,0)=\mf S_{[L]}=V_1(T,0)$, hence $V_2(T,0)=0$.
By Proposition~\ref{prop4.3} and the proof of \eqref{3.21}, we obtain
	\begin{equation}\label{8.41}
		\begin{aligned}
		|V_1-V_0^{\mathrm{shock}}|\lesssim \nu^{(L+1)(\frac14-\frac{\beta_2}{2})}.
		\end{aligned}
	\end{equation}
Then $V_2$ satisfies
		\begin{equation*}
		\begin{aligned}
		\pa_Y^2V_2-\pa_Y(V_1V_2)+\frac{\dot{\mf{S}}}{\sqrt{T}}\pa_YV_2-V_2\pa_YV_2=H_{0,L}-\ol{\mf{S}}_L\pa_YV_1,
		\end{aligned}
	\end{equation*}
with $V_2(T,Y)\to 0$ as $Y\to\pm\infty$.
The parameter Taylor remainder in \eqref{3.21}, using \eqref{4.19}, gives
		\begin{equation}\label{8.43}
		\begin{aligned}
		|V_2|\lesssim \nu^{(L+1)(\frac14-\frac{\beta_2}{2})}.
		\end{aligned}
	\end{equation}
Combining \eqref{8.41} and \eqref{8.43}, we obtain
		\begin{equation*}
		\begin{aligned}
			|V_0^{\mathrm{shock}}-V_{0,[L]}^{\mathrm{SI}}|\lesssim \nu^{(L+1)(\frac14-\frac{\beta_2}{2})}.
		\end{aligned}
	\end{equation*}
Applying the differentiated parameter Taylor formula to
$V_0^{\mathrm{shock}}-V_0^{\mathrm{cut}}$ gives the first estimate in
\eqref{4.34} for $j=0$: by \eqref{4.22}, polynomial factors in $|Y|$
from parameter derivatives are absorbed by a smaller exponential rate,
and \eqref{4.19} supplies the factor $T^{-m}$ after $T$-differentiation.\par
Now suppose \eqref{4.34} holds for $Q_{<j,L}$ and $\Theta_{<j-1}^{\mathrm{S}}-\Theta_{<j-1,[L]}$. To estimate $Q_{j,L}$ and $\Theta_{j-1}^{\mathrm{S}}-\Theta_{j-1,[L]}$, we sum \eqref{4.8} over $\ell=0,\dots,L$; this yields the following equation for $V_{j,[L]}^{\mathrm{SI}}:$
	\begin{align*}
 \pa_Y^2V_{j,[L]}^{\mathrm{SI}}=&\frac12\pa_Y\sum_{j_1+j_2=j}V_{j_1,[L]}^{\mathrm{SI}}V_{j_2,[L]}^{\mathrm{SI}}-\frac{\dot{\mf{S}}}{\sqrt{T}}\pa_YV_{j,[L]}^{\mathrm{SI}}-(2j-\frac52)V_{j-1,[L]}^{\mathrm{SI}}\notag\\
		&\quad+\frac Y2\pa_YV_{j-1,[L]}^{\mathrm{SI}}+T\pa_TV_{j-1,[L]}^{\mathrm{SI}}+\sum_{\substack{j_1+j_2=j-1}}\bigl[T\pa_T\Theta_{j_1}^{\mathrm{S}}-(2j_1+\tfrac12)\Theta_{j_1}^{\mathrm{S}}\bigr]\pa_YV_{j_2,[L]}^{\mathrm{SI}}\notag\displaybreak[1]\\
		&\quad-\frac12\pa_Y\sum_{j_1+j_2=j}\ \sum_{\substack{\ell_1,\ell_2\leq L\\\ell_1+\ell_2>L}}\nu^{\frac{\ell_1+\ell_2}4}V_{j_1,\ell_1}^{\mathrm{SI}}V_{j_2,\ell_2}^{\mathrm{SI}}+\ol{\mf{S}}_{L}\pa_YV_{j,[L]}^{\mathrm{SI}}\notag\displaybreak[1]\\
		&\quad+\sum_{\substack{0\leq\ell_1\leq L-1\\0\leq\ell_2\leq L\\\ell_1+\ell_2+1>L}}\nu^{\frac{\ell_1+\ell_2+1}4}\mf{S}_{\ell_1}\pa_YV_{j,\ell_2}^{\mathrm{SI}} -\sum_{\substack{j_1+j_2=j-1}}\bigl[T\pa_T\ol{\Theta}_{j_1,L}-(2j_1+\tfrac12)\ol{\Theta}_{j_1,L}\bigr]\pa_YV_{j_2,[L]}^{\mathrm{SI}}\notag\\*
		&\quad-\sum_{\substack{j_1+j_2=j-1}}\ \sum_{\substack{\ell_1,\ell_2\leq L\\\ell_1+\ell_2>L}}\nu^{\frac{\ell_1+\ell_2}4}\bigl[T\pa_T\Theta_{j_1,\ell_1}-(2j_1+\tfrac12)\Theta_{j_1,\ell_1}\bigr]\pa_YV_{j_2,\ell_2}^{\mathrm{SI}}.
\end{align*}
Subtracting the preceding equation from Proposition~\ref{prop3.3} gives, for $j\geq1$,
\begin{align}
&\pa_Y^2Q_{j,L}-\pa_Y(V_0^{\mathrm{shock}}Q_{j,L})
+\frac{\dot{\mf{S}}}{\sqrt{T}}\pa_YQ_{j,L}\notag\\*
=&\pa_Y(Q_{0,L}V_{j,[L]}^{\mathrm{SI}}) +\pa_Y\sum_{\substack{j_1+j_2=j\\j_1,j_2>0}}V_{j_1,[L]}^{\mathrm{SI}}Q_{j_2,L}
+\frac12\pa_Y\sum_{\substack{j_1+j_2=j\\j_1,j_2>0}}Q_{j_1,L}Q_{j_2,L}\notag\displaybreak[1]\\
&-(2j-\tfrac52)Q_{j-1,L}+\frac Y2\pa_YQ_{j-1,L}+T\pa_TQ_{j-1,L} +\sum_{j_1+j_2=j-1}\bigl[T\pa_T\Theta_{j_1}^{\mathrm{S}}
-(2j_1+\tfrac12)\Theta_{j_1}^{\mathrm{S}}\bigr]\pa_YQ_{j_2,L}\notag\displaybreak[1]\\
&+\frac12\pa_Y\sum_{j_1+j_2=j}\ \sum_{\substack{\ell_1,\ell_2\leq L\\\ell_1+\ell_2>L}}
\nu^{\frac{\ell_1+\ell_2}4}V_{j_1,\ell_1}^{\mathrm{SI}}V_{j_2,\ell_2}^{\mathrm{SI}}
-\ol{\mf{S}}_L\pa_YV_{j,[L]}^{\mathrm{SI}} -\sum_{\substack{0\leq\ell_1\leq L-1\\0\leq\ell_2\leq L\\\ell_1+\ell_2+1>L}}
\nu^{\frac{\ell_1+\ell_2+1}4}\mf{S}_{\ell_1}\pa_YV_{j,\ell_2}^{\mathrm{SI}}\notag\displaybreak[1]\\
&+\sum_{j_1+j_2=j-1}\bigl[T\pa_T\ol{\Theta}_{j_1,L}
-(2j_1+\tfrac12)\ol{\Theta}_{j_1,L}\bigr]\pa_YV_{j_2,[L]}^{\mathrm{SI}}\notag\\*
&+\sum_{j_1+j_2=j-1}\ \sum_{\substack{\ell_1,\ell_2\leq L\\\ell_1+\ell_2>L}}
\nu^{\frac{\ell_1+\ell_2}4}\bigl[T\pa_T\Theta_{j_1,\ell_1}
-(2j_1+\tfrac12)\Theta_{j_1,\ell_1}\bigr]\pa_YV_{j_2,\ell_2}^{\mathrm{SI}}.
\label{eq:M16-difference}
\end{align}
Indeed, the nonlinear difference before applying $\pa_Y$ is exactly
\[
\begin{aligned}
\frac12\sum_{j_1+j_2=j}
\bigl(V_{j_1}^{\mathrm{shock}}V_{j_2}^{\mathrm{shock}}
-V_{j_1,[L]}^{\mathrm{SI}}V_{j_2,[L]}^{\mathrm{SI}}\bigr)
&=V_0^{\mathrm{shock}}Q_{j,L}+Q_{0,L}V_{j,[L]}^{\mathrm{SI}}+\sum_{\substack{j_1+j_2=j\\j_1,j_2>0}}
\left(V_{j_1,[L]}^{\mathrm{SI}}Q_{j_2,L}
+\frac12Q_{j_1,L}Q_{j_2,L}\right).
\end{aligned}
\]
In particular, when $j=1$ the two sums with $j_1,j_2>0$ vanish, but
$\pa_Y(Q_{0,L}V_{1,[L]}^{\mathrm{SI}})$ remains. All five truncation
groups have the opposite signs to those in the equation for
$V_{j,[L]}^{\mathrm{SI}}$.

Introducing $\ol Q_{j,L}=Q_{j,L}-Q_{j,L}^{\mathrm{cut}}$, we obtain
\begin{equation}\label{eq:M16-cut}
\pa_Y^2\ol Q_{j,L}-\pa_Y(V_0^{\mathrm{shock}}\ol Q_{j,L})
+\frac{\dot{\mf{S}}}{\sqrt T}\pa_Y\ol Q_{j,L}=H_{j,L}.
\end{equation}
Here $H_{j,L}$ is the entire right-hand side of
\eqref{eq:M16-difference}, including both nonlinear difference terms,
together with
\[
-\pa_Y^2Q_{j,L}^{\mathrm{cut}}
+\pa_Y(V_0^{\mathrm{shock}}Q_{j,L}^{\mathrm{cut}})
-\frac{\dot{\mf{S}}}{\sqrt T}\pa_YQ_{j,L}^{\mathrm{cut}}.
\]
The grid profiles and phases have already been constructed in
Proposition~\ref{prop4.5}. We hold them fixed when extracting the
dependence on $\ol\Theta_{j-1,L}$. The notation
$H_{j,L}|_{\ol\Theta_{j-1,L}=\pa_T\ol\Theta_{j-1,L}=0}$ below means
that $\Theta_{j-1}^{\mathrm{S}}$ and its derivative are replaced by
$\Theta_{j-1,[L]}$ and its derivative throughout this expression,
including in $Q_{j,L}^{\mathrm{cut}}$.

The only term of $Q_{j,L}^{\mathrm{cut}}$ containing
$\ol\Theta_{j-1,L}$ is
$-\chi(Y)V_{1,0}^{\pm}\ol\Theta_{j-1,L}$. Moreover, the two phase
terms with $(j_1,j_2)=(j-1,0)$ in \eqref{eq:M16-difference} satisfy
\[
\begin{aligned}
&\bigl[T\pa_T\Theta_{j-1}^{\mathrm{S}}-(2j-\tfrac32)\Theta_{j-1}^{\mathrm{S}}\bigr]\pa_YQ_{0,L}\\
&\quad+\bigl[T\pa_T\ol\Theta_{j-1,L}-(2j-\tfrac32)\ol\Theta_{j-1,L}\bigr]\pa_YV_{0,[L]}^{\mathrm{SI}}\\
&=\bigl[T\pa_T\Theta_{j-1,[L]}-(2j-\tfrac32)\Theta_{j-1,[L]}\bigr]\pa_YQ_{0,L}\\
&\quad+\bigl[T\pa_T\ol\Theta_{j-1,L}-(2j-\tfrac32)\ol\Theta_{j-1,L}\bigr]\pa_YV_0^{\mathrm{shock}}.
\end{aligned}
\]
Thus the small coefficient multiplying the current phase derivative is
included exactly in $\pa_YV_0^{\mathrm{shock}}$. Also,
$\pa_Y(Q_{0,L}V_{j,[L]}^{\mathrm{SI}})$ contains only the already
constructed grid phase; no estimate for the current phase difference
is used to bound that term. Consequently,
\[
\begin{aligned}
H_{j,L}={}&H_{j,L}|_{\ol\Theta_{j-1,L}=\pa_T\ol\Theta_{j-1,L}=0}
+\bigl[T\pa_T\ol\Theta_{j-1,L}-(2j-\tfrac32)\ol\Theta_{j-1,L}\bigr]\pa_YV_0^{\mathrm{shock}}\\
&+\ol\Theta_{j-1,L}\left\{\pa_Y^2(\chi V_{1,0}^{\pm})
-\pa_Y\left[\left(V_0^{\mathrm{shock}}-\frac{\dot{\mf S}}{\sqrt T}\right)
\chi V_{1,0}^{\pm}\right]\right\}.
\end{aligned}
\]
By the limits $V_0^{\mathrm{shock}}-\dot{\mf S}/\sqrt T\to\mp\om$
at $Y\to\pm\infty$, the condition $\int_{\R}H_{j,L}\,\md Y=0$
and \eqref{430} give
\[
\begin{aligned}
2T\pa_T(\om\ol\Theta_{j-1,L})-2(2j-2)\om\ol\Theta_{j-1,L}
&=\int_{\R}H_{j,L}|_{\ol\Theta_{j-1,L}=\pa_T\ol\Theta_{j-1,L}=0}\,\md Y.
\end{aligned}
\]
The terminal conditions in Appendix~B give
$\ol\Theta_{j-1,L}(2\nu^{-\beta_2})=0$. Hence the phase difference
satisfies the exact formula
\begin{equation}\label{eq:M16-phase}
\ol\Theta_{j-1,L}(T)
=-\frac{T^{2j-2}}{2\om(T)}\int_T^{2\nu^{-\beta_2}}
\frac{\displaystyle\int_{\R}
H_{j,L}|_{\ol\Theta_{j-1,L}=\pa_T\ol\Theta_{j-1,L}=0}(\tau,Y)\,\md Y}
{\tau^{2j-1}}\,\md\tau.
\end{equation}

We next estimate $H_{j,L}|_{\ol\Theta_{j-1,L}=\pa_T\ol\Theta_{j-1,L}=0}$.
By \eqref{4.14}, the finite sums $V_{j,[L]}^{\mathrm{SI}}$ and their
derivatives have uniformly bounded polynomial growth in $Y$.
The inductive hypothesis and Proposition~\ref{prop4.3} give the
corresponding polynomial bounds for $Q_{j',L}$, $j'<j$, with the
factor $\nu^{(L+1)(\frac14-\frac{\beta_2}{2})}$.
The quadratic products $Q_{j_1,L}Q_{j_2,L}$ have its square, and
$\pa_Y(Q_{0,L}V_{j,[L]}^{\mathrm{SI}})$ has one such factor.
The omitted grid products have total index at least $L+1$; \eqref{4.14}
and \eqref{4.19} give the same bound for all five truncation groups.
These polynomial terms must be combined with the cutoff contributions
before estimating their spatial integrals. The coefficient identities
of Lemmas~\ref{lem3.5} and \ref{lem4.6}, used in the proofs of
Propositions~\ref{prop3.6} and \ref{prop4.5}, cancel the pure
polynomial terms on $|Y|\geq1$. Every remaining term either contains
an exponentially decaying profile remainder or a derivative of $\chi$.
After decreasing the exponential rate within the fixed finite
hierarchy, this proves
\[
\left|\pa_T^m\pa_Y^n
\left(H_{j,L}|_{\ol\Theta_{j-1,L}=\pa_T\ol\Theta_{j-1,L}=0}\right)\right|
\lesssim\nu^{(L+1)(\frac14-\frac{\beta_2}{2})}T^{-m}e^{-\mu|Y|}.
\]
For $j\geq2$, \eqref{eq:M16-phase} and
\[
T^{2j-2}\int_T^{2\nu^{-\beta_2}}\tau^{-(2j-1)}\,\md\tau
\leq\frac1{2j-2}
\]
give the required phase bound with a constant independent of $\nu$.

For $j=1$, a uniform bound for the forcing alone would introduce a
logarithm, so we retain the jump--phase product before truncation.
The spatial integral in the construction of $V_1^{\mathrm{shock}}$
gives
\[
2T\pa_T(\om\Theta_0^{\mathrm{S}})
=-\om(V_{0,1}^++V_{0,1}^-)+V_{1,0}^--V_{1,0}^+.
\]
Applying the same calculation coefficient by coefficient to
\eqref{4.8} yields
\[
\begin{aligned}
T\pa_T\!\sum_{\ell_1+\ell_2\leq L}
&\nu^{\frac{\ell_1+\ell_2}4}
(V_{0,0,\ell_1}^--V_{0,0,\ell_1}^+)\Theta_{0,\ell_2}\\
={}&-\frac12\sum_{\ell_1+\ell_2\leq L}\nu^{\frac{\ell_1+\ell_2}4}
(V_{0,0,\ell_1}^--V_{0,0,\ell_1}^+)
(V_{0,1,\ell_2}^++V_{0,1,\ell_2}^-)\\
&+\sum_{\ell=0}^L\nu^{\frac\ell4}(V_{1,0,\ell}^--V_{1,0,\ell}^+).
\end{aligned}
\]
By Propositions~\ref{prop4.2}--\ref{prop4.3} and \eqref{4.15}, the
difference between the two right-hand sides is bounded by
$\nu^{(L+1)/4}T^{(L+1)/2}$, with the corresponding differentiated
bounds. Furthermore, \eqref{4.14} implies
\[
\begin{aligned}
\biggl|\pa_T^m\biggl(2\om\Theta_{0,[L]}
-\sum_{\ell_1+\ell_2\leq L}\nu^{\frac{\ell_1+\ell_2}4}
(V_{0,0,\ell_1}^--V_{0,0,\ell_1}^+)\Theta_{0,\ell_2}\biggr)\biggr|
\lesssim\nu^{(L+1)(\frac14-\frac{\beta_2}{2})}T^{-m}.
\end{aligned}
\]
Indeed, expand $2\om$ to order $L$ using \eqref{4.19}; every omitted
product has total index at least $L+1$, and
$|V_{0,0,\ell}^{\pm}|\lesssim T^{\ell/2}$ while
$|\Theta_{0,\ell}|\lesssim\nu^{-\beta_2\ell/2}$.
Subtracting the two product equations, integrating in $\md T/T$,
and using the common terminal phase therefore gives
\[
\begin{aligned}
|2\om(T)\ol\Theta_{0,L}(T)|
&\lesssim\nu^{(L+1)(\frac14-\frac{\beta_2}{2})}
+\nu^{\frac{L+1}4}\int_T^{2\nu^{-\beta_2}}\tau^{\frac{L-1}2}\,\md\tau\\
&\lesssim\nu^{(L+1)(\frac14-\frac{\beta_2}{2})}.
\end{aligned}
\]
This includes $L=0$ and introduces no logarithmic loss. Since
$\om\gtrsim ab>0$, differentiating the phase equation now gives,
for every $j\geq1$ and $m\in\N_0$,
\begin{equation*}
|\pa_T^m\ol\Theta_{j-1,L}|
\lesssim\nu^{(L+1)(\frac14-\frac{\beta_2}{2})}T^{-m}.
\end{equation*}
Returning to the full forcing, we obtain
\begin{equation*}
|\pa_T^m\pa_Y^nH_{j,L}|
\lesssim\nu^{(L+1)(\frac14-\frac{\beta_2}{2})}T^{-m}e^{-\mu|Y|},
\qquad \int_{\R}H_{j,L}\,\md Y=0.
\end{equation*}
Integrating \eqref{eq:M16-cut} once gives
\[
\pa_Y\ol Q_{j,L}
-\left(V_0^{\mathrm{shock}}-\frac{\dot{\mf S}}{\sqrt T}\right)\ol Q_{j,L}
=\int_{-\infty}^{Y}H_{j,L}(T,Y')\,\md Y'.
\]
The right-hand side decays at both infinities. The normalization
$\ol Q_{j,L}(T,0)=0$ follows from that of the profiles and from
$\chi=0$ near the origin. By \eqref{pro}, the integrating factor
is $\cosh^2(\om(T)Y/2)$, and hence
\[
\ol Q_{j,L}(T,Y)=\frac{1}{\cosh^2(\om(T)Y/2)}
\int_0^Y\cosh^2(\om(T)Y'/2)
\int_{-\infty}^{Y'}H_{j,L}(T,Y'')\,\md Y''\,\md Y'.
\]
The uniform lower bound for $\om$, together with the estimates above,
proves the first estimate in \eqref{4.34}, after decreasing $\mu$ if
necessary. Differentiating this formula and the equation proves the
same estimate for all indicated derivatives. Relabelling $j-1$ as
$j$ in the phase estimate proves the second estimate in \eqref{4.34}
and completes the induction.

\end{proof}

We introduce the rescaled profiles of the shock-profile expansion:
\begin{equation}\label{4.36}
	\begin{aligned}
	U_{j,\ell}^{\mathrm{SI}}(T,X)
	&=T^{\frac12-2j}\sum_{p=0}^{\ell}\frac1{p!}
	\sum_{\substack{\ell_0+\ell_1+\cdots+\ell_p=\ell\\
	\ell_0\ge0,\ \ell_1,\ldots,\ell_p\ge1}}
	(\pa_Y^pV_{j,\ell_0}^{\mathrm{SI}})
	(T,\sqrt{T}X+\Theta_0^{\mathrm{I}}(T))\\
	&	\times\left(\prod_{r=1}^p\Theta_{\ell_r}^{\mathrm{I}}(T)\right).
	\end{aligned}
\end{equation}
Then $U_{j,\ell}^{\mathrm{SI}}$ satisfies the equation:
\begin{prop}
	For $j,\ell\in \N_0$, with $U_{-1,\ell}^{\mathrm{SI}}=0$, we have
	\begin{equation}\label{4.43}
		\begin{aligned}
		\pa_X^2U_{j,\ell}^{\mathrm{SI}}
		&+T^{\frac12}\sum_{\ell_1+\ell_2=\ell-1}
		\mf{S}_{\ell_1}\pa_XU_{j,\ell_2}^{\mathrm{SI}}
		=\frac12\pa_X\sum_{\substack{j_1+j_2=j\\\ell_1+\ell_2=\ell}}
		U_{j_1,\ell_1}^{\mathrm{SI}}U_{j_2,\ell_2}^{\mathrm{SI}}
		+\pa_TU_{j-1,\ell}^{\mathrm{SI}}\\
		&\quad-\sum_{\ell_1+\ell_2=\ell}
		\pa_T\left(\frac{\Theta_{\ell_1}^{\mathrm{I}}}{T^{\frac12}}\right)
		\pa_XU_{j-1,\ell_2}^{\mathrm{SI}}
		+\sum_{\substack{j_1+j_2=j-1\\\ell_1+\ell_2=\ell}}
		\pa_T\left(\frac{\Theta_{j_1,\ell_1}}{T^{2j_1+\frac12}}\right)
		\pa_XU_{j_2,\ell_2}^{\mathrm{SI}}.
		\end{aligned}
	\end{equation}
\end{prop}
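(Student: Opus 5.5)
The plan is to derive \eqref{4.43} as the $(T,X)$-form of the grid hierarchy \eqref{4.8}, in the same way that Proposition~\ref{shockeq} is the $(T,X)$-form of Proposition~\ref{prop3.3}. It is convenient to package the grid into generating sums first. Introduce formal sums $\mathcal V_j=\sum_\ell\nu^{\ell/4}V_{j,\ell}^{\mathrm{SI}}$, $\vartheta_{j}=\sum_\ell\nu^{\ell/4}\Theta_{j,\ell}$ and $\Theta^{\mathrm I}=\sum_\ell\nu^{\ell/4}\Theta^{\mathrm I}_\ell$. Summing \eqref{4.8} against $\nu^{\ell/4}$ shows that, as formal power series in $\nu^{1/4}$, the $\mathcal V_j$ satisfy exactly \eqref{3.12}. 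In this identity, $\dot{\mf S}/\sqrt T$ is replaced by $\sum_\ell\nu^{(\ell+1)/4}\mf S_\ell$ and $\Theta_j^{\mathrm S}$ by $\vartheta_j$.

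Set $\mathcal U_j(T,X)=T^{\frac12-2j}\mathcal V_j(T,\sqrt TX+\Theta^{\mathrm I})$. The computation in the proof of Proposition~\ref{shockeq} is purely algebraic and uses only the chain rule. It therefore applies verbatim to these formal series and gives \eqref{348} for $\mathcal U_j$, with $\Theta$ replaced by $\Theta^{\mathrm I}$, $\Theta^{\mathrm S}_{j_1}$ by $\vartheta_{j_1}$, and $\dot{\mf S}$ by $T^{1/2}\sum\nu^{(\ell+1)/4}\mf S_\ell$.

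Concretely, the chain rule gives $\pa_X=\sqrt T\pa_Y$ and $\pa_T|_X=\pa_T|_Y+(\tfrac{X}{2\sqrt T}+\dot\Theta^{\mathrm I})\pa_Y$. Substituting $Y-\Theta^{\mathrm I}=\sqrt TX$, the transport term $\tfrac Y2\pa_Y\mathcal V_{j-1}$ combines with $T\pa_T\mathcal V_{j-1}$ and the power $T^{\frac12-2j}$ into $\pa_T\mathcal U_{j-1}$. This produces the correction $-\pa_T(\Theta^{\mathrm I}/T^{1/2})\pa_X\mathcal U_{j-1}$. The term $-(2j-\frac52)V_{j-1}$ is precisely what the factor $T^{\frac12-2j}$ absorbs. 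Finally, $[T\pa_T\Theta_{j_1}-(2j_1+\frac12)\Theta_{j_1}]T^{-2j_1-\frac32}$ equals $T^{-1}\cdot\pa_T(\Theta_{j_1}T^{-2j_1-\frac12})\cdot T^{2j_1+\frac32}$, which accounts for the last sum in \eqref{4.43}.

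The remaining step is to identify $\mathcal U_j=\sum_\ell\nu^{\ell/4}U^{\mathrm{SI}}_{j,\ell}$ by expanding in $\nu^{1/4}$. Taylor-expand $\mathcal V_j(T,\sqrt TX+\Theta^{\mathrm I}_0+\sum_{\ell\ge1}\nu^{\ell/4}\Theta^{\mathrm I}_\ell)$ about $Y=\sqrt TX+\Theta^{\mathrm I}_0$. Collecting the coefficient of $\nu^{\ell/4}$ gives exactly \eqref{4.36}, since the $1/p!$ multinomial bookkeeping matches the Taylor formula with ordered indices $\ell_1,\dots,\ell_p\ge1$. Because all operations (products, $\pa_T$, $\pa_X$, multiplication by $T$-dependent coefficients) are compatible with the Cauchy product in $\nu^{1/4}$, equating coefficients of $\nu^{\ell/4}$ in the transformed identity yields \eqref{4.43}. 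The products $\mathcal U_{j_1}\mathcal U_{j_2}$ give the double sum over $(j_1,\ell_1),(j_2,\ell_2)$. The shift term $T^{1/2}\sum\nu^{(\ell_1+1)/4}\mf S_{\ell_1}$ moves the index by one, giving $\ell_1+\ell_2=\ell-1$. The phase terms give $\ell_1+\ell_2=\ell$. The case $j=0$ reduces to the $(T,X)$-form of \eqref{0si} with $U^{\mathrm{SI}}_{-1,\ell}=0$.

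The main obstacle is justifying the formal manipulation. Each $\nu^{\ell/4}$-coefficient involves only finitely many terms, so no convergence is needed; the identity is an equality of polynomials in the finitely many shifts $\Theta^{\mathrm I}_1,\dots,\Theta^{\mathrm I}_\ell$. I would make this rigorous by introducing a formal parameter $\epsilon$ in place of $\nu^{1/4}$, computing the $\epsilon^\ell$ Taylor coefficient at $\epsilon=0$, and using smoothness of $V^{\mathrm{SI}}_{j,\ell}$ in $Y$ from Proposition~\ref{prop4.5}. The one delicate sign check is the term $\dot\Theta^{\mathrm I}\pa_Y$ arising from $\pa_T|_X$. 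It must combine with $\sqrt TX$ to produce $\pa_T(\Theta^{\mathrm I}/T^{1/2})$ with the stated sign, and I would verify this once directly at $\ell=0,1$ using \eqref{0.38}.
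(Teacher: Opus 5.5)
Your proposal is correct and is essentially the paper's argument. The paper also passes to $(T,X)$ by the chain rule, using $\pa_TU_{j-1}-\pa_T(\Theta/T^{\frac12})\pa_XU_{j-1}=T^{\frac32-2j}[T\pa_TV_{j-1}+\frac Y2\pa_YV_{j-1}+(\frac52-2j)V_{j-1}]$. It then collects Taylor coefficients about $Y=\sqrt TX+\Theta_0^{\mathrm I}$, which your generating series in a formal parameter $\epsilon$ organizes more transparently. That device is genuinely needed, since the $\Theta_{j,\ell}$ depend on $\nu$ through their terminal data at $T=2\nu^{-\beta_2}$. The identity itself does not use how $\Theta_\ell^{\mathrm I}$ is built from the $\Theta_{j,\ell}$, because the composition shift cancels exactly.

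Two bookkeeping slips need fixing. The phase identity should read $[T\pa_T\Theta_{j_1}-(2j_1+\frac12)\Theta_{j_1}]T^{-2j_1-\frac32}=\pa_T(\Theta_{j_1}T^{-2j_1-\frac12})$. The term $(\frac52-2j)V_{j-1}$ comes from differentiating the prefactor $T^{\frac12-2(j-1)}$ of $\mathcal U_{j-1}$, not $T^{\frac12-2j}$.
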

\begin{proof}
For $j=0$, the identity follows from \eqref{0si} and \eqref{4.36} by the Taylor product rule, using $\pa_X=T^{1/2}\pa_Y$. For $j=1$, the chain rule gives
\[
\begin{aligned}
&\pa_TU_0^{\mathrm{shock}}
-\pa_T\left(\frac{\Theta}{\sqrt T}\right)\pa_XU_0^{\mathrm{shock}}\\
&\quad=T^{-\frac12}\left[
T\pa_TV_0^{\mathrm{shock}}+\frac Y2\pa_YV_0^{\mathrm{shock}}
+\frac12V_0^{\mathrm{shock}}
\right]_{Y=\sqrt T X+\Theta}.
\end{aligned}
\]
Collecting the Taylor coefficients at fixed $(T,X)$ and using \eqref{4.8} with $j=1$ yields
\[
\begin{aligned}
\pa_X^2U_{1,\ell}^{\mathrm{SI}}
&+\sqrt T\sum_{\ell_1+\ell_2=\ell-1}\mf S_{\ell_1}\pa_XU_{1,\ell_2}^{\mathrm{SI}}
=\pa_X\sum_{\ell_1+\ell_2=\ell}U_{0,\ell_1}^{\mathrm{SI}}U_{1,\ell_2}^{\mathrm{SI}}+\pa_TU_{0,\ell}^{\mathrm{SI}}\\
&\quad-\sum_{\ell_1+\ell_2=\ell}
\pa_T\left(\frac{\Theta_{\ell_1}^{\mathrm I}-\Theta_{0,\ell_1}}{\sqrt T}\right)
\pa_XU_{0,\ell_2}^{\mathrm{SI}},
\end{aligned}
\]
which is \eqref{4.43} for $j=1$. The same calculation gives the general case.
\end{proof}
Here we explain why we introduce \eqref{4.36}. Indeed, we seek the expansion:
\begin{equation}\label{4.38}
	\begin{aligned}
	U_j^{\mathrm{shock}}(T,X)\sim \sum\limits_{\ell\geq0} \nu^{\frac{\ell}4}	U_{j,\ell}^{\mathrm{SI}}(T,X). 
	\end{aligned}
\end{equation}
Substituting it into \eqref{348} and collecting terms of the same order in $\nu$, we obtain the same equation for $U_{j,\ell}^{\mathrm{SI}}$ as in \eqref{4.43}. 

Define
\begin{equation}\label{eq:SI-cut}
	\begin{aligned}
	U_{j,\ell}^{\mathrm{cut}}(T,X)
	&=T^{\frac12-2j}\sum_{p=0}^{\ell}\frac1{p!}
	\sum_{\substack{\ell_0+\ell_1+\cdots+\ell_p=\ell\\
	\ell_0\ge0,\ \ell_1,\ldots,\ell_p\ge1}}
	(\pa_Y^pV_{j,\ell_0}^{\mathrm{cut}})
	(T,\sqrt{T}X+\Theta_0^{\mathrm{I}}(T))\\
	&	\times\left(\prod_{r=1}^p\Theta_{\ell_r}^{\mathrm{I}}(T)\right).
	\end{aligned}
\end{equation}
\begin{prop}\label{prop:SI-estimates}
Let $\beta_2\in(0,\frac1{100})$ and choose $\Theta_\ell^{\mathrm I}$ as in \eqref{5.43}.
There exists $\mu>0$ such that, for all $j,\ell,m,n\in\N_0$,
$T\in[1,2\nu^{-\beta_2}]$ and $X\in\R$,
\begin{equation}\label{eq:SI-estimates}
	\begin{aligned}
	|\pa_T^m\pa_X^n(U_{j,\ell}^{\mathrm{SI}}-U_{j,\ell}^{\mathrm{cut}})|
	&\lesssim \nu^{-\beta_2\ell/2}
	T^{\frac12-2j-m+\frac n2}e^{-\mu\sqrt T|X|},\\
	|\pa_T^m\pa_X^nU_{j,\ell}^{\mathrm{SI}}|
	&\lesssim \nu^{-\beta_2\ell/2}
	T^{\frac12-2j-m+\frac n2}(1+\sqrt T|X|)^j.
	\end{aligned}
\end{equation}
The implicit constants may depend on the indices and the fixed phase truncation, but not on $\nu,T,X$.
\end{prop}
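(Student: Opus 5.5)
The estimate will be derived directly from the definitions \eqref{4.36} and \eqref{eq:SI-cut}, together with Proposition~\ref{prop4.5} and bounds on the inner phases $\Theta_\ell^{\mathrm I}$. These phase bounds come from the choice \eqref{5.43}. Concretely, we use $|\pa_T^m\Theta_\ell^{\mathrm I}|\lesssim\nu^{-\beta_2\ell/2}T^{-m}$ for $T\in[1,2\nu^{-\beta_2}]$, which we expect to follow from the construction there as in Proposition~\ref{prop4.5}.

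Subtracting the two definitions, the difference $U_{j,\ell}^{\mathrm{SI}}-U_{j,\ell}^{\mathrm{cut}}$ is a finite sum of terms
\[
T^{\frac12-2j}\frac1{p!}\,\pa_Y^p\bigl(V_{j,\ell_0}^{\mathrm{SI}}-V_{j,\ell_0}^{\mathrm{cut}}\bigr)(T,\sqrt TX+\Theta_0^{\mathrm I})\prod_{r=1}^p\Theta_{\ell_r}^{\mathrm I},
\]
where $\ell_0+\cdots+\ell_p=\ell$.

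\textbf{Bound on the difference.} By \eqref{4.14}, each factor $\pa_Y^p(V^{\mathrm{SI}}_{j,\ell_0}-V^{\mathrm{cut}}_{j,\ell_0})$ is bounded by $\nu^{-\beta_2\ell_0/2}e^{-\mu|Y|}$. The product of phases contributes $\nu^{-\beta_2(\ell-\ell_0)/2}$, so the powers of $\nu$ combine to exactly $\nu^{-\beta_2\ell/2}$. Since $|\Theta_0^{\mathrm I}|\lesssim1$, we have $e^{-\mu|\sqrt TX+\Theta_0^{\mathrm I}|}\lesssim e^{-\mu\sqrt T|X|}$, at the cost of a constant.

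\textbf{Derivatives.} Each $\pa_X$ acting on the composition produces a factor $T^{1/2}$ through $\pa_X=T^{1/2}\pa_Y$. Each $\pa_T$ either falls on the prefactor $T^{\frac12-2j}$ or on a phase, giving $T^{-1}$, or falls on $V(T,\cdot)$ itself. In the last case it produces $\pa_TV$, again with $T^{-1}$, plus $\bigl(\tfrac{X}{2\sqrt T}+\dot\Theta_0^{\mathrm I}\bigr)\pa_YV$. The factor $X/(2\sqrt T)=Y/(2T)$ up to $O(T^{-1})$, and the extra $|Y|$ it carries is absorbed into the exponential by lowering $\mu$ slightly. This proves the first line of \eqref{eq:SI-estimates}.

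\textbf{Second estimate.} We control $U_{j,\ell}^{\mathrm{cut}}$ directly. The function $V_{j,\ell_0}^{\mathrm{cut}}$ is $\chi(Y)$ times a polynomial in $Y$ of degree at most $j$, since $i-p\le i\le j$ in its defining sum. Its coefficients are products of $V^{\pm}_{i,k,\ell_0'}$ and $\Theta_{j_r,\ell_r}$.

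The $\Theta_{j_r,\ell_r}$ are bounded by Proposition~\ref{prop4.5}. For the coefficients $V^{\pm}_{i,k,\ell}$ we use \eqref{4.15} and Proposition~\ref{prop4.2} at $X=0^\pm$, where $\mf D(T,0)\sim T^{1/2}$. This gives $|V^{\pm}_{i,k,\ell}|\lesssim T^{\ell/2}$, and on $T\le2\nu^{-\beta_2}$ this is $\lesssim\nu^{-\beta_2\ell/2}$. Combined with the bound on the phases, every coefficient is $\lesssim\nu^{-\beta_2\ell/2}T^{-m}$ after $m$ time derivatives.

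Hence $|\pa_T^m\pa_X^nU^{\mathrm{cut}}_{j,\ell}|\lesssim\nu^{-\beta_2\ell/2}T^{\frac12-2j-m+\frac n2}(1+\sqrt T|X|)^j$. Here each $\pa_X$ lowers the polynomial degree but costs $T^{1/2}$, and the $X/\sqrt T$ factors from $\pa_T$ are again bounded by $(1+|Y|)T^{-1}$ and fit within degree $j$. Adding the first estimate, whose exponential is bounded by $1$, gives the second line.

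\textbf{Main obstacle.} The delicate point is the bookkeeping of the $\nu^{-\beta_2/2}$ losses. It has to be checked that the coefficients $V^{\pm}_{i,k,\ell}$ and the phases $\Theta^{\mathrm I}_\ell$ chosen in \eqref{5.43} each lose exactly $\nu^{-\beta_2/2}$ per unit of $\ell$, and no more, uniformly on $[1,2\nu^{-\beta_2}]$. For the coefficients this rests on the growth $T^{\ell/2}$ inherited from the homogeneity in Proposition~\ref{prop4.2}. For the phases it rests on integrating the phase ODE backward from the terminal time $2\nu^{-\beta_2}$, exactly as in the phase argument in the proof of Proposition~\ref{prop4.7}; there we would verify that no logarithmic loss appears at $\ell=0$ by retaining the jump–phase product as done there.
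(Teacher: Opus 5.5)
Your proposal is correct and follows essentially the same route as the paper. You bound $\pa_T^m\Theta_\ell^{\mathrm I}$ and $\pa_T^mV_{i,k,\ell}^{\pm}$ by $\nu^{-\beta_2\ell/2}T^{-m}$, the latter via Proposition~\ref{prop4.2} and $\mf D(T,0)\sim T^{1/2}$. You then apply the chain rule to \eqref{4.36} and \eqref{eq:SI-cut}, absorbing the polynomial factors produced by $\pa_T$ into a smaller exponential rate. For the second bound you use that $V_{j,\ell}^{\mathrm{cut}}$ is piecewise polynomial of degree at most $j$.

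The phase bound you flag as the main obstacle is in fact immediate. By \eqref{5.43}, $\Theta_\ell^{\mathrm I}$ is a finite sum of $T^{-2j}\Theta_{j,\ell}$, and the second line of \eqref{4.14} already controls each $\Theta_{j,\ell}$, so there is no need to re-integrate the phase ODE.
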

\begin{proof}
By \eqref{4.14}, \eqref{5.43}, Proposition~\ref{prop4.2} and \eqref{4.15},
\[
|\pa_T^m\Theta_\ell^{\mathrm I}|
+|\pa_T^mV_{i,k,\ell}^{\pm}|
\lesssim \nu^{-\beta_2\ell/2}T^{-m}.
\]
Apply the chain rule to \eqref{4.36} and \eqref{eq:SI-cut}.
Each $X$ derivative contributes $T^{1/2}$; each $T$ derivative contributes
$T^{-1}$ and at most one factor $1+\sqrt T|X|$.
Since $|\Theta_0^{\mathrm I}|\lesssim1$, \eqref{4.14} gives the first estimate
after decreasing $\mu$ to absorb these polynomial factors.
For the second, $V_{j,\ell}^{\mathrm{cut}}$ is a polynomial of degree at most $j$
on each of $Y>1$ and $Y<-1$. Differentiating its composition does not increase
this degree, while derivatives of $\chi$ have bounded support.
The coefficient bounds above and the first estimate finish the proof.
\end{proof}

\begin{prop}\label{prop:SI-reconstruction}
Let $\beta_2\in(0,\frac1{100})$ and choose $\Theta_\ell^{\mathrm I}$ as in \eqref{5.43}.
For each fixed $j,L,m,n\in\N_0$, $T\in[1,2\nu^{-\beta_2}]$, $X\in\R$
and sufficiently small $\nu>0$, we have
\begin{equation}\label{eq:SI-reconstruction}
	\begin{aligned}
	&\left|\pa_T^m\pa_X^n\left(
	U_j^{\mathrm{shock}}-\sum_{\ell=0}^L\nu^{\frac\ell4}U_{j,\ell}^{\mathrm{SI}}
	\right)\right|\\
	&\qquad\lesssim \nu^{(L+1)(\frac14-\frac{\beta_2}{2})}
	T^{\frac12-2j-m+\frac n2}(1+\sqrt T|X|)^j.
	\end{aligned}
\end{equation}
The implicit constant is independent of $\nu,T,X$.
\end{prop}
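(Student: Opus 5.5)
We work in the layer variable $Y$ and pass back to $(T,X)$ only at the end. By definition, $U_j^{\mathrm{shock}}(T,X)=T^{\frac12-2j}V_j^{\mathrm{shock}}(T,Y_\Theta)$ with $Y_\Theta=\sqrt TX+\Theta(T)$. The finite sum $\sum_{\ell\le L}\nu^{\ell/4}U_{j,\ell}^{\mathrm{SI}}$ is, by \eqref{4.36}, exactly the order-$\le L$ part in $\nu^{1/4}$ of the formal Taylor expansion
\[
T^{\frac12-2j}\sum_{p\ge0}\frac1{p!}\,\pa_Y^pV_{j,[L]}^{\mathrm{SI}}(T,Y_0)\Bigl(\sum_{\ell\ge1}\nu^{\ell/4}\Theta_\ell^{\mathrm I}\Bigr)^p,\qquad Y_0=\sqrt TX+\Theta_0^{\mathrm I}.
\]
We therefore split
\[
U_j^{\mathrm{shock}}-\sum_{\ell\le L}\nu^{\frac\ell4}U_{j,\ell}^{\mathrm{SI}}
=T^{\frac12-2j}\bigl[Q_{j,L}(T,Y_\Theta)\bigr]
+T^{\frac12-2j}\bigl[V_{j,[L]}^{\mathrm{SI}}(T,Y_\Theta)-\mathcal T_L\bigr],
\]
where $\mathcal T_L$ denotes the truncated Taylor polynomial above. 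We estimate the two brackets separately.

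\textbf{First bracket.} By Proposition~\ref{prop4.7}, $Q_{j,L}-Q_{j,L}^{\mathrm{cut}}$ is $O(\nu^{(L+1)(\frac14-\frac{\beta_2}2)}e^{-\mu|Y|})$. The cut part $Q_{j,L}^{\mathrm{cut}}=V_j^{\mathrm{cut}}-V_{j,[L]}^{\mathrm{cut}}$ is a polynomial in $Y$ of degree $\le j$ on $|Y|\ge1$. Its coefficients are differences between products of the $V_{i,k}^\pm$ and $\Theta^{\mathrm S}$'s and their grid truncations. These are controlled by Proposition~\ref{prop4.3} (through \eqref{vik}, \eqref{4.15}, with $\mf D(T,0)\sim T^{1/2}$ and $T\le2\nu^{-\beta_2}$, giving $\nu^{(L+1)/4}T^{(L+1)/2}\le\nu^{(L+1)(\frac14-\frac{\beta_2}2)}$) together with the phase bound in \eqref{4.34}. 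Thus $|Q_{j,L}|\lesssim\nu^{(L+1)(\frac14-\frac{\beta_2}2)}(1+|Y|)^j$. Since $|\Theta|\lesssim1$, we have $1+|Y_\Theta|\lesssim1+\sqrt T|X|$. Derivatives go through the chain rule: each $\pa_X$ gives $T^{1/2}$, and each $\pa_T$ gives $T^{-1}$ times at most one factor $(1+\sqrt T|X|)$. That extra factor is absorbed because $\pa_Y$ lowers the polynomial degree, and on the exponential part it is absorbed by shrinking $\mu$.

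\textbf{Second bracket.} This is a Taylor remainder in the phase. We need $\Theta-\Theta_0^{\mathrm I}=\sum_{1\le\ell\le L}\nu^{\ell/4}\Theta_\ell^{\mathrm I}+O(\nu^{(L+1)(\frac14-\frac{\beta_2}2)})$ with $|\Theta_\ell^{\mathrm I}|\lesssim\nu^{-\beta_2\ell/2}$. This follows from the choice \eqref{5.43}, combined with Corollary~\ref{cor3.8}-type truncation of $\Theta$ and Proposition~\ref{prop4.7}. In particular $|\Theta-\Theta_0^{\mathrm I}|\lesssim\nu^{\frac14-\frac{\beta_2}2}$ is small. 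We apply Taylor's formula to order $L$ in the shift $h=\Theta-\Theta_0^{\mathrm I}$, with integral remainder $\frac{h^{L+1}}{(L+1)!}\pa_Y^{L+1}V_{j,[L]}^{\mathrm{SI}}(T,\xi)$. Proposition~\ref{prop4.5} shows that $\pa_Y^{L+1}V_{j,[L]}^{\mathrm{SI}}$ grows at most like $(1+|Y|)^j$, with $\nu$-loss bounded by $\nu^{-\beta_2L/2}$ times powers of $\nu^{1/4}$. We then expand $h^p$ in powers of $\nu^{1/4}$. Terms of total order $>L$, coming from cross products, from $V_{j,\ell_0}$ with $\ell_0+\ldots>L$, or from the $O(\cdot)$ error in $h$, each carry $\nu^{\ell/4-\beta_2\ell/2}$ with $\ell\ge L+1$. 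Every such term is therefore $\lesssim\nu^{(L+1)(\frac14-\frac{\beta_2}2)}(1+\sqrt T|X|)^j$. What remains is exactly $\sum_{\ell\le L}\nu^{\ell/4}U_{j,\ell}^{\mathrm{SI}}$. Time derivatives of $h$ cost $T^{-1}$, again by \eqref{5.43} and \eqref{4.34}.

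\textbf{Main obstacle.} The main obstacle is bookkeeping the $\nu^{-\beta_2\ell/2}$ losses so that every discarded term keeps the net gain $\nu^{(\frac14-\frac{\beta_2}2)}$ per order. The key fact is that each $\Theta_\ell^{\mathrm I}$ and $V_{j,\ell}^{\mathrm{SI}}$ loses at most $\nu^{-\beta_2\ell/2}$ while carrying $\nu^{\ell/4}$. The second difficulty is that polynomial growth in $Y$ from the cut parts must not compound under differentiation; this is handled by the degree-counting argument of Proposition~\ref{prop:SI-estimates}. For these reasons we use the sharper phase estimate of Proposition~\ref{prop4.7}, which carries no logarithmic loss for $j=1$, and not merely Proposition~\ref{prop4.5}.
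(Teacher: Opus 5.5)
Your proposal is correct and follows essentially the paper's proof: you bound $Q_{j,L}$ through $Q_{j,L}^{\mathrm{cut}}$ and \eqref{4.34}, then Taylor-expand $V_{j,[L]}^{\mathrm{SI}}$ in the phase increment about $Y=\sqrt TX+\Theta_0^{\mathrm I}$ and discard terms of total order above $L$. The only difference is bookkeeping: the paper first replaces $\Theta$ by $\sum_{r\le L}\nu^{r/4}\Theta_r^{\mathrm I}$ and expands each $V_{j,\ell}^{\mathrm{SI}}$ to order $L-\ell$, whereas you expand in the exact increment and then substitute its expansion, which is exactly \eqref{5.44} (Lemma~\ref{lem5.9}); no Corollary~\ref{cor3.8}-type truncation is needed, since $\Theta$ is already a finite sum.
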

\begin{proof}
Proposition~\ref{prop4.3}, \eqref{vik}, \eqref{4.15} and the phase estimate in
\eqref{4.34} control the finite products in $Q_{j,L}^{\mathrm{cut}}$.
Adding the first estimate in \eqref{4.34} gives
\[
|\pa_T^m\pa_Y^nQ_{j,L}|
\lesssim \nu^{(L+1)(\frac14-\frac{\beta_2}{2})}T^{-m}(1+|Y|)^j.
\]
First replace $V_j^{\mathrm{shock}}$ by $V_{j,[L]}^{\mathrm{SI}}$ in
$U_j^{\mathrm{shock}}$, and then replace $\Theta$ by
$\sum_{r=0}^L\nu^{r/4}\Theta_r^{\mathrm I}$.
The preceding estimate and \eqref{5.44} bound these errors.
For each $\ell\le L$, Taylor-expand $V_{j,\ell}^{\mathrm{SI}}$ at
$Y=\sqrt T X+\Theta_0^{\mathrm I}$ through order $L-\ell$.
The phase increment is $O(\nu^{\frac14-\frac{\beta_2}{2}})$, so its weighted
Taylor remainder is
$O(\nu^{(L+1)(\frac14-\frac{\beta_2}{2})}(1+\sqrt T|X|)^j)$.
Terms of total order greater than $L$ have the same bound; the retained terms,
after multiplication by $T^{\frac12-2j}$, are exactly the sum in
\eqref{eq:SI-reconstruction} by \eqref{4.36}.
Differentiating the integral Taylor remainder gives the factor $T^{-m+n/2}$
as in Proposition~\ref{prop:SI-estimates}, completing the proof.
\end{proof}

We also obtain the outer--shock grid matching estimate. Introduce
\begin{equation*}
	\begin{aligned}
		U_{[J],\ell}^{\mathrm{SI}}=\sum\limits_{j=0}^JU_{j,\ell}^{\mathrm{SI}},\quad 	U_{[K],\ell}^{\mathrm{OI}}=\sum\limits_{k=0}^KU_{k,\ell}^{\mathrm{OI}}.
	\end{aligned}
\end{equation*}
Following the proof of Proposition~\ref{prop3.9}, we have the following estimate:
\begin{prop}\label{prop4.9}
	Let $\beta_1\in (0,\frac1{100})$. In the region $\{(T,X):T\in[1,2\nu^{-\beta_2}],\ 2T^{-1/2}\leq |X|\leq4 T^{-\frac12+\beta_1}\}$ there exists a constant $\mu>0$ such that for all $m,n,J,K,\ell\in\N_0$, we have
	\begin{equation}\label{4.39}
		\begin{aligned}
			|\pa_T^m\pa_X^n&(U_{[J],\ell}^{\mathrm{SI}}-{U}_{[K],\ell}^{\mathrm{OI}})|\lesssim \nu^{-\beta_2\ell/2}|T|^{\frac12-m+\frac n2}\\
			&\times(e^{-\mu\sqrt{T}|X|}+(\frac{|X|+1}{T^{\frac32}})^{K+1}+(\frac{|X|+1}{T^{\frac32}})^{J+1}).
		\end{aligned}
	\end{equation}
\end{prop}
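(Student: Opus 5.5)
The plan is to mirror the proof of Proposition~\ref{prop3.9} at the level of a fixed grid index $\ell$, inserting an intermediate cutoff object at that level. Set $U_{[J],\ell}^{\mathrm{cut}}=\sum_{j\le J}U_{j,\ell}^{\mathrm{cut}}$ as in \eqref{eq:SI-cut}, and split
\[
U_{[J],\ell}^{\mathrm{SI}}-U_{[K],\ell}^{\mathrm{OI}}
=(U_{[J],\ell}^{\mathrm{SI}}-U_{[J],\ell}^{\mathrm{cut}})
+(U_{[J],\ell}^{\mathrm{cut}}-U_{[K],\ell}^{\mathrm{cut}})
+(U_{[K],\ell}^{\mathrm{cut}}-U_{[K],\ell}^{\mathrm{OI}}).
\]
The first term is handled directly by the first estimate of Proposition~\ref{prop:SI-estimates}. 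Summing over $j\le J$ gives the bound $\nu^{-\beta_2\ell/2}T^{\frac12-m+\frac n2}e^{-\mu\sqrt T|X|}$, since the factors $T^{-2j}$ are harmless.

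For the remaining two terms, I will first prove an $\ell$-graded analogue of Lemma~\ref{lem3.10}. In the region $|X|\ge 2T^{-1/2}$ we have $|\sqrt T X|\ge 2$, while $|\Theta_0^{\mathrm I}|\lesssim 1$. So after shrinking the lower bound if needed, $\chi(\sqrt TX+\Theta_0^{\mathrm I})\equiv 1$ and the sign of the argument agrees with that of $X$; derivatives of $\chi$ do not appear. Then $U_{j,\ell}^{\mathrm{cut}}$ is an explicit polynomial in $X$ whose coefficients are built from $V_{i,k,\ell_0}^{\pm}=\frac1{i!}\pa_X^iU_{k,\ell_0}^{\mathrm{OI}}(T,0^\pm)T^{-\frac{1-4k-3i}{2}}$, the phases $\Theta_{j_r,\ell_r}$, and the Taylor factors $\Theta_{\ell_r}^{\mathrm I}$ from \eqref{4.36}. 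I will redo the binomial reorganization of Lemma~\ref{lem3.10}, now tracking the $\ell$-grading: the product structure in $\ell_0+\ell_1+\dots+\ell_p=\ell$ is a Cauchy product, so the identities hold coefficientwise in $\nu^{1/4}$.

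The output should be
\[
U_{[J],\ell}^{\mathrm{cut}}=\sum_{i+k\le J}\frac1{i!}\pa_X^iU_{k,\ell}^{\mathrm{OI}}(T,0^\pm)X^i+\text{(phase-shift terms)}+R_{J,\ell},
\]
with the remainder bounded by
\[
|R_{J,\ell}|\lesssim\nu^{-\beta_2\ell/2}T^{-\frac32}\bigl(T^{-2J}+(|X|/T^{3/2})^J\bigr),
\]
using the coefficient bounds $|\pa_T^m V_{i,k,\ell}^\pm|+|\pa_T^m\Theta_{j,\ell}|+|\pa_T^m\Theta_\ell^{\mathrm I}|\lesssim\nu^{-\beta_2\ell/2}T^{-m}$ (from Proposition~\ref{prop4.2}, \eqref{4.14}, \eqref{5.43}). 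The phase-shift terms are of the form $[X+T^{-1/2}(\text{phase tail})]^i-X^i$. As in Proposition~\ref{prop3.9}, the phase tails are $O(T^{-2(J-i-k)-2})$ times $\nu^{-\beta_2\ell/2}$, so these terms are absorbed into the same remainder. The term $U_{[J],\ell}^{\mathrm{cut}}-U_{[K],\ell}^{\mathrm{cut}}$ then reduces to Taylor monomials with $J<i+k\le K$, each bounded via Proposition~\ref{prop4.2} by
\[
\nu^{-\beta_2\ell/2}\mf D(T,0)^{\ell+1-4k-3i}|X|^i\lesssim\nu^{-\beta_2\ell/2}T^{1/2}\bigl(|X|/T^{3/2}\bigr)^{i+k}T^{\ell/2}\cdot T^{-\ell/2},
\]
which is the claimed power.

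For the last term, I Taylor expand $U_{k,\ell}^{\mathrm{OI}}(T,\cdot)$ from $0^\pm$ to order $K-k$ on the half-line containing $X$, where it is smooth. The remainder $\frac{1}{(K-k+1)!}\pa_X^{K-k+1}U_{k,\ell}^{\mathrm{OI}}(T,\xi)X^{K-k+1}$ is bounded by Proposition~\ref{prop4.2}, giving $\mf D(T,\xi)^{\ell+1-4k-3(K-k+1)}|X|^{K-k+1}$.

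The main obstacle is this Taylor remainder: the negative power of $\mf D$ must be converted into $T^{\frac12}((|X|+1)/T^{3/2})^{K+1}$ with the prefactor $\nu^{-\beta_2\ell/2}$. The positive part $\mf D^{\ell}$ is what costs us, and I plan to handle it through the upper bound on $X$. In the region $|X|\le 4T^{-1/2+\beta_1}$ we have $\mf D\sim T^{1/2}$ by Lemma~\ref{lem2.11}, because $|\mf Y|\sim T^{1/2}$ there. Hence $\mf D^{\ell}\lesssim T^{\ell/2}\le\nu^{-\beta_2\ell/2}$ since $T\le 2\nu^{-\beta_2}$. The remaining negative powers give $T^{\frac12}(|X|/T^{3/2})^{\,i+k}$ up to harmless factors $T^{-2k}\le1$. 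The same bookkeeping controls $\mf D(T,0^\pm)$ in the cutoff coefficients. The "$+1$" in $|X|+1$ absorbs the bounded phase offset $T^{-1/2}\Theta$. Finally, derivatives in $T$ and $X$ follow by differentiating each identity, since each $\pa_T$ costs $T^{-1}$ and each $\pa_X$ costs $T^{1/2}$ in this region.
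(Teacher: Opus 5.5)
Your proposal is correct and is essentially the paper's argument: the paper proves Proposition~\ref{prop4.9} only by the remark that it follows the proof of Proposition~\ref{prop3.9}, which is exactly your route. That means the three-way split through the cutoff profiles, Proposition~\ref{prop:SI-estimates} for the exponential part, an $\ell$-graded version of Lemma~\ref{lem3.10} together with one-sided Taylor expansion of $U_{k,\ell}^{\mathrm{OI}}$ (bounded by Proposition~\ref{prop4.2}) for the polynomial parts, and $\mf D\sim T^{1/2}$ with $T^{\ell/2}\lesssim\nu^{-\beta_2\ell/2}$ to produce the prefactor. One small fix: you cannot shrink the region. On the part where $\sqrt T|X|\lesssim 1+\sup|\Theta_0^{\mathrm I}|$, where $\chi\neq1$ or the signs may disagree, the estimate follows directly from the crude bounds of Propositions~\ref{prop:SI-estimates} and \ref{prop4.2}, since there $e^{-\mu\sqrt T|X|}\gtrsim1$.
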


To end this section, we give a lemma that will be used in the next section and shows that the following asymptotic expansion
\begin{equation*}
	\begin{aligned}
	\Theta_{\ell}^{\mathrm{I}}\sim \sum\limits_{j\geq0} T^{-2j}\Theta_{j,\ell}
	\end{aligned}
\end{equation*}
holds.
\begin{lemma}\label{lem5.8}
	For all $\ell,m,J\in \N_0$ and $T\in [1,2\nu^{-\beta_2}]$, we have
	\begin{equation*}
		\begin{aligned}
			|\pa_T^m(\Theta_{\ell}^{\mathrm{I}}-\sum\limits_{j=0}^J T^{-2j}\Theta_{j,\ell})|\lesssim \nu^{-\beta_2\ell/2}T^{-2J-2-m}.
		\end{aligned}
	\end{equation*}
\end{lemma}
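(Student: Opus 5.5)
The natural route is to mirror, at the level of each inner index $\ell$, the argument that gave Corollary~\ref{cor3.8} from Remark~\ref{rem3.4} and Proposition~\ref{prop3.6}. The first step is to identify $\Theta_\ell^{\mathrm I}$ as the $\nu^{\ell/4}$ coefficient of the truncated phase $\Theta=\sum_{j=0}^{J_*}T^{-2j}\Theta_j^{\mathrm S}$, with $J_*$ the fixed phase truncation of Remark~\ref{rem3.4}. I expect this to be exactly the choice \eqref{5.43}:
\[
\Theta_\ell^{\mathrm I}=\sum_{j=0}^{J_*}T^{-2j}\Theta_{j,\ell},
\]
with $\Theta_{j,\ell}$ the grid phases constructed in Proposition~\ref{prop4.5}. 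The definition is consistent with the expansion \eqref{4.7} of $\Theta_j^{\mathrm S}$. If \eqref{5.43} is instead defined through the full expansion of $\Theta$ in $\nu^{1/4}$, then I would first check, using Proposition~\ref{prop4.7}, that these two descriptions agree up to $O(\nu^{(L+1)(\frac14-\frac{\beta_2}2)})$.

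Given this identification, the difference in the lemma is, for $J<J_*$,
\[
\Theta_\ell^{\mathrm I}-\sum_{j=0}^J T^{-2j}\Theta_{j,\ell}=\sum_{j=J+1}^{J_*}T^{-2j}\Theta_{j,\ell}.
\]
For $J\ge J_*$ the difference vanishes identically, taking $\Theta_{j,\ell}=0$ for $j>J_*$ in accordance with Remark~\ref{rem3.4}. The second bound of \eqref{4.14} gives $|\pa_T^m\Theta_{j,\ell}|\lesssim\nu^{-\beta_2\ell/2}T^{-m}$. By the Leibniz rule, each $T$-derivative falling on $T^{-2j}$ costs at most $T^{-1}$, so
\[
|\pa_T^m(T^{-2j}\Theta_{j,\ell})|\lesssim \nu^{-\beta_2\ell/2}T^{-2j-m}.
\]
Since $T\ge1$, the finite sum over $j\ge J+1$ is dominated by its first term, which gives $\nu^{-\beta_2\ell/2}T^{-2J-2-m}$. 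The implicit constants depend only on $\ell,m,J,J_*$.

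The main obstacle is the identification step: \eqref{5.43} is not yet displayed, and the terminal conditions for the grid phases (Appendix~B, at $T=2\nu^{-\beta_2}$) must be compatible with those for $\Theta_j^{\mathrm S}$ (at $T_0$). If $\Theta_\ell^{\mathrm I}$ is defined independently, for instance by a solvability condition for the inner hierarchy \eqref{4.4}, I would argue as in \eqref{eq:M16-phase}. Both $\Theta_\ell^{\mathrm I}$ and its truncated $T^{-2j}$ expansion satisfy the same linear first-order ODE of the form $2T\pa_T(\om\,\cdot)=\text{forcing}$. Their difference then solves that ODE with a forcing of size $\nu^{-\beta_2\ell/2}T^{-2J-2}$, which comes from the omitted orders of the shock--inner hierarchy \eqref{4.8}. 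Integrating from the common terminal time with the factor $\tau^{-1}$, and using $\om\gtrsim ab$, gives the bound without logarithmic loss. The derivative bounds would then follow by differentiating the ODE.
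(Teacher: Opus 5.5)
Your proposal is correct and matches the paper's argument. The paper takes \eqref{5.43}, $\Theta_\ell^{\mathrm I}=\sum_{j=0}^{J_*}T^{-2j}\Theta_{j,\ell}$, as the definition, so the difference is the tail $\sum_{j=J+1}^{J_*}T^{-2j}\Theta_{j,\ell}$, bounded termwise by the second estimate in \eqref{4.14}, with $J\le J_*$ understood by the same convention as Remark~\ref{rem3.4}; your contingency ODE argument is therefore not needed.
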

In fact, Remark~\ref{rem3.4} tells us that for some sufficiently large $J_*\in \N$, we define
	\begin{equation*}
	\begin{aligned}
	\Theta=\sum\limits_{j=0}^{J_*}T^{-2j}\Theta_{j}^{\mathrm{S}}.
	\end{aligned}
\end{equation*}
From \eqref{4.1}, for $\ell\in \N_0$ we can define
	\begin{equation}\label{5.43}
	\begin{aligned}
		\Theta_{\ell}^{\mathrm{I}}=\sum\limits_{j=0}^{J_*}T^{-2j}\Theta_{j,\ell}.
	\end{aligned}
\end{equation}
Hence Lemma~\ref{lem5.8} holds automatically provided $J\leq J_*$. For convenience as in Remark~\ref{rem3.4}, we omit the upper bound $J\leq J_*$. The following lemma shows that, under the choice of $\Theta_{\ell}^{\mathrm{I}}$ in \eqref{5.43}, the asymptotic expansion \eqref{4.1} holds.
\begin{lemma}\label{lem5.9}
	For all $m,L\in \N_0$ and $T\in [1,2\nu^{-\beta_2}]$, we have
		\begin{equation}\label{5.44}
		\begin{aligned}
			|\pa_T^m(\Theta-\sum\limits_{\ell=0}^L \nu^{\frac{\ell}4}\Theta_{\ell}^{\mathrm{I}})|\lesssim T^{-m} \nu^{(L+1)(\frac14-\frac{\beta_2}{2})}.
		\end{aligned}
	\end{equation}
\end{lemma}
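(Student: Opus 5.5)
The plan is to reduce Lemma~\ref{lem5.9} to the coefficientwise expansion of the phase coefficients and then resum. By Remark~\ref{rem3.4} we have $\Theta=\sum_{j=0}^{J_*}T^{-2j}\Theta_j^{\mathrm S}$, and by \eqref{5.43} $\Theta_\ell^{\mathrm I}=\sum_{j=0}^{J_*}T^{-2j}\Theta_{j,\ell}$. Interchanging the two finite sums gives the exact identity
\[
\Theta-\sum_{\ell=0}^L\nu^{\frac\ell4}\Theta_\ell^{\mathrm I}
=\sum_{j=0}^{J_*}T^{-2j}\bigl(\Theta_j^{\mathrm S}-\Theta_{j,[L]}\bigr)
=\sum_{j=0}^{J_*}T^{-2j}\,\ol\Theta_{j,L},
\]
with $\Theta_{j,[L]}$ and $\ol\Theta_{j,L}$ as in \eqref{4.32}--\eqref{4.33}. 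The problem is therefore reduced to bounding each remainder $\ol\Theta_{j,L}$ for $0\le j\le J_*$.

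That bound is the second estimate of \eqref{4.34} in Proposition~\ref{prop4.7}, valid on $T\in[1,2\nu^{-\beta_2}]$:
\[
|\pa_T^m\ol\Theta_{j,L}|\lesssim\nu^{(L+1)(\frac14-\frac{\beta_2}2)}T^{-m}.
\]
Differentiating each summand by the Leibniz rule gives terms of the form $\pa_T^{m_1}(T^{-2j})\,\pa_T^{m_2}\ol\Theta_{j,L}$ with $m_1+m_2=m$. Such a term is bounded by $T^{-2j-m_1}\nu^{(L+1)(\frac14-\frac{\beta_2}2)}T^{-m_2}\le \nu^{(L+1)(\frac14-\frac{\beta_2}2)}T^{-m}$, using $T\ge1$. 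Since $J_*$ is fixed and finite, summing over $j\le J_*$ and over $m_1+m_2=m$ costs only a constant depending on $J_*$, $m$, and $L$. This gives \eqref{5.44}.

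The only delicate point is bookkeeping, since Proposition~\ref{prop4.7} is proved by induction in $j$. We must check that it furnishes $\ol\Theta_{j,L}$ for every $j\le J_*$, including the top index. That index arises as the phase $\Theta_{j-1}$ in the step constructing $V_{j}$ with $j=J_*+1$. This step is available, because Remark~\ref{rem3.4} solves the hierarchy up to $V_{\le J_*+1}^{\mathrm{shock}}$. We must also check that the terminal normalizations used for $\Theta_j^{\mathrm S}$ and $\Theta_{j,\ell}$ coincide, which is what makes \eqref{eq:M16-phase} an exact formula for $\ol\Theta_{j,L}$. Apart from this consistency check, the argument is a direct resummation.
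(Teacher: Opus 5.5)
Your proposal is correct and matches the paper's proof: the paper also interchanges the two finite sums, writing the difference as $\sum_{j=0}^{J_*}T^{-2j}(\Theta_j^{\mathrm S}-\Theta_{j,[L]})$, and then applies the phase estimate in \eqref{4.34} term by term. Your Leibniz-rule step and your remarks on the top index $J_*$ and the terminal normalization only spell out what the paper leaves implicit when it cites Proposition~\ref{prop4.7}.
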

\begin{proof}
	From Remark~\ref{rem3.4} and \eqref{5.43}, we have
		\begin{equation}\label{5.45}
		\begin{aligned}
			|\pa_T^m(\Theta-\sum\limits_{\ell=0}^L \nu^{\frac{\ell}4}\Theta_{\ell}^{\mathrm{I}})|\leq \sum\limits_{j=0}^{J_*}|\pa_T^m[T^{-2j}(\Theta_j^{\mathrm{S}}-\sum\limits_{\ell=0}^L \nu^{\frac{\ell}4}\Theta_{j,\ell})]|.
		\end{aligned}
	\end{equation}
	Then \eqref{5.44} comes from \eqref{4.34} and \eqref{5.45}.
\end{proof}

\section{Inner-Grid Matching}
In this section we construct the inner-grid approximation
$U_{\ell,0}^{\mathrm{in}}$ of the inner expansion $U_\ell^{\mathrm{in}}$,
with errors controlled by index functions. We work on the time interval
$[1,2\nu^{-\beta_2}]$ for some small $\beta_2>0$. Section~6.1 collects the
notation and states the main result of the section; Section~6.2 derives the
error equation for $P_{J,K,0}$ and establishes the three-zone estimate for
the residual $G_{J,K,0}$. The weighted energy estimates and the inductive
construction of $P_{J,K,\ell,0}$ and $U_{\ell,0}^{\mathrm{in}}$ are carried
out in the subsequent subsections.

\subsection{Setup and notation}

Let $U_{k,\ell}^{\mathrm{OI}}$ and $U_{j,\ell}^{\mathrm{SI}}$ be the
outer--inner and shock--inner grid functions introduced in Section~5.
With the initial data $U_\ell^{\mathrm{in}}(0,X)$ prescribed in \cite{CG},
we solve \eqref{4.4} for $0\le T\le1$. The following matching estimate
is obtained by an argument similar to the proof of
\cite[Proposition~4.2]{CG}; details are given in Appendix~\ref{app:prop61}.
\begin{lemma}\label{lem:52}
For $K,\ell,m,n\in\N_0$, we have
\begin{equation*}
|\pa_T^m\pa_X^n(U_\ell^{\mathrm{in}}-U_{[K],\ell}^{\mathrm{OI}})|\lesssim
\mf D^{\ell-3-4K-2m-3n},\qquad \text{for } |X|\ge1,\ T\in[0,1].
\end{equation*}
\end{lemma}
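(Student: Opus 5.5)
We compare the inner profile with the truncated outer--inner grid on $|X|\ge1$, $T\in[0,1]$ by a weighted error-equation argument, following \cite[Proposition~4.2]{CG}. Set $R_{K,\ell}=U_\ell^{\mathrm{in}}-U_{[K],\ell}^{\mathrm{OI}}$ on $|X|\ge1$. On the short interval $T\in[0,1]$ we have $\mf D\sim(|T|+\mf Y^2)^{1/2}\sim 1+|X|^{1/3}$ for $|X|\ge1$. Indeed $\mf Y\sim X^{1/3}$ there, and $\mf S=\ml O(\nu^{3/4}T^2)$, so the shift is negligible. Hence the claimed bound is algebraic decay $|X|^{(\ell-3-4K-2m-3n)/3}$ as $|X|\to\infty$.

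\textbf{Step 1 (residual).} Sum \eqref{4.5} over $k\le K$ and compare with \eqref{4.4}. The function $U_{[K],\ell}^{\mathrm{OI}}$ satisfies the inner hierarchy at level $\ell$ up to the single truncation term $\pa_X^2U_{K,\ell}^{\mathrm{OI}}$. The quadratic terms $\sum_{k_1+k_2>K}U_{k_1,\ell_1}^{\mathrm{OI}}U_{k_2,\ell_2}^{\mathrm{OI}}$ drop out once the inner profiles of lower order $\ell'<\ell$ are themselves replaced by their grid truncations; they then reappear as lower-order $R_{K,\ell'}$ products. By Proposition~\ref{prop4.2}, the truncation term is $\ml O(\mf D^{\ell+1-4K-4-3\cdot 2})$, which is the order of $\mf D^{\ell-3-4K}$ measured after two $X$-derivatives. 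Taking $K$ one larger than needed gives slack, since $\mf D\gtrsim1$ on the region.

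\textbf{Step 2 (linear error equation and induction on $\ell$).} Subtract the two equations. This gives
\[
\pa_TR_{K,\ell}+\pa_X\bigl(U_0^{\mathrm{in}}R_{K,\ell}\bigr)-\pa_X^2R_{K,\ell}=F_{K,\ell},
\]
where $F_{K,\ell}$ collects the truncation residual, products involving $R_{K,\ell'}$ for $\ell'<\ell$ (controlled by the inductive hypothesis), and the $\mf S$-terms with the factor $T^{1/2}\le1$. At $\ell=0$ the equation is nonlinear, with an extra term $\tfrac12\pa_X R_{K,0}^2$. The initial data match: \cite{CG} prescribes $U_\ell^{\mathrm{in}}(0,\cdot)$ precisely as the $T=0$ homogeneous expansion of the grid. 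Hence $R_{K,\ell}(0,X)=\ml O(\mf D^{\ell-3-4K})$ with all derivatives. Since the region has a boundary at $|X|=1$, I would introduce a smooth cutoff $\chi(X)$ supported in $|X|\ge1/2$ and equal to $1$ for $|X|\ge1$. On the compact set $|X|\le2$, $T\le1$, both functions are bounded by parabolic local well-posedness, so the commutator terms contribute only compactly supported bounded forcing.

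\textbf{Step 3 (weighted energy / maximum principle).} Multiply $\chi R_{K,\ell}$ by the weight $\langle X\rangle^{2p}$ with $p$ large. Since $U_0^{\mathrm{in}}$ grows like $|X|^{1/3}$, the transport term produces $\int\langle X\rangle^{2p-1}|U_0^{\mathrm{in}}|\,|R|^2\lesssim\int\langle X\rangle^{2p}|R|^2$. Because $\pa_XU_0^{\mathrm{in}}$ is bounded on $T\le1$, Gronwall over a time of length $1$ costs only a constant. Then apply $\pa_X^n$ and $\pa_T^m$, using the equation to trade $\pa_T$ for $\pa_X^2$, and conclude with weighted Sobolev embedding. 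This yields the pointwise bound with the $\mf D^{-2m-3n}$ gain after sacrificing a finite number of $K$ orders, which is absorbed by relabeling $K$. The losses come from embedding and from the weight mismatch between $\langle X\rangle^{1/3}$ and $\mf D$.

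\textbf{Main obstacle.} I expect the difficulty to be the growth of the coefficients at infinity, from both $U_0^{\mathrm{in}}$ and the polynomially growing grid terms. This growth must not destroy the weighted estimate. The control of the transport term in Step 3 depends on the boundedness of $\pa_XU_0^{\mathrm{in}}$ on $[0,1]$ and on the fact that the time interval has length $1$. I would handle the nonlinear $\ell=0$ case by a bootstrap on the weighted norm.
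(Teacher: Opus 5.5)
Your outline is correct, but it takes a different analytic route from the paper.

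\textbf{The paper's route.} The paper works on the whole line with $P_{K,\ell}=U_\ell^{\mathrm{in}}-\chi U_{[K],\ell}^{\mathrm{OI}}$ and writes the forcing $G_{K,\ell}$ explicitly. It contains the viscous truncation term, the quadratic tails with $k_1+k_2>K$, cutoff terms supported in $\frac12\le|X|\le1$, the $\sqrt T\mf S_{\ell_1}$ terms, and products of lower-order $P_{K,\ell'}$. The paper bounds $G_{K,\ell}$ by $(1+X^2)^{(\ell-5-4K-2m-3n)/6}$. It then applies the comparison principle to each $\pa_T^m\pa_X^nP_{K,\ell}$ with the explicit barriers $\pm Ce^{CT}(1+X^2)^{(\ell-3-4K-2m-3n)/6}$. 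This gives the sharp exponent at the same $K$ with no loss. The reason is that, on a time interval of length one, the forcing is $\mf D^{-2}$ better than the target, so the bound is set by the initial mismatch from \cite[Proposition~4.2]{CG}.

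\textbf{Your route.} The weighted $L^2$ estimate plus weighted Sobolev embedding loses a fixed power, roughly $\langle X\rangle^{1/2+}$. This comes from the integrability of the data against $\langle X\rangle^{2p}$ and from the embedding. Further $K$-independent losses arise in the derivative commutators if you only use boundedness of $\pa_X^jU_0^{\mathrm{in}}$. Your relabeling nevertheless works. Prove the bound for $K'=K+M$ and add back $\sum_{k=K+1}^{K'}U_{k,\ell}^{\mathrm{OI}}$. Each of these terms satisfies $|\pa_T^m\pa_X^nU_{k,\ell}^{\mathrm{OI}}|\lesssim\mf D^{\ell-3-4K-2m-3n}$ by Proposition~\ref{prop4.2} and $\mf D\gtrsim1$. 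So the $\mf D^{-2m-3n}$ gain is inherited for free from the grid, and your argument only needs losses independent of $K$. The energy route is more robust, since it needs no sign structure or barrier construction. The barrier argument is shorter and lossless.

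\textbf{Details to correct.}
\begin{itemize}
\item The quadratic tails $\frac12\pa_X\sum_{k_1,k_2\le K,\,k_1+k_2>K}U_{k_1,\ell_1}^{\mathrm{OI}}U_{k_2,\ell_2}^{\mathrm{OI}}$ do not drop out; they remain in the forcing. They are harmless, because they have the same size $\mf D^{\ell-5-4K}$ as $\pa_X^2U_{K,\ell}^{\mathrm{OI}}$. The exponent of the latter is $\ell+1-4K-6$, not what you wrote.
\item The data at $T=0$ are the values of the \cite{CG} profiles. They agree with the grid only up to $\ml O(\mf D^{\ell-3-4K})$, not exactly; that error bound is all you actually use.
\item At $\ell=0$, a bootstrap with no small parameter need not close: you get $E'\lesssim E+E^{3/2}$ with $E(0)=\ml O(1)$. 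None is needed, though. The flux $U_{[K],0}^{\mathrm{OI}}R+\frac12R^2=\frac12(U_{[K],0}^{\mathrm{OI}}+U_0^{\mathrm{in}})R$ is linear in $R$, with a known coefficient of growth $\langle X\rangle^{1/3}$ and bounded $X$-derivative. So the same linear estimate applies.
\item $\mf S=\ml O(\nu^{1/4}T^2)$, not $\ml O(\nu^{3/4}T^2)$.
\item The energy argument is a priori, so you should run it with truncated weights. This uses the polynomial growth of the Cauchy solutions taken from \cite{CG}.
\end{itemize}
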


We write $\wh X=X+T^{-\frac12}\Theta(T)$ for the inner variable. Let
$\wh\zeta_1\in C_c^{\infty}(\R)$ be a cutoff with $\wh\zeta_1(z)=1$ on
$|z|\le\frac12$ and $\wh\zeta_1(z)=0$ for $|z|\ge1$, and let
$\wh\zeta_2\in C^{\infty}(\R)$ satisfy $\wh\zeta_2(z)=1$ for $z\ge1$ and
$\wh\zeta_2(z)=0$ for $z\leq\frac12$. For $\beta_1\in(0,\frac12]$, set
\begin{equation}\label{64}
\zeta(T,X)=\wh\zeta_1\bigl(XT^{\frac12-\beta_1}\bigr)\,\wh\zeta_2(T).
\end{equation}
Thus $\zeta=1$ in the shock zone, $\zeta=0$ in the outer zone, and
$0<\zeta<1$ in the matching zone, where $\pa_X\zeta$ is supported. We define
the combined grid
\begin{equation}\label{eq:SOI}
U_{j,k,\ell}^{\mathrm{SOI}}=\zeta U_{j,\ell}^{\mathrm{SI}}+(1-\zeta)U_{k,\ell}^{\mathrm{OI}},
\qquad
U_{[J],[K],\ell}^{\mathrm{SOI}}=\zeta U_{[J],\ell}^{\mathrm{SI}}+(1-\zeta)U_{[K],\ell}^{\mathrm{OI}}.
\end{equation}

\begin{definition}[Index functions]
A function $\psi:\N_0^2\to\R$ is an \emph{index function} if it is increasing
in both arguments, and if there exist $\mu>0$ and $C\ge0$ such that
$\psi(J,K)\ge\mu\min\{J,K\}-C$ for all $J,K\in\N_0$.
\end{definition}

The following is the main result of this section.
\begin{prop}
\label{thm:main}
Fix $L\in\N_0$. There exist smooth corrections $P_{J,K,\ell,0}$,
index functions $\psi_\ell$ and families
$\{a_{m,n}^{(\ell)}\}\subset\N_0$, with $a_{m,n}^{(\ell)}$ independent of
$\nu,J,K$, such that, for $0\le\ell\le L$, each fixed $m,n\in\N_0$
and all sufficiently large $J,K\in\N$, on $[1,2\nu^{-\beta_2}]\times\R$,
\begin{equation}\label{eq:Pellh}
|\pa_T^m\pa_X^nP_{J,K,\ell,0}|
\lesssim \nu^{-\frac{\beta_2}{2}\ell}
(T+X^2)^{-\psi_\ell(J,K)+a_{m,n}^{(\ell)}}.
\end{equation}
Moreover, the profiles
\[
U_{\ell,0}^{\mathrm{in}}
:=P_{J,K,\ell,0}+U_{[J],[K],\ell}^{\mathrm{SOI}}
\]
satisfy
\begin{equation}\label{eq:UellH}
\begin{aligned}
\pa_TU_{\ell,0}^{\mathrm{in}}
&-T^{\frac12}\sum_{\ell_1+\ell_2=\ell-1}
\mf S_{\ell_1}\pa_XU_{\ell_2,0}^{\mathrm{in}}+\frac12\pa_X\sum_{\ell_1+\ell_2=\ell}
U_{\ell_1,0}^{\mathrm{in}}U_{\ell_2,0}^{\mathrm{in}}
=\pa_X^2U_{\ell,0}^{\mathrm{in}},
\end{aligned}
\end{equation}
with initial data
\[
U_{\ell,0}^{\mathrm{in}}(1,X)
=U_\ell^{\mathrm{in}}(1,X)
+\ml O(\nu^{-\frac{\beta_2}{2}\ell})\rho(X),
\]
where $\rho\in C_c^\infty(\R)$ is fixed, nonnegative, independent of
$\nu,J,K$, and satisfies $\int_\R\rho\,\md X=1$.
The coefficient of $\rho$ is independent of $X$, and the implicit constants
are independent of $\nu,T,X$.
\end{prop}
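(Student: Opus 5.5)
We construct $P_{J,K,\ell,0}$ by induction on $\ell$, each time solving for $P$ the equation obtained by inserting $U_{\ell,0}^{\mathrm{in}}=P_{J,K,\ell,0}+U_{[J],[K],\ell}^{\mathrm{SOI}}$ into \eqref{eq:UellH}.

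\textbf{Step 1: residual.} For $\ell=0$ the correction solves
\[
\pa_TP+\pa_X\bigl(U_{[J],[K],0}^{\mathrm{SOI}}P\bigr)+\tfrac12\pa_X(P^2)-\pa_X^2P=G_{J,K,0},
\]
where $G_{J,K,0}$ is minus the residual of $U_{[J],[K],0}^{\mathrm{SOI}}$ in \eqref{4.4}. For $\ell\ge1$ the equation is linear, with the same transport coefficient $U_{0,0}^{\mathrm{in}}$. Its forcing $G_{J,K,\ell}$ collects the SOI residual and the products $U_{\ell_1,0}^{\mathrm{in}}U_{\ell_2,0}^{\mathrm{in}}$ and $\mf S_{\ell_1}\pa_XU_{\ell_2,0}^{\mathrm{in}}$ with $\ell_1,\ell_2<\ell$, which contain the already controlled lower-order $P$'s.

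The residual is estimated zone by zone.
\begin{itemize}
\item In the shock zone ($\zeta=1$), \eqref{4.43} holds exactly. The only errors are the truncation tails in $j$ and the phase tails, controlled by Lemma~\ref{lem5.8}. They are of size $T^{-2J-2+O(1)}\nu^{-\beta_2\ell/2}$.
\item In the outer zone, \eqref{4.5} truncated at $K$ leaves $\pa_X^2U_{K,\ell}^{\mathrm{OI}}$-type terms. By Proposition~\ref{prop4.2} these are $\lesssim\mf D^{\ell-3-4K}$, with $\mf D\sim(T+X^2)^{1/2}$ up to the cusp scaling.
\item In the matching zone, the commutator terms containing $\pa_T\zeta,\pa_X\zeta,\pa_X^2\zeta$ multiply $U_{[J],\ell}^{\mathrm{SI}}-U_{[K],\ell}^{\mathrm{OI}}$. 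Proposition~\ref{prop4.9} makes these tiny there: since $|X|\ge\frac12T^{-1/2+\beta_1}$, the exponential factor is $e^{-\mu T^{\beta_1}}$, and $(|X|+1)/T^{3/2}\lesssim T^{-1}$.
\end{itemize}
Since $T\le 2\nu^{-\beta_2}$ and $X$ is large only in the outer zone, these bounds give
\[
|\pa_T^m\pa_X^nG_{J,K,\ell}|\lesssim\nu^{-\beta_2\ell/2}(T+X^2)^{-\psi(J,K)+a_{m,n}}
\]
for an index function $\psi$.

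\textbf{Step 2: initial data and mass.} At $T=1$ we set $P(1)=U_\ell^{\mathrm{in}}(1)-U_{[J],[K],\ell}^{\mathrm{SOI}}(1)-\mu_\ell\rho$. By Lemma~\ref{lem:52} and Proposition~\ref{prop:SI-estimates}, the difference $U_\ell^{\mathrm{in}}(1)-U_{[J],[K],\ell}^{\mathrm{SOI}}(1)$ is rapidly decaying in $X$. The constant $\mu_\ell$ is chosen to make $\int P(1)\,\md X=0$, and $|\mu_\ell|\lesssim\nu^{-\beta_2\ell/2}$.

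The forcing need not have zero mass. We therefore split $P=m(T)\rho+\tilde P$, with $m(T)=\int_1^T\int G\,\md X\,\md\tau$. The function $m$ is bounded and decays like the tail of $\int G$, because $\int G$ is integrable in $T$ with index-function decay. The remainder $\tilde P$ then satisfies an equation with zero-mean data and zero-mean forcing, where the forcing now includes the terms generated by $m\rho$; these inherit the decay of $m$.

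\textbf{Step 3: decay estimates (main obstacle).} By the model \eqref{0.44}, the transport coefficient $U_{0,0}^{\mathrm{in}}\approx-\sqrt T\tanh(\sqrt T X/2)$ is compressive. A nonzero mass would concentrate into a component of size $\sqrt T$, so decay must come from the zero-mass structure.

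For $\ell=0$ I would apply the Cole--Hopf transform. Writing the full solution as $-2\pa_X\log\phi$ and the SOI part as $-2\pa_X\log\phi_0$, the ratio $\phi/\phi_0$ satisfies a linear heat-type equation with a drift and a source. Zero mass of $P$ means $\log(\phi/\phi_0)$ tends to $0$ at both infinities, so $\tilde P$ is a derivative of a decaying quantity.

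For $\ell\ge1$ I would work with the primitive $Q=\int_{-\infty}^X\tilde P$. It satisfies $\pa_TQ+U_{0,0}^{\mathrm{in}}\pa_XQ-\pa_X^2Q=\int^X G$, a transport--diffusion equation without a zeroth-order amplifying term. I would prove weighted $L^2$/maximum-principle bounds with the weights $(T+X^2)^{p}$:
\begin{itemize}
\item the characteristics point inward toward the shock, so the weights are handled with the signs of $U_{0,0}^{\mathrm{in}}$;
\item near the shock, the $\cosh^2$ integrating factor is used to absorb the polynomial weight.
\end{itemize}
This yields $|Q|\lesssim(T+X^2)^{-\psi+C}$. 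Derivatives then follow by parabolic regularity on parabolic cylinders of the natural scale, at the cost of the losses $a_{m,n}^{(\ell)}$. Adding back $m\rho$ gives \eqref{eq:Pellh}, and the induction over $\ell\le L$ closes since each step loses only a fixed amount in the exponent. The hardest part will be keeping the weighted estimates uniform across the layer of width $T^{-1/2}$ as $T\to2\nu^{-\beta_2}$, without a loss that grows with $T$.
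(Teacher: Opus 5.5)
\textbf{Gap in Step~2.} Your overall architecture is the paper's: zone-by-zone residual, a time-dependent multiple of $\rho$ to remove mass, Cole--Hopf at $\ell=0$, the spatial primitive for $\ell\ge1$, and weighted estimates driven by the compressive drift. However, the mass bookkeeping in Step~2 is oriented the wrong way, and the construction fails there.

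You choose $\mu_\ell$ so that $\int_\R P(1,X)\,\md X=0$ and set $m(T)=\int_1^T\int_\R G\,\md X\,\md\tau$. Then $\int_\R P(T,X)\,\md X=m(T)$ for every $T$. This $m$ is not a tail of $\int G$. It converges to $m_\infty=\int_1^{2\nu^{-\beta_2}}\int_\R G\,\md X\,\md\tau$, a quantity of size up to $\nu^{-\beta_2\ell/2}$ with no reason to vanish; the paper stresses that $G$ need not have zero mean.

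On the other hand, \eqref{eq:Pellh} implies $|\int_\R P(T,X)\,\md X|\lesssim\nu^{-\beta_2\ell/2}T^{-\psi_\ell+a_{0,0}^{(\ell)}+1/2}$. This becomes small for large $T$, uniformly in $\nu$, whereas $m(T)\to m_\infty$. So your $P$ cannot satisfy the claimed bound.

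The same defect appears in your splitting. The forcing generated by $m\rho$ contains $m\,\pa_X(U_{0,0}^{\mathrm{in}}\rho)$ and $m\rho''$. These are of size $|m|\sqrt T$ on $\operatorname{supp}\rho$, and $|m|T$ inside the layer. They grow rather than decay, so the weighted estimates for $\tilde P$ cannot close. This is exactly the mechanism of \eqref{0.44}: a persistent mass $m_\infty$ becomes a $\operatorname{sech}^2$ bump of height about $|m_\infty|\sqrt T$ at the shock.

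\textbf{The missing idea.} The initial correction must anticipate all the mass the residual will inject on the finite interval $[1,2\nu^{-\beta_2}]$. Normalize at the terminal time instead:
\[
\lambda_\ell(T)=-\int_T^{2\nu^{-\beta_2}}\int_\R G\,\md X\,\md T'.
\]
Then $\pa_T\lambda_\ell=\int_\R G\,\md X$, $\lambda_\ell(2\nu^{-\beta_2})=0$, and $\lambda_\ell$ decays at the index-function rate. Choose
\[
\mu_\ell=\int_\R\bigl(U_\ell^{\mathrm{in}}-U_{[J],[K],\ell}^{\mathrm{SOI}}\bigr)(1,X)\,\md X-\lambda_\ell(1),
\]
so that $\int_\R P(1,X)\,\md X=\lambda_\ell(1)$, not $0$.

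With this choice, $\int_\R P(T,X)\,\md X=\lambda_\ell(T)$ for all $T$. The function $P-\lambda_\ell\rho$ has zero initial mass and zero-mean forcing. The terms generated by $\lambda_\ell\rho$ decay despite the factors $\sqrt T$ and $T$ coming from the coefficient. Since $|\lambda_\ell(1)|\lesssim\nu^{-\beta_2\ell/2}$, this $\mu_\ell$ still has the size permitted by the $\ml O(\nu^{-\frac{\beta_2}{2}\ell})\rho$ term in the statement. The non-decaying mass is thereby moved out of $P$ into the initial discrepancy, which the paper later absorbs with the explicit profile $W_2$ built from Lemma~\ref{lem6.5}.

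With this fix, your Step~3 is in line with Propositions~\ref{prop:energy} and \ref{prop:linenergy}. At $\ell=0$ you also need an upper bound on $\int_{-\infty}^XP$ (the paper's comparison argument \eqref{ich}). This keeps $\phi/\phi_0$ bounded below, so the Cole--Hopf transform can be inverted.
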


\subsection{Proof of Proposition~\ref{thm:main} for $\ell=0$}

Set $P_{J,K,0}=U_0^{\mathrm{in}}-U_{[J],[K],0}^{\mathrm{SOI}}$.
Using \eqref{4.4} with $\ell=0$, we obtain
\begin{equation}\label{eq:P}
	\begin{aligned}
\pa_TP_{J,K,0}
+\pa_X[U_{[J],[K],0}^{\mathrm{SOI}}P_{J,K,0}]&+P_{J,K,0}\pa_XP_{J,K,0}
-\pa_X^2P_{J,K,0}\\
&=G_{J,K,0},
\end{aligned}
\end{equation}
where
\begin{equation}\label{eq:G0}
-G_{J,K,0}=\pa_TU_{[J],[K],0}^{\mathrm{SOI}}
+U_{[J],[K],0}^{\mathrm{SOI}}\pa_XU_{[J],[K],0}^{\mathrm{SOI}}
-\pa_X^2U_{[J],[K],0}^{\mathrm{SOI}}.
\end{equation}
Writing $U_{[J],[K],0}^{\mathrm{SOI}}=\zeta U_{[J],0}^{\mathrm{SI}}+(1-\zeta)U_{[K],0}^{\mathrm{OI}}$,
we expand the right-hand side of \eqref{eq:G0} as
\begin{equation}\label{eq:G0exp}
\begin{aligned}
-G_{J,K,0}
&=\zeta\pa_TU_{[J],0}^{\mathrm{SI}}+(1-\zeta)\pa_TU_{[K],0}^{\mathrm{OI}}
+\pa_T\zeta\,(U_{[J],0}^{\mathrm{SI}}-U_{[K],0}^{\mathrm{OI}})\\
&\quad+\zeta^2U_{[J],0}^{\mathrm{SI}}\pa_XU_{[J],0}^{\mathrm{SI}}
+(1-\zeta)^2U_{[K],0}^{\mathrm{OI}}\pa_XU_{[K],0}^{\mathrm{OI}}\\
&\quad+\zeta(1-\zeta)\bigl[U_{[J],0}^{\mathrm{SI}}\pa_XU_{[K],0}^{\mathrm{OI}}
+U_{[K],0}^{\mathrm{OI}}\pa_XU_{[J],0}^{\mathrm{SI}}\bigr]\\
&\quad+\zeta\pa_X\zeta\,|U_{[J],0}^{\mathrm{SI}}|^2
-(1-\zeta)\pa_X\zeta\,|U_{[K],0}^{\mathrm{OI}}|^2
+(1-2\zeta)\pa_X\zeta\,U_{[J],0}^{\mathrm{SI}}U_{[K],0}^{\mathrm{OI}}\\
&\quad-\Bigl[\zeta\pa_X^2U_{[J],0}^{\mathrm{SI}}+(1-\zeta)\pa_X^2U_{[K],0}^{\mathrm{OI}}
+2\pa_X\zeta\,\pa_X(U_{[J],0}^{\mathrm{SI}}-U_{[K],0}^{\mathrm{OI}})\\
&\quad+\pa_X^2\zeta\,(U_{[J],0}^{\mathrm{SI}}-U_{[K],0}^{\mathrm{OI}})\Bigr].
\end{aligned}
\end{equation}

\medskip
We first consider terms free of derivatives of $\zeta$.
For $J\ge1$, summing \eqref{4.43} with $\ell=0$ and grouping the phase terms gives
\begin{equation}\label{eq:M1}
\begin{aligned}
\pa_X^2U_{[J],0}^{\mathrm{SI}}
&=\frac12\pa_X\sum_{j_1+j_2\le J}
U_{j_1,0}^{\mathrm{SI}}U_{j_2,0}^{\mathrm{SI}}
+\pa_TU_{[J-1],0}^{\mathrm{SI}}\\
&\quad-\sum_{j_2=0}^{J-1}
\pa_T\left(\frac{\Theta_0^{\mathrm I}
-\sum_{j_1=0}^{J-1-j_2}T^{-2j_1}\Theta_{j_1,0}}{\sqrt T}\right)
\pa_XU_{j_2,0}^{\mathrm{SI}}.
\end{aligned}
\end{equation}
Consequently,
\begin{equation}\label{eq:M2}
\begin{aligned}
&\zeta\left(\pa_TU_{[J],0}^{\mathrm{SI}}
+\frac12\pa_X|U_{[J],0}^{\mathrm{SI}}|^2
-\pa_X^2U_{[J],0}^{\mathrm{SI}}\right)\\
&=\zeta\Biggl[\pa_TU_{J,0}^{\mathrm{SI}}
+\frac12\pa_X\sum_{\substack{j_1+j_2>J\\j_1,j_2\le J}}
U_{j_1,0}^{\mathrm{SI}}U_{j_2,0}^{\mathrm{SI}}\\
&\qquad+\sum_{j_2=0}^{J-1}
\pa_T\left(\frac{\Theta_0^{\mathrm I}
-\sum_{j_1=0}^{J-1-j_2}T^{-2j_1}\Theta_{j_1,0}}{\sqrt T}\right)
\pa_XU_{j_2,0}^{\mathrm{SI}}\Biggr].
\end{aligned}
\end{equation}
Similarly, summing \eqref{4.5} with $\ell=0$ gives
\begin{equation}\label{eq:M3}
\begin{aligned}
&(1-\zeta)\left(\pa_TU_{[K],0}^{\mathrm{OI}}
+\frac12\pa_X|U_{[K],0}^{\mathrm{OI}}|^2
-\pa_X^2U_{[K],0}^{\mathrm{OI}}\right)\\
&=(1-\zeta)\left[-\pa_X^2U_{K,0}^{\mathrm{OI}}
+\frac12\pa_X\sum_{\substack{k_1+k_2>K\\k_1,k_2\le K}}
U_{k_1,0}^{\mathrm{OI}}U_{k_2,0}^{\mathrm{OI}}\right].
\end{aligned}
\end{equation}

The remaining quadratic terms of \eqref{eq:G0exp} that do not involve
$\pa\zeta$ reduce to
\begin{equation}\label{eq:M5}
\begin{aligned}
&\zeta^2U_{[J],0}^{\mathrm{SI}}\pa_XU_{[J],0}^{\mathrm{SI}}
+(1-\zeta)^2U_{[K],0}^{\mathrm{OI}}\pa_XU_{[K],0}^{\mathrm{OI}}
+\zeta(1-\zeta)\bigl[U_{[J],0}^{\mathrm{SI}}\pa_XU_{[K],0}^{\mathrm{OI}}
\\
&+U_{[K],0}^{\mathrm{OI}}\pa_XU_{[J],0}^{\mathrm{SI}}\bigr]
\quad-\zeta U_{[J],0}^{\mathrm{SI}}\pa_XU_{[J],0}^{\mathrm{SI}}
-(1-\zeta)U_{[K],0}^{\mathrm{OI}}\pa_XU_{[K],0}^{\mathrm{OI}}
\\
&=-\frac12\zeta(1-\zeta)\pa_X\bigl(U_{[K],0}^{\mathrm{OI}}-U_{[J],0}^{\mathrm{SI}}\bigr)^2.
\end{aligned}
\end{equation}
Finally, the terms in \eqref{eq:G0exp} involving $\pa\zeta$ are given
explicitly by
\begin{equation}\label{eq:dxz}
\begin{aligned}
\pa_X\zeta\Bigl[&\zeta U_{[J],0}^{\mathrm{SI}}\bigl(U_{[J],0}^{\mathrm{SI}}-U_{[K],0}^{\mathrm{OI}}\bigr)
+(1-\zeta)U_{[K],0}^{\mathrm{OI}}\bigl(U_{[J],0}^{\mathrm{SI}}-U_{[K],0}^{\mathrm{OI}}\bigr)\\
&-2\pa_X\bigl(U_{[J],0}^{\mathrm{SI}}-U_{[K],0}^{\mathrm{OI}}\bigr)\Bigr]\\
&+\pa_T\zeta\bigl(U_{[J],0}^{\mathrm{SI}}-U_{[K],0}^{\mathrm{OI}}\bigr)-\pa_X^2\zeta\bigl(U_{[J],0}^{\mathrm{SI}}-U_{[K],0}^{\mathrm{OI}}\bigr).
\end{aligned}
\end{equation}
They are supported in the transition region, and they are controlled by the
common factor $U_{[J],0}^{\mathrm{SI}}-U_{[K],0}^{\mathrm{OI}}$ together with
the matching estimate of Section~5.

With $\beta_1$ as in Section~5, Lemma~\ref{lem5.8} yields
for $0\le j_2\le J-1$,
\[
\left|\pa_T^{m+1}\left(
\frac{\Theta_0^{\mathrm I}
-\sum_{j_1=0}^{J-1-j_2}T^{-2j_1}\Theta_{j_1,0}}{\sqrt T}\right)\right|
\lesssim T^{-2(J-j_2)-\frac32-m}.
\]
Thus Proposition~\ref{prop:SI-estimates} bounds the right-hand side of
\eqref{eq:M2} by $T^{-(2-\beta_1)J-\frac12+\beta_1}$ on the support of $\zeta$.
Proposition~\ref{prop4.2} bounds \eqref{eq:M3} by $\mf D^{-4K-5}$.
On the transition region, \eqref{4.39}, with uniform bounds for bounded $T$, yields
\[
\left|\pa_T^m\pa_X^n
\bigl(U_{[J],0}^{\mathrm{SI}}-U_{[K],0}^{\mathrm{OI}}\bigr)\right|
\lesssim T^{-1-\frac32\min\{J,K\}-m+\frac n2}.
\]
Here the exponential term is absorbed into any prescribed power of $T^{-1}$.
Using $|\pa_T\zeta|\lesssim T^{-1}$,
$|\pa_X^n\zeta|\lesssim T^{n(\frac12-\beta_1)}$ and the grid bounds,
we bound \eqref{eq:M5} and \eqref{eq:dxz} by $T^{-\frac32\min\{J,K\}}$.
Since $T+X^2\sim T$ on the support of $\zeta$ and
$\mf D^6\sim T^3+X^2\gtrsim T+X^2$, the residual estimate below follows with
\[
\psi_{0}(J,K)=\frac12\min\{J,K\}-3.
\]
At $T=1$, Lemma~\ref{lem:52} applies on $|X|\ge1$, where $\zeta$
and all its derivatives vanish. Using $\mf D(1,X)\sim(1+X^2)^{1/6}$
gives the initial estimate there.
On $|X|\le1$, the initial profile and grid derivatives are uniformly bounded.
Differentiating the preceding residual formulas introduces only fixed powers
depending on $m,n$, independently of $J,K,\nu$; enlarging $a_{m,n}$ covers
these losses, with $a_{0,0}=0$. This proves the following lemma:
\begin{lemma}[Three-zone residual estimate]
\label{lem:G0}
There exist an index function $\psi_{0}$ and a family $\{a_{m,n}\}\subset\N_0$ with $a_{0,0}=0$ such that, for each fixed $m,n\in\N_0$ and all sufficiently large $J,K$, on $[1,2\nu^{-\beta_2}]$,
\begin{equation*}
	\begin{aligned}
&|\pa_T^m\pa_X^nG_{J,K,0}|\lesssim
(T+X^2)^{-\psi_{0}(J,K)-3+a_{m,n}}\\
&|\pa_T^m\pa_X^nP_{J,K,0}(1,X)|\lesssim
(1+X^2)^{-\psi_{0}(J,K)-1+a_{m,n}}.
\end{aligned}
\end{equation*}
\end{lemma}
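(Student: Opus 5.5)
The residual $G_{J,K,0}$ is already written out in \eqref{eq:G0exp}. The plan is to estimate it term by term in three zones: the shock zone where $\zeta\equiv1$, the outer zone where $\zeta\equiv0$, and the transition zone where $\pa\zeta\neq0$. In each zone the pointwise bound will be converted into a power of $(T+X^2)$. The initial estimate for $P_{J,K,0}(1,X)$ is treated separately using Lemma~\ref{lem:52}.

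\emph{Shock zone.} Here $|X|\lesssim T^{-\frac12+\beta_1}$, so $T+X^2\sim T$. By \eqref{eq:M2} the residual reduces to three kinds of terms:
\begin{itemize}
\item[] $\pa_TU_{J,0}^{\mathrm{SI}}$,
\item[] the quadratic tails with $j_1+j_2>J$,
\item[] the phase-truncation terms.
\end{itemize}
Proposition~\ref{prop:SI-estimates} bounds $U_{j,0}^{\mathrm{SI}}$ by $T^{\frac12-2j}(1+\sqrt T|X|)^j\lesssim T^{\frac12-2j+\beta_1 j}$, so each of the first two kinds decays like $T^{-(2-\beta_1)J+O(1)}$. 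Lemma~\ref{lem5.8} gives the factor $T^{-2(J-j_2)-\frac32-m}$ for the phase terms, and pairing it with $\pa_XU_{j_2,0}^{\mathrm{SI}}$ gives the same order. Hence this zone is bounded by $T^{-(2-\beta_1)J+C}$.

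\emph{Outer zone.} Here \eqref{eq:M3} leaves $\pa_X^2U_{K,0}^{\mathrm{OI}}$ and the quadratic tails with $k_1+k_2>K$. Proposition~\ref{prop4.2} bounds these by $\mf D^{-4K-5}$. Using $\mf D^6\sim T^3+X^2\gtrsim T+X^2$ (from Lemma~\ref{lem2.11} after rescaling), this becomes a negative power of $(T+X^2)$ proportional to $K$.

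\emph{Transition zone.} This is where the main care is needed. All remaining terms, namely \eqref{eq:M5} and \eqref{eq:dxz}, carry either a factor $\zeta(1-\zeta)$ or a derivative of $\zeta$, together with the difference $U_{[J],0}^{\mathrm{SI}}-U_{[K],0}^{\mathrm{OI}}$ or its $X$-derivative. This region $T^{-\frac12+\beta_1}/2\le|X|\le T^{-\frac12+\beta_1}$ lies inside the hypothesis region of Proposition~\ref{prop4.9}, since there $|X|\ge 2T^{-1/2}$ for $T$ large; bounded $T$ can be handled by uniform bounds. In this region:
\begin{itemize}
\item[] $\sqrt T|X|\ge T^{\beta_1}/2$, so $e^{-\mu\sqrt T|X|}$ beats any power of $T$;
\item[] $(|X|+1)/T^{3/2}\lesssim T^{-3/2}$.
\end{itemize}
The difference is therefore $O(T^{-\frac32\min\{J,K\}+C})$. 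The cutoff derivatives cost at most $T^{n(\frac12-\beta_1)}$ or $T^{-1}$, and the grid profiles have polynomial size, so these losses do not depend on $J,K$. The main obstacle I anticipate is bookkeeping: the exponent must be uniform in $J,K$ up to a fixed loss $a_{m,n}$, and the phase shift $\Theta$ must not move the matching region outside the hypotheses of Proposition~\ref{prop4.9}. Since $|\Theta|\lesssim1$, one has $T^{-1/2}\Theta=O(T^{-1/2})\ll T^{-\frac12+\beta_1}$, which I expect to settle this.

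\emph{Conclusion.} Taking the minimum over the three zones gives $|G_{J,K,0}|\lesssim (T+X^2)^{-\frac12\min\{J,K\}+C}$. Setting $\psi_0=\frac12\min\{J,K\}-3$ and absorbing the constants gives the stated form; the function $\psi_0$ is an index function. Derivatives $\pa_T^m\pa_X^n$ only add fixed powers, which define $a_{m,n}$, with $a_{0,0}=0$.

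\emph{Initial estimate.} At $T=1$ we have $\zeta\equiv0$ because of the factor $\wh\zeta_2(T)$. So $P_{J,K,0}(1,\cdot)=U_0^{\mathrm{in}}-U_{[K],0}^{\mathrm{OI}}$, and for $|X|\ge1$ Lemma~\ref{lem:52} bounds this by $\mf D^{-3-4K-2m-3n}$ with $\mf D(1,X)\sim(1+X^2)^{1/6}$. For $|X|\le1$ everything is uniformly bounded. Together these give the claimed bound $(1+X^2)^{-\psi_0-1+a_{m,n}}$.
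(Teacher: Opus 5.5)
Your argument follows the paper's proof step by step. You split \eqref{eq:G0exp} into \eqref{eq:M2}, \eqref{eq:M3}, \eqref{eq:M5} and \eqref{eq:dxz}. You use Proposition~\ref{prop:SI-estimates} and Lemma~\ref{lem5.8} on $\operatorname{supp}\zeta$, and Proposition~\ref{prop4.2} with $\mf D^6\sim T^3+X^2$ on $\operatorname{supp}(1-\zeta)$. You use \eqref{4.39} on the transition region, take the same $\psi_0=\frac12\min\{J,K\}-3$, and apply Lemma~\ref{lem:52} at $T=1$.

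One statement in your initial-data step is wrong and has to be fixed. It is not true that $\zeta(1,\cdot)\equiv0$. By definition $\wh\zeta_2(z)=1$ for $z\ge1$, so $\wh\zeta_2(T)\equiv1$ on all of $[1,2\nu^{-\beta_2}]$ and $\zeta(1,X)=\wh\zeta_1(X)$.

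This is not cosmetic. For $T>0$ the outer--inner grid is discontinuous at $X=0$; for instance $U_{0,0}^{\mathrm{OI}}=-a\mf Y$ jumps by $2abT^{1/2}$ across the shock. If the grid at $T=1$ were $U_{[K],0}^{\mathrm{OI}}$ alone, as you claim, then $P_{J,K,0}(1,\cdot)$ would be discontinuous at the origin. Your assertion that everything is uniformly bounded on $|X|\le1$ would then fail for every $\pa_X^n$ with $n\ge1$.

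The correct reasoning is the paper's:
\begin{itemize}
\item On $|X|\ge1$ one has $\zeta(1,X)=0$, so $P_{J,K,0}(1,X)=U_0^{\mathrm{in}}-U_{[K],0}^{\mathrm{OI}}$. Your use of Lemma~\ref{lem:52} with $\mf D(1,X)\sim(1+X^2)^{1/6}$ is valid there.
\item On $|X|\le\frac12$ one has $\zeta(1,X)=1$. The grid there is the smooth profile $U_{[J],0}^{\mathrm{SI}}$, and the factor $1-\zeta$ cuts off the jump of the outer--inner grid.
\end{itemize}
This is what gives bounded derivatives of $P_{J,K,0}(1,\cdot)$ on $|X|\le1$. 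With this correction your proposal is complete.
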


\subsection{Initial data correction}

The source $G_{J,K,0}$ and the initial discrepancy $P_{J,K,0}(1,\cdot)$
need not have zero spatial mean. We construct $P_{J,K,0,0}$ solving
\eqref{eq:P} with corrected initial data. The decomposition below gives
an equation for $\wh P^0$ with zero-mean forcing and initial data,
as required by the energy estimate.

Define $P_{J,K,0,0}$ to be the solution of
\begin{equation}\label{eq:P000}
\pa_TP_{J,K,0,0}+\pa_X[U_{[J],[K],0}^{\mathrm{SOI}}P_{J,K,0,0}]
+P_{J,K,0,0}\pa_XP_{J,K,0,0}-\pa_X^2P_{J,K,0,0}=G_{J,K,0},
\end{equation}
with initial data $P_{J,K,0,0}(1,X)$ to be chosen below. Let
$\rho\in C_c^{\infty}(\R)$ be non-negative with $\int_\R\rho\,\md X=1$, and set
$\rho_j(T,X)=\lam_j(T)\rho(X)$. We look for $P_{J,K,0,0}$ in the form
$P_{J,K,0,0}=\wh P^0+\rho_0$. Then $\wh P^0$ satisfies
\begin{equation}\label{eq:whP0}
\begin{aligned}
\pa_T\wh P^0&+\pa_X[(U_{[J],[K],0}^{\mathrm{SOI}}+\rho_0)\wh P^0]
+\wh P^0\pa_X\wh P^0-\pa_X^2\wh P^0\\
&=G_{J,K,0}-\bigl(\pa_T\rho_0+\pa_X[U_{[J],[K],0}^{\mathrm{SOI}}\rho_0]
+\rho_0\pa_X\rho_0-\pa_X^2\rho_0\bigr)\\
&=:\wh G^0.
\end{aligned}
\end{equation}
We choose $\lam_0$ so that $\int_\R\wh G^0\,\md X=0$, namely
\begin{equation}\label{eq:lam0}
\lam_0(T)=-\int_T^{2\nu^{-\beta_2}}\int_\R G_{J,K,0}\,\md X\,\md T,
\end{equation}
for then $\pa_T\lam_0=\int_\R G_{J,K,0}\,\md X$ and the spatial means of the
two sides of \eqref{eq:whP0} balance. We define the initial data by
\begin{equation}\label{eq:init0}
\begin{aligned}
P_{J,K,0,0}(1,X)&=P_{J,K,0}(1,X)-\mu_0\rho(X),\\
\wh P^0(1,X)&=P_{J,K,0}(1,X)-(\lam_0(1)+\mu_0)\rho(X),
\end{aligned}
\end{equation}
and fix
\begin{equation}\label{eq:mu0}
\mu_0=\int_\R P_{J,K,0}(1,X)\,\md X-\lam_0(1)=\ml O(1),
\end{equation}
so that $\int_\R\wh P^0(1,X)\,\md X=0$.

The principal part of $U_{[J],[K],0}^{\mathrm{SOI}}$ is $U_{0,0,0}^{\mathrm{SOI}}$.
 Combining the grid-profile estimates of Section 5, we
obtain, for $m\in \N_0$ and $n\in\{0,1\}$,
\begin{equation}\label{eq:ve}
	\begin{aligned}
\bigl|\pa_T^m\pa_X^n\bigl(U_{[J],[K],0}^{\mathrm{SOI}}-U_{0,0,0}^{\mathrm{SOI}}\bigr)\bigr|
\lesssim& T^{-m}\Bigl(\frac{|\pa_X^nX|+1}{T}+T^{\frac {n-3}2}e^{-\mu|X|\sqrt T}\Bigr)1_{\{(T,X):|X|T^{\frac12-\beta_1}\leq 1\}}
\\
&+\mf D^{-3-2m-3n}.
\end{aligned}
\end{equation}
Here $\mf d$ denotes the gauge distance of Section 3, and $\mf D$ is its
rescaling, $\mf d(\nu^{\frac12}T,\nu^{\frac34}X)=\nu^{\frac14}\allowbreak\mf D(T,X)$,
introduced in Section 4.2. We define the speed
\begin{equation}\label{eq:V}
\ml V=U_{0,0,0}^{\mathrm{SOI}},
\end{equation}
and write
$U_{[J],[K],0}^{\mathrm{SOI}}=\ml V+\ml V_e$, so that $\ml V_e$ satisfies
\eqref{eq:ve}.

Set
\begin{equation}\label{eq:varphi}
\vph(T,X)=T+\bigl(X+T^{-\frac12}\Theta_0^{\mathrm I}(T)\bigr)^2.
\end{equation}
The following weighted energy estimate is the main analytic tool of this
section.

\begin{prop}[Weighted energy estimate]\label{prop:energy}
Suppose that $P$ satisfies
\begin{equation}\label{eq:Peq}
\pa_TP+\pa_X[(\ml V+\ml V_e)P]+P\pa_XP-\pa_X^2P=G
\end{equation}
on $[1,2\nu^{-\beta_2}]$, with $\int_\R G(T,X)\,\md X=0$ for all $T\in[1,2\nu^{-\beta_2}]$,
$\int_\R P(1,X)\,\md X=0$, and with $\ml V_e$ satisfying \eqref{eq:ve} for $m\in\N_0,n\in \{0,1\}$.

 Suppose there exist a constant $N>2$ and a family
$\{a_{m,n}\}\subset\N_0$ with $a_{0,0}=0$ such that, for all $m,n\in\N_0$,
\begin{equation}\label{cod}
	\begin{aligned}
|\pa_T^m\pa_X^nG|&\lesssim(T+X^2)^{-N-3+a_{m,n}},\qquad
|(\pa_T^m\pa_X^nP)(1,X)|\lesssim(1+X^2)^{-N-1+a_{m,n}},
\end{aligned}
\end{equation}
and for $m\in \N_0,n\in \N$ with $n\geq 2$,
\begin{equation*}
	\begin{aligned}
		|\pa_T^m\pa_X^n\ml{V}_e|&\lesssim (T+X^2)^{a_{m,n}}.
	\end{aligned}
\end{equation*}

Set $\ml G=-\frac12\int_{-\infty}^XG(T,X')\,\md X'$.
Then there exists $R=R(m,n)\in\N$, depending only on $\{a_{m,n}\}$, such
that, for all $0\leq r\le N-2R(m,n)$,
\begin{equation}\label{eq:energy}
\begin{aligned}
&\sup_{T\in[1,2\nu^{-\beta_2}]}\int\vph^{2r}|\pa_T^m\pa_X^nP(T,\cdot)|^2\\
&+\int_1^{2\nu^{-\beta_2}}\int\bigl[\vph^{2r-1}|\pa_T^m\pa_X^nP|^2
+\vph^{2r}|\pa_T^m\pa_X^{n+1}P|^2\bigr]\md T\\
&\lesssim\left(\int_1^{2\nu^{-\beta_2}}
\bigl\|\vph^{r+R(m,n)}\pa_T^{\leq m}\pa_X^{\leq n+1}\ml G\bigr\|_{L^2_X}\,\md T\right)^2\\
&\quad+\bigl\|\langle X\rangle^{2r+2R(m,n)+1}
(\pa_T^{\leq m}\pa_X^{\leq n}P)(1,\cdot)\bigr\|_{L^2_X}^2.
\end{aligned}
\end{equation}
Initial time derivatives are determined by \eqref{eq:Peq}; the constants
are independent of $\nu$.
Moreover, there exists a family $\{b_{m,n}\}$, depending only on $\{a_{m,n}\}$,
such that, for $N\ge b_{m,n}$ and $T\in[1,2\nu^{-\beta_2}]$,
\begin{equation}\label{eq:Pdecay}
|\pa_T^m\pa_X^nP|\lesssim(T+X^2)^{-N+b_{m,n}}.
\end{equation}
\end{prop}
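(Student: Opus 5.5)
The plan is to pass to the spatial primitive. Because both $G$ and $P(1,\cdot)$ have zero mean, and the equation \eqref{eq:Peq} is in conservation form, $\int_\R P(T,X)\,\md X=0$ for every $T$. Set $\ml P=\int_{-\infty}^X P$. This function is localized, and integrating \eqref{eq:Peq} once in $X$ gives
\[
\pa_T\ml P+(\ml V+\ml V_e)\pa_X\ml P+\tfrac12(\pa_X\ml P)^2-\pa_X^2\ml P=-2\ml G .
\]
The point of working with $\ml P$ is that the obstruction described around \eqref{0.44} disappears. The transport field $\ml V\approx -\sqrt T\tanh(\sqrt T\wh X/2)$ is compressive toward the shock, so for $P$ itself the term $\pa_X(\ml V P)$ produces a positive zeroth-order term $-\pa_X\ml V\sim T$ that could pile up mass. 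For $\ml P$, however, $\ml V$ acts only as a pure transport term. After integrating by parts in the weighted $L^2$ estimate it contributes $-\tfrac12\int \pa_X(\vph^{2r}\ml V)\ml P^2$. Its sign is favorable near the shock, and away from the shock it is bounded by $\vph^{2r-1}$-terms with coefficient $|\ml V\pa_X\vph|\lesssim \vph^{0}$ times $\vph^{2r-1}\cdot\vph^{1/2}$. Since the weight is centered at $\wh X$, this is compatible with the choice \eqref{eq:varphi} of $\vph$.

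The key steps, in order, are as follows.

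(1) Derive a weighted $L^2$ estimate for $\ml P$ with weight $\vph^{2r}$. I will use $\pa_T\vph\ge 1/2$, which produces the coercive term $\int\vph^{2r-1}\ml P^2$. The commutator $\pa_X^2\vph^{2r}\lesssim \vph^{2r-1}$ is absorbed by this term for $r$ in the admissible range. I will control $\ml V_e$ through \eqref{eq:ve}: its size is $O(T^{-1}(1+|X|))$ inside the shock zone, plus $\mf D^{-3}$, which is integrable in $T$ against the coercive term. I expect a loss of a fixed power of $\vph$ here, which is what fixes $R(m,n)$. The nonlinear term $\tfrac12(\pa_X\ml P)^2=\tfrac12P^2$ will be handled by a bootstrap. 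By interpolation, $\|P\|_{L^\infty}$ is small, decaying like a negative power of $T$ once $N$ is large, so the term is perturbative. The forcing is treated in $L^1_TL^2_X$ by Gronwall, which yields the square on the right side of \eqref{eq:energy}. Initial data are handled by the standard estimate $\|\vph^{r}\ml P(1)\|\lesssim\|\langle X\rangle^{2r+1}P(1)\|$, using Hardy/Cauchy–Schwarz and the zero mean.

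(2) Obtain the estimates for $P=\pa_X\ml P$ and the stated dissipation terms by differentiating the $\ml P$ equation in $X$ and $T$. I will induct on $(m,n)$, with each derivative costing at most $a_{m,n}$ powers of the weight, using the hypothesis $|\pa^m_T\pa^n_X\ml V_e|\lesssim(T+X^2)^{a_{m,n}}$ and the bounds on $\ml V$ from Section 5. The dangerous term $\pa_X\ml V\,\pa_X^n\ml P$ is of size $T$ but exponentially localized in a layer of width $T^{-1/2}$. I will treat it either by its sign (for the top derivative it is compressive, and after integration by parts it appears as $\tfrac12\int\pa_X\ml V|\cdot|^2$ with the good sign) or by interpolating with the dissipation term, losing weight powers that are again absorbed into $R(m,n)$.

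(3) Derive the pointwise bound \eqref{eq:Pdecay} by Sobolev/Gagliardo–Nirenberg in $X$ applied to the weighted $L^2$ bounds for $\pa_X^{\le n+1}\pa_T^m P$. I will take $r=N-2R$ and use $\vph\sim T+X^2$, which holds since $|T^{-1/2}\Theta_0^{\mathrm I}|\lesssim1$. The hypotheses \eqref{cod} make the right side of \eqref{eq:energy} finite uniformly in $\nu$, because $\int_1^\infty T^{-2-\epsilon}\,\md T$ converges, and so we can take $b_{m,n}\approx 2R+a$.

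I expect the main obstacle to be step (2), specifically the shock layer. There, $\pa_X\ml V\sim -T\,\mathrm{sech}^2$ is large, and the weights must not be allowed to grow with $T$ up to $2\nu^{-\beta_2}$. The energy argument closes with $\nu$-independent constants only if each such term is either signed or controlled by $\int\vph^{2r-1}|\cdot|^2$ and dissipation without a factor of $T$. If that fails, my fallback is to weight the derivatives by the natural scaling $T^{-n/2}$ inside the layer, conjugating by the $\cosh^2(\om\sqrt T\wh X/2)$ integrating factor as in Proposition~\ref{prop4.7}.
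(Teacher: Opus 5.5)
Your overall frame matches the paper: the primitive variable, conservation of zero mean, weights $\vph^{2r}$, forcing in $L^1_TL^2_X$, and weighted Sobolev at the end. The gap is in step (1). The coercive term you rely on has the wrong sign, so the estimate does not close. Since
\[
\tfrac12\tfrac{\md}{\md T}\int\vph^{2r}\ml P^2=\int\vph^{2r}\ml P\,\pa_T\ml P+r\int\vph^{2r-1}(\pa_T\vph)\,\ml P^2,
\]
the growth $\pa_T\vph\approx1$ of the weight enters the energy identity as $-r\int\vph^{2r-1}\pa_T\vph\,\ml P^2$ on the left. It is a source, not damping: a weight increasing in time costs. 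This is the term $-\frac12\pa_T(\vph^{2r})$ in the paper's $H_0$. Without another mechanism, Gronwall gives growth of the weighted energy on $[1,2\nu^{-\beta_2}]$, not the $\nu$-uniform bound \eqref{eq:energy}.

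The missing idea is Lemma~\ref{lem:damping}. The transport term $-\frac12\int\pa_X(\vph^{2r}\ml V)\ml P^2$, which you treat as an error away from the shock, is the coercive term everywhere. For $T\ge\tilde T$, $\ml V$ points toward the shock, so $\ml V\pa_X\vph\le0$ with $\pa_X\vph=2(X+T^{-\frac12}\Theta_0^{\mathrm I})$. It is also compressive, so $-\pa_X\ml V\ge0$; this includes the matching-zone term $-\pa_X\zeta\,(U_{0,0}^{\mathrm{SI}}-U_{0,0}^{\mathrm{OI}})>0$. Hence both pieces of $-\pa_X(\ml V\vph^{2r})=-\vph^{2r}\pa_X\ml V-2r\vph^{2r-1}\ml V\pa_X\vph$ are nonnegative. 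The first gives $\vph^{2r-1}\zeta T^2\operatorname{sech}^2(\cdot)$ in the core, and the second gives $\vph^{2r-1}|T^{\frac12}X+\Theta_0^{\mathrm I}|$ off it. Together they are $\gg\vph^{2r-1}$, and only this dominates $\frac12\pa_T(\vph^{2r})$, $\frac12\pa_X^2(\vph^{2r})$ and the $\ml V_e$ contribution, all of which are $\lesssim\vph^{2r-1}(1+|X|/T)$. The bounded interval $[1,\tilde T]$ is handled by Gronwall. Even with the right sign, your bound of the off-shock transport by $\vph^{2r-1}\vph^{1/2}$ could not be absorbed by a $\vph^{2r-1}$ term; that term must be used with its sign.

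Two further claims fail as stated.

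\emph{Sign for derivatives (step (2)).} The compressive field is not favorable for derivatives. Differentiating the primitive equation $n\ge1$ times produces $n\,\pa_X\ml V\,\pa_X^n\ml P$. After integrating the transport by parts, the net layer coefficient is $(n-\frac12)\vph^{2r}\pa_X\ml V<0$, an adverse term of size $T$; this is the same mass-concentration effect you identified for $P$. So only your interpolation fallback is viable. The paper uses a dyadic decomposition to bound the term by $\vep\|\vph^r\pa_XW_{m,n}\|^2+C(\vep)\|\vph^{r+n+2}W_{m,-1}\|^2$. This trades it against the zeroth-order damping at a higher weight, which is why that estimate is needed for the whole range of $r$ and why $R(m,n)$ grows.

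\emph{The quadratic term.} $\frac12(\pa_X\ml P)^2$ cannot be made perturbative by smallness. Hypothesis \eqref{cod} only gives $P(1,\cdot)=O(1)$ with an unspecified constant; in the application, $P_{J,K,0}(1,\cdot)$ and $\mu_0$ are $O(1)$. So neither $P$ nor $\ml P$ is small near $T=1$, and decay in $T$ does not help there. A bootstrap whose Gronwall factor depends on the data size does not close. The paper removes the nonlinearity with the Cole--Hopf variable $W=e^{-\frac12\int_{-\infty}^XP}-1$, which solves the linear equation \eqref{eq:W}. It then recovers $P=-2\pa_XW/(1+W)$ from the comparison-principle bound \eqref{ich}, which gives $1+W\ge e^{-M/2}$. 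A convex energy $\int\vph^{2r}\Phi(\ml P)$ with $\Phi=\cosh(\kappa\,\cdot)-1$, as in Section~7, is another way to absorb it.
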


To prove Proposition \ref{prop:energy}, we first prove some auxiliary lemmas.

\begin{lemma}[Damping]\label{lem:damping}
For $\vph=T+(X+T^{-\frac12}\Theta_0^{\mathrm I})^2$ and real $r\ge1$, there exists $\tilde T>1$, independent of $\nu$, such that for $T\in[\tilde T,2\nu^{-\beta_2}]$ we have
\begin{equation}\label{eq:damping}
-\pa_X(\ml V\vph^{2r})\gtrsim\vph^{2r-1}\Bigl[|T^{\frac12}X+\Theta_0^{\mathrm I}(T)|
+\zeta T^2\operatorname{sech}^2\Bigl(\frac{ab}{2}\bigl(T^{\frac12}X+\Theta_0^{\mathrm I}(T)\bigr)\Bigr)\Bigr].
\end{equation}
\end{lemma}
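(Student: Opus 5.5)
We prove \eqref{eq:damping} by computing $-\pa_X(\ml V\vph^{2r})=-(\pa_X\ml V)\vph^{2r}-2r\ml V\vph^{2r-1}\pa_X\vph$ explicitly. Write $Y=T^{\frac12}X+\Theta_0^{\mathrm I}(T)$, so that $X+T^{-\frac12}\Theta_0^{\mathrm I}=T^{-\frac12}Y$, $\vph=T+T^{-1}Y^2$ and $\pa_X\vph=2T^{-\frac12}\cdot T^{-\frac12}Y\cdot T^{\frac12}\cdot T^{-\frac12}=2T^{-1}Y\cdot T^{\frac12}/T^{\frac12}$; more simply $\pa_X\vph=2(X+T^{-\frac12}\Theta_0^{\mathrm I})=2T^{-\frac12}Y$. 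Recall $\ml V=U_{0,0,0}^{\mathrm{SOI}}=\zeta U_{0,0}^{\mathrm{SI}}+(1-\zeta)U_{0,0}^{\mathrm{OI}}$, and I will treat the shock zone ($\zeta=1$), the outer zone ($\zeta=0$) and the transition zone separately.

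In the shock zone, by \eqref{4.36} and \eqref{pro} (at the $\nu^0$ level, where $\mf S$-terms drop), $U_{0,0}^{\mathrm{SI}}=-T^{\frac12}\om_0(T)\tanh(\om_0 Y/2)$ with $\om_0\to ab$ up to $O(T^{-1})$ corrections coming from Proposition~\ref{prop2.15} and \eqref{4.22}. Then $-\pa_X\ml V=\frac12 T\om_0^2\operatorname{sech}^2(\om_0Y/2)\ge0$, and $-2r\ml V\pa_X\vph=4r\om_0\,Y\tanh(\om_0Y/2)\ge0$, which is $\gtrsim|Y|$ for $|Y|\ge1$ and $\gtrsim Y^2$ for $|Y|\le1$. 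Multiplying by $\vph^{2r}$, resp. $\vph^{2r-1}$, and using $\vph\sim T$ there (since $|X|\le T^{-\frac12+\beta_1}$), the first piece gives $\vph^{2r-1}T^2\operatorname{sech}^2$, and the sum of both dominates $\vph^{2r-1}[|Y|+T^2\operatorname{sech}^2(abY/2)]$ after replacing $\om_0$ by $ab$ (valid for $T\ge\tilde T$ large, the $\operatorname{sech}^2$ with a slightly different rate being comparable up to constants only on $|Y|\lesssim1$, while for large $|Y|$ the $|Y|$ term already dominates). On $|Y|\le1$ the $\operatorname{sech}^2$ term supplies the bound, so the missing $|Y|$ is harmless.

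In the outer and transition zones, $U_{0,0}^{\mathrm{OI}}=-a\mf Y$ rescaled, which is odd-signed, $\operatorname{sgn}\ml V=-\operatorname{sgn}X$, with $|\ml V|\sim\mf D\gtrsim T^{\frac12}$ for $|X|\lesssim T^{3/2}$ and $-\pa_X\ml V=a\pa_X\mf Y>0$ by \eqref{2.43}. Thus $-2r\ml V\pa_X\vph\gtrsim T^{\frac12}|X|\sim|Y|$ (since $|T^{-\frac12}\Theta_0^{\mathrm I}|\ll|X|$ there by $|X|\ge\frac12T^{-\frac12+\beta_1}$), and no sech term is needed because $\zeta T^2\operatorname{sech}^2$ is exponentially small in $T^{\beta_1}$ there, absorbed by $|Y|\gtrsim T^{\beta_1}$. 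In the transition zone I combine the two by convexity: both $U^{\mathrm{SI}}_{0,0}$ and $U^{\mathrm{OI}}_{0,0}$ have the same sign and are comparable (Proposition~\ref{prop4.9} with $J=K=0$), and $\pa_X\zeta$ terms are multiplied by their difference, which is $O(T^{-1})$ relative, controlled for $T\ge\tilde T$.

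The main obstacle is the transition zone and the sign-consistency near $Y=0$ off the exact center: one needs $\ml V$ to vanish exactly where $\pa_X\vph$ does, i.e.\ at $Y=0$, which is why $\vph$ is centered with $\Theta_0^{\mathrm I}$; I would check that the Taylor construction gives $U_{0,0}^{\mathrm{SI}}(T,X)=0$ at $Y=0$ exactly (the mean $\mf S$ is of order $\nu^{1/4}$, hence absent at $\ell=0$), and choose $\tilde T$ large so that all $O(T^{-1})$ errors in $\om_0$ and in the $\pa_X\zeta(U^{\mathrm{SI}}-U^{\mathrm{OI}})$ term are absorbed.
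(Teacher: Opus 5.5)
The shock-core and pure outer-zone parts of your argument are correct and follow the paper. With $Y=T^{\frac12}X+\Theta_0^{\mathrm I}$, the $\tanh$ profile gives $-\pa_X U_{0,0}^{\mathrm{SI}}=\frac{(ab)^2}{2}T\operatorname{sech}^2(\frac{ab}{2}Y)$ and $-2r\,U_{0,0}^{\mathrm{SI}}\pa_X\vph=4rab\,Y\tanh(\frac{ab}{2}Y)$. Where $1-\zeta>0$, the bound $\pm\mf Y>bT^{\frac12}$ for $\pm X>0$ gives $-2r\,U_{0,0}^{\mathrm{OI}}\pa_X\vph\ge4rab|Y|$. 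Your convexity treatment of $-2r\ml V\pa_X\vph$ in the transition zone is also fine. Note that at order $\nu^0$ the rate has no $O(T^{-1})$ correction: $V_{0,0,0}^{\pm}=\mp ab$ exactly, so $U_{0,0}^{\mathrm{SI}}=-abT^{\frac12}\tanh(\frac{ab}{2}Y)$.

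\textbf{The gap.} The problem is the remaining transition-zone term $-\pa_X\zeta\,(U_{0,0}^{\mathrm{SI}}-U_{0,0}^{\mathrm{OI}})\vph^{2r}$. You propose to absorb it as a small error using Proposition~\ref{prop4.9} with $J=K=0$, and this cannot work.

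On $\frac12\le|X|T^{\frac12-\beta_1}\le1$ we have $|\pa_X\zeta|\sim T^{\frac12-\beta_1}$, $\vph\sim T$ and $|Y|\sim T^{\beta_1}$. So everything else on the left side, and the target right side, is only of size $\vph^{2r-1}T^{\beta_1}$: the piece $(1-\zeta)\mf D^{-2}\vph^{2r}$ is $O(\vph^{2r-1})$, and the $\operatorname{sech}^2$ piece is exponentially small.

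Proposition~\ref{prop4.9} gives at best $|U_{0,0}^{\mathrm{SI}}-U_{0,0}^{\mathrm{OI}}|\lesssim T^{\frac12}(|X|+1)T^{-\frac32}\sim T^{-1}$ there. The error term is then controlled only by $\vph^{2r-1}T^{\frac12-\beta_1}$. Your ``$O(T^{-1})$ relative'' bound is worse still, giving $\vph^{2r-1}T^{1-\beta_1}$. Both exceed $\vph^{2r-1}T^{\beta_1}$ because $\beta_1<\frac14$, and enlarging $\tilde T$ only widens the discrepancy.

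\textbf{The fix.} This term has a favorable sign. Since $\pm\mf Y>bT^{\frac12}$ for $\pm X>0$ and $|\tanh|<1$, the difference $U_{0,0}^{\mathrm{SI}}-U_{0,0}^{\mathrm{OI}}=a\mf Y-abT^{\frac12}\tanh(\frac{ab}{2}Y)$ has the sign of $X$. Meanwhile $\pa_X\zeta$ has the sign of $-X$, taking $\wh\zeta_1$ even and nonincreasing in $|z|$. Hence $-\pa_X\zeta\,(U_{0,0}^{\mathrm{SI}}-U_{0,0}^{\mathrm{OI}})\ge0$, and it can simply be dropped; this is how the paper closes the estimate.

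Alternatively, the sharp expansion $\mf Y=bT^{\frac12}+\frac{X}{2aT}+\dots$ gives $|U_{0,0}^{\mathrm{SI}}-U_{0,0}^{\mathrm{OI}}|\lesssim|X|T^{-1}+T^{\frac12}e^{-ab|Y|}$. That reduces the term to $O(\vph^{2r-1})$, which is then absorbable by $T^{\beta_1}\vph^{2r-1}$. The crude matching bound you cite is not enough for either route.
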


\begin{proof}
Set $\mf y(\nu^{\frac12}T,\nu^{\frac34}X)=\nu^{\frac14}\mf Y(T,X)$. From the
leading-order profiles established in Sections 3 and 5, together with the
definition \eqref{eq:V}, we have
\begin{equation*}
\ml V=\zeta U_{0,0}^{\mathrm{SI}}+(1-\zeta)U_{0,0}^{\mathrm{OI}},
\end{equation*}
with
\begin{equation*}
U_{0,0}^{\mathrm{OI}}(T,X)=-a\mf Y(T,X),\qquad
U_{0,0}^{\mathrm{SI}}(T,X)=-abT^{\frac12}\tanh\Bigl(\frac{ab}{2}\bigl(T^{\frac12}X+\Theta_0^{\mathrm I}(T)\bigr)\Bigr).
\end{equation*}
By the rescaled gauge identities of Section~3,
\begin{equation*}
\pa_X\mf Y=\frac1a\mf D^{-2}>0,\qquad
\pm\mf Y(T,X)>bT^{\frac12}\quad\text{for }\pm X>0.
\end{equation*}
Differentiating $\ml V$, we obtain
\begin{equation}\label{eq:dxV}
\begin{aligned}
-\pa_X\ml V
&=(1-\zeta)\mf D^{-2}
+\zeta\frac{(ab)^2}{2}T\operatorname{sech}^2\Bigl(\frac{ab}{2}\bigl(T^{\frac12}X+\Theta_0^{\mathrm I}(T)\bigr)\Bigr)
-\pa_X\zeta\bigl(U_{0,0}^{\mathrm{SI}}-U_{0,0}^{\mathrm{OI}}\bigr)\\
&\geq(1-\zeta)\mf D^{-2}
+\zeta\frac{(ab)^2}{2}T\operatorname{sech}^2\Bigl(\frac{ab}{2}\bigl(T^{\frac12}X+\Theta_0^{\mathrm I}(T)\bigr)\Bigr),
\end{aligned}
\end{equation}
because $-\pa_X\zeta\,(U_{0,0}^{\mathrm{SI}}-U_{0,0}^{\mathrm{OI}})>0$ in the
matching zone and $\pa_X\zeta=0$ elsewhere. Since $|\Theta_0^{\mathrm I}|\lesssim1$ and $|\sqrt T X|\ge\frac12T^{\beta_1}$ wherever $1-\zeta>0$, taking $\tilde T$ large ensures $X(X+T^{-\frac12}\Theta_0^{\mathrm I})>0$ there for $T\ge\tilde T$. Combining \eqref{eq:dxV} with
$\pa_X\vph=2(X+T^{-\frac12}\Theta_0^{\mathrm I})$, we obtain
\begin{equation}\label{eq:dxVvph}
\begin{aligned}
-\pa_X(\ml V\vph^{2r})
&=-\vph^{2r-1}\bigl(2r\ml V\pa_X\vph+\pa_X\ml V\vph\bigr)\\
&\ge\vph^{2r-1}\Bigl[4rabT^{\frac12}\tanh\Bigl(\frac{ab}{2}\bigl(T^{\frac12}X+\Theta_0^{\mathrm I}(T)\bigr)\Bigr)
\bigl(X+T^{-\frac12}\Theta_0^{\mathrm I}\bigr)\\
&\qquad+\zeta\vph\frac{(ab)^2}{2}T\operatorname{sech}^2\Bigl(\frac{ab}{2}\bigl(T^{\frac12}X+\Theta_0^{\mathrm I}(T)\bigr)\Bigr)\Bigr]\\
&\gtrsim\vph^{2r-1}\Bigl[|T^{\frac12}X+\Theta_0^{\mathrm I}(T)|
+\zeta T^2\operatorname{sech}^2\Bigl(\frac{ab}{2}\bigl(T^{\frac12}X+\Theta_0^{\mathrm I}(T)\bigr)\Bigr)\Bigr].
\end{aligned}
\end{equation}
Here the last step uses $y\tanh(\frac{ab}{2}y)\geq c_{\delta}|y|$ for all $|y|\geq \delta$ and $\vph\gtrsim T$, where $c_{\delta}>0$ is a small constant depending on $\delta>0$.

\end{proof}
On $[1,\tilde T]$, Gr\"onwall's inequality yields the corresponding energy estimates with constants independent of $\nu$. For brevity, subsequent energy estimates treat Lemma~\ref{lem:damping} as valid throughout the time interval under consideration, with this finite-time argument understood.

The following lemma is used to ensure the regularity of the inverse of the Cole--Hopf transform.
\begin{lemma}
	Under the assumptions of Proposition~\ref{prop:energy}, there exists a constant $M>0$ such that for all $(T,X)\in [1,2\nu^{-\beta_2}]\times \R$,
	\begin{equation}\label{ich}
		\int_{-\infty}^X P(T,X')\md X'<M.
	\end{equation}
\end{lemma}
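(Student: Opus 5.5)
The bound \eqref{ich} is a uniform upper bound on the spatial primitive $\mathcal P(T,X):=\int_{-\infty}^X P(T,X')\,\md X'$. I would prove it through the parabolic equation that $\mathcal P$ satisfies, with an $L^\infty$ maximum principle rather than the weighted energies. Integrating \eqref{eq:Peq} over $(-\infty,X]$ gives
\[
\pa_T\mathcal P+(\ml V+\ml V_e)\pa_X\mathcal P+\tfrac12(\pa_X\mathcal P)^2-\pa_X^2\mathcal P=-2\ml G .
\]
The boundary terms at $-\infty$ vanish by the decay in \eqref{cod} and the polynomial growth of $\ml V+\ml V_e$. Since $\int_\R G\,\md X=0$ and $\int_\R P(1,\cdot)\,\md X=0$, integrating \eqref{eq:Peq} over $\R$ shows $\int_\R P(T,\cdot)\,\md X=0$ for all $T$. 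Hence $\mathcal P(T,\cdot)\to0$ at both spatial infinities, and $\ml G$ is compactly decaying on both sides as well.

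This is a Hamilton--Jacobi equation with convex nonlinearity $\frac12(\pa_X\mathcal P)^2\ge0$. The nonlinear term therefore only helps an upper bound: $\mathcal P$ is a subsolution of the linear equation
\[
\pa_T w+(\ml V+\ml V_e)\pa_X w-\pa_X^2 w=-2\ml G .
\]
The maximum principle on $[1,2\nu^{-\beta_2}]\times\R$ then gives
\[
\sup_X\mathcal P(T,X)\le \sup_X\mathcal P(1,X)+2\int_1^{T}\|\ml G(s,\cdot)\|_{L^\infty}\,\md s .
\]
Equivalently, one can use the Cole--Hopf variable $\phi=e^{-\mathcal P/2}$, which the statement hints at: it turns the equation into a linear one, and positivity of $\phi$ is preserved. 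The maximum principle is justified because $\mathcal P$ is bounded and the drift grows at most polynomially, so a standard Phragmén--Lindelöf-type barrier such as $\epsilon e^{C T}(1+X^2)$ works.

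It remains to bound the two terms.
\begin{itemize}
\item By \eqref{cod} with $m=n=0$, we have $|P(1,X)|\lesssim(1+X^2)^{-N-1}$, so $|\mathcal P(1,X)|\lesssim1$.
\item Also from \eqref{cod}, $|\ml G(s,X)|\le\frac12\int_\R|G|\,\md X'\lesssim\int_\R(s+X'^2)^{-N-3}\,\md X'\lesssim s^{-N-5/2}$.
\end{itemize}
Since $N>2$, this is integrable on $[1,\infty)$, and the bound is independent of $\nu$ and of the length of the interval. Thus $M$ can be taken to depend only on the implicit constants in \eqref{cod}.

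The main obstacle is the rigorous application of the comparison principle on an unbounded domain with the unbounded, $\nu$-dependent-interval drift $\ml V+\ml V_e$. This drift grows like $|X|^{1/3}$ in the outer region, by $\ml V\sim -a\mf Y$ and \eqref{eq:ve}. I would handle this with the barrier argument above, using that $\mathcal P$ is a priori bounded on each compact time interval (from the smoothness of $P$ and its integrability). Alternatively, I would compare the decay of $\phi$ at infinity against the linear heat-type kernel. The constant remains $\nu$-independent because the barrier is removed by letting $\epsilon\to0$.
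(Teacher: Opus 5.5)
Your proposal is correct and is essentially the paper's argument: you integrate \eqref{eq:Peq} in $X$ to get the viscous Hamilton--Jacobi equation for the primitive, bound $\ml G$ by an integrable power of $T$ using \eqref{cod}, and compare with an $X$-independent supersolution. The paper takes $C_1e^{-2/\sqrt T}$, whose time derivative $C_1e^{-2/\sqrt T}T^{-3/2}$ dominates $2|\ml G|$; you first drop the nonnegative term $\frac12(\pa_X\mathcal P)^2$ and use $\sup_X\mathcal P(1,\cdot)+2\int_1^T\|\ml G(s)\|_{L^\infty}\,\md s$, which is the same idea, and your barrier argument for the comparison principle on the unbounded domain supplies a detail the paper leaves implicit.
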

\begin{proof}
	Set $Q(T,X)=	\int_{-\infty}^X P(T,X')\md X'$ and $\ml G=-\frac12\int_{-\infty}^XG\,\md X'$. Integrating \eqref{eq:Peq} in $X$, we obtain
	\begin{equation*}
		\begin{aligned}
		\pa_TQ+(\ml{V}+\ml{V}_e)\pa_XQ+\frac12 |\pa_XQ|^2-\pa_X^2Q=-2\ml{G}.
		\end{aligned}
	\end{equation*}
From \eqref{cod}, we have $Q(1,X)\in L^{\infty}_X(\R)$. Set  $Q_{\mathrm{com}}(T,X)=C_1e^{-\frac{2}{\sqrt{T}}}$.  From \eqref{cod} and the assumption $\int G(T,X)\md X=0$, we have
	\begin{equation*}
		\begin{aligned}
			|\ml{G}|\lesssim (T+X^2)^{-N-2}\lesssim T^{-N-2}.
		\end{aligned}
	\end{equation*}
For $T\in [1,2\nu^{-\beta_2}]$, choose $C_1>0$ independent of $\nu$ with $Q_{\mathrm{com}}(1,\cdot)\ge Q(1,\cdot)$ and
		\begin{equation}\label{641}
		\begin{aligned}
		 	\pa_TQ_{\mathrm{com}}&+(\ml{V}+\ml{V}_e)\pa_XQ_{\mathrm{com}}+\frac12 |\pa_XQ_{\mathrm{com}}|^2-\pa_X^2Q_{\mathrm{com}}\\
		 	&=C_1e^{-\frac{2}{\sqrt{T}}}T^{-\frac32}>2|\ml{G}|.
		\end{aligned}
	\end{equation}
Then \eqref{ich} follows from \eqref{641} and the comparison principle.
	
\end{proof}

Now we are in a position to prove Proposition~\ref{prop:energy}.

\begin{proof}[Proof of Proposition \ref{prop:energy}]
Set $W=e^{-\frac12\int_{-\infty}^XP\,\md X'}-1$ and
$\ml G=-\frac12\int_{-\infty}^XG\,\md X'$. A direct computation shows that
$W$ satisfies the linear equation
\begin{equation}\label{eq:W}
\pa_TW+(\ml V+\ml V_e)\pa_XW-\ml G\,W-\pa_X^2W=\ml G.
\end{equation}
For $m=n=0$ and $r\geq0$, we use $\int_{\R}P(1,X)\,\md X=0$
to obtain, by a direct calculation,
\[
\begin{aligned}
\|\vph(1,\cdot)^rW(1,\cdot)\|_{L^2_X}
&\asymp\|\langle X\rangle^{2r}W(1,\cdot)\|_{L^2_X}\\
&\lesssim e^{\frac12\|P(1,\cdot)\|_{L^1_X}}
\|\langle X\rangle^{2r+1}P(1,\cdot)\|_{L^2_X}.
\end{aligned}
\]

For real $1\le r<N$, multiplying \eqref{eq:W} by $\vph^{2r}W$ and
integrating in $X$, we obtain
\begin{equation}\label{eq:energyid}
\frac12\frac{\md}{\md T}\int\vph^{2r}|W|^2
+\int H_0|W|^2+\int\vph^{2r}|\pa_XW|^2
=\int\vph^{2r}\ml G\,W,
\end{equation}
where
\begin{equation*}
H_0=-\frac12\pa_X[\vph^{2r}(\ml V+\ml V_e)]
-\ml G\vph^{2r}-\frac12\pa_X^2(\vph^{2r})-\frac12\pa_T(\vph^{2r}).
\end{equation*}
By \eqref{eq:ve} and the decay of $G$, direct computation gives
\begin{equation*}
\Bigl|-\frac12\pa_X(\ml V_e\vph^{2r})
-\ml G\vph^{2r}-\frac12\pa_X^2(\vph^{2r})-\frac12\pa_T(\vph^{2r})\Bigr|
\lesssim\vph^{2r-1}(1+\frac{|X|}{T}).
\end{equation*}
From Lemma \ref{lem:damping}, arguing separately in the regions
$|T^{\frac12}X+\Theta_0^{\mathrm I}(T)|\gtrsim1$ and $|T^{\frac12}X+\Theta_0^{\mathrm I}(T)|\lesssim1$,
we obtain, after enlarging $\tilde T$ if necessary, for all $T\in[\tilde T,2\nu^{-\beta_2}]$,
\begin{equation}\label{eq:G0lower}
H_0\gtrsim\vph^{2r-1}\Bigl[|T^{\frac12}X+\Theta_0^{\mathrm I}(T)|
+\zeta T^2\operatorname{sech}^2\Bigl(\frac{ab}{2}\bigl(T^{\frac12}X+\Theta_0^{\mathrm I}(T)\bigr)\Bigr)-C\Bigr]
\gg\vph^{2r-1}.
\end{equation}
On $[1,\tilde T]$, Gr\"onwall's inequality gives the corresponding energy bound.
Substituting this into \eqref{eq:energyid} and using the Cauchy--Schwarz
inequality, we obtain
\begin{equation*}
\begin{aligned}
\sup_{T\in[1,2\nu^{-\beta_2}]}\int\vph^{2r}|W(T,\cdot)|^2
&+\int_1^{2\nu^{-\beta_2}}\int\bigl[\vph^{2r-1}|W|^2+\vph^{2r}|\pa_XW|^2\bigr]\md T\\
&\lesssim\Bigl(\int_1^{2\nu^{-\beta_2}}\|\vph^r\ml G\|_{L^2_X}\md T\Bigr)^2+
\|\langle X\rangle^{2r}W(1,\cdot)\|_{L^2_X}^2.
\end{aligned}
\end{equation*}

For $m\in\N_0$ and $n\in\N_0\cup\{-1\}$, set $W_{m,n}=\pa_T^m\pa_X^{n+1}W$.
Differentiating \eqref{eq:W} gives
\begin{equation*}
\begin{aligned}
\pa_TW_{m,n}&+(\ml V+\ml V_e)\pa_XW_{m,n}
+(n+1)\pa_X(\ml V+\ml V_e)W_{m,n}-\ml G\,W_{m,n}-\pa_X^2W_{m,n}\\
&=\pa_T^m\pa_X^{n+1}\ml G\\
&\quad-\sum_{\substack{m_1+m_2=m,\ n_1+n_2=n\\m_1,m_2,n_1\geq0,\ n_2\geq-1\\(m_1,n_1)\notin\{(0,0),(0,1)\}}}
C_m^{m_1}C_{n+1}^{n_1}\pa_T^{m_1}\pa_X^{n_1}(\ml V+\ml V_e)\pa_XW_{m_2,n_2}\\
&\quad+\sum_{\substack{m_1+m_2=m,\ n_1+n_2=n\\m_1,m_2,n_1\geq0,\ n_2\geq-1\\(m_1,n_1)\neq(0,0)}}
C_m^{m_1}C_{n+1}^{n_1}\pa_T^{m_1}\pa_X^{n_1}\ml G\,W_{m_2,n_2}.
\end{aligned}
\end{equation*}
For $n=-1$, multiplication by $\vph^{2r}W_{m,-1}$ and induction on $m$
give, for $1\le r\le N-2R(m,0)$ and sufficiently large $R(m,0)$,
\begin{equation*}
\begin{aligned}
&\sup_{T\in[1,2\nu^{-\beta_2}]}\int\vph^{2r}|W_{m,-1}(T,\cdot)|^2\\
&+\int_1^{2\nu^{-\beta_2}}\int\bigl[\vph^{2r-1}|W_{m,-1}|^2
+\vph^{2r}|\pa_XW_{m,-1}|^2\bigr]\md T\\
&\lesssim\left(\int_1^{2\nu^{-\beta_2}}
\|\vph^{r+R(m,0)}\pa_T^{\leq m}\ml G\|_{L^2_X}\,\md T\right)^2
+\|\langle X\rangle^{2r+2R(m,0)}(\pa_T^{\leq m}W)(1,\cdot)\|_{L^2_X}^2.
\end{aligned}
\end{equation*}
Indeed, after we enlarge $a_{m_1,n_1}$ for $(m_1,n_1)\ne(0,0)$,
the commutator coefficients satisfy
\[
|\pa_T^{m_1}\pa_X^{n_1}(\ml V+\ml V_e)|
+|\pa_T^{m_1}\pa_X^{n_1}\ml G|\lesssim\vph^{a_{m_1,n_1}}.
\]
Using $\vph\ge T\ge1$ and $\int_1^{2\nu^{-\beta_2}}T^{-2}\,\md T\le1$, we have
\begin{align*}
&\left(\int_1^{2\nu^{-\beta_2}}
\|\vph^r\pa_T^{m_1}\pa_X^{n_1}(\ml V+\ml V_e)
\pa_XW_{m_2,n_2}\|_{L^2_X}\,\md T\right)^2\displaybreak[1]\\
&\lesssim\int_1^{2\nu^{-\beta_2}}\int
\vph^{2r+2a_{m_1,n_1}+2}|\pa_XW_{m_2,n_2}|^2\,\md T.
\end{align*}
The $\ml G$ commutators are bounded in the same way, using the lower-order
damping integral at the fixed weight $r+a_{m_1,n_1}+\frac32$.
All weight losses are included in $R$.
For $n\in\N_0$, the term $(n+1)\pa_X(\ml V+\ml V_e)W_{m,n}$ requires a
separate treatment. Let $\Sigma_j=\{X:|X+T^{-\frac12}\Theta(T)|\sim2^j\sqrt T\}$ for $j\ge1$,
$\Sigma_0=\{X:|X+T^{-\frac12}\Theta(T)|\le2\sqrt{T}\}$, and set $\theta=\frac{n+1}{n+2}$. Using
$|\pa_X\ml V|\lesssim T$, we obtain
\begin{equation}\label{eq:dyadic}
\begin{aligned}
\Bigl|\int\vph^{2r}\pa_X\ml V\,|W_{m,n}|^2\Bigr|
&\lesssim\int\vph^{2r+1}|W_{m,n}|^2
\lesssim\sum_{j\ge0}(4^jT)^{2r+1}\|W_{m,n}\|_{L^2_X(\Sigma_j)}^2\\
&\lesssim\vep\|\vph^r\pa_XW_{m,n}\|_{L^2_X}^2
+C(\vep)\|\vph^{r+n+2}W_{m,-1}\|_{L^2_X}^2.
\end{aligned}
\end{equation}
Here localized interpolation includes the cutoff derivatives, whose terms
are absorbed by the same bound. The last term is the damping term for
$W_{m,-1}$ at weight $r+n+\frac52$. The $\ml V_e$ contribution satisfies
the same estimate by \eqref{eq:ve}. Induction first on $m$ and then on $n$
therefore closes after enlarging $R(m,n)$; monotonicity of the weights also
covers $0\le r<1$.
Differentiating the Cole--Hopf transform and using the weighted primitive
estimate bounds the initial terms by the $P$ initial norm in \eqref{eq:energy}.
By \eqref{ich}, $1+W\ge e^{-M/2}>0$, so $P=-2\pa_XW/(1+W)$ gives
\eqref{eq:energy}. Weighted Sobolev embedding applied at spatial orders
$n,n+1$ gives \eqref{eq:Pdecay}, with
$b_{m,n}\ge2\max\{R(m,n),R(m,n+1)\}$.
\end{proof}

Using Proposition \ref{prop:energy}, we obtain the following estimate for
$P_{J,K,0,0}$.

\begin{prop}\label{prop:P000}
There exists a family $\{a_{m,n}^{(0,0)}\}$ such that, for each fixed
$m,n\in\N_0$ and all sufficiently large $J,K\in\N$,
\begin{equation*}
|\pa_T^m\pa_X^nP_{J,K,0,0}|\lesssim(T+X^2)^{-\psi_{0}(J,K)+a_{m,n}^{(0,0)}}.
\end{equation*}
\end{prop}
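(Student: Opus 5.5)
The plan is to write $P_{J,K,0,0}=\wh P^0+\rho_0$ as in \eqref{eq:whP0}, bound the two pieces separately, and apply Proposition~\ref{prop:energy} to $\wh P^0$.

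\textbf{The mass-correction term $\rho_0$.} Since $\rho_0=\lam_0(T)\rho(X)$ with $\rho\in C_c^\infty$, it suffices to bound $\lam_0$ from \eqref{eq:lam0}. Lemma~\ref{lem:G0} gives
\[
\Bigl|\int_\R\pa_T^mG_{J,K,0}\,\md X\Bigr|\lesssim T^{-\psi_0-\frac52+a_{m,0}}.
\]
Integrating from $T$ to $2\nu^{-\beta_2}$ then yields $|\pa_T^m\lam_0|\lesssim T^{-\psi_0-\frac32+a_{m,0}}$. On the compact support of $\rho$ we have $T\sim T+X^2$, so $\rho_0$ satisfies the claimed bound. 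The same bound at $T=1$ controls $\lam_0(1)$ and hence $\mu_0$ in \eqref{eq:mu0}.

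\textbf{Checking the hypotheses of Proposition~\ref{prop:energy} for $\wh P^0$.} Equation \eqref{eq:whP0} has the form \eqref{eq:Peq}, with $\ml V_e$ replaced by $\ml V_e+\rho_0$. Because $\rho_0$ is compactly supported in $X$ and tiny, this replacement preserves \eqref{eq:ve} and the bounds on $\pa_T^m\pa_X^n\ml V_e$ for $n\ge2$. By the choice of $\lam_0$ and $\mu_0$, the forcing $\wh G^0$ and the initial data $\wh P^0(1,\cdot)$ have zero mean. For \eqref{cod} with $N=\psi_0(J,K)$:
\begin{itemize}
\item $G_{J,K,0}$ is covered by Lemma~\ref{lem:G0}.
\item The extra forcing $\pa_T\rho_0+\pa_X[U^{\mathrm{SOI}}_{[J],[K],0}\rho_0]+\rho_0\pa_X\rho_0-\pa_X^2\rho_0$ is compactly supported in $X$. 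Using $|U^{\mathrm{SOI}}|\lesssim T^{1/2}$, with polynomially growing derivatives, it is bounded by $T^{-\psi_0-1+a}$ on that support.
\end{itemize}
This is worse than the required $T^{-N-3}$ by a fixed power. I absorb the loss by lowering $N$ to $\psi_0-C_0$ with $C_0$ independent of $J,K$; the result is still an index function. The initial data $P_{J,K,0}(1)-(\lam_0(1)+\mu_0)\rho$ satisfy the initial bound in \eqref{cod} by Lemma~\ref{lem:G0} and the bounds on $\lam_0(1)$ and $\mu_0$.

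\textbf{Applying the estimate.} Proposition~\ref{prop:energy}, via \eqref{eq:Pdecay}, then gives
\[
|\pa_T^m\pa_X^n\wh P^0|\lesssim(T+X^2)^{-\psi_0+C_0+b_{m,n}}
\]
once $J,K$ are large enough that $N\ge b_{m,n}$. Setting $a^{(0,0)}_{m,n}=C_0+\max\{b_{m,n},a_{m,0}\}$ and adding the bound for $\rho_0$ proves the proposition.

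\textbf{Main obstacle.} The delicate step is the nonlinear term $\wh P^0\pa_X\wh P^0$. Proposition~\ref{prop:energy} already handles it through the Cole--Hopf transform and the bound \eqref{ich}. The only new point is that the modified drift $\ml V+\ml V_e+\rho_0$ must keep the damping of Lemma~\ref{lem:damping}. This holds because $\pa_X\rho_0=O(T^{-\psi_0})$ is negligible against $\vph^{-1}$.
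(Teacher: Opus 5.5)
Your proposal is correct and follows the paper's proof essentially step for step. Like the paper, you split $P_{J,K,0,0}=\wh P^0+\rho_0$, bound $\lam_0$ (and hence $\rho_0$ and $\mu_0$) using Lemma~\ref{lem:G0}, absorb $\rho_0$ into $\ml V_e$, apply Proposition~\ref{prop:energy} to $\wh P^0$ with $N$ lowered by a fixed amount, and add back $\rho_0$.

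The only slip is harmless: the extra forcing on $\operatorname{supp}\rho$ is $O(T^{-\psi_0-\frac12})$ rather than $O(T^{-\psi_0-1})$, because $|\pa_X U^{\mathrm{SOI}}_{[J],[K],0}|\sim T$ in the shock layer. This is why the paper takes $N=\psi_0(J,K)-\frac52$, and your generic shift $C_0$ already covers it.
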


\begin{proof}
By \eqref{eq:lam0}, \eqref{eq:whP0} and Lemma~\ref{lem:G0}, enlarging
$a_{m,n}$ for $(m,n)\ne(0,0)$ gives
\[
\begin{aligned}
|\lam_0(T)|&\lesssim T^{-\psi_{0}(J,K)-\frac32},\\
|\pa_T^m\pa_X^n\wh G^0|&\lesssim
(T+X^2)^{-\psi_{0}(J,K)-\frac12+a_{m,n}}.
\end{aligned}
\]
The compatible initial traces satisfy \eqref{cod} by \eqref{eq:init0}
and \eqref{eq:whP0}. Apply Proposition~\ref{prop:energy} to $\wh P^0$ with
$\ml V_e+\rho_0$ and
$N=\psi_{0}(J,K)-\frac52>\max\{2,b_{m,n}\}$.
For sufficiently large $J,K$, the required coefficient bounds hold for
all derivatives used, and \eqref{eq:lam0}--\eqref{eq:mu0} give the zero means.
Since $P_{J,K,0,0}=\wh P^0+\rho_0$, \eqref{eq:Pdecay} yields the claim with
$a_{m,n}^{(0,0)}\ge b_{m,n}+\frac52$.
\end{proof}

In subsequent applications, decrease $\psi_{\ell}$ so that the forcing
bound in \eqref{cod} holds before the mean correction with
$N=\psi_{\ell}+\frac52$, and the initial bound with $N=\psi_{\ell}$.
After correction use $N=\psi_{\ell}>\max\{2,b_{m,n}\}$; fixed decreases
preserve the index-function property.

\subsection{Construction of $U_{\ell,0}^{\mathrm{in}}$ for $\ell\in\N$}

We construct $U_{\ell,0}^{\mathrm{in}}$ by induction on $\ell$. Suppose
that $U_{\ell',0}^{\mathrm{in}}$ and $P_{J,K,\ell',0}$ have been
constructed for all $0\le\ell'<\ell$, satisfying \eqref{eq:UellH} and,
for some index functions $\psi_{\ell'}$ and families $\{a_{m,n}^{(\ell')}\}$,
\begin{equation}\label{eq:indhyp}
\begin{aligned}
&|\pa_T^m\pa_X^nP_{J,K,\ell',0}|\lesssim
\nu^{-\frac{\beta_2}{2}\ell'}(T+X^2)^{-\psi_{\ell'}(J,K)+a_{m,n}^{(\ell')}},\\
&U_{\ell',0}^{\mathrm{in}}(1,X)=U_{\ell'}^{\mathrm{in}}(1,X)+\ml O(\nu^{-\frac{\beta_2}2\ell'})\rho(X).
\end{aligned}
\end{equation}
We construct $U_{\ell,0}^{\mathrm{in}}$ to satisfy
\begin{equation}\label{eq:target}
\begin{aligned}
\pa_TU_{\ell,0}^{\mathrm{in}}
&-T^{\frac12}\sum_{\ell_1+\ell_2=\ell-1}\mf S_{\ell_1}\pa_XU_{\ell_2,0}^{\mathrm{in}}
+\frac12\pa_X\sum_{\ell_1+\ell_2=\ell}U_{\ell_1,0}^{\mathrm{in}}U_{\ell_2,0}^{\mathrm{in}}
=\pa_X^2U_{\ell,0}^{\mathrm{in}}.
\end{aligned}
\end{equation}
Set
\begin{equation}\label{eq:decomp}
U_{\ell,0}^{\mathrm{in}}=P_{J,K,\ell,0}+U_{[J],[K],\ell}^{\mathrm{SOI}}.
\end{equation}
For this calculation, abbreviate
$B_\ell=U_{[J],[K],\ell}^{\mathrm{SOI}}$ and $P_\ell=P_{J,K,\ell,0}$.
Then
\[
\begin{aligned}
\frac12\pa_X\sum_{\ell_1+\ell_2=\ell}
&\bigl(U_{\ell_1,0}^{\mathrm{in}}U_{\ell_2,0}^{\mathrm{in}}
-B_{\ell_1}B_{\ell_2}\bigr)
=\pa_X\bigl(U_{0,0}^{\mathrm{in}}P_\ell\bigr)\\
&\quad+\pa_X\sum_{\substack{\ell_1+\ell_2=\ell\\\ell_2<\ell}}
B_{\ell_1}P_{\ell_2}+\frac12\pa_X\sum_{\substack{\ell_1+\ell_2=\ell\\\ell_1,\ell_2>0}}
P_{\ell_1}P_{\ell_2}.
\end{aligned}
\]
The last two terms contain only previously constructed corrections.
Substituting \eqref{eq:decomp} into \eqref{eq:target}, we obtain
\begin{align}
\pa_TP_\ell&+\pa_X(U_{0,0}^{\mathrm{in}}P_\ell)-\pa_X^2P_\ell=G_\ell,\notag\displaybreak[1]\\
G_\ell:=-\Biggl[&\pa_TB_\ell-\pa_X^2B_\ell
-T^{\frac12}\sum_{\ell_1+\ell_2=\ell-1}\mf S_{\ell_1}\pa_XB_{\ell_2}\notag\displaybreak[1]\\
&+\frac12\pa_X\sum_{\ell_1+\ell_2=\ell}B_{\ell_1}B_{\ell_2}
-T^{\frac12}\sum_{\ell_1+\ell_2=\ell-1}\mf S_{\ell_1}\pa_XP_{\ell_2}\notag\displaybreak[1]\\
&+\pa_X\sum_{\substack{\ell_1+\ell_2=\ell\\\ell_2<\ell}}
B_{\ell_1}P_{\ell_2}+\frac12\pa_X\sum_{\substack{\ell_1+\ell_2=\ell\\\ell_1,\ell_2>0}}
P_{\ell_1}P_{\ell_2}\Biggr].
\label{eq:derived}
\end{align}
The grid residual is estimated as in Subsection~6.2, using
\eqref{4.5}, \eqref{4.43}, Lemma~\ref{lem5.8} and \eqref{4.39}.
The remaining terms follow from \eqref{eq:indhyp} and the grid bounds.
Thus, decreasing $\psi_\ell$ as above, for some family
$\{a_{m,n}\}\subset\N_0$ independent of $\nu,J,K$, with $a_{0,0}=0$,
\begin{equation}\label{eq:Gell}
|\pa_T^m\pa_X^nG_\ell|\lesssim
\nu^{-\frac{\beta_2}{2}\ell}(T+X^2)^{-\psi_\ell(J,K)-\frac{11}{2}+a_{m,n}}.
\end{equation}

As in Subsection~6.3, choose
\[
\begin{aligned}
\lam_\ell(T)&=-\int_T^{2\nu^{-\beta_2}}\int_\R G_\ell(T',X)\,\md X\,\md T',\\
\mu_\ell&=\int_\R\bigl(U_\ell^{\mathrm{in}}(1,X)-B_\ell(1,X)\bigr)\,\md X-\lam_\ell(1),
\end{aligned}
\]
and solve
\begin{equation}\label{eq:Pell0}
\begin{aligned}
\pa_TP_{J,K,\ell,0}+\pa_X(U_{0,0}^{\mathrm{in}}P_{J,K,\ell,0})
&-\pa_X^2P_{J,K,\ell,0}=G_\ell,\\
P_{J,K,\ell,0}(1,X)&=U_\ell^{\mathrm{in}}(1,X)-B_\ell(1,X)-\mu_\ell\rho(X).
\end{aligned}
\end{equation}
With $\rho_\ell=\lam_\ell\rho$, the function $P_{J,K,\ell,0}-\rho_\ell$
has zero initial mean and satisfies the same linear equation with zero-mean source
\[
G_\ell-\pa_T\rho_\ell-\pa_X(U_{0,0}^{\mathrm{in}}\rho_\ell)+\pa_X^2\rho_\ell.
\]

\begin{prop}[Energy estimate for the linearized equation]\label{prop:linenergy}
Suppose that $P$ satisfies
\begin{equation*}
\pa_TP+\pa_X[(\ml V+\ml V_e)P]-\pa_X^2P=G
\end{equation*}
on $[1,2\nu^{-\beta_2}]$, with $\int_\R G(T,X)\,\md X=0$ for all $T\in[1,2\nu^{-\beta_2}]$,
$\int_\R P(1,X)\,\md X=0$, and with $\ml V_e$ satisfying the coefficient bounds of Proposition~\ref{prop:energy}.
Suppose there exist a constant $N>2$ and a family $\{a_{m,n}\}\subset\N_0$ with $a_{0,0}=0$
such that, for all $m,n\in\N_0$,
\begin{equation*}
|\pa_T^m\pa_X^nG|\lesssim(T+X^2)^{-N-3+a_{m,n}},\qquad
|(\pa_T^m\pa_X^nP)(1,X)|\lesssim(1+X^2)^{-N-1+a_{m,n}}.
\end{equation*}
Then \eqref{eq:energy} and \eqref{eq:Pdecay} hold, with initial time
derivatives determined by the linearized equation. The proof uses
$W=-\frac12\int_{-\infty}^XP(T,X')\,\md X'$ and the same weighted induction.
\end{prop}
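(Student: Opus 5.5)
The proof follows the argument for Proposition~\ref{prop:energy}, with the Cole--Hopf step replaced by a spatial primitive; the equation is now linear. Set $W=-\frac12\int_{-\infty}^XP(T,X')\,\md X'$ and $\ml G=-\frac12\int_{-\infty}^XG\,\md X'$. Since $\int_\R P(1,\cdot)\,\md X=0$ and $\int_\R G\,\md X=0$, integrating the equation in $X$ shows that the mass of $P$ is conserved. Hence $W(T,\cdot)$ and $\ml G(T,\cdot)$ decay at both infinities. Integrating the equation once gives
\[
\pa_TW+(\ml V+\ml V_e)\pa_XW-\pa_X^2W=\ml G,
\]
which is \eqref{eq:W} with the zeroth-order term $-\ml G W$ removed. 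From \eqref{cod} and the zero mean, $|\ml G|\lesssim(T+X^2)^{-N-2}$. From the initial bound and zero mean, $\|\langle X\rangle^{2r}W(1,\cdot)\|_{L^2}\lesssim\|\langle X\rangle^{2r+1}P(1,\cdot)\|_{L^2}$. No exponential factor or lower bound of the type \eqref{ich} is needed, since $P=-2\pa_XW$ directly.

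For the basic estimate, multiply by $\vph^{2r}W$ and integrate; this gives the identity \eqref{eq:energyid} with
\[
H_0=-\tfrac12\pa_X[\vph^{2r}(\ml V+\ml V_e)]-\tfrac12\pa_X^2\vph^{2r}-\tfrac12\pa_T\vph^{2r}.
\]
Lemma~\ref{lem:damping} supplies the coercive lower bound $\gtrsim\vph^{2r-1}\bigl[|T^{1/2}X+\Theta_0^{\mathrm I}|+\zeta T^2\operatorname{sech}^2(\cdot)\bigr]$. The error terms, including those from $\ml V_e$ via \eqref{eq:ve}, are bounded by $\vph^{2r-1}(1+|X|/T)$. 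This yields $H_0\gg\vph^{2r-1}$ for $T\ge\tilde T$, exactly as in \eqref{eq:G0lower}. On the bounded interval $[1,\tilde T]$, Gr\"onwall gives a $\nu$-independent constant. Cauchy--Schwarz on $\int\vph^{2r}\ml GW$ then closes the $m=n=0$ case of \eqref{eq:energy} with $W$ in place of $P$.

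For higher derivatives, set $W_{m,n}=\pa_T^m\pa_X^{n+1}W$ and use the same differentiated equation as before, minus the $\ml G$-commutators. First induct on $m$ at $n=-1$. Then induct on $n$, handling the term $(n+1)\pa_X(\ml V+\ml V_e)W_{m,n}$ by the dyadic interpolation \eqref{eq:dyadic}, which trades it for $\vep\|\vph^r\pa_XW_{m,n}\|^2$ plus the damping integral for $W_{m,-1}$ at a higher weight. Commutators with $\pa_T^{m_1}\pa_X^{n_1}(\ml V+\ml V_e)$ are controlled by $\vph^{a_{m_1,n_1}}$ and absorbed by raising the weight. All weight losses are collected in $R(m,n)$, which fixes the range $r\le N-2R(m,n)$. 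Initial time derivatives are expressed through the linear equation in terms of spatial derivatives at $T=1$, and are therefore bounded by the weighted initial norm of $P$.

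Since $\pa_T^m\pa_X^nP=-2W_{m,n}$, these bounds are exactly \eqref{eq:energy}. Then \eqref{eq:Pdecay} follows from weighted Sobolev embedding at orders $n$ and $n+1$, using the decay of $\ml G$ implied by \eqref{cod}, with $b_{m,n}\ge2\max\{R(m,n),R(m,n+1)\}$.

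The main obstacle, as in the nonlinear case, is the top-order term $\pa_X\ml V\,W_{m,n}$. Here $|\pa_X\ml V|\sim T$ in the shock layer and has the wrong sign for $n\ge0$. It must be absorbed through the dyadic splitting in the sets $\Sigma_j$ and the $\operatorname{sech}^2$ damping available only at the $W_{m,-1}$ level. I would verify this step carefully, checking that the cutoff derivatives arising in the localized interpolation carry only bounded powers of $\vph$.
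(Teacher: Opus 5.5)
Your proposal is correct and follows the paper's own route. The paper proves this proposition by substituting the primitive $W=-\frac12\int_{-\infty}^XP\,\md X'$ for the Cole--Hopf variable, which yields \eqref{eq:W} without the $-\ml G\,W$ term and makes \eqref{ich} unnecessary since $\pa_T^m\pa_X^nP=-2W_{m,n}$. It then reruns the damping-based weighted induction of Proposition~\ref{prop:energy}, including the dyadic interpolation for the top-order $\pa_X(\ml V+\ml V_e)W_{m,n}$ term, exactly as you describe.
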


\begin{prop}\label{prop:Plellh}
There exists a family $\{a_{m,n}^{(\ell)}\}\subset\N_0$, independent of
$\nu,J,K$, such that, for each fixed $m,n\in\N_0$ and all sufficiently large $J,K\in\N$,
\begin{equation*}
|\pa_T^m\pa_X^nP_{J,K,\ell,0}|\lesssim
\nu^{-\frac{\beta_2}{2}\ell}(T+X^2)^{-\psi_\ell(J,K)+a_{m,n}^{(\ell)}}.
\end{equation*}
\end{prop}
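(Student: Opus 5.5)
The plan mirrors the proof of Proposition~\ref{prop:P000}, with the nonlinear weighted estimate replaced by Proposition~\ref{prop:linenergy}. We argue by induction on $\ell$, assuming \eqref{eq:indhyp} for all $\ell'<\ell$. The first step is to put \eqref{eq:Pell0} into the form required by Proposition~\ref{prop:linenergy}. We write $U_{0,0}^{\mathrm{in}}=\ml V+\ml V_e'$ with $\ml V_e':=\ml V_e+P_{J,K,0,0}$. By \eqref{eq:ve} and Proposition~\ref{prop:P000}, the additional piece $P_{J,K,0,0}$ decays like $(T+X^2)^{-\psi_0+a_{m,n}^{(0,0)}}$. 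For $J,K$ large this is far smaller than the bound in \eqref{eq:ve}, so $\ml V_e'$ satisfies the coefficient hypotheses, including the polynomial bounds for $n\ge2$.

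Next we subtract the mass correction and set $\wh P^\ell:=P_{J,K,\ell,0}-\rho_\ell$. This function solves the same linear equation with source
\[
\wh G^\ell=G_\ell-\pa_T\rho_\ell-\pa_X(U_{0,0}^{\mathrm{in}}\rho_\ell)+\pa_X^2\rho_\ell .
\]
By the choice of $\lam_\ell$, this source has zero spatial mean, and by the choice of $\mu_\ell$ the initial datum $\wh P^\ell(1,\cdot)$ has zero mean. From \eqref{eq:Gell}, integrating in $X$ and $T$ gives $|\lam_\ell(T)|\lesssim\nu^{-\frac{\beta_2}{2}\ell}T^{-\psi_\ell-5}$. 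Since $\rho$ is compactly supported and $T+X^2\sim T$ on its support, every term involving $\rho_\ell$ is bounded by $\nu^{-\frac{\beta_2}{2}\ell}(T+X^2)^{-\psi_\ell-\frac{11}{2}+a_{m,n}}$ after enlarging $a_{m,n}$. Here $|U_{0,0}^{\mathrm{in}}|\lesssim T^{1/2}$ costs only a fixed power. Hence $\wh G^\ell$ obeys the forcing bound in Proposition~\ref{prop:linenergy} with $N=\psi_\ell+\frac52$, up to the factor $\nu^{-\frac{\beta_2}{2}\ell}$. Linearity lets us divide by this factor.

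For the initial datum, $U_\ell^{\mathrm{in}}(1,\cdot)-B_\ell(1,\cdot)$ is controlled on $|X|\ge1$ by Lemma~\ref{lem:52}, where $\zeta=0$. It is bounded on $|X|\le1$. Using $\mf D(1,X)\sim(1+X^2)^{1/6}$, this gives decay $(1+X^2)^{-\psi_\ell-1+a_{m,n}}$ after decreasing $\psi_\ell$ by a fixed amount. For the coefficients, $\mu_\ell$ is bounded by the $L^1$ norms of $U_\ell^{\mathrm{in}}(1)$ and $B_\ell(1)$ together with $\lam_\ell(1)$. These give $|\mu_\ell|\lesssim\nu^{-\frac{\beta_2}{2}\ell}$, via the grid bounds in Proposition~\ref{prop:SI-estimates} and Proposition~\ref{prop4.2}, which carry the factor $\nu^{-\beta_2\ell/2}$. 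We then check that the initial time derivatives, determined by the linear equation, satisfy the same bounds.

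Applying \eqref{eq:Pdecay} to $\wh P^\ell$ with $N=\psi_\ell>\max\{2,b_{m,n}\}$ for large $J,K$, and adding back $\rho_\ell$, yields the claim with $a_{m,n}^{(\ell)}\ge b_{m,n}+\frac52$. This also supplies the initial-data relation in \eqref{eq:indhyp}, which closes the induction.

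The main obstacle is verifying \eqref{eq:Gell} itself, and in particular the lower-order coupling terms. The terms $T^{1/2}\mf S_{\ell_1}\pa_XP_{\ell_2}$ and $\pa_X(B_{\ell_1}P_{\ell_2})$ grow polynomially, with $|\mf S_{\ell_1}|\lesssim T^{(\ell_1+1)/2}$ by \eqref{4.19}, and $B_{\ell_1}$ grows like a power of $\mf D$. Their product with the decay of $P_{\ell_2}$ must still beat $(T+X^2)^{-\psi_\ell-11/2}$. I would handle this by choosing $\psi_\ell\le\psi_{\ell-1}-c_\ell$ for a fixed $c_\ell$ depending only on $\ell$, so that these fixed polynomial losses are absorbed; this preserves the index-function property. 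The factors $\nu^{-\beta_2\ell_1/2}\nu^{-\beta_2\ell_2/2}$ combine to the required $\nu^{-\beta_2\ell/2}$. The pure grid residual is treated exactly as in Lemma~\ref{lem:G0}, using \eqref{4.43}, \eqref{4.5}, Lemma~\ref{lem5.8} and \eqref{4.39}.
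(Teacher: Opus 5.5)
Your proposal is correct and follows the paper's proof essentially step by step. You absorb $P_{J,K,0,0}$ into the coefficient error via \eqref{eq:ve} and Proposition~\ref{prop:P000}, subtract $\rho_\ell=\lam_\ell\rho$ to obtain zero-mean data and forcing, apply Proposition~\ref{prop:linenergy} to the rescaled remainder $\nu^{\frac{\beta_2}{2}\ell}(P_{J,K,\ell,0}-\rho_\ell)$, and add $\rho_\ell$ back, exactly as the paper does.

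Some of the exponent bookkeeping needs small repairs, none of which affect the conclusion:
\begin{itemize}
\item Integrating \eqref{eq:Gell} in $X$ and then in $T$ gives $|\lam_\ell|\lesssim\nu^{-\frac{\beta_2}{2}\ell}T^{-\psi_\ell-4}$. Your $T^{-\psi_\ell-5}$ is the bound for $\pa_T\lam_\ell$.
\item The corrected source satisfies the forcing hypothesis only with $N=\psi_\ell$, as in the paper, not with $\psi_\ell+\frac52$. The reason is that $\pa_XU_{0,0}^{\mathrm{in}}$ can be of size $T$ on $\operatorname{supp}\rho$, so $\pa_X(U_{0,0}^{\mathrm{in}}\rho_\ell)$ is only $O(T^{-\psi_\ell-3})$, and $a_{0,0}=0$ cannot be enlarged. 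This is harmless because $N=\psi_\ell$ is the value you actually use.
\item $\mu_\ell$ must be bounded through the $L^1$ norm of the difference $U_\ell^{\mathrm{in}}(1,\cdot)-B_\ell(1,\cdot)$, using Lemma~\ref{lem:52} on $|X|\ge1$. Neither term is integrable on its own.
\end{itemize}
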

\begin{proof}
By \eqref{eq:Gell} and Lemma~\ref{lem:52},
\[
|\lam_\ell(T)|\lesssim\nu^{-\frac{\beta_2}{2}\ell}T^{-\psi_\ell(J,K)-4},
\qquad |\mu_\ell|\lesssim\nu^{-\frac{\beta_2}{2}\ell}.
\]
By \eqref{eq:ve} and Proposition~\ref{prop:P000}, the coefficient
$U_{0,0}^{\mathrm{in}}-\ml V$ satisfies the required bounds for large $J,K$.
The compact support of $\rho$ and \eqref{eq:Gell} give \eqref{cod}
for the normalized corrected source, with $N=\psi_\ell(J,K)$.
Lemma~\ref{lem:52} and \eqref{eq:Pell0} give the compatible initial traces;
outside a fixed compact set they equal those of $U_\ell^{\mathrm{in}}-B_\ell$.
Apply Proposition~\ref{prop:linenergy} to
$\nu^{\frac{\beta_2}{2}\ell}(P_{J,K,\ell,0}-\rho_\ell)$ with
$N>\max\{2,b_{m,n}\}$, and add back $\rho_\ell$.
\end{proof}

\begin{prop}\label{prop:UellH}
Let $U_{\ell,0}^{\mathrm{in}}$ be given by \eqref{eq:decomp}. Then it
satisfies \eqref{eq:target}, with initial data
\[
U_{\ell,0}^{\mathrm{in}}(1,X)=U_\ell^{\mathrm{in}}(1,X)-\mu_\ell\rho(X),
\qquad |\mu_\ell|\lesssim\nu^{-\frac{\beta_2}{2}\ell}.
\]
\end{prop}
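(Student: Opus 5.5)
The statement is essentially a bookkeeping consequence of the construction just carried out, so the plan is to verify the equation and the initial trace directly from the definitions \eqref{eq:decomp}, \eqref{eq:derived} and \eqref{eq:Pell0}.

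\textbf{The equation.} First I check that $U_{\ell,0}^{\mathrm{in}}=P_{J,K,\ell,0}+B_\ell$ satisfies \eqref{eq:target}. Substitute the decomposition into the left-hand side of \eqref{eq:target} and split the quadratic term with the algebraic identity displayed before \eqref{eq:derived}. This produces three groups:
\begin{itemize}
\item the grid residual in $B_\ell$;
\item the linear term $\pa_X(U_{0,0}^{\mathrm{in}}P_\ell)$;
\item the lower-order cross terms in $P_{\ell'}$, $\ell'<\ell$, together with the drift terms $T^{\frac12}\mf S_{\ell_1}\pa_XP_{\ell_2}$.
\end{itemize}
By the definition \eqref{eq:derived}, the first and third groups together equal exactly $-G_\ell$. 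Hence \eqref{eq:target} reduces to the linear equation in \eqref{eq:Pell0}, which $P_{J,K,\ell,0}$ solves by construction.

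\textbf{Solvability.} Existence of a smooth solution of \eqref{eq:Pell0} with polynomially controlled growth is taken from the linear parabolic theory already used. Concretely, apply Proposition~\ref{prop:linenergy} to $P_{J,K,\ell,0}-\rho_\ell$, which has zero-mean source and zero initial mean, as in the proof of Proposition~\ref{prop:Plellh}. The coefficient $U_{0,0}^{\mathrm{in}}$ is the $\ell=0$ profile already built, and its required bounds come from Proposition~\ref{prop:P000} and \eqref{eq:ve}.

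\textbf{The initial trace.} At $T=1$, the initial condition in \eqref{eq:Pell0} gives
\[
U_{\ell,0}^{\mathrm{in}}(1,X)=B_\ell(1,X)+U_\ell^{\mathrm{in}}(1,X)-B_\ell(1,X)-\mu_\ell\rho(X)=U_\ell^{\mathrm{in}}(1,X)-\mu_\ell\rho(X).
\]
So the remaining task is the bound $|\mu_\ell|\lesssim\nu^{-\frac{\beta_2}{2}\ell}$. Recall $\mu_\ell=\int_\R(U_\ell^{\mathrm{in}}-B_\ell)(1,X)\,\md X-\lam_\ell(1)$, and bound the two parts separately.

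\emph{Bound on $\lam_\ell(1)$.} Integrating \eqref{eq:Gell} in $X$ and then in $T'$ over $[1,2\nu^{-\beta_2}]$ gives $|\lam_\ell(1)|\lesssim\nu^{-\frac{\beta_2}{2}\ell}$. The integral converges because $\psi_\ell$ is large for large $J,K$.

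\emph{Bound on the spatial integral.} Split it into two regions.
\begin{itemize}
\item On $|X|\ge1$: here $\zeta(1,\cdot)$ vanishes, since $\wh\zeta_2(1)=1$ but $\wh\zeta_1(X)=0$ for $|X|\ge1$. Thus $B_\ell=U_{[K],\ell}^{\mathrm{OI}}$ there. Lemma~\ref{lem:52} then bounds the integrand by $\mf D(1,X)^{\ell-3-4K}\sim(1+X^2)^{(\ell-3-4K)/6}$, which is integrable for $K$ large with an $O(1)$ integral.
\item On $|X|\le1$: both $U_\ell^{\mathrm{in}}(1,\cdot)$ and $B_\ell(1,\cdot)$ are bounded. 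For $B_\ell$ this follows from Proposition~\ref{prop:SI-estimates} with weight $\nu^{-\beta_2\ell/2}$, and from Proposition~\ref{prop4.2} on the outer part, where $\mf D\gtrsim1$.
\end{itemize}
Together these give $|\mu_\ell|\lesssim\nu^{-\frac{\beta_2}{2}\ell}$.

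\textbf{Expected obstacle.} The only delicate point is making sure the $\nu$-dependence of $\mu_\ell$ is no worse than $\nu^{-\beta_2\ell/2}$. This requires all phase and grid coefficients entering $B_\ell$ at $T=1$ to carry at most that factor, which follows from \eqref{4.14}. Everything else is direct substitution.
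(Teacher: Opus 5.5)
Your proposal is correct and follows the paper's argument. You verify the equation by substituting \eqref{eq:decomp} and using the definition \eqref{eq:derived} of $G_\ell$ together with \eqref{eq:Pell0}, read the initial trace off \eqref{eq:Pell0}, and bound $\mu_\ell$ as in the proof of Proposition~\ref{prop:Plellh}: \eqref{eq:Gell} controls $\lam_\ell(1)$, and Lemma~\ref{lem:52} (using $\zeta(1,\cdot)=0$ on $|X|\ge1$) controls the spatial integral, so you are just writing out what the paper compresses into one line.
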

\begin{proof}
This follows from \eqref{eq:derived} and \eqref{eq:Pell0}.
\end{proof}

Together with Subsection~6.3, this completes the induction and proves
Proposition~\ref{thm:main}. For any fixed $r\ge0$ and finitely many
derivative orders, choosing $\psi_\ell(J,K)>r+\max a_{m,n}^{(\ell)}$ gives
\[
\nu^{\frac{\beta_2}{2}\ell}|\pa_T^m\pa_X^nP_{J,K,\ell,0}|
\lesssim(T+X^2)^{-r}
\]
uniformly in $\nu$ for all those derivatives.

\section{Approximation of $U^{(\nu)}$}
In this section we assemble the three-layer approximation
$U^{\mathrm{app}}_{J,K,L}$ to $U^{(\nu)}$ from the outer, shock-profile,
and inner-grid expansions of Sections~4--6, and we estimate the error
$W=U^{(\nu)}-U^{\mathrm{app}}_{J,K,L}$ on the whole interval
$[0,T_0]$, with $T_0=\ol\vep\nu^{-1/2}$. Section~7.1 treats the
short-time post-formation regime $T\in[0,1]$: we approximate $U^{(\nu)}$
by $U_{K,L}^{\mathrm{app}}$ and establish the weighted estimate in
Proposition~\ref{prop6.1}. Section~7.2 treats the
shock-development regime $T\in[1,T_0]$: we introduce the three zones and
the three-layer ansatz $U^{\mathrm{app}}_{J,K,L}$, derive the equation
for the error $W$ with residual $E_1$, control its initial data and
residual, and prove the pointwise estimate of Proposition~\ref{prop6.4} by weighted
energy estimates. The section closes with the proof of the main theorem,
which combines the two regimes and yields part (ii) of
Theorem~\ref{thm}.
	Recall that, for $T\in[0,T_0]$, $U^{(\nu)}$ satisfies
\begin{equation*}
	\begin{aligned}
		\pa_TU^{(\nu)}-\dot{\mf{S}}\pa_XU^{(\nu)}+U^{(\nu)}\pa_XU^{(\nu)}=\pa_X^2U^{(\nu)}.
	\end{aligned}
\end{equation*}
\subsection{Approximation of $U^{(\nu)}$ at shock formation}
Before shock formation, the viscous Burgers equation reads
\begin{equation*}
	\begin{aligned}
		\pa_tu^{(\nu)}+u^{(\nu)}\pa_xu^{(\nu)}=\nu\pa_x^2u^{(\nu)}.
	\end{aligned}
\end{equation*}
Following \cite{CG}, we define
\begin{equation*}
	\begin{aligned}
	u_{K,L}^{\mathrm{app}}:=(1-\xi)u_{[K]}^{\mathrm{out}}+\xi u_{[L]}^{\mathrm{in}}.
	\end{aligned}
\end{equation*}
See \eqref{0.20} and  \cite{CG} for more details.
Set $v=u^{(\nu)}-u_{K,L}^{\mathrm{app}}$. Then $v$ satisfies
\begin{equation*}
	\begin{aligned}
	&\pa_tv+\pa_x(u_{K,L}^{\mathrm{app}}v)+v\pa_xv-\nu\pa_x^2v=-(\pa_tu_{K,L}^{\mathrm{app}}+u_{K,L}^{\mathrm{app}}\pa_xu_{K,L}^{\mathrm{app}}-\nu\pa_x^2u_{K,L}^{\mathrm{app}}) :=E_0\\
	&v(t_{\mathrm{in}},x)=0.
	\end{aligned}
\end{equation*}
For convenience, introduce $U_{K,L}^{\mathrm{app}}(T,X)=\nu^{-\frac14}u_{K,L}^{\mathrm{app}}(\nu^{\frac12}T,\nu^{\frac34} X)$, where $u_{K,L}^{\mathrm{app}}$ is defined in the $(t,\x)$ coordinates.
Adapting the proof in \cite[Section~5]{CG} yields the following estimate;
the proof is deferred to Appendix~C.
\begin{prop}\label{prop6.1}
	For $\vep<\frac16$ and $\beta\in(\frac1{2+\vep},\frac12)$, there exist constants $C_1>0$ and $0<\nu_0\le\min\{1,\ol\vep^2\}$ such that, for all $K,L\in\N_0$ and $0<\nu\le\nu_0$, we have
	\begin{equation*}
		\begin{aligned}
			\left\|(1+X^2)\bigl(U^{(\nu)}-U_{K,L}^{\mathrm{app}}\bigr)\right\|_{L^{\infty}_TH^3_X([0,1]\times \R)}
\lesssim \nu^{\min\{(1-2\beta)K,L\beta/3\}-C_1}.
		\end{aligned}
	\end{equation*}
\end{prop}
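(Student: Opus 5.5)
The plan is to transfer the weighted $H^3$ estimate of Proposition~\ref{prop0.1} (the Chaturvedi--Graham energy argument on $[t_{\mathrm{in}},0]$) across the short interval $T\in[0,1]$, i.e.\ $t\in[0,\nu^{1/2}]$. We keep the same cutoff $\xi$ and add a polynomial weight $(1+X^2)$. Write $V=U^{(\nu)}-U^{\mathrm{app}}_{K,L}$ in inner variables. It satisfies
\[
\pa_TV+\pa_X(U^{\mathrm{app}}_{K,L}V)+V\pa_XV-\dot{\mf S}\pa_XV-\pa_X^2V=\mathcal E_0,
\]
where $\mathcal E_0$ is the rescaled residual $E_0$ plus the harmless shift term coming from $\dot{\mf S}$. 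We have $\dot{\mf S}=\mathcal O(\nu^{1/4}T)$ and $\mf S=\mathcal O(\nu^{5/4})$ on $[0,1]$, so the change from $x$ to $\x$ costs only a smooth, small perturbation.

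\textbf{Step 1: data at $T=0$.} Proposition~\ref{prop0.1} controls $V(0,\cdot)$ in $H^3$ by $\nu^{\min\{(1-2\beta)K,L\beta/3\}-C}$. To obtain the weighted version, we rerun the energy argument of \cite[Section~5]{CG} with the multiplier $(1+X^2)^2$; this is the refinement announced in Appendix~\ref{app:prop61}. The commutators $\pa_X(1+X^2)^2$ are dominated by $X(1+X^2)$ times the unweighted quantity. Since the $X$-weight corresponds to $\nu^{-3/4}|x|$ and $u_0$ is constant for $|x|\ge L$, both $E_0$ and $V$ decay rapidly for $|x|\gtrsim 1$. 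The weight therefore costs at most a fixed power $\nu^{-c}$, which is absorbed by enlarging $C_1$.

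\textbf{Step 2: residual on $[0,1]$.} By construction of $u^{\mathrm{app}}_{K,L}$, the bounds for $E_0$ in \cite{CG} are homogeneous. In the outer zone $\mf d\gtrsim\nu^{1/4-\epsilon}$ the residual is bounded by $\nu^{K}\mf d^{-4K-c}$. In the inner zone it is bounded by $\nu^{L/4}$ times polynomial growth in $X$. In the matching zone it is controlled by the grid remainders of Propositions~\ref{prop4.2}--\ref{prop4.3} together with Lemma~\ref{lem:52}.

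These bounds carry over to $0\le t\le\nu^{1/2}$, because Propositions~\ref{prop3.2}, \ref{prop2.15} and \ref{prop2.16} hold for $t\ge0$ with the same homogeneity. Here $\mf d\sim(|t|+\mf y^2)^{1/2}$, and by Lemma~\ref{lem2.11} this remains comparable to its formation-time size. One caveat: for $t>0$ the outer profiles jump across $\x=0$. This does not matter on $[0,1]$, because that point lies deep inside the inner zone, where $\xi=1$. The weighted residual norm $\|(1+X^2)\pa_X^{\le3}\mathcal E_0\|_{L^2}$ is therefore bounded by $\nu^{\min\{(1-2\beta)K,L\beta/3\}-C}$.

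\textbf{Step 3: weighted energy estimate on a unit time interval.} Apply $\pa_X^n$ for $n\le3$, multiply by $(1+X^2)^2\pa_X^nV$, and integrate. The linear transport term contributes $\|\pa_XU^{\mathrm{app}}_{K,L}\|_{L^\infty}$, which is $\mathcal O(1)$ on $[0,1]$ since the inner profiles are smooth there. The weight commutator is bounded by the polynomial growth of $U^{\mathrm{app}}$, namely $|U^{\mathrm{app}}|\lesssim\mf D\lesssim(1+|X|)^{1/3}$, times $X(1+X^2)$, and is absorbed by Gr\"onwall on the unit interval. The nonlinear term $V\pa_XV$ is handled by a bootstrap, using the $H^1\subset L^\infty$ embedding and smallness of $V$.

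Gr\"onwall on $[0,1]$, which loses only a constant, then gives the claim with $C_1$ equal to $C$ plus the weight loss. We choose $\nu_0\le\ol\vep^2$ so that $T=1$ corresponds to $t\le\ol\vep$.

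\textbf{Main obstacle.} The main difficulty is the interaction of the weight with the growing transport speed $U^{\mathrm{app}}$ and with the cutoff region. The weight must not be allowed to amplify errors located in the outer zone at large $|X|$. If direct Gr\"onwall with $|U^{\mathrm{app}}|\,|X|$ commutators fails, the first fallback is to use the weight $(1+X^2)$ only up to $|X|\lesssim\nu^{-3/4}$, which is the physical region $|x|\lesssim1$, and to use the exact decay of $V$ beyond it.
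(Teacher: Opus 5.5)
\textbf{Gap in Step 1.} Your Steps 2 and 3 on $[0,1]$ are fine: on a unit time interval the weight commutators $|X|\,|U^{\mathrm{app}}_{K,L}|/(1+X^2)$ and $((1+X^2)^2)''/(1+X^2)^2$ are $\ml O(1)$, so Gr\"onwall loses only a constant. Your ``main obstacle'' paragraph therefore points at the wrong place. The real difficulty is in Step 1, and there the argument has a genuine gap.

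The weighted bound at $T=0$ must be carried across the rescaled pre-shock interval $[\nu^{-1/2}t_{\mathrm{in}},0]$, whose length is $\sim\nu^{-1/2}$. Suppose you literally rerun the energy estimate with multiplier $(1+X^2)^2$. The viscous commutator $\tfrac12\int((1+X^2)^2)''|\pa_X^rV|^2\,\md X$ is then comparable to the weighted energy itself, since $((1+X^2)^2)''=4+12X^2\le12(1+X^2)^2$. A naive Gr\"onwall over that interval gives $e^{C\nu^{-1/2}}$, not $\nu^{-c}$. Bounding the transport commutator with $|U^{\mathrm{app}}_{K,L}|\lesssim\nu^{-1/4}$ would be worse still.

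Your justification is that $V$ decays rapidly for $|x|\gtrsim1$ because $u_0$ is constant for $|x|\ge L$. This holds only on the approximate side. The transport hierarchy \eqref{3.3} has finite propagation, so $E_0$ and $u^{\mathrm{app}}_{K,L}-c$ are compactly supported in $x$. The viscous solution $u^{(\nu)}$ has no such property. Far-field smallness of $u^{(\nu)}-c$, with three derivatives, in weighted $L^2$, uniformly for $t\in[t_{\mathrm{in}},\sqrt\nu]$, is an extra estimate you do not supply. Your fallback rests on the same unproved decay. Moreover, a weight truncated at $|X|\sim\nu^{-3/4}$ cannot yield the full weight $1+X^2$ required in the statement.

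\textbf{The missing idea: a $\nu$-adapted weight.} The paper uses $\omega_\nu(X)=1+\nu^{3/2}X^2$, which is the physical weight $1+x^2$.
\begin{itemize}
\item It satisfies $|\omega_\nu'|\lesssim\nu^{3/4}\omega_\nu$ and $|(\omega_\nu^2)''|\lesssim\nu^{3/2}\omega_\nu^2$. Together with $\nu^{1/4}\|U^{\mathrm{app}}_{K,L}-\dot{\mf S}\|_{L^\infty}\lesssim1$, every weight commutator has time integral $\ml O(1)$ over $[\nu^{-1/2}t_{\mathrm{in}},1]$.
\item The compact physical support of $E_0$ turns pointwise residual bounds into $\omega_\nu$-weighted $H^3$ bounds.
\item The only large Gr\"onwall exponent is $(2j+1)\int\|\pa_XU^{\mathrm{app}}_{K,L}\|_{L^\infty_X}\,\md T\le(2j+1)(\tfrac12|\log\nu|+C_{K,L})$. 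This costs only the fixed power $\nu^{-(2j+1)/2}$.
\item The nonlinearity is handled by a bootstrap on $\|V\|_{L^\infty}\le\nu^{1/4}$.
\item At the end, $1+X^2\le\nu^{-3/2}\omega_\nu$ recovers the stated weight.
\end{itemize}
The paper runs this in one sweep from $T=\nu^{-1/2}t_{\mathrm{in}}$, where $V=0$, up to $T=1$, so no separate treatment of $T=0$ is needed.

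Alternatively, your localization idea can be made rigorous. You would prove, for instance via Cole--Hopf, that $u^{(\nu)}-c$ and its derivatives are exponentially small in $1/\nu$ for $|x|\ge C'$. Combined with the unweighted Proposition~\ref{prop0.1}, this gives the weighted bound at $T=0$ at a polynomial cost of about $\nu^{-3/2}$, plus the rescaling factor. That far-field estimate, however, is exactly what Step 1 currently leaves out.
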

\subsection{Approximation of $U^{(\nu)}$ in shock development}
In the shock-development regime we use the following three zones:
\begin{equation}\label{6.7}
	\begin{aligned}
	O_{\mathrm{shock}}&=\{(T,X):\nu^{-\beta_2}< T\leq T_0,\ |X|T^{\frac12-\beta_1}< 2\},\\
	O_{\mathrm{in}}&=\{(T,X):0\leq T< 2\nu^{-\beta_2},\ |X|< 2\nu^{-\beta_3}\},\\
	O_{\mathrm{out}}&=\{(T,X):0\leq T< 2\nu^{-\beta_2},\ |X|>\nu^{-\beta_3}\}\\
	&\qquad\cup\{(T,X):\nu^{-\beta_2}< T\leq T_0,\ |X|T^{\frac12-\beta_1}> 1\},
	\end{aligned}
\end{equation}
where $\beta_1,\beta_2,\beta_3\in (0,\frac1{100}).$ Let $\varsigma_o+\varsigma_i+\varsigma_s=1$ be the associated partition of unity, even in $X$.
For small $\nu$, choose $\varsigma_s(1,\cdot)=0$ and $\varsigma_i(1,\cdot)$
to agree with the rescaled cutoff defining $U_{K,L}^{\mathrm{app}}$,
with $\varsigma_i(1,\cdot)=1$ on $\operatorname{supp}\rho$. Set
\begin{equation*}
	\begin{aligned}
	U^{\mathrm{app}}_{J,K,L}=\varsigma_oU_{[K]}^{\mathrm{out}}+\varsigma_sU_{[J]}^{\mathrm{shock}}+\varsigma_iU_{[L],0}^{\mathrm{in}},
	\end{aligned}
\end{equation*}
where
\begin{equation*}
	\begin{aligned}
	U_{[L],0}^{\mathrm{in}}:=\sum\limits_{\ell=0}^L\nu^{\frac{\ell}4}U_{\ell,0}^{\mathrm{in}}.
	\end{aligned}
\end{equation*}
Set $W=U^{(\nu)}-U^{\mathrm{app}}_{J,K,L}$. Then $W$ satisfies
\begin{equation}\label{6.8}
	\begin{aligned}
		\pa_TW-\dot{\mf{S}}\pa_XW+\pa_X(U^{\mathrm{app}}_{J,K,L}W)+W\pa_XW-\pa_X^2W=E_1,
	\end{aligned}
\end{equation}
where
\begin{equation*}
	\begin{aligned}
	E_1=-[\pa_TU^{\mathrm{app}}_{J,K,L}-\dot{\mf{S}}\pa_XU^{\mathrm{app}}_{J,K,L}+U^{\mathrm{app}}_{J,K,L}\pa_XU^{\mathrm{app}}_{J,K,L}-\pa_X^2U^{\mathrm{app}}_{J,K,L}].
	\end{aligned}
\end{equation*}
In what follows, we assume $\beta=\frac12-\frac23 \beta_3$, where $\beta$ is chosen in Proposition~\ref{prop6.1}.
\begin{lemma} For any $M>1$ and sufficiently large $J,K,L$, there exists $\ol{\mu}(\nu)\in \R$  such that
	\begin{equation}\label{6.10}
		\begin{aligned}
			\left\|(1+X^2)\bigl[W(1,X)-\ol{\mu}(\nu)\rho(X)\bigr]\right\|_{H^3_X(\R)}\lesssim \nu^{M},
		\end{aligned}
	\end{equation}
where $\rho\in C_c^{\infty}(\R)$ is chosen in Proposition~{\ref{thm:main}}, and $|\ol{\mu}(\nu)|\lesssim1$ uniformly as $\nu\to0$.
\end{lemma}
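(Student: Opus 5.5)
We will split $W(1,\cdot)$ by the triangle inequality through the pre-formation approximation $U_{K,L}^{\mathrm{app}}$ evaluated at $T=1$:
\[
W(1,X)=\bigl(U^{(\nu)}-U_{K,L}^{\mathrm{app}}\bigr)(1,X)+\bigl(U_{K,L}^{\mathrm{app}}-U^{\mathrm{app}}_{J,K,L}\bigr)(1,X).
\]
The first term is handled directly by Proposition~\ref{prop6.1}. With $\beta=\frac12-\frac23\beta_3\in(\frac1{2+\vep},\frac12)$ (admissible since $\beta_3<\frac1{100}$), the weighted $H^3_X$ norm of $(1+X^2)(U^{(\nu)}-U_{K,L}^{\mathrm{app}})(1,\cdot)$ is bounded by $\nu^{\min\{(1-2\beta)K,L\beta/3\}-C_1}$. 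Choosing $K,L$ large makes this at most $\nu^{M}$.

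For the second term we use the choice of partition of unity at $T=1$. There $\varsigma_s(1,\cdot)=0$, and $\varsigma_i(1,\cdot)$ coincides with the rescaled cutoff $\xi$ defining $U_{K,L}^{\mathrm{app}}$, so $\varsigma_o(1,\cdot)=1-\xi$. The outer parts cancel identically, leaving
\[
\bigl(U_{K,L}^{\mathrm{app}}-U^{\mathrm{app}}_{J,K,L}\bigr)(1,X)=\varsigma_i(1,X)\sum_{\ell=0}^L\nu^{\frac\ell4}\bigl(U_\ell^{\mathrm{in}}-U_{\ell,0}^{\mathrm{in}}\bigr)(1,X).
\]
We expect a minor bookkeeping point here: the rescaled inner sum $u^{\mathrm{in}}_{[L]}$ of \cite{CG} must be identified with $\sum_\ell \nu^{\ell/4}U^{\mathrm{in}}_\ell$ at $T=1$. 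This holds by the definition \eqref{0.17}, since the shift $\mf s$ has been removed by working in the $\x$ coordinates. By Proposition~\ref{prop:UellH} (and Proposition~\ref{thm:main} for $\ell=0$), $U_{\ell}^{\mathrm{in}}(1,\cdot)-U_{\ell,0}^{\mathrm{in}}(1,\cdot)=\mu_\ell\rho$. Since $\varsigma_i(1,\cdot)=1$ on $\operatorname{supp}\rho$, the second term equals exactly $\ol\mu(\nu)\rho$ with
\[
\ol\mu(\nu):=\sum_{\ell=0}^L\nu^{\frac\ell4}\mu_\ell .
\]
This is the only place where the exact, rather than approximate, form of the initial correction matters. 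Note that the $J$-dependence of $U_{\ell,0}^{\mathrm{in}}$ only enters through $\mu_\ell$; this is harmless, but the bound on $\mu_\ell$ must be uniform in $J$. That uniformity is asserted in Proposition~\ref{prop:UellH}.

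It remains to check $|\ol\mu(\nu)|\lesssim1$. From $|\mu_\ell|\lesssim\nu^{-\beta_2\ell/2}$ we get $|\nu^{\ell/4}\mu_\ell|\lesssim\nu^{\ell(\frac14-\frac{\beta_2}2)}\le1$, because $\beta_2<\frac1{100}$. Hence $\ol\mu(\nu)=\mu_0+o(1)$ with $\mu_0=\ml O(1)$ by \eqref{eq:mu0}.

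The main obstacle is confirming that the cancellation really is exact. Any mismatch between $\varsigma_i(1,\cdot)$ and $\xi$, or between the two inner sums, would have to be estimated instead. If such a mismatch arises, we would bound it on the region where both cutoffs are nonconstant, which is the matching zone $|X|\sim\nu^{-\beta_3}$. There we would combine Lemma~\ref{lem:52} with Proposition~\ref{prop4.3} to show that inner and outer expansions agree to $\mf D^{-cK}\lesssim\nu^{c\beta_3K/3}$, with the weight $(1+X^2)$ costing only a fixed power of $\nu^{-\beta_3}$. This is absorbed by taking $K,L$ larger.
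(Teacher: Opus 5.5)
Your proposal is correct and matches the paper's proof: the same splitting of $W(1,\cdot)$ through $U_{K,L}^{\mathrm{app}}$, Proposition~\ref{prop6.1} (with $\beta=\frac12-\frac23\beta_3$) for the first piece, and the exact identity $U_{K,L}^{\mathrm{app}}(1,\cdot)-U^{\mathrm{app}}_{J,K,L}(1,\cdot)=\ol\mu(\nu)\rho$ with $\ol\mu(\nu)=\sum_{\ell=0}^L\nu^{\ell/4}\mu_\ell$ for the second, which follows from the cutoff choice at $T=1$ and Proposition~\ref{prop:UellH}, so your fallback paragraph is never needed. One small correction: the bound on $\mu_\ell$ need not be uniform in $J$, and Proposition~\ref{prop:UellH} does not claim this, because $J,K,L$ are fixed before the $\nu$-uniform estimates are asserted.
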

\begin{proof}
Writing
\begin{equation}\label{6.11}
	\begin{aligned}
	W&=U^{(\nu)}-U^{\mathrm{app}}_{J,K,L}\\
	&=U^{(\nu)}-U_{K,L}^{\mathrm{app}}+U_{K,L}^{\mathrm{app}}-U^{\mathrm{app}}_{J,K,L},
	\end{aligned}
\end{equation}
choose $\ol\mu(\nu)=\sum_{\ell=0}^L\nu^{\ell/4}\mu_\ell$, where $\mu_\ell$
is the initial correction from Section~6. The cutoff choice and
Proposition~\ref{prop:UellH}, together with \eqref{eq:init0}, give
\[
U_{K,L}^{\mathrm{app}}(1,X)-U^{\mathrm{app}}_{J,K,L}(1,X)
=\ol\mu(\nu)\rho(X),\qquad
|\ol\mu(\nu)|\lesssim\sum_{\ell=0}^L\nu^{\ell(\frac14-\frac{\beta_2}{2})}\lesssim1.
\]
Thus \eqref{6.10} follows from \eqref{6.11} and Proposition~\ref{prop6.1}.
\end{proof}

\begin{lemma}
	For any $M>0$, there exist sufficiently large $J,K,L$ such that for $T\in [1,T_0]$,
		\begin{equation}\label{6.13}
		\begin{aligned}
			|\pa_T^{\leq 4}\pa_X^{\leq 4}E_1|\lesssim \nu^{M}(T+X^2)^{-M}.
		\end{aligned}
	\end{equation}
\end{lemma}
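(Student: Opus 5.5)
The proof is a residual computation for the partition of unity. Each of the three building blocks nearly solves \eqref{3.7} on its own zone, and the cutoff commutators are controlled by the matching estimates of Sections~4--5.

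\textbf{Expansion.} Writing $\mathcal{N}[U]=\pa_TU-\dot{\mf S}\pa_XU+U\pa_XU-\pa_X^2U$ and using $\varsigma_o+\varsigma_s+\varsigma_i=1$, we expand
\[
-E_1=\varsigma_o\mathcal N[U_{[K]}^{\mathrm{out}}]+\varsigma_s\mathcal N[U_{[J]}^{\mathrm{shock}}]+\varsigma_i\mathcal N[U_{[L],0}^{\mathrm{in}}]+\mathcal C,
\]
where $\mathcal C$ gathers two kinds of terms. The first kind are the cutoff terms $\pa\varsigma_\ast\cdot(\text{difference of two expansions})$. The second kind are quadratic cross terms $\varsigma_\ast\varsigma_\#\,\pa_X(A_\ast-A_\#)^2$, as in \eqref{eq:M5}. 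All of them are supported in the pairwise overlaps of the zones \eqref{6.7}.

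\textbf{Self-residuals.} These are handled zone by zone.
\begin{itemize}
\item Outer zone. Proposition~\ref{prop3.1} telescopes $\mathcal N[U_{[K]}^{\mathrm{out}}]$ to $-\nu^{K+1}$-type tails. By Proposition~\ref{prop3.2} these are bounded by $\nu^{\frac{K+1}{4}}\mf D^{-4K-5}$ in rescaled form. On $\operatorname{supp}\varsigma_o$ we have $\mf D\gtrsim \nu^{-\beta_3/3}$ or $\mf D\gtrsim T^{1/2}\ge\nu^{-\beta_2/2}$, so both a large power of $\nu$ and a large power of $(T+X^2)^{-1}$ come out.
\item Shock zone. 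The hierarchy \eqref{348} leaves terms of order $T^{-2J-2}$ times polynomial growth $(1+\sqrt T|X|)^{J}$. It also leaves the phase truncation error from Corollary~\ref{cor3.8}. On $O_{\mathrm{shock}}$ we have $T>\nu^{-\beta_2}$ and $\sqrt T|X|\lesssim T^{\beta_1}$, so these are $O(T^{-(2-\beta_1)J})$, which is small in both $\nu$ and $T+X^2\sim T$.
\item Inner zone. By \eqref{eq:UellH}, $\mathcal N[U_{[L],0}^{\mathrm{in}}]$ consists only of the $\ell$-tails $\nu^{\ell/4}$ with $\ell>L$, together with the $\dot{\mf S}$ remainder from \eqref{4.19}. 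Using Proposition~\ref{thm:main}, these are $O(\nu^{(L+1)(\frac14-\frac{\beta_2}2)}\,\mathrm{poly}(T,X))$ on $T\le2\nu^{-\beta_2}$, $|X|\le 2\nu^{-\beta_3}$. The polynomial growth is at most $\nu^{-C(\beta_2+\beta_3)}$, so for large $L$ this is $\nu^{M}(T+X^2)^{-M}$, because $T+X^2\lesssim\nu^{-2\beta_2-2\beta_3}$ there.
\end{itemize}

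\textbf{Matching terms.} On each overlap we bound the difference of the two expansions involved.
\begin{itemize}
\item Outer/shock overlap ($T>\nu^{-\beta_2}$, $|X|\sim T^{-\frac12+\beta_1}$). Proposition~\ref{prop3.9} gives the bound $T^{1/2}\bigl(e^{-\mu T^{\beta_1}}+T^{-(1-\beta_1)\min(J,K)}\bigr)$.
\item Inner/outer overlap. We use $U_{[L],0}^{\mathrm{in}}=\sum_\ell\nu^{\ell/4}(P_{J,K,\ell,0}+U^{\mathrm{SOI}}_{[J],[K],\ell})$. On this overlap $|X|\sim\nu^{-\beta_3}$, so $\zeta=0$ when $T\ge1$ and we compare with $U^{\mathrm{OI}}$. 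Proposition~\ref{prop4.3} then gives $\nu^{\frac{L+1}{4}}\mf D^{L+2-4k}$. The $P$-terms are bounded by \eqref{eq:Pellh} as $(T+X^2)^{-\psi}$ with $\psi$ large.
\item Inner/shock overlap ($T\sim\nu^{-\beta_2}$). We compare $U^{\mathrm{SOI}}$ with $U^{\mathrm{shock}}$ via Proposition~\ref{prop:SI-reconstruction} on $\zeta=1$, and via Propositions~\ref{prop4.9} and \ref{prop4.3} on the transition.
\end{itemize}
The cutoff derivatives cost at most $\nu^{-C\beta_i}$ per derivative.

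\textbf{Main obstacle.} The hardest part is the bookkeeping of exponents. Every error must carry \emph{simultaneously} a factor $\nu^{M}$ and decay $(T+X^2)^{-M}$.

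In the outer and shock zones, $\nu$-smallness comes from $T\ge\nu^{-\beta_2}$ or $|X|\ge\nu^{-\beta_3}$, which gives $(T+X^2)^{-2M}\lesssim\nu^{M'}(T+X^2)^{-M}$. In the shock zone $(T+X^2)$ is only comparable to $T\le T_0=\ol\vep\nu^{-1/2}$. We therefore trade decay and smallness using $T\ge\nu^{-\beta_2}$ and choose $J$ with $(1-\beta_1)J\gg M/\beta_2$.

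In the inner zone, spatial decay at $|X|\sim\nu^{-\beta_3}$ must be extracted from the $\nu^{(L+1)/4}$ gain. This forces $L(\frac14-\frac{\beta_2}2)$ to dominate $2M(\beta_2+\beta_3)$ plus the polynomial loss exponents $a_{m,n}$. We fix the order of choices as $M$, then $\beta_i$ (already fixed), then $J,K$ large (also making $\psi_\ell$ large), then $L$.

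Derivatives up to order $4$ only introduce fixed powers, as in Propositions~\ref{prop:SI-estimates} and \ref{prop4.2}, and are absorbed in the same way.
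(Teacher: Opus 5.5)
Your decomposition is the paper's: the self-residuals come from the hierarchies \eqref{3.3}, \eqref{348}, \eqref{eq:UellH}, and the cutoff and cross terms live on the pairwise overlaps. There they are controlled by Propositions~\ref{prop3.9}, \ref{prop4.3}, \ref{prop:SI-reconstruction}, \ref{prop4.9} and by the bounds \eqref{eq:Pellh} from Proposition~\ref{thm:main}.

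\textbf{The gap: your order of parameter choices.} You choose ``$J,K$ large (also making $\psi_\ell$ large), then $L$''. Proposition~\ref{thm:main} is stated with $L$ fixed first. The corrections $P_{J,K,\ell,0}$, $\ell\le L$, satisfy \eqref{eq:Pellh} only for ``all sufficiently large $J,K$'' depending on $L$, since each induction step applies Proposition~\ref{prop:linenergy} with $N=\psi_\ell(J,K)>\max\{2,b_{m,n}\}$. If $J,K$ are fixed before $L$, enlarging $L$ afterwards leaves the new $P_{J,K,\ell,0}$ without the bounds you need. You need them in the inner self-residual and on every overlap, where $\psi_\ell(J,K)-a^{(\ell)}_{m,n}$ must be large for every $\ell\le L$. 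Your reason for taking $L$ last, that $L(\frac14-\frac{\beta_2}{2})$ must beat the losses $a_{m,n}$, is circular, because $a^{(\ell)}_{m,n}$ itself depends on $\ell\le L$.

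\textbf{The fix.} Reverse the order, as the paper does: choose $L$ first, then $J,K$. This works because the inner and speed tails never need $L$ to depend on $J,K$. Set $\sigma=\max\{\beta_2/2,\beta_3/3\}<\frac14$.
\begin{itemize}
\item Once $\psi_\ell\ge a^{(\ell)}_{m,n}$, the $P$-terms are $O(\nu^{-\beta_2\ell/2})$.
\item On $\operatorname{supp}\zeta$ one has $\sqrt T|X|\le T^{\beta_1}$, so $|U^{\mathrm{SI}}_{j,\ell}|\lesssim\nu^{-\beta_2\ell/2}T^{\frac12-(2-\beta_1)j}$, with no $J$-dependent growth.
\item $|U^{\mathrm{OI}}_{k,\ell}|\lesssim\mathfrak D^{\ell+1}$, with $1\lesssim\mathfrak D\lesssim\nu^{-\sigma}$ on $O_{\mathrm{in}}\cap\{T\ge1\}$.
\end{itemize}
Hence the inner and speed tails are $\lesssim_{J,K}\nu^{(L+1)(\frac14-\sigma)-C}$ with $C$ independent of $J,K$; only the implicit constant depends on $J,K$.

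\textbf{Smaller slips, none fatal.}
\begin{itemize}
\item The rescaled outer tail is $\mathfrak D^{-4K-5}$, without a factor $\nu^{(K+1)/4}$. This still suffices, since $\mathfrak D$ is large on $\operatorname{supp}\varsigma_o$.
\item On the outer/shock overlap, $\pa_X\varsigma_s\sim T^{\frac12-\beta_1}$ can be as large as about $\nu^{-1/4}$, not $\nu^{-C\beta_i}$. This polynomial loss in $T$ is absorbed by the $T^{-(2-\beta_1)\min\{J,K\}}$ decay of the matching estimate.
\item $O_{\mathrm{in}}\cap O_{\mathrm{out}}$ also contains the strip $\nu^{-\beta_2}<T<2\nu^{-\beta_2}$, $T^{-\frac12+\beta_1}<|X|<\nu^{-\beta_3}$, not only $|X|\sim\nu^{-\beta_3}$. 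There $\zeta=0$ as well, and the same estimates apply.
\end{itemize}
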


\begin{proof}
	By Propositions~\ref{prop3.1}, \ref{shockeq} and \ref{prop:UellH}, the truncated expansions satisfy
\begin{equation*}
	\begin{aligned}
	&	\pa_TU_{[K]}^{\mathrm{out}}-\dot{\mf{S}}\pa_XU_{[K]}^{\mathrm{out}}
		+U_{[K]}^{\mathrm{out}}\pa_XU_{[K]}^{\mathrm{out}}-\pa_X^2U_{[K]}^{\mathrm{out}}=\frac 12\pa_X\sum\limits_{\substack{k_1+k_2>K\\k_1,k_2\leq K}}U_{k_1}^{\mathrm{out}}U_{k_2}^{\mathrm{out}}-\pa_X^2U_{K}^{\mathrm{out}},\\
	&\pa_TU_{[J]}^{\mathrm{shock}}-\dot{\mf{S}}\pa_XU_{[J]}^{\mathrm{shock}}+U_{[J]}^{\mathrm{shock}}\pa_XU_{[J]}^{\mathrm{shock}}-\pa_X^2U_{[J]}^{\mathrm{shock}}\\
	&\quad =\pa_TU_{J}^{\mathrm{shock}}+\frac12\pa_X\sum_{\substack{j_1+j_2>J\\j_1,j_2\leq J}}U^{\mathrm{shock}}_{j_1}U^{\mathrm{shock}}_{j_2}+\sum_{j_1+j_2=J-1}\pa_T\Big(\tfrac{\Theta-\wh{\Theta}^{\mathrm{S}}_{[j_1]}}{T^{\frac12}}\Big)\pa_XU_{j_2}^{\mathrm{shock}},
		\end{aligned}
	\end{equation*}
	and
\begin{equation*}
	\begin{aligned}	
&\pa_TU_{[L],0}^{\mathrm{in}}-\dot{\mf S}\pa_XU_{[L],0}^{\mathrm{in}}
+U_{[L],0}^{\mathrm{in}}\pa_XU_{[L],0}^{\mathrm{in}}-\pa_X^2U_{[L],0}^{\mathrm{in}}\\
&\quad =\frac12\pa_X\sum_{\substack{\ell_1+\ell_2>L\\\ell_1,\ell_2\le L}}
\nu^{\frac{\ell_1+\ell_2}{4}}U_{\ell_1,0}^{\mathrm{in}}U_{\ell_2,0}^{\mathrm{in}}-\sum_{\ell_1+\ell_2=L}\nu^{\frac{\ell_2}{4}}
\bigl(\dot{\mf S}-T^{\frac12}\mf S_{[\ell_1]}\bigr)\pa_XU_{\ell_2,0}^{\mathrm{in}}.
\end{aligned}
\end{equation*}
Notice the definition of $\wh{\Theta}^{\mathrm{S}}_{[j_1]}$ in \eqref{3.61}.
The terms involving derivatives of the partition are controlled by the
matching estimates in Propositions~\ref{prop3.9}, \ref{prop4.3},
\ref{prop:SI-reconstruction} and \ref{prop4.9}.
On these overlaps, $T+X^2\gtrsim\nu^{-\min\{\beta_2,2\beta_3\}}$, so
Proposition~\ref{thm:main} makes the correction terms arbitrarily small.
Choose $L$ large for the inner and speed truncation tails, and then $J,K$
large for the outer and shock tails and the index-function decay.
Together with Propositions~\ref{prop3.8}, \ref{prop4.2},
\ref{prop:SI-estimates} and Lemma~\ref{lem5.8}, these bounds give
\eqref{6.13}, after absorbing the fixed derivative and weight losses.

\end{proof}
\begin{prop}\label{prop6.4}
There exists a constant $\delta>0$ such that, for all  $\beta_1,\beta_2,\beta_3\in(0,\frac1{100})$, we have for sufficiently large $J,K,L$ and all $T\in[1,T_0]$, 
	\begin{equation}\label{6.18}
		\begin{aligned}
			|W(T,X)|\lesssim 1+\sqrt{T}e^{-\delta T^{\frac12}|X|}.
		\end{aligned}
	\end{equation}
\end{prop}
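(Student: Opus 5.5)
We follow the decomposition announced in Section~2: write $W=W_1+W_2+W_3$ on $[1,T_0]$. Here $W_2$ carries the initial mass $\ol\mu(\nu)\rho$ from \eqref{6.10}. $W_1$ carries the mass generated by the residual $E_1$. $W_3$ is a zero-mass remainder. Concretely, set $\lambda(T)=\int_1^T\int_\R E_1\,\md X\,\md T'$; by \eqref{6.13}, $|\lambda|+|\pa_T\lambda|\lesssim\nu^M$ uniformly on $[1,T_0]$.

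We let $W_2$ solve the linearization of \eqref{6.8} about $U^{\mathrm{app}}_{J,K,L}$,
\[
\pa_TW_2-\dot{\mf S}\pa_XW_2+\pa_X(U^{\mathrm{app}}_{J,K,L}W_2)-\pa_X^2W_2=0,\qquad W_2(1)=\ol\mu(\nu)\rho .
\]
We treat $W_1$ the same way, with data $\lambda(T)\rho$ inserted through a source, so that the leftover source has zero mean. The key observation is that on $O_{\mathrm{shock}}$ the coefficient $U^{\mathrm{app}}_{J,K,L}-\dot{\mf S}$ is, to leading order, $-abT^{1/2}\tanh(\frac{ab}2(T^{1/2}X+\Theta))$, a compressive shock speed. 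For the mass-carrying parts, the natural approach is therefore comparison with an explicit profile rather than energy. The function $\pa_XU_0^{\mathrm{shock}}=-\frac{(ab)^2}{2}T\operatorname{sech}^2(\cdots)$, which has mass $\sim\sqrt T$, is an approximate solution of the linearized equation with conserved mass. So we look for supersolutions of the form
\[
\Phi_\pm(T,X)=C\bigl(T^{1/2}e^{-\delta\sqrt T|X+T^{-1/2}\Theta|}+\vph^{-1}\bigr).
\]
We verify $\mathcal L\Phi_\pm\ge0$ using the sign of $-\pa_X\ml V$ (Lemma~\ref{lem:damping}) and the transport toward the shock, which dominates $\sqrt T\,\delta$ times the exponential once $\delta<ab/2$. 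The outer coefficient has $-\pa_X\ml V\sim\mf D^{-2}$, which is also compressive. The parabolic maximum principle for the linear equation then gives $|W_1|+|W_2|\lesssim 1+\sqrt Te^{-\delta\sqrt T|X|}$; the $\Theta$ shift costs only a constant since $|\Theta|\lesssim1$. For $T\le\nu^{-\beta_2}$ the same bound follows with $\sqrt T\lesssim\nu^{-\beta_2/2}$ absorbed by the explicit $\mathcal O(1)$ mass.

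For $W_3$, the equation is \eqref{6.8} minus the two linear pieces. It contains the quadratic term $W\pa_XW$ and cross terms $\pa_X(W_{1,2}W_3)$, $\pa_X(W_1W_2)$, with a zero-mean source and zero-mean data of size $\nu^M$ by \eqref{6.10} and \eqref{6.13}. We run the argument of Proposition~\ref{prop:energy}, extended to $[1,T_0]$: pass to the Cole--Hopf primitive, use the weighted energy with weight $\vph^{2r}$ and the damping of Lemma~\ref{lem:damping}, and close by bootstrap on $\|W_3\|_{L^\infty}\le\nu^{M/2}$. The dangerous quadratic source $\pa_X(W_2^2)$ is the main obstacle. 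It is not small, being of size $T$ in an $O(T^{-1/2})$ layer, so it cannot be put into the forcing.

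To overcome this, we instead include $W_2$ (and $W_1$) in the nonlinear equation itself. That is, $W_{12}:=W_1+W_2$ solves the full nonlinear equation \eqref{6.8} with mass data, and only $W_3$ is linearized about $U^{\mathrm{app}}+W_{12}$. For $W_{12}$, Cole--Hopf makes the problem linear exactly: with $Z=e^{-\frac12\int^X W_{12}}$, a positive solution of a linear parabolic equation, the comparison principle applies to $Z$. Bounded mass gives $Z\in[e^{-C},e^{C}]$, and a supersolution argument on $\pa_XZ$ as above yields the exponential profile. The shifted coefficient $U^{\mathrm{app}}+W_{12}$ is still a viscous shock of the same strength up to a bounded phase shift, since mass only shifts the layer. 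Hence the damping of Lemma~\ref{lem:damping} persists and the $W_3$ energy estimate closes with $|W_3|\lesssim\nu^{M/2}(T+X^2)^{-1}$ by weighted Sobolev. Summing the three pieces gives \eqref{6.18}.
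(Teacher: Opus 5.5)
There is a genuine gap: the comparison step fails, and it is the step everything else depends on. Your linearized $W_2$ and the quantity $\pa_XZ$ from your Cole--Hopf variable both satisfy the \emph{conservative} equation $\mathcal LP:=\pa_TP+\pa_X(bP)-\pa_X^2P=\dots$ with $b=U^{\mathrm{app}}_{J,K,L}-\dot{\mf S}$. So $\mathcal L\Phi=\pa_T\Phi+b\pa_X\Phi+(\pa_Xb)\Phi-\pa_X^2\Phi$, and the compressive slope enters through $(\pa_Xb)\Phi$ with the \emph{unfavourable} sign. Take $c=ab\sqrt T$, $b\approx-c\tanh(c\wh X/2)$ and $\Phi_1=\sqrt Te^{-\delta\sqrt T|\wh X|}$. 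On $0<\sqrt T|\wh X|\ll1$ you get
\[
\mathcal L\Phi_1\approx\Bigl(\delta c\tanh\tfrac{c|\wh X|}{2}-\tfrac{c^2}{2}\operatorname{sech}^2\tfrac{c\wh X}{2}-\delta^2T\Bigr)\Phi_1\approx-\Bigl(\tfrac{(ab)^2}{2}+\delta^2\Bigr)T\,\Phi_1<0 .
\]
Likewise $\mathcal L\vph^{-1}\approx(\pa_Xb)\vph^{-1}\approx-\tfrac{(ab)^2}{2}$ there. Transport toward the shock helps only in the tails $\sqrt T|\wh X|\gg1$, and the corner at $\wh X=0$ does not rescue the rest of the core.

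Lemma~\ref{lem:damping} controls $-\pa_X(\ml V\vph^{2r})$, i.e.\ the transport form that appears for primitives after integrating by parts. In conservative form, compression amplifies rather than damps; this is exactly the $\sqrt T$ concentration discussed after \eqref{0.44}. Indeed, for the frozen tanh operator the only nonnegative steady supersolutions with decaying tails are multiples of $c\operatorname{sech}^2(c\wh X/2)$. A valid barrier would therefore have to contain this exact core, plus correctors for the time dependence and the $O(T^{-1})$ slope errors, and your proposal provides none of this. So neither your bound on $W_1,W_2$ nor the ``supersolution argument on $\pa_XZ$'' is established. The favourable sign is available only for $Z$ itself, which has no zeroth-order term.

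Your fallback is also essentially circular. A $W_{12}$ solving the full equation \eqref{6.8} with the mass data solves the same problem as $W$, up to $O(\nu^M)$ changes in data and forcing. Your treatment of $W_3$ then assumes that $U^{\mathrm{app}}_{J,K,L}+W_{12}$ is a viscous shock up to a bounded phase shift. That asymptotic-stability statement is what has to be proved. A pointwise bound on $W_{12}$ gives no sign for $\pa_XW_{12}$, which may be of size $T$ in the core, so Lemma~\ref{lem:damping} does not transfer.

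The missing idea is the paper's explicit choice $W_2=\vph_r(ab\sqrt T,\wh X)$ from Lemma~\ref{lem6.5}.
\begin{itemize}
\item $F+\vph_r$ is again an exact, shifted tanh profile.
\item $\int\vph_r=2\ln r$ is independent of the width, so $W_2$ has constant mass and $r$ is fixed by the initial mass.
\item $|\vph_r|\lesssim\sqrt Te^{-\delta\sqrt T|\wh X|}$ holds explicitly, which produces the second term of \eqref{6.18} without any barrier.
\end{itemize}
The zero-mass remainder $W_3$ is then handled through its primitive, using the weighted functional $\cosh(\kappa\ml W)-1$ with damping $\Gamma=-\pa_X(F+W_2)>0$, followed by $L^2$ and $H^1$ bounds and Sobolev. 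This $W_3$ is only $O(1)$, not $O(\nu^{M/2})$. Its data $W(1)-W_1(1)-\vph_r(ab,\wh X)$ has the right mass but the wrong shape, and the forcing $G_2$, which contains $-\pa_TW_2$, is not small. That is where the constant term in \eqref{6.18} comes from.
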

The proof uses the following explicit shock-profile correction.
\begin{lemma}\label{lem6.5}
	Let $F(y)=-\mathsf{c}\tanh\frac{\mathsf{c}y}{2}$ for a constant $\mathsf{c}>0$. For $r>0$, define
		\begin{equation*}
		\begin{aligned}
			\vph_r(\mathsf{c},y)=\frac{2\mathsf{c}(r-1)e^{\mathsf{c}y}}{(e^{\mathsf{c}y}+1)(e^{\mathsf{c}y}+r)}.
		\end{aligned}
	\end{equation*}
	Then we have
	\begin{equation*}
		\begin{aligned}
		\pa_y(F\vph_r)+\vph_r\pa_y\vph_r-\pa_y^2\vph_r=0.
		\end{aligned}
	\end{equation*}
Moreover, for all $r\in(0,+\infty)$ and $y\in\R$, we have
	\begin{equation*}
	\begin{aligned}
		\pa_y\vph_r+\pa_yF=-2\frac{\mathsf{c}^2re^{\mathsf{c}y}}{(e^{\mathsf{c}y}+r)^2},
	\end{aligned}
\end{equation*}
and
\begin{equation}\label{6.22}
	\begin{aligned}
	\int_{\R} \vph_r(\mathsf{c},y)\md y=2\ln r.
	\end{aligned}
\end{equation}
\end{lemma}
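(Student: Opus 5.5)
The plan is to reduce everything to the observation that $F+\vph_r$ is again a translated viscous shock profile with the same end states $\mp\mathsf{c}$. Write $e=e^{\mathsf{c}y}$ for brevity. The partial fraction identity $(r-1)e=r(e+1)-(e+r)$ gives
\[
\vph_r(\mathsf{c},y)=2\mathsf{c}\Bigl[\frac{r}{e^{\mathsf{c}y}+r}-\frac{1}{e^{\mathsf{c}y}+1}\Bigr].
\]
Also $F=-\mathsf{c}\frac{e-1}{e+1}=-\mathsf{c}+\frac{2\mathsf{c}}{e+1}$. Adding the two expressions,
\[
F+\vph_r=-\mathsf{c}+\frac{2\mathsf{c}r}{e^{\mathsf{c}y}+r}=-\mathsf{c}\tanh\Bigl(\frac{\mathsf{c}y-\ln r}{2}\Bigr).
\]
So $F+\vph_r$ is $F$ shifted by $\ln r/\mathsf{c}$ in $y$. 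This holds for every $r>0$, including $r<1$ where $\vph_r<0$.

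Next I would check that every function of the form $G=-\mathsf{c}\tanh(\mathsf{c}(y-y_0)/2)$ satisfies the steady viscous Burgers relation
\[
\pa_yG=\tfrac12G^2-\tfrac12\mathsf{c}^2.
\]
This is immediate from $\pa_y\tanh=\operatorname{sech}^2=1-\tanh^2$. Differentiating once gives $\pa_y(\tfrac12G^2)-\pa_y^2G=0$, and this identity holds for both $G=F$ and $G=F+\vph_r$. Subtracting the two versions, and using
\[
\tfrac12(F+\vph_r)^2-\tfrac12F^2=F\vph_r+\tfrac12\vph_r^2,
\]
yields $\pa_y(F\vph_r)+\vph_r\pa_y\vph_r-\pa_y^2\vph_r=0$, which is the first claim.

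The derivative identity follows by differentiating the closed form of $F+\vph_r$:
\[
\pa_y\Bigl(\frac{2\mathsf{c}r}{e^{\mathsf{c}y}+r}\Bigr)=-\frac{2\mathsf{c}^2re^{\mathsf{c}y}}{(e^{\mathsf{c}y}+r)^2}.
\]

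For \eqref{6.22} I would integrate the partial fraction form directly. For $s>0$, an antiderivative of $1/(e^{\mathsf{c}y}+s)$ is $\frac{1}{s}\bigl(y-\frac1{\mathsf{c}}\ln(e^{\mathsf{c}y}+s)\bigr)$. Hence an antiderivative of $\vph_r$ is
\[
2\ln\frac{e^{\mathsf{c}y}+1}{e^{\mathsf{c}y}+r}.
\]
This tends to $0$ as $y\to+\infty$ and to $2\ln(1/r)$ as $y\to-\infty$, so $\int_\R\vph_r\,\md y=2\ln r$. Equivalently, the mass of the difference of two tanh profiles equals the jump $2\mathsf{c}$ times the shift $\ln r/\mathsf{c}$.

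I expect no real obstacle. The only point needing care is recognizing the translated-profile structure, which removes all direct computation of second derivatives.
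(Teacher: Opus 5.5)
Your argument is correct and complete. It is organized differently from the paper, which only asserts that the identities follow by direct differentiation and integration. Done that way, one expands $\pa_y(F\vph_r)$, $\vph_r\pa_y\vph_r$ and $\pa_y^2\vph_r$ as rational functions of $e^{\mathsf{c}y}$ and checks the cancellation by hand.

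Your partial-fraction observation that $F+\vph_r=-\mathsf{c}\tanh\bigl(\tfrac{\mathsf{c}y-\ln r}{2}\bigr)$ is a translate of $F$ removes that computation entirely:
\begin{itemize}
\item the first identity becomes the difference of two copies of the profile equation $\pa_yG=\tfrac12G^2-\tfrac12\mathsf{c}^2$;
\item the second identity is the derivative of a shifted profile, namely $-\tfrac{\mathsf{c}^2}{2}\operatorname{sech}^2\bigl(\tfrac{\mathsf{c}y-\ln r}{2}\bigr)$;
\item the mass $2\ln r$ is the jump $2\mathsf{c}$ times the shift $\ln r/\mathsf{c}$.
\end{itemize}

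The brute-force check needs no idea, but it also explains nothing. Your version shows why the lemma does its job in the proof of Proposition~\ref{prop6.4}:
\begin{itemize}
\item $W_2$ is an exact perturbation that translates the viscous layer so that it carries the prescribed initial mass.
\item That mass is independent of $\mathsf{c}=abT^{1/2}$, which is exactly why $\int\pa_TW_2\,\md X=0$.
\item $\Gamma=-\pa_X(F+W_2)$ is the $\operatorname{sech}^2$ bump of the shifted layer, so its positivity and the lower bound $\Gamma\gtrsim Te^{-C|\sqrt T\wh X|}$ are immediate, with implicit constant depending only on the fixed $r$.
\end{itemize}
It also makes plain that nothing changes for $0<r<1$, where $\vph_r<0$ and the shift is to the left.
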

\begin{proof}
The identities follow by direct differentiation and integration.
\end{proof}

\begin{proof}[Proof of Proposition~\ref{prop6.4}]
	We first prove the estimate on $T\in[1,\nu^{-\beta_2}]$. We decompose
	$W=W_1+W_2+W_3$. First, we choose a function $W_1$ defined on $[1,T_0]\times\R$ such that
	$|\pa_T^m\pa_X^nW_1(T,X)|\lesssim\nu^M(T+X^2)^{-M}$ for $m,n\leq4$ and
	sufficiently large $M$, and
		\begin{equation}\label{6.23}
			\begin{aligned}
				\int \pa_TW_1=\int E_1.
			\end{aligned}
		\end{equation}
	Next, we choose
	$W_2(T,X)=\vph_r(abT^{\frac12},\wh X)$ and
	$F(T,X)=-abT^{\frac12}\tanh(\frac{abT^{\frac12}}{2}\wh X)$ for some $r$,
	such that
		\begin{equation*}
		\begin{aligned}
		\int W_2(1,X)\md X=\int W(1,X)\md X-\int W_1(1,X)\md X.
		\end{aligned}
	\end{equation*}
	Here $\wh X=X+T^{-\frac12}\Theta(T)$.
	From \eqref{6.22}, we obtain
		\begin{equation}\label{6.25}
		\begin{aligned}
			\int \pa_TW_2(T,X)\md X=0.
		\end{aligned}
	\end{equation}
Then $W_3$ satisfies
	\begin{equation}\label{6.26}
		\begin{aligned}
			\pa_TW_3+\pa_X(\ml{V}W_3)+W_3\pa_XW_3-\pa_X^2W_3=G_1+G_2,
		\end{aligned}
	\end{equation}
where 
	\begin{equation*}
		\begin{aligned}
		\ml{V}&=U^{\mathrm{app}}_{J,K,L}-\dot{\mf{S}}+W_1+W_2\\
		G_1&=E_1-\pa_TW_1-\pa_X(U^{\mathrm{app}}_{J,K,L}W_1)+\dot{\mf{S}}\pa_XW_1-\pa_X(W_1W_2)-W_1\pa_XW_1+\pa_X^2W_1\\
		G_2&=-\pa_TW_2-\pa_X(FW_2)-W_2\pa_XW_2+\pa_X^2W_2-\pa_X[(U^{\mathrm{app}}_{J,K,L}-F-\dot{\mf{S}})W_2]\\
		&=-\pa_TW_2-\pa_X[(U^{\mathrm{app}}_{J,K,L}-F-\dot{\mf{S}})W_2].\\
		\end{aligned}
	\end{equation*}
We note that 
 \begin{equation}\label{6.28}
 	\begin{aligned}
 		\dot{\mf{S}}&=\ml{O}(\nu^{\frac14}T)\\
 		\varsigma_iU_{[L],0}^{\mathrm{in}}&=\varsigma_iU_{0,0,0}^{\mathrm{SOI}}
+\ml{O}((T+X^2)^{\beta_1-\frac32})+\ml{O}(\mf D^{-3})\\
&\quad+\ml{O}(\nu^{\frac14}\mf D^2)+\ml{O}(\nu^{\frac14-\beta_2})\\
 		\varsigma_oU_{[K]}^{\mathrm{out}}&=\varsigma_oU_{0,0}^{\mathrm{OI}}+\ml{O}(\min\{\nu^{\frac14}\mf{D}^2,\mf{D}\})+\ml{O}(\mf{D}^{-3})\\
 		\varsigma_sU_{[J]}^{\mathrm{shock}}&=\varsigma_sU_{0,0}^{\mathrm{SI}}+\ml{O}(\nu^{\frac14-\beta_2})+\ml{O}((T+X^2)^{\beta_1-\frac32}),
 	\end{aligned}
 \end{equation}
 and
 \begin{equation}\label{6.29}
 	\begin{aligned}
 		\pa_X[\varsigma_i(U_{[L],0}^{\mathrm{in}}-U_{0,0,0}^{\mathrm{SOI}})]
&=\ml{O}((T+X^2)^{-1})+\ml{O}(\mf D^{-6})\\
&\quad+\ml{O}(\nu^{\frac14}\mf D^{-1})+\ml{O}(\nu^{\frac14-\frac32\beta_2})\\
 		\pa_X[\varsigma_o(U_{[K]}^{\mathrm{out}}-U_{0,0}^{\mathrm{OI}})]&=\ml{O}(\nu^{\frac14}\mf{D}^{-1})+\ml{O}(\mf{D}^{-6})\\
 		\pa_X[\varsigma_s(U_{[J]}^{\mathrm{shock}}-U_{0,0}^{\mathrm{SI}})]&=\ml{O}(\nu^{\frac14-\frac32\beta_2})+\ml{O}((T+X^2)^{-1}).
 	\end{aligned}
 \end{equation}
The inner bounds follow from \eqref{eq:SOI}, Proposition~\ref{prop4.2},
Proposition~\ref{prop:SI-estimates} and Proposition~\ref{thm:main}.
On $\operatorname{supp}\varsigma_i$, $\nu^{1/4}\mf D\ll1$; the
outer-grid terms with $\ell\ge1$ sum to $\ml O(\nu^{1/4}\mf D^2)$,
and their $X$ derivatives to $\ml O(\nu^{1/4}\mf D^{-1})$. Introduce 
$$\wh{X}_a=\nu^{-\frac34}\arctan(\nu^{\frac34}\wh{X}).$$
 From \eqref{6.10}, \eqref{6.23}--\eqref{6.26}, \eqref{6.28}--\eqref{6.29}, \eqref{64} and \eqref{eq:SOI}, we obtain that, for $\beta_1,\beta_2\in (0,\frac1{100})$, there exists a constant $\delta>0$ such that
		\begin{equation}\label{6.30}
		\begin{aligned}
		\int_{\R}W_3(1,X)\,\md X&=0,\\
		\left\|W_3(1,\cdot)\right\|_{H^1_X(\R)}+\left\|(\pa_TW_3)(1,\cdot)\right\|_{L^2_X(\R)}&\lesssim1,\\
		\sum_{m=0}^2\left\|\int_{-\infty}^X(\pa_T^mW_3)(1,X')\,\md X'\right\|_{L^2_X(\R)}&\lesssim1,\\
		\int G_i(T,X)&=0,\quad \text{ for }i=1,2,\\
		|\pa_T^m\pa_X^nG_1|&\lesssim  \nu^M(T+X^2)^{-M},\quad \text{for } m,n \leq 2,\\
		|\pa_T^m\pa_X^nG_2|&\lesssim  T^{\frac{n-1}{2}-m}e^{-\delta |T^{\frac12}\wh{X}|},\quad \text{for } m,n \leq 2,\\
		|\pa_T^m\int_{-\infty}^X G_2(T,X')\md X'|&\lesssim T^{-1-m}e^{-\delta |T^{\frac12}\wh{X}|}\quad \text{for } m\leq 2.\\
		\end{aligned}
	\end{equation}
Let $G=G_1+G_2$,
$\ml W(T,X)=\int_{-\infty}^XW_3(T,X)\md X$ and
$\ml G_i=\int_{-\infty}^XG_i(T,X')\md X'$ for $i=1,2$ with
$\ml G=\ml G_1+\ml G_2$. Then $\ml W$ satisfies
		\begin{equation}\label{eqw}
		\begin{aligned}
		\pa_T\ml{W}+\ml{V}\pa_X\ml{W}+\frac12|\pa_X\ml{W}|^2-\pa_X^2\ml{W}=\ml{G}.
		\end{aligned}
	\end{equation}
Introduce $\Phi(z)=\cosh(\kappa z)-1\geq 0$, where $\kappa>0$ will be chosen later. For $s\in(0,1]$, we multiply \eqref{eqw} by $(T+\wh{X}_a^2)^{2s}\Phi'(\ml{W})$ to obtain
\begin{equation}\label{7.30}
\begin{aligned}
\frac{\md}{\md T}&\int (T+\wh X_a^2)^{2s}\Phi(\ml W)
+\int H_0\Phi(\ml W)\\
&+\int (T+\wh X_a^2)^{2s}
\left[\Phi''(\ml W)+\frac12\Phi'(\ml W)\right]|\pa_X\ml W|^2=\int (T+\wh X_a^2)^{2s}\ml G\Phi'(\ml W),
\end{aligned}
\end{equation}
where
\begin{equation*}
\begin{aligned}
H_0=-\pa_X\bigl[\ml V(T+\wh X_a^2)^{2s}\bigr]
-\pa_T(T+\wh X_a^2)^{2s}
-\pa_X^2(T+\wh X_a^2)^{2s}.
\end{aligned}
\end{equation*}
Fix $\kappa\ge1$ and $0<s\le\frac34$. Then
$\Phi''+\frac12\Phi'\ge\frac12\Phi''$,
$|\Phi'|\lesssim\Phi+\sqrt\Phi$, and $(\Phi')^2\le2\Phi\Phi''$.
Let $\Gamma=-\pa_X(F+W_2)$. By Lemma~\ref{lem6.5} and the proof of
\eqref{eq:G0lower}, with the bounded-time contribution included in
$CT^{-2}$, we have on $[1,2\nu^{-\beta_2}]$
\begin{equation}\label{6.5}
\begin{aligned}
H_0\ge c\Gamma(T+\wh X_a^2)^{2s}
-C\bigl(\nu^{\beta_2}+T^{-2}\bigr)(T+\wh X_a^2)^{2s}.
\end{aligned}
\end{equation}
By \eqref{6.30} and $\Gamma\gtrsim Te^{-C|\sqrt T\wh X|}$ from
Lemma~\ref{lem6.5}, splitting at
$|\sqrt T\wh X|=C^{-1}\log(2+T)$ gives, after decreasing $\delta>0$,
\begin{equation*}
\begin{aligned}
\|\ml G_1\|_{L^\infty_X}
+\|(T+\wh X_a^2)^s\ml G_1\|_{L^2_X}&\lesssim T^{-2},\\
|\ml G_2|&\le\vep\Gamma+C(\vep)T^{-1-\delta}.
\end{aligned}
\end{equation*}
Using $\Gamma\gtrsim T$ on $|\sqrt T\wh X|\le1$, apply the fundamental
theorem of calculus to $\sinh(\kappa\ml W/2)$. Then Cauchy--Schwarz,
\eqref{6.30} and Young's inequality give
\begin{equation*}
\begin{aligned}
&\left|\int(T+\wh X_a^2)^{2s}\ml G_2\Phi'(\ml W)\,\md X\right|\\
&\quad\le\vep\int(T+\wh X_a^2)^{2s}
\left[\Gamma\Phi(\ml W)+\Phi''(\ml W)|W_3|^2\right]\md X\\
&\qquad+C(\vep)T^{-1-\delta}\int(T+\wh X_a^2)^{2s}\Phi(\ml W)\,\md X
+C(\vep)T^{2s-7/2}.
\end{aligned}
\end{equation*}
Combining these bounds with \eqref{7.30}, choosing $\vep$ sufficiently
small and using $2s-7/2\le-2$, we obtain
\begin{equation*}
\begin{aligned}
\frac{\md}{\md T}&\int(T+\wh X_a^2)^{2s}\Phi(\ml W)\,\md X\\
&+c\int(T+\wh X_a^2)^{2s}
\left[\Gamma\Phi(\ml W)+\Phi''(\ml W)|W_3|^2\right]\md X\\
&\le C\bigl(\nu^{\beta_2}+T^{-1-\delta}+T^{-2}\bigr)
\int(T+\wh X_a^2)^{2s}\Phi(\ml W)\,\md X+CT^{-2}.
\end{aligned}
\end{equation*}
The coefficient on the right has a uniformly bounded time integral.
By \eqref{6.10} for the initial weighted bound (the extra weight on its small remainder costs at most $\nu^{-3/8}$), Gr\"onwall's inequality gives
\begin{equation}\label{7.38}
\begin{aligned}
&\sup_{1\le T\le\nu^{-\beta_2}}
\int(T+\wh X_a^2)^{2s}\Phi(\ml W(T,X))\,\md X\\
&\quad+\int_1^{2\nu^{-\beta_2}}\int(T+\wh X_a^2)^{2s}
\left[\Gamma\Phi(\ml W)+\Phi''(\ml W)|W_3|^2\right]\md X\,\md T
\lesssim1.
\end{aligned}
\end{equation}
Since $\Phi(z)\ge\kappa^2z^2/2$ and $\Phi''(z)\ge\kappa^2$,
\eqref{6.37} follows for $s\in[0,\frac34]$.
We now estimate $W_3$. From \eqref{6.26}, we have
\begin{equation}\label{6.43}
	\begin{aligned}
		\frac12\frac{\md}{\md T}\int |W_3|^2+\frac12\int\pa_X\ml{V}|W_3|^2+\int |\pa_XW_3|^2&=\int \pa_X\ml{G}W_3\\
		&\leq \left\|\ml{G}\right\|_{L^2}\left\|\pa_XW_3\right\|_{L^2}.
	\end{aligned}
\end{equation}
To bound the $\pa_X\ml V$ term, we notice that
\begin{equation*}
	\begin{aligned}
		\int|\pa_X\ml{V}||W_3|^2&\lesssim \int\Gamma|W_3|^2+T^{-1}\int |W_3|^2\md X.
	\end{aligned}
\end{equation*}
To treat $\int\Gamma|W_3|^2$, we decompose $\R=\bigcup\limits_{n\in \Z} (\frac{n-\frac12}{\sqrt{T}},\frac{n+\frac32}{\sqrt{T}}):=\bigcup\limits_{n\in \Z} I_n$ and introduce an associated partition of unity $1=\sum\limits_{n\in \Z}\chi_n$, whose cutoff functions have derivatives of order $\ml{O}(\sqrt{T})$ uniformly in $n$. Then we obtain for sufficiently small $\vep>0$,
\begin{equation*}
	\begin{aligned}
	\int\Gamma|W_3|^2&\lesssim \sum\limits_{n\in \Z}\Gamma(T,\frac{n}{\sqrt{T}})\int|\chi_nW_3|^2\\
	&\lesssim  \sum\limits_{n\in \Z}\Gamma(T,\frac{n}{\sqrt{T}})\left\|\pa_X(\chi_nW_3)\right\|_{L^2(I_n)}\left\|\ml{W}\right\|_{L^2(I_n)}\\
	&\lesssim  \sum\limits_{n\in \Z}\Gamma(T,\frac{n}{\sqrt{T}})\left\|\pa_XW_3\right\|_{L^2(I_n)}\left\|\ml{W}\right\|_{L^2(I_n)}\\
	&+\sum\limits_{n\in \Z}\Gamma(T,\frac{n}{\sqrt{T}})\sqrt{T}\left\|W_3\right\|_{L^2(I_n)}\left\|\ml{W}\right\|_{L^2(I_n)}\\
	&\lesssim \vep\left\|\pa_XW_3\right\|_{L^2(\R)}^2+ \vep \left\|\Gamma^{\frac12}W_3\right\|_{L^2}^2+ C(\vep)\left\|\Gamma^{\frac12}(T+\wh{X}_a^2)^{\frac12}\ml{W}\right\|_{L^2(\R)}^2.
	\end{aligned}
\end{equation*}
Hence
\begin{equation*}
	\begin{aligned}
		\int|\pa_X\ml{V}||W_3|^2
		&\lesssim \vep\left\|\pa_XW_3\right\|_{L^2(\R)}^2+C(\vep)\left\|\Gamma^{\frac12}(T+\wh{X}_a^2)^{\frac12}\ml{W}\right\|_{L^2}^2\\
		&+T^{-1}\int |\pa_X\ml{W}|^2\md X.\\
	\end{aligned}
\end{equation*}

Note that
\begin{equation}\label{7.41}
	\begin{aligned}
	\left\|\ml{W}\right\|_{L^{\infty}}\lesssim T^{-\frac s2}\left\|(T+\wh{X}_a^2)^{s}\ml{W}\right\|_{L^{2}}^{\frac12}\left\|{W}_3\right\|_{L^{2}}^{\frac12}.
	\end{aligned}
\end{equation}
From \eqref{7.38}--\eqref{7.41}, we obtain for $T\in [1,\nu^{-\beta_2}]$ and $s\in[0,\frac34]$,
\begin{equation}\label{6.37}
	\begin{aligned}
		\left\|(T+\wh{X}_a^2)^s\ml{W}(T,\cdot)\right\|_{L^2_X}^2&+\int_1^{T}\left[\left\|(T'+\wh{X}_a^2)^s\Gamma^{\frac12}\ml{W}\right\|_{L^2_X}^2+\left\|(T'+\wh{X}_a^2)^sW_3\right\|_{L^2_X}^2\right]\md T'\\
		&\lesssim 1;
	\end{aligned}
\end{equation}
\begin{equation}\label{6.45}
	\begin{aligned}
		\left\|W_3(T,\cdot)\right\|_{L^2_X}^2&+\int_1^{T}\left\|\pa_XW_3\right\|_{L^2_X}^2\md T'\lesssim 1.
	\end{aligned}
\end{equation}

Using \eqref{6.37} with $s=\frac34$ to control $(\pa_T\ml V)W_3$, we obtain the estimates for $\pa_T\ml W$ and $\pa_TW_3$ similarly, with $s\in[0,\frac12]$:
\begin{equation}\label{6.38}
	\begin{aligned}
		\left\|(T+\wh{X}_a^2)^s\pa_T\ml{W}(T,\cdot)\right\|_{L^2_X}^2&+\int_1^{T}[\left\|(T'+\wh{X}_a^2)^s\Gamma^{\frac12}\pa_T\ml{W}\right\|_{L^2_X}^2\\&+\left\|(T'+\wh{X}_a^2)^s\pa_X\pa_T\ml{W}\right\|_{L^2_X}^2]\md T'\lesssim 1,
	\end{aligned}
\end{equation}
and 
\begin{equation*}
	\begin{aligned}
		\left\|\pa_TW_3(T,\cdot)\right\|_{L^2_X}^2&+\int_1^{T}\left\|\pa_X\pa_TW_3\right\|_{L^2_X}^2\md T'\lesssim 1.
	\end{aligned}
\end{equation*}

By \eqref{6.30} and the preceding energy bounds, the Cauchy--Schwarz inequality gives
\begin{equation}\label{7.47}
\begin{aligned}
\|\pa_XW_3(T,\cdot)\|_{L^2_X}^2
&=\|\pa_XW_3(1,\cdot)\|_{L^2_X}^2\\
&\quad+2\int_1^T\int_{\R}\pa_XW_3(T',X)\pa_T\pa_XW_3(T',X)\,\md X\,\md T'\lesssim1.
\end{aligned}
\end{equation}
The one-dimensional Sobolev inequality and \eqref{6.45} therefore give
\begin{equation}\label{7.49}
\|W_3(T,\cdot)\|_{L^\infty_X}^2
\lesssim\|W_3(T,\cdot)\|_{L^2_X}\|\pa_XW_3(T,\cdot)\|_{L^2_X}\lesssim1.
\end{equation}
Combining the estimates for $W_1$, $W_2$ and $W_3$, we obtain the desired
result on $T\in[1,\nu^{-\beta_2}]$. 

The proof of the estimate for $T\in[\nu^{-\beta_2},T_0]$ follows in a similar fashion; here we give a sketch. Denote by $\vph_r^1$ the function given in Lemma~\ref{lem6.5} with $F(T,X)=-abT^{\frac12}\tanh(\frac{abT^{\frac12}}{2}\wh{X})$, and by $\vph_r^2$ the function given in Lemma~\ref{lem6.5} with $F(T,X)=U_0^{\mathrm{shock}}-\dot{\mf{S}}$.
By \eqref{3.19} and \eqref{3.21}, $\vph_r^2$ is well-defined. Introduce an even cutoff function $\xi\in C_c^{\infty}$ such that $\xi(z)=1$ for $|z|\leq 1$ and $\xi=0$ for $|z|\geq 2$, and set $W_2(T,X)=\xi(\nu^{\beta_2}T)\vph_r^1(ab\sqrt{T},\wh{X})+[1-\xi(\nu^{\beta_2}T)]\vph_r^2(\om(T)\sqrt{T},\wh{X})$. Introduce $W_3=W-W_1-W_2$. By Proposition~\ref{prop4.7}, the additional error produced by $\xi$ on $T\in [\nu^{-\beta_2},2\nu^{-\beta_2}]$ is negligible provided $\beta_2\in (0,\frac1{100})$; hence the estimate \eqref{7.49} for $W_3$ extends to $T\in [1,2\nu^{-\beta_2}]$. For $T\in[2\nu^{-\beta_2},T_0]$, we decompose the coefficient $\ml{V}$ in \eqref{6.26} as
\begin{equation*}
	\begin{aligned}
		\ml{V}&=\ml{V}_0+\ml{V}_e\\
		\ml{V}_0&=\varsigma_oU_0^{\mathrm{out}}+\varsigma_{s}U_0^{\mathrm{shock}}-\dot{\mf{S}}+W_2\\
		\ml{V}_e&=\ml{O}((T+X^2)^{\beta_1-\frac32})+\ml{O}(\mf{D}^{-3})\\
		\pa_X\ml{V}_e&=\ml{O}((T+X^2)^{-1})+\ml{O}(\mf{D}^{-6}).\\
	\end{aligned}
\end{equation*}
Estimating $W_3$ as in \eqref{6.26}, we obtain \eqref{7.30} with
\begin{equation*}
	\begin{aligned}
		H_0\gtrsim \frac14\Gamma (T+\wh{X}_a^2)^{2s}-C\nu^{\frac12}(T+\wh{X}_a^2)^{2s},
	\end{aligned}
\end{equation*}
where the negative term $-C\nu^{\frac12}(T+\wh{X}_a^2)^{2s}$ comes from the error involving $U_{[K]}^{\mathrm{out}}$ for $|X|\gtrsim \nu^{-\frac34}$. This extends \eqref{7.49} to $T\in [1,T_0]$. We omit the details.

\end{proof}
	
We are now in a position to prove the main theorem.
	
\begin{proof}[Proof of Theorem~\ref{thm}]
	By the maximum principle, it suffices to consider sufficiently small $\nu>0$.
	Set $T_0=\ol\vep\nu^{-1/2}$ and work in the inner coordinates
	\eqref{inner} on $[0,T_0]\times\R$ and fix $\beta_i\in (0,\frac1{100})$ for $i=1,2,3$.
	We first fix sufficiently large truncation orders
	$J,K,L$; the implicit constants below may depend on these fixed orders,
	but are independent of $\nu$ and $T$. We establish the estimate
	\begin{equation}\label{eq:thminner}
		|U^{(\nu)}(T,X)-U^{(0)}(T,X)|\lesssim
		1+\sqrt T\,e^{-\delta\sqrt T|X|}
		\qquad (0\le T\le T_0,\ X\in\R).
	\end{equation}
	Multiplying by $\nu^{\frac14}$ and using \eqref{inner} yields the
	conclusion of the theorem.

	\emph{Short times ($0\le T\le1$).}
	Choose $K,L$ so large that the exponent in Proposition~\ref{prop6.1} is
	positive. On $|X|\le2$, the boundedness of the finitely many inner
	profiles and the local cusp description of $U^{(0)}$ give a uniform bound
	for $U_{K,L}^{\mathrm{app}}-U^{(0)}$. On $|X|\ge1$, the same bound follows
	from Lemma~\ref{lem:52} and the outer estimates in
	Propositions~\ref{prop3.2} and \ref{prop4.2}. Hence
	\[
	|U_{K,L}^{\mathrm{app}}(T,X)-U^{(0)}(T,X)|\lesssim1,
	\qquad 0\le T\le1.
	\]
	Together with Proposition~\ref{prop6.1}, this yields
	\begin{equation*}
		|U^{(\nu)}(T,X)-U^{(0)}(T,X)|\lesssim1\lesssim
		1+\sqrt T\,e^{-\delta\sqrt T|X|}
		\qquad (0\le T\le1).
	\end{equation*}

	\emph{Reduction on $1\le T\le T_0$.}
	By Proposition~\ref{prop6.4},
	\begin{equation*}
		\begin{aligned}
		|U^{(\nu)}-U^{(0)}|&\le
		|U^{(\nu)}-U^{\mathrm{app}}_{J,K,L}|
		+|U^{\mathrm{app}}_{J,K,L}-U^{(0)}|\\
		&\lesssim1+\sqrt T\,e^{-\delta\sqrt T|X|}
		+|U^{\mathrm{app}}_{J,K,L}-U^{(0)}|.
		\end{aligned}
	\end{equation*}
	Thus it remains to show
	\begin{equation}\label{eq:app0}
		|U^{\mathrm{app}}_{J,K,L}-U^{(0)}|\lesssim
		1+\sqrt T\,e^{-\delta\sqrt T|X|}
		\qquad (1\le T\le T_0).
	\end{equation}

	\emph{Zone decomposition.}
	Let $\varsigma_o+\varsigma_i+\varsigma_s=1$ be the partition of unity
	subordinate to the covering \eqref{6.7}, and write
	\begin{equation}\label{eq:dec0}
		U^{\mathrm{app}}_{J,K,L}-U^{(0)}
		=\varsigma_o\bigl(U_{[K]}^{\mathrm{out}}-U^{(0)}\bigr)
		+\varsigma_s\bigl(U_{[J]}^{\mathrm{shock}}-U^{(0)}\bigr)
		+\varsigma_i\bigl(U_{[L],0}^{\mathrm{in}}-U^{(0)}\bigr).
	\end{equation}
	We show that each of the three differences on the right-hand side of
	\eqref{eq:dec0} is bounded by the right-hand side of \eqref{eq:app0} on
	the support of its cutoff, after increasing $J,K,L$ if necessary. The
	supports of the cutoffs are contained in the corresponding zones
	\eqref{6.7} up to the matching regions, so it suffices to combine, for
	each zone, the zone estimate with the matching estimates valid on the
	two adjacent overlaps.

	The three zone estimates are as follows. On $O_{\mathrm{out}}$, by
	Propositions~\ref{prop3.1}--\ref{prop3.2} and the scaling \eqref{3.60},
	\begin{equation}\label{eq:zout}
		|U_{[K]}^{\mathrm{out}}-U^{(0)}|\lesssim1,
	\end{equation}
	for all $K$ sufficiently large. On $O_{\mathrm{shock}}$, the profile
	expansion interpolates between the two inviscid states across the shock
	layer of width $T^{-1/2}$; by Propositions~\ref{prop3.8}--\ref{prop3.9}
	and \eqref{eq:zout},
	\begin{equation}\label{eq:zshock}
		|U_{[J]}^{\mathrm{shock}}-U^{(0)}|\lesssim
		1+\sqrt T\,e^{-\delta\sqrt T|X|},
	\end{equation}
	for $J$ sufficiently large. On $O_{\mathrm{in}}$, the lowest-order inner
	correction must be retained. To make this contribution explicit, set
	\begin{equation}\label{eq:Ptrunc}
	\begin{aligned}
	P_{J,K,[L],0}&:=\sum_{\ell=0}^L\nu^{\frac\ell4}
	P_{J,K,\ell,0},\\
	U_{[J],[L]}^{\mathrm{SI}}&:=\sum_{\ell=0}^L\nu^{\frac\ell4}
	U_{[J],\ell}^{\mathrm{SI}},\qquad
	U_{[K],[L]}^{\mathrm{OI}}:=\sum_{\ell=0}^L\nu^{\frac\ell4}
	U_{[K],\ell}^{\mathrm{OI}}.
	\end{aligned}
	\end{equation}
	By \eqref{eq:decomp} and \eqref{eq:SOI}, the exact decomposition is
	\begin{equation}\label{eq:innertrunc}
	U_{[L],0}^{\mathrm{in}}
	=P_{J,K,[L],0}+\zeta U_{[J],[L]}^{\mathrm{SI}}
	+(1-\zeta)U_{[K],[L]}^{\mathrm{OI}}.
	\end{equation}
	Fix $q>0$. Since the index functions in Proposition~\ref{thm:main}
	tend to infinity with $J,K$, we may choose $J,K$ so large that
	$\psi_\ell(J,K)-a_{0,0}^{(\ell)}\ge q$ for all $0\le\ell\le L$.
	Summing \eqref{eq:Pellh} with the weights in \eqref{eq:Ptrunc} gives
	\begin{equation}\label{eq:Ptruncbd}
	|P_{J,K,[L],0}|\lesssim (T+X^2)^{-q},
	\qquad 1\le T\le2\nu^{-\beta_2},
	\end{equation}
	since $\sum_{\ell=0}^L\nu^{\ell(\frac14-\frac{\beta_2}{2})}\lesssim1$.
	In particular, \eqref{eq:Ptruncbd} is only $O(1)$ at a fixed inner point; no positive
	power of $\nu$ is asserted there. Propositions~\ref{prop4.3},
	\ref{prop:SI-reconstruction}, and \ref{prop4.9} compare the grid part of
	\eqref{eq:innertrunc} with the
	outer and shock-profile expansions. The viscous outer corrections
	must also be retained: with
	$\sigma=\max\{\beta_2/2,\beta_3/3\}<1/4$, the homogeneous gauge satisfies
	$1\lesssim\mathfrak D\lesssim\nu^{-\sigma}$ on
	$O_{\mathrm{in}}\cap\{T\ge1\}$. Since $U_0^{\mathrm{out}}=U^{(0)}$,
	Propositions~\ref{prop3.2} and \ref{prop4.3} give, for fixed $K$,
	\[
	\left|U_{[K],[L]}^{\mathrm{OI}}-U^{(0)}\right|
	\lesssim\mathfrak D^{-3}
	+\nu^{\frac{L+1}{4}}\mathfrak D^{L+2}
	\lesssim1,
	\]
	provided $(L+1)/4-\sigma(L+2)>0$. Thus the algebraic tail is
	uniformly bounded. In the core
	$|X|\lesssim T^{-1/2}$ the layer term is of size $O(\sqrt T)$; outside
	the core the outer--shock matching applies. Thus, after choosing $L$ large,
	\[
	\left|\zeta U_{[J],[L]}^{\mathrm{SI}}
	+(1-\zeta)U_{[K],[L]}^{\mathrm{OI}}-U^{(0)}\right|
	\lesssim1+\sqrt T\,e^{-\delta\sqrt T|X|}
	\]
	on $O_{\mathrm{in}}$. Combining this estimate with
	\eqref{eq:Ptruncbd} gives
	\begin{equation}\label{eq:zin}
	|U_{[L],0}^{\mathrm{in}}-U^{(0)}|
	\lesssim (T+X^2)^{-q}+1
	+\sqrt T\,e^{-\delta\sqrt T|X|}
	\lesssim1+\sqrt T\,e^{-\delta\sqrt T|X|}
	\end{equation}
	on $O_{\mathrm{in}}$.

	The matching estimates on the overlaps are as follows. On
	$O_{\mathrm{out}}\cap O_{\mathrm{shock}}$, we have $T\ge\nu^{-\beta_2}$
	and $1\le|X|T^{\frac12-\beta_1}\le2$, hence
	$|X|/T^{\frac32}\le2T^{\beta_1-2}\le2\nu^{\beta_2(2-\beta_1)}$, and
	Proposition~\ref{prop3.9} yields
	\begin{equation}\label{eq:mosh}
		\begin{aligned}
		|U_{[J]}^{\mathrm{shock}}-U_{[K]}^{\mathrm{out}}|
		&\lesssim T^{\frac12}\Bigl(e^{-\frac{\om(T)\sqrt T}{4}|X|}
		+\bigl(\tfrac{|X|}{T^{\frac32}}\bigr)^{K+1}
		+\bigl(\tfrac{|X|}{T^{\frac32}}\bigr)^{J+1}\Bigr)\\
		&\lesssim\sqrt T\,e^{-\delta\sqrt T|X|}+1,
		\end{aligned}
	\end{equation}
	where we decrease $\delta$ if necessary and choose $J,K$ large. On
	$O_{\mathrm{out}}\cap O_{\mathrm{in}}$, combining
	\eqref{eq:innertrunc} and Proposition~\ref{thm:main} with the outer--inner grid
	matching of Proposition~\ref{prop4.3} and the grid matching of
	Proposition~\ref{prop4.9}, we obtain
	\begin{equation}\label{eq:moi}
		\begin{aligned}
		|U_{[L],0}^{\mathrm{in}}-U_{[K]}^{\mathrm{out}}|
		&\le|P_{J,K,[L],0}|
		+|U_{[K],[L]}^{\mathrm{OI}}-U_{[K]}^{\mathrm{out}}|\\
		&\qquad+\zeta|U_{[J],[L]}^{\mathrm{SI}}-U_{[K],[L]}^{\mathrm{OI}}|\\
		&\lesssim1+\sqrt T\,e^{-\delta\sqrt T|X|}.
		\end{aligned}
	\end{equation}
	Indeed, on this overlap either $|X|\ge\nu^{-\beta_3}$ or
	$T\ge\nu^{-\beta_2}$; hence
	$T+X^2\gtrsim\nu^{-\min\{\beta_2,2\beta_3\}}$, and \eqref{eq:Ptruncbd}
	gives $|P_{J,K,[L],0}|\lesssim\nu^{q\min\{\beta_2,2\beta_3\}}$.
	On $O_{\mathrm{shock}}\cap O_{\mathrm{in}}$, the same decomposition,
	Propositions~\ref{prop:SI-estimates}, \ref{prop:SI-reconstruction} and
	\ref{prop4.9} give, since $T\ge\nu^{-\beta_2}$ there,
	\begin{equation}\label{eq:msi}
		|U_{[L],0}^{\mathrm{in}}-U_{[J]}^{\mathrm{shock}}|
		\lesssim1+\sqrt T\,e^{-\delta\sqrt T|X|}.
	\end{equation}
	Here \eqref{eq:Ptruncbd} contributes at most
	$\nu^{\beta_2q}\lesssim1$ for $q>0$ and $0<\nu\le1$.

	We now combine these estimates. On $O_{\mathrm{out}}$, the identity
	\eqref{eq:dec0} together with \eqref{eq:zout}, \eqref{eq:mosh} and
	\eqref{eq:moi} gives
	\begin{equation*}
		\begin{aligned}
		|U^{\mathrm{app}}_{J,K,L}-U^{(0)}|
		\le|U_{[K]}^{\mathrm{out}}-U^{(0)}|
		+\varsigma_s|U_{[J]}^{\mathrm{shock}}-U_{[K]}^{\mathrm{out}}|
		+\varsigma_i|U_{[L],0}^{\mathrm{in}}-U_{[K]}^{\mathrm{out}}|\\
		\lesssim1+\sqrt T\,e^{-\delta\sqrt T|X|}.
		\end{aligned}
	\end{equation*}
	Arguing identically on $O_{\mathrm{shock}}$ and $O_{\mathrm{in}}$ with
	\eqref{eq:zshock}, \eqref{eq:zin} and the matching estimates symmetric
	to \eqref{eq:mosh}--\eqref{eq:msi}, we obtain the same bound. Since the
	three zones \eqref{6.7} cover $[1,T_0]\times\R$, this proves
	\eqref{eq:app0}, and with it \eqref{eq:thminner} on $[0,T_0]\times\R$.

	Finally, \eqref{eq:thminner} and the scaling \eqref{inner} yield, for all
	$t\in[0,\ol\vep]$ and $x\in\R$,
	\begin{equation*}
		|u^{(\nu)}(t,x)-u^{(0)}(t,x)|\lesssim
		\nu^{\frac14}
		+\sqrt t\,e^{-\delta\frac{|x-\mf{s}(t)|\sqrt t}{\nu}},
	\end{equation*}
	which is the assertion of part (ii).
	This completes the proof.
\end{proof}

\appendix
\section{Proof of Proposition~\ref{prop3.6}}\label{app:prop46}

In this appendix we prove Proposition~\ref{prop3.6} for general $j\in\N$ by
induction, as deferred from Section~4.4. Assuming that the profiles
$V_{<j}^{\mathrm{shock}}$ and the phase coefficients $\Theta_{<j-1}^{\mathrm{S}}$
have been constructed with the estimates of Proposition~\ref{prop3.6}, we
decompose $V_j^{\mathrm{shock}}=\ol{V}_j+V_j^{\mathrm{cut}}$ and derive the
equation satisfied by $\ol{V}_j$. Lemma~\ref{lem3.5} shows that the
pure-cutoff terms cancel on $|Y|\ge1$; the phase $\Theta_{j-1}^{\mathrm{S}}$ is
then chosen so that the boundary contribution at $Y=+\infty$ vanishes, which
yields the exponentially decaying bound for $\ol{V}_j$ and, similarly, for its
derivatives.

\begin{proof}[Proof of Proposition~\ref{prop3.6}]
	Assume that Proposition~\ref{prop3.6} holds for $V_{<j}^{\mathrm{shock}}$ with the exponential decay rate $e^{-(\frac{1}{2}+\eta')\om(T)|Y|}$ for some $\eta'>0$, and for the phase coefficients $\Theta_{<j-1}^{\mathrm{S}}$. We show that $V_j^{\mathrm{shock}}$ satisfies the analogous estimate with the decay rate $e^{-(\frac{1}{2}+\eta)\om(T)|Y|}$ for some $0<\eta<\eta'$, and we determine $\Theta_{j-1}^{\mathrm{S}}$ accordingly. Define
	\begin{equation}\label{3.28}
		\begin{aligned}
			\ol{V}_j=V_j^{\mathrm{shock}}-V_j^{\mathrm{cut}}.
		\end{aligned}
	\end{equation}
	From \eqref{3.12} and \eqref{3.28} we derive the following equation for $\ol{V}_j$:
	\begin{align}
&\pa_Y^2\ol{V}_j-\pa_Y[(V_0^{\mathrm{shock}}-\frac{\dot{\mf{S}}}{\sqrt{T}})\ol{V}_j]=-(2j-\frac52)\ol{V}_{j-1}+\frac Y2\pa_Y\ol{V}_{j-1}+T\pa_T\ol{V}_{j-1}\notag\displaybreak[1]\\
		&+\sum\limits_{\substack{j_1+j_2=j-1\\j_2>0}}[T\pa_T\Theta_{j_1}^{\mathrm{S}}-(2j_1+\frac12)\Theta_{j_1}^{\mathrm{S}}]\pa_Y\ol{V}_{j_2}+\frac12\pa_Y\sum\limits_{\substack{j_1+j_2=j\\j_1,j_2>0}}\ol{V}_{j_1}\ol{V}_{j_2}\notag\displaybreak[1]\\
		&+\pa_Y\sum\limits_{\substack{j_1+j_2=j\\j_2>0}}\ol{V}_{j_1}{V}_{j_2}^{\mathrm{cut}}
		+\frac12\pa_Y\sum\limits_{j_1+j_2=j}V_{j_1}^{\mathrm{cut}}V_{j_2}^{\mathrm{cut}}+\frac Y2\pa_YV_{j-1}^{\mathrm{cut}}+T\pa_TV_{j-1}^{\mathrm{cut}}-(2j-\frac 52)V_{j-1}^{\mathrm{cut}}\notag\displaybreak[1]\\
		&-\frac{\dot{\mf{S}}}{\sqrt{T}}\pa_YV_j^{\mathrm{cut}}
		+\sum\limits_{\substack{j_1+j_2=j-1\\j_2>0}}[T\pa_T\Theta_{j_1}^{\mathrm{S}}-(2j_1+\frac 12)\Theta_{j_1}^{\mathrm{S}}]\pa_YV_{j_2}^{\mathrm{cut}}\notag\displaybreak[1]\\
		&+[T\pa_T\Theta_{j-1}^{\mathrm{S}}-(2j-\frac 32)\Theta_{j-1}^{\mathrm{S}}]\pa_YV_{0}^{\mathrm{shock}}
		-\pa_Y^2V_{j}^{\mathrm{cut}}\notag\displaybreak[1]\\
		&:=g_{\geq 1}+g_0+[T\pa_T\Theta_{j-1}^{\mathrm{S}}-(2j-\frac 32)\Theta_{j-1}^{\mathrm{S}}]\pa_YV_{0}^{\mathrm{shock}},
\label{3.38}
\end{align}
	where $g_{\geq1}$ collects all terms containing at least one factor $\ol{V}_{j'}$ with $j'<j$, and $g_0$ collects the remaining terms, which involve only the cutoff profiles $V^{\mathrm{cut}}$. We extract the $\pa_YV_0^{\mathrm{shock}}$ term, and choose $\Theta_{j-1}^{\mathrm{S}}$ appropriately later to ensure solvability. By the inductive hypothesis, $g_{\geq1}$ is exponentially decaying. We shall show that $g_0$ can be balanced by Lemma~\ref{lem3.5}. To this end, we decompose $g_0$ as follows:
		\begin{align*}
g_0=I_1+I_2+I_3+I_4+I_5,
\end{align*}
where
			\begin{align*}
		I_1&=\frac12\pa_Y\sum\limits_{j_1+j_2=j}V_{j_1}^{\mathrm{cut}}V_{j_2}^{\mathrm{cut}},\quad
		I_2=\frac Y2\pa_YV_{j-1}^{\mathrm{cut}}+T\pa_TV_{j-1}^{\mathrm{cut}}-(2j-\frac 52)V_{j-1}^{\mathrm{cut}}\notag\displaybreak[1]\\
		I_3&=\sum\limits_{\substack{j_1+j_2=j-1\\j_2>0}}[T\pa_T\Theta_{j_1}^{\mathrm{S}}-(2j_1+\frac 12)\Theta_{j_1}^{\mathrm{S}}]\pa_YV_{j_2}^{\mathrm{cut}},\quad
		I_4=-\frac{\dot{\mf{S}}}{\sqrt{T}}\pa_YV_j^{\mathrm{cut}},\quad
		I_5=-\pa_Y^2V_{j}^{\mathrm{cut}}.
\end{align*}
	In fact, we rewrite \eqref{3.27} as 
	\begin{equation}\label{3.39}
		\begin{aligned}
		V_j^{\mathrm{cut}}=\chi(Y)\sum\limits_{p=0}^j \sum\limits_{\substack{i\geq p,k\geq 0\\ h_1,...,h_p\geq 0\\i+k+h_1+...+h_p=j}}(-1)^{p}C_{i}^{p}Y^{i-p}{V}_{i,k}^{\pm}\prod_{r=1}^p\Theta_{h_r}^{\mathrm{S}}.
		\end{aligned}
	\end{equation}
	We now compute the relevant terms explicitly. From \eqref{3.27} we write $V_j^{\mathrm{cut}}=\chi\widetilde{V}_j^{\pm}$, where
	\[
	\begin{aligned}
	\widetilde{V}_j^{\pm}(Y)&=\sum\limits_{p=0}^j\sum\limits_{\substack{i\geq p,k\geq 0\\h_1,\dots,h_p\geq 0\\i+k+h_1+\cdots+h_p=j}}(-1)^{p}C_i^{p}Y^{i-p}V_{i,k}^{\pm}\prod_{r=1}^p\Theta_{h_r}^{\mathrm{S}},\\[4pt]
	\pa_Y\widetilde{V}_j^{\pm}&=\sum\limits_{p=0}^j\sum\limits_{\substack{i\geq p,k\geq 0\\\sum h_r=j-i-k}}(-1)^{p}C_i^{p}(i-p)Y^{i-p-1}V_{i,k}^{\pm}\prod_{r=1}^p\Theta_{h_r}^{\mathrm{S}}.
	\end{aligned}
	\]
	In what follows we write $f(Y)\asymp g(Y)$ to denote that $f(Y)=g(Y)$ for $|Y|\geq1$. Then
	\begin{equation*}
	\begin{aligned}
	\frac12\pa_Y\sum_{j_1+j_2=j}V_{j_1}^{\mathrm{cut}}V_{j_2}^{\mathrm{cut}}
	&\asymp\frac12\pa_Y\bigg[\sum_{j_1+j_2=j}\widetilde{V}_{j_1}^{\pm}\widetilde{V}_{j_2}^{\pm}\bigg]\\[2pt]
	&=\frac12\sum_{j_1+j_2=j}
	\bigl(\pa_Y\widetilde{V}_{j_1}^{\pm}\!\cdot\!\widetilde{V}_{j_2}^{\pm}
	+\widetilde{V}_{j_1}^{\pm}\!\cdot\!\pa_Y\widetilde{V}_{j_2}^{\pm}\bigr).
	\end{aligned}
	\end{equation*}
	Substituting the explicit formulas for $\widetilde{V}_j^{\pm}$ and its $Y$-derivative, we obtain the unmerged expansion
	\begin{align*}
\frac12&\sum_{j_1+j_2=j}
	\bigl(\pa_Y\widetilde{V}_{j_1}^{\pm}\!\cdot\!\widetilde{V}_{j_2}^{\pm}
	+\widetilde{V}_{j_1}^{\pm}\!\cdot\!\pa_Y\widetilde{V}_{j_2}^{\pm}\bigr)\notag\displaybreak[1]\\
	&\asymp\frac12\sum_{\substack{j_1+j_2=j\\0\leq p_1\leq j_1\\0\leq p_2\leq j_2}}
	\sum_{\substack{i_1\geq p_1,\,k_1\geq0\\i_2\geq p_2,\,k_2\geq0}}
	\sum_{h_{1,r},h_{2,s}\geq0} (-1)^{p_1+p_2}C_{i_1}^{p_1}C_{i_2}^{p_2}
	(i_1+i_2-p_1-p_2)Y^{i_1+i_2-p_1-p_2-1}\notag\displaybreak[1]\\
	&\qquad\times V_{i_1,k_1}^{\pm}V_{i_2,k_2}^{\pm}
	\prod_{r=1}^{p_1}\Theta_{h_{1,r}}^{\mathrm{S}}\prod_{s=1}^{p_2}\Theta_{h_{2,s}}^{\mathrm{S}},
\end{align*}
	where the inner sums are restricted to $i_1+k_1+\sum_{r=1}^{p_1}h_{1,r}=j_1$ and $i_2+k_2+\sum_{s=1}^{p_2}h_{2,s}=j_2$. With $p=p_1+p_2$, $i=i_1+i_2$, and $k=k_1+k_2$, the full expression simplifies to
	\begin{equation*}
	\begin{aligned}
	\frac12\pa_Y\sum_{j_1+j_2=j}V_{j_1}^{\mathrm{cut}}V_{j_2}^{\mathrm{cut}}
	&\asymp\frac12\sum\limits_{p=0}^{j}\sum\limits_{\substack{i\geq p,k\geq 0\\ i+k+\sum h_s=j}}
	(-1)^{p}(i-p)Y^{i-p-1}\\[-2pt]
	&\quad\times\Bigl(\prod_{r=1}^{p}\Theta_{h_r}^{\mathrm{S}}\Bigr)\,
	\sum_{p_1+p_2=p}\;
	\sum_{\substack{i_1+i_2=i\\i_1\geq p_1,\,i_2\geq p_2}}
	C_{i_1}^{p_1}C_{i_2}^{p_2}
	\sum_{k_1+k_2=k}V_{i_1,k_1}^{\pm}V_{i_2,k_2}^{\pm}.
	\end{aligned}
	\end{equation*}
	The quadratic term $I_1$ has now been simplified. It remains to expand the linear terms $I_2$ and $I_3$:
		\begin{align}
&I_2=\frac Y2\pa_YV_{j-1}^{\mathrm{cut}}+T\pa_TV_{j-1}^{\mathrm{cut}}-(2j-\frac 52)V_{j-1}^{\mathrm{cut}}\notag\displaybreak[1]\\
			&\asymp\sum\limits_{p=0}^{j-1} \sum\limits_{\substack{i\geq p,k\geq 0\\ h_1,...,h_p\geq 0\\i+k+h_1+...+h_p=j-1}}(-1)^{p}C_{i}^{p}\frac{i-p-4j+5}{2}Y^{i-p}{V}_{i,k}^{\pm}\prod_{r=1}^p\Theta_{h_r}^{\mathrm{S}}\notag\displaybreak[1]\\
			&+\sum\limits_{p=0}^{j-1}\sum\limits_{\substack{i\geq p,k\geq 0\\h_1,\dots,h_p\geq 0\\i+k+h_1+\cdots+h_p=j-1}}(-1)^{p}C_i^{p}Y^{i-p}T\pa_TV_{i,k}^{\pm}\prod_{r=1}^p\Theta_{h_r}^{\mathrm{S}}\notag\displaybreak[1]\\
			&+\sum\limits_{p=0}^{j-1}\sum\limits_{\substack{i\geq p,k\geq 0\\h_1,\dots,h_p\geq 0\\i+k+h_1+\cdots+h_p=j-1}}(-1)^{p}C_i^{p}Y^{i-p}V_{i,k}^{\pm}T\pa_T(\prod_{r=1}^p\Theta_{h_r}^{\mathrm{S}})\notag\\*
			&:=I_{2,a}+I_{2,b}+I_{2,c}.
\label{3.35}
\end{align}
	Similarly, for the term $I_3$ we have
		\begin{align*}
&I_3=\sum\limits_{\substack{j_1+j_2=j-1\\j_2>0}}[T\pa_T\Theta_{j_1}^{\mathrm{S}}-(2j_1+\frac 12)\Theta_{j_1}^{\mathrm{S}}]\pa_YV_{j_2}^{\mathrm{cut}}\notag\displaybreak[1]\\
		&\asymp\sum\limits_{\substack{j_1+j_2=j-1}}\sum\limits_{p=0}^{j_2}\sum\limits_{\substack{i\geq p,k\geq 0\\\sum h_r=j_2-i-k}}(-1)^{p}C_i^{p}(i-p)Y^{i-p-1}V_{i,k}^{\pm}\prod_{r=1}^p\Theta_{h_r}^{\mathrm{S}}T\pa_T\Theta_{j_1}^{\mathrm{S}}\notag\displaybreak[1]\\
		&-\sum\limits_{\substack{j_1+j_2=j-1}}(2j_1+\frac12)\sum\limits_{p=0}^{j_2}\sum\limits_{\substack{i\geq p,k\geq 0\\\sum h_r=j_2-i-k}}(-1)^{p}C_i^{p}(i-p)Y^{i-p-1}V_{i,k}^{\pm}\prod_{r=1}^p\Theta_{h_r}^{\mathrm{S}}\Theta_{j_1}^{\mathrm{S}}\notag\\*
		&:=I_{3,a}+I_{3,b}.
\end{align*}
	
	It remains to evaluate the terms involving $T\pa_T\Theta$. We first compute $I_{2,c}$:
		\begin{align*}
&I_{2,c}=\sum\limits_{p=0}^{j-1}\sum\limits_{\substack{i\geq p,k\geq 0\\h_1,\dots,h_p\geq 0\\i+k+h_1+\cdots+h_p=j-1}}(-1)^{p}C_i^{p}Y^{i-p}V_{i,k}^{\pm}T\pa_T(\prod_{r=1}^p\Theta_{h_r}^{\mathrm{S}})\notag\displaybreak[1]\\
		&=\sum\limits_{p=1}^{j-1}\sum\limits_{\substack{i\geq p,k\geq 0\\h_1,\dots,h_p\geq 0\\i+k+h_1+\cdots+h_p=j-1}}p(-1)^{p}C_i^{p}Y^{i-p}V_{i,k}^{\pm}T\pa_T\Theta_{h_1}^{\mathrm{S}}(\prod_{r=2}^p\Theta_{h_r}^{\mathrm{S}})\notag\displaybreak[1]\\
		&=\sum\limits_{p=1}^{j-1}\sum\limits_{\substack{i\geq p,k\geq 0\\h_1,\dots,h_p\geq 0\\i+k+h_1+\cdots+h_p=j-1}}\frac{i!}{(p-1)!(i-p)!}(-1)^{p}Y^{i-p}V_{i,k}^{\pm}T\pa_T\Theta_{h_p}^{\mathrm{S}}(\prod_{r=1}^{p-1}\Theta_{h_r}^{\mathrm{S}})\notag\displaybreak[1]\\
		&=\sum\limits_{p=1}^{j-1}\sum\limits_{\substack{i\geq p,k\geq 0\\h_1,\dots,h_p\geq 0\\i+k+h_1+\cdots+h_p=j-1}}(i-p+1)C_{i}^{p-1}(-1)^{p}Y^{i-p}V_{i,k}^{\pm}T\pa_T\Theta_{h_p}^{\mathrm{S}}(\prod_{r=1}^{p-1}\Theta_{h_r}^{\mathrm{S}})\notag\displaybreak[1]\\
		&=\sum\limits_{q=0}^{j-2}\sum\limits_{\substack{i\geq q,k\geq 0\\h_1,\dots,h_{q+1}\geq 0\\i+k+h_1+\cdots+h_{q+1}=j-1}}(i-q)C_{i}^{q}(-1)^{q+1}Y^{i-q-1}V_{i,k}^{\pm}T\pa_T\Theta_{h_{q+1}}^{\mathrm{S}}(\prod_{r=1}^{q}\Theta_{h_r}^{\mathrm{S}}),
\end{align*}
	and, for $I_{3,a}$, we obtain
	\begin{align*}
&I_{3,a}=\sum\limits_{\substack{j_1+j_2=j-1}}\sum\limits_{p=0}^{j_2}\sum\limits_{\substack{i\geq p,k\geq 0\\\sum h_r=j_2-i-k}}(-1)^{p}C_i^{p}(i-p)Y^{i-p-1}V_{i,k}^{\pm}\prod_{r=1}^p\Theta_{h_r}^{\mathrm{S}}T\pa_T\Theta_{j_1}^{\mathrm{S}}\notag\displaybreak[1]\\
	&=\sum\limits_{p=0}^{j-1}\sum\limits_{\substack{j_1+j_2=j-1\\j_2\geq p}}\sum\limits_{\substack{i\geq p,k\geq 0\\ i+k+h_1+...+h_p=j_2}}(-1)^{p}C_i^{p}(i-p)Y^{i-p-1}V_{i,k}^{\pm}\prod_{r=1}^p\Theta_{h_r}^{\mathrm{S}}T\pa_T\Theta_{j_1}^{\mathrm{S}}\notag\displaybreak[1]\\
	&=\sum\limits_{p=0}^{j-1}\sum\limits_{\substack{i\geq p,k\geq 0\\ i+k+h_1+...+h_p+h_{p+1}=j-1\\h_{p+1}\leq j-1-p}}(-1)^{p}C_i^{p}(i-p)Y^{i-p-1}V_{i,k}^{\pm}\prod_{r=1}^p\Theta_{h_r}^{\mathrm{S}}T\pa_T\Theta_{h_{p+1}}^{\mathrm{S}}\notag\displaybreak[1]\\
	&=\sum\limits_{p=0}^{j-2}\sum\limits_{\substack{i\geq p,k\geq 0\\ i+k+h_1+...+h_p+h_{p+1}=j-1}}(-1)^{p}C_i^{p}(i-p)Y^{i-p-1}V_{i,k}^{\pm}\prod_{r=1}^p\Theta_{h_r}^{\mathrm{S}}T\pa_T\Theta_{h_{p+1}}^{\mathrm{S}}\notag\\*
	&=-I_{2,c}.
\end{align*}
	For $I_{3,b}$, we use the rotation symmetry of the summation indices:
		\begin{align}
&-I_{3,b}=\sum\limits_{\substack{j_1+j_2=j-1}}(2j_1+\frac12)\sum\limits_{p=0}^{j_2}\sum\limits_{\substack{i\geq p,k\geq 0\\\sum h_r=j_2-i-k}}(-1)^{p}C_i^{p}(i-p)Y^{i-p-1}V_{i,k}^{\pm}\prod_{r=1}^p\Theta_{h_r}^{\mathrm{S}}\Theta_{j_1}^{\mathrm{S}}\notag\displaybreak[1]\\
		&=\sum\limits_{p=0}^{j-2}\sum\limits_{\substack{i\geq p,k\geq 0\\ i+k+h_1+...+h_p+h_{p+1}=j-1}}(2h_{p+1}+\frac12)(-1)^{p}C_i^{p}(i-p)Y^{i-p-1}V_{i,k}^{\pm}\prod_{r=1}^{p+1}\Theta_{h_r}^{\mathrm{S}}\notag\displaybreak[1]\\
		&=\sum\limits_{q=1}^{j-1}\sum\limits_{\substack{i\geq q-1,k\geq 0\\ i+k+h_1+...+h_{q}=j-1}}(2h_{q}+\frac12)(-1)^{q-1}C_i^{q-1}(i-q+1)Y^{i-q}V_{i,k}^{\pm}\prod_{r=1}^{q}\Theta_{h_r}^{\mathrm{S}}\notag\displaybreak[1]\\
		&=\sum\limits_{q=1}^{j-1}\sum\limits_{\substack{i\geq q,k\geq 0\\ i+k+h_1+...+h_{q}=j-1}}q(2h_{q}+\frac12)(-1)^{q-1}C_i^{q}Y^{i-q}V_{i,k}^{\pm}\prod_{r=1}^{q}\Theta_{h_r}^{\mathrm{S}}\notag\displaybreak[1]\\
		&=\sum\limits_{q=1}^{j-1}\sum\limits_{\substack{i\geq q,k\geq 0\\ i+k+h_1+...+h_{q}=j-1}}[\frac{q}{2}+2(j-1-i-k)](-1)^{q-1}C_i^{q}Y^{i-q}V_{i,k}^{\pm}\prod_{r=1}^{q}\Theta_{h_r}^{\mathrm{S}}.
\label{2j+0.5}
\end{align}
	Combining the expressions for $I_{2,a}$ and $I_{3,b}$, we obtain
	\begin{align*}
I_{2,a}+I_{3,b}&=\sum\limits_{p=0}^{j-1} \sum\limits_{\substack{i\geq p,k\geq 0\\ h_1,...,h_p\geq 0\\i+k+h_1+...+h_p=j-1}}(-1)^{p}C_{i}^{p}\frac{i-p-4j+5}{2}Y^{i-p}{V}_{i,k}^{\pm}\prod_{r=1}^p\Theta_{h_r}^{\mathrm{S}}\notag\displaybreak[1]\\
		&+\sum\limits_{p=1}^{j-1}\sum\limits_{\substack{i\geq p,k\geq 0\\ i+k+h_1+...+h_{p}=j-1}}[\frac{p}{2}+2(j-1-i-k)](-1)^{p}C_i^{p}Y^{i-p}V_{i,k}^{\pm}\prod_{r=1}^{p}\Theta_{h_r}^{\mathrm{S}}\notag\displaybreak[1]\\
		&=\sum\limits_{p=0}^{j-1}\sum\limits_{\substack{i\geq p,k\geq 0\\ i+k+h_1+...+h_{p}=j-1}}(-1)^{p}C_{i}^{p}\frac{1-4k-3i}{2}Y^{i-p}V_{i,k}^{\pm}\prod_{r=1}^{p}\Theta_{h_r}^{\mathrm{S}}.
\end{align*}
The main nonlinear term $I_1$ simplifies as follows:
	\begin{align*}
&I_1:=\frac12\sum\limits_{p=0}^{j}\sum\limits_{\substack{i\geq p,k\geq 0\\ i+k+\sum h_s=j}}
		(-1)^{p}(i-p)Y^{i-p-1}\prod_{r=1}^{p}\Theta_{h_r}^{\mathrm{S}}\notag\displaybreak[1]\\
		&\times\sum_{p_1+p_2=p}
		\sum_{\substack{i_1+i_2=i\\i_1\geq p_1,\,i_2\geq p_2}}
		C_{i_1}^{p_1}C_{i_2}^{p_2}
		\sum_{k_1+k_2=k}V_{i_1,k_1}^{\pm}V_{i_2,k_2}^{\pm}\notag\displaybreak[1]\\
		&=\frac12\sum\limits_{p=0}^{j-1}\sum\limits_{\substack{a\geq p,k\geq 0\\ a+k+\sum h_s=j-1}}
		(-1)^{p}(a-p+1)Y^{a-p}\prod_{r=1}^{p}\Theta_{h_r}^{\mathrm{S}}\notag\displaybreak[1]\\
		&\times\sum_{p_1+p_2=p}
		\sum_{\substack{i_1+i_2=a+1\\i_1\geq p_1,\,i_2\geq p_2\\k_1+k_2=k}}
		C_{i_1}^{p_1}C_{i_2}^{p_2}
		V_{i_1,k_1}^{\pm}V_{i_2,k_2}^{\pm}\notag\displaybreak[1]\\
		&=\frac12\sum\limits_{p=0}^{j-1}\sum\limits_{\substack{a\geq p,k\geq 0\\ a+k+\sum h_s=j-1}}
		(-1)^{p}(a-p+1)Y^{a-p}\prod_{r=1}^{p}\Theta_{h_r}^{\mathrm{S}}\notag\displaybreak[1]\\
		&\times
		\sum_{\substack{i_1+i_2=a+1\\k_1+k_2=k}}
		V_{i_1,k_1}^{\pm}V_{i_2,k_2}^{\pm}\sum_{\substack{p_1+p_2=p\\p_1\leq i_1,p_2\leq i_2}}	C_{i_1}^{p_1}C_{i_2}^{p_2}\notag\displaybreak[1]\\
		&=\frac12\sum\limits_{p=0}^{j-1}\sum\limits_{\substack{a\geq p,k\geq 0\\ a+k+\sum h_s=j-1}}
		(-1)^{p}(a+1)C_a^pY^{a-p}\prod_{r=1}^{p}\Theta_{h_r}^{\mathrm{S}}
		\sum_{\substack{i_1+i_2=a+1\\k_1+k_2=k}}
		V_{i_1,k_1}^{\pm}V_{i_2,k_2}^{\pm}.
\end{align*}
	We also record the expansions of the remaining terms $I_4$ and $I_5$:
	\begin{align*}
-\frac{\dot{\mf{S}}}{\sqrt{T}}\pa_YV_j^{\mathrm{cut}}
		&\asymp\frac{\dot{\mf{S}}}{\sqrt{T}}\sum\limits_{p=0}^j \sum\limits_{\substack{i\geq p,k\geq 0\\ h_1,...,h_p\geq 0\\i+k+h_1+...+h_p=j}}(-1)^{p+1}C_{i}^{p}(i-p)Y^{i-p-1}{V}_{i,k}^{\pm}\prod_{r=1}^p\Theta_{h_r}^{\mathrm{S}}\notag\displaybreak[1]\\
		&=\frac{\dot{\mf{S}}}{\sqrt{T}}\sum\limits_{p=0}^{j-1} \sum\limits_{\substack{i\geq p+1,k\geq 0\\ h_1,...,h_p\geq 0\\i+k+h_1+...+h_p=j}}(-1)^{p+1}iC_{i-1}^{p}Y^{i-p-1}{V}_{i,k}^{\pm}\prod_{r=1}^p\Theta_{h_r}^{\mathrm{S}}\notag\displaybreak[1]\\
		&=-\frac{\dot{\mf{S}}}{\sqrt{T}}\sum\limits_{p=0}^{j-1} \sum\limits_{\substack{a\geq p,k\geq 0\\ h_1,...,h_p\geq 0\\a+k+h_1+...+h_p=j-1}}(a+1)(-1)^{p}C_{a}^{p}Y^{a-p}{V}_{a+1,k}^{\pm}\prod_{r=1}^p\Theta_{h_r}^{\mathrm{S}},
\end{align*}
and, for $I_5$, we have
		\begin{align*}
-\pa_Y^2V_{j}^{\mathrm{cut}}
			&\asymp\sum\limits_{p=0}^j \sum\limits_{\substack{i\geq p,k\geq 0\\ h_1,...,h_p\geq 0\\i+k+h_1+...+h_p=j}}(-1)^{p+1}C_{i}^{p}(i-p)(i-p-1)Y^{i-p-2}{V}_{i,k}^{\pm}\prod_{r=1}^p\Theta_{h_r}^{\mathrm{S}}\notag\displaybreak[1]\\
			&=\sum\limits_{p=0}^{j-1} \sum\limits_{\substack{i\geq p+2,k\geq 0\\ h_1,...,h_p\geq 0\\i+k+h_1+...+h_p=j}}(-1)^{p+1}C_{i-2}^{p}i(i-1)Y^{i-p-2}{V}_{i,k}^{\pm}\prod_{r=1}^p\Theta_{h_r}^{\mathrm{S}}\notag\displaybreak[1]\\
			&=\sum\limits_{p=0}^{j-1} \sum\limits_{\substack{a\geq p,b\geq 1\\ h_1,...,h_p\geq 0\\a+b+h_1+...+h_p=j-1}}(-1)^{p+1}C_{a}^{p}(a+2)(a+1)Y^{a-p}{V}_{a+2,b-1}^{\pm}\prod_{r=1}^p\Theta_{h_r}^{\mathrm{S}}.
\end{align*}
	Combining the quadratic term, the linear terms, and the main parts of $I_4$ and $I_5$, and applying Lemma~\ref{lem3.5}, we obtain, for $|Y|\geq1$,
	\begin{equation}\label{combined_sec3}
		\begin{aligned}
			g_0&\asymp\sum\limits_{p=0}^{j-1}\sum\limits_{\substack{i\geq p,k\geq 0\\ i+k+h_1+...+h_{p}=j-1}}(-1)^{p}C_{i}^{p}Y^{i-p}\prod_{r=1}^{p}\Theta_{h_r}^{\mathrm{S}}\Bigl[\frac{1-4k-3i}{2}V_{i,k}^{\pm}+T\pa_TV_{i,k}^{\pm}\\
			&+\frac{i+1}{2}\sum_{\substack{i_1+i_2=i+1\\k_1+k_2=k}}V_{i_1,k_1}^{\pm}V_{i_2,k_2}^{\pm}-(i+2)(i+1)V_{i+2,k-1}^{\pm}-\dot{\mf{S}}(i+1)T^{-\frac12}V_{i+1,k}^{\pm}\Bigr]=0,
		\end{aligned}
	\end{equation}
	where the last two terms in the bracket are the main parts of $I_5$ and $I_4$, respectively.
By \eqref{combined_sec3}, $g_0$ vanishes on $|Y|\geq1$; let $\ml{F}_j:=g_{\geq 1}+g_0$ denote the right-hand side of \eqref{3.38} with the term involving $\Theta_{j-1}^{\mathrm{S}}$ removed. Integrating \eqref{3.38} once in $Y$, we obtain
		\begin{equation}\label{3.53}
		\begin{aligned}
		\pa_Y\ol{V}_j-(V_0^{\mathrm{shock}}-\frac{\dot{\mf{S}}}{\sqrt{T}})\ol{V}_j&=\int_{-\infty}^{Y}\ml{F}_{j}(T,Y')\md Y'\\
		&+[T\pa_T\Theta_{j-1}^{\mathrm{S}}-(2j-\frac 32)\Theta_{j-1}^{\mathrm{S}}](V_0^{\mathrm{shock}}-V_{0,0}^-).
		\end{aligned}
		\end{equation}
We need to extract the contribution of $\Theta_{j-1}^{\mathrm{S}}$ in the $\int \ml{F}_j$ term. From \eqref{3.39}, the only term in $V_j^{\mathrm{cut}}$ involving $\Theta_{j-1}^{\mathrm{S}}$ is
\[
-\chi(Y)V_{1,0}^{\pm}(T)\Theta_{j-1}^{\mathrm{S}}(T).
\]
Consequently,
\[
\int_{\R}\ml{F}_j(T,Y')\md Y'
=\int_{\R}\ml{F}_j|_{\Theta_{j-1}^{\mathrm{S}}=0}(T,Y')\md Y'
+\om(T)(V_{1,0}^++V_{1,0}^-)\Theta_{j-1}^{\mathrm{S}}(T).
\]
To cancel the boundary contribution at $Y=+\infty$ (recall that $V_0^{\mathrm{shock}}\to V_{0,0}^{+}$ as $Y\to+\infty$ and $V_{0,0}^{-}-V_{0,0}^{+}=2\om(T)$), we choose $\Theta_{j-1}^{\mathrm{S}}$ and use \eqref{430} to ensure
\[
\begin{aligned}
0&=\int_{\R}\ml{F}_j(T,Y')\md Y'-2\om(T)[T\pa_T\Theta_{j-1}^{\mathrm{S}}-(2j-\tfrac32)\Theta_{j-1}^{\mathrm{S}}]\\
&=\int_{\R}\ml{F}_j|_{\Theta_{j-1}^{\mathrm{S}}=0}(T,Y')\md Y'-2\om(T)\left[T\pa_T\Theta_{j-1}^{\mathrm{S}}-(2j-\tfrac32)\Theta_{j-1}^{\mathrm{S}}-\frac{V_{1,0}^++V_{1,0}^-}{2}\Theta_{j-1}^{\mathrm{S}}\right]\\
&=\int_{\R}\ml{F}_j|_{\Theta_{j-1}^{\mathrm{S}}=0}(T,Y')\md Y'-2T\pa_T(\om\Theta_{j-1}^{\mathrm{S}})+2(2j-2)\om\Theta_{j-1}^{\mathrm{S}}.
\end{aligned}
\]
Setting $\wh{\ml F}_j(T):=\int_{\R}\ml{F}_j|_{\Theta_{j-1}^{\mathrm{S}}=0}(T,Y')\md Y'$, and imposing $\Theta_{j-1}^{\mathrm{S}}(T_0)=0$, we solve the resulting linear first-order differential equation to obtain
		\begin{equation}\label{3.55}
		\begin{aligned}
			\Theta_{j-1}^{\mathrm{S}}(T)=-\frac{T^{2j-2}}{2\om(T)}\int_T^{T_0}\frac{\wh{\ml{F}}_j(\tau)}{\tau^{2j-1}}\md \tau.
		\end{aligned}
		\end{equation}
With $\Theta_{j-1}^{\mathrm{S}}$ thus determined, we have $|\Theta_{j-1}^{\mathrm{S}}|\lesssim 1$ for $T\in[1,T_0]$, and the right-hand side of \eqref{3.53} decays exponentially as $|Y|\to\infty$.  Setting $\ol{V}_j(T,0)=0$ and choosing $\eta\in(0,\eta')$, we obtain
	\begin{equation*}
		\begin{aligned}
		|\ol{V}_j|\lesssim e^{-(\frac{1}{2}+\eta)\om(T)|Y|}.
		\end{aligned}
	\end{equation*}
The estimates for the derivatives follow in the same way.
\end{proof}
\section{Proof of Proposition~\ref{prop4.5}}
In this appendix, we prove Proposition~\ref{prop4.5} for general $V_{j,\ell}^{\mathrm{SI}}$ and $\Theta_{j-1,\ell}$ by induction, where $j\in \N$ and $\ell \in \N_0$. We shall use the following lemma, which follows directly from Lemma~\ref{lem3.5}:
\begin{lemma}\label{lem4.6}
	For $k,i,\ell\in\N_0$, the scaled Taylor coefficients $V_{i,k,\ell}^{\pm}(T)$ satisfy
	\begin{equation*}
		\begin{aligned}
			\Big(\frac{1-4k-3i}{2}&V_{i,k,\ell}^{\pm}+T\pa_TV_{i,k,\ell}^{\pm}\Big)+\frac{i+1}{2}\sum\limits_{\substack{k_1+k_2=k\\i_1+i_2=i+1\\\ell_1+\ell_2=\ell}}V_{i_1,k_1,\ell_1}^{\pm}V_{i_2,k_2,\ell_2}^{\pm}\\
			&=(i+2)(i+1)V_{i+2,k-1,\ell}^{\pm}
			+(i+1)\sum\limits_{\ell_1+\ell_2=\ell-1}\mf{S}_{\ell_1}V_{i+1,k,\ell_2}^{\pm}.
		\end{aligned}
	\end{equation*}
\end{lemma}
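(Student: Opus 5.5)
The statement is Lemma~\ref{lem4.6}, and I would derive it from Lemma~\ref{lem3.5} by expanding both sides in powers of $\nu^{1/4}$ and matching coefficients. The identity \eqref{429} holds exactly for the scaled Taylor coefficients $V_{i,k}^{\pm}(T)$ of the outer profiles. Since $U_k^{\mathrm{out}}\sim\sum_\ell\nu^{\ell/4}U_{k,\ell}^{\mathrm{OI}}$, definitions \eqref{vik} and \eqref{4.15} give $V_{i,k}^{\pm}\sim\sum_\ell\nu^{\ell/4}V_{i,k,\ell}^{\pm}$. The speed coefficient satisfies $\dot{\mf S}T^{-1/2}\sim\sum_\ell\nu^{(\ell+1)/4}\mf S_\ell$ by \eqref{4.1}. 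Substituting these expansions into \eqref{429} and collecting the coefficient of $\nu^{\ell/4}$ should give the claimed identity term by term. The linear terms keep their index $\ell$. The quadratic term produces the convolution over $\ell_1+\ell_2=\ell$. The speed term $\dot{\mf S}(i+1)T^{-1/2}V_{i+1,k}^{\pm}$ produces $(i+1)\sum_{\ell_1+\ell_2=\ell-1}\mf S_{\ell_1}V_{i+1,k,\ell_2}^{\pm}$, because of the extra factor $\nu^{1/4}$.

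Asymptotic matching by itself does not yield an exact identity, so I would instead prove the lemma directly at the level of the grid equation \eqref{4.5}. Applying $\pa_X^i$ to \eqref{4.5}, evaluating at $X=0^{\pm}$ and dividing by $i!$ gives, for $\tilde U_{k,i,\ell}:=\frac1{i!}\pa_X^iU_{k,\ell}^{\mathrm{OI}}(T,0^\pm)$,
\[
\pa_T\tilde U_{k,i,\ell}=(i+1)T^{\frac12}\sum_{\ell_1+\ell_2=\ell-1}\mf S_{\ell_1}\tilde U_{k,i+1,\ell_2}-\frac{i+1}2\sum_{\substack{k_1+k_2=k\\ i_1+i_2=i+1\\ \ell_1+\ell_2=\ell}}\tilde U_{k_1,i_1,\ell_1}\tilde U_{k_2,i_2,\ell_2}+(i+2)(i+1)\tilde U_{k-1,i+2,\ell}.
\]
This mirrors the computation in the proof of Lemma~\ref{lem3.5}. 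Here one uses that $\pa_X^n$ of a product evaluated at a point is the Leibniz convolution, and that one-sided traces of the smooth branches are legitimate by Proposition~\ref{prop4.2}.

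Next I would substitute $\tilde U_{k,i,\ell}=T^{\frac{1-4k-3i}{2}}V_{i,k,\ell}^{\pm}$ from \eqref{4.15}. Then
\[
\pa_T\tilde U=T^{\frac{-1-4k-3i}{2}}\Bigl(\tfrac{1-4k-3i}{2}V+T\pa_TV\Bigr).
\]
The power counting must now be checked. In the quadratic term, $i_1+i_2=i+1$ and $k_1+k_2=k$ give the exponent $\frac{2-4k-3(i+1)}{2}=\frac{-1-4k-3i}{2}$. In the speed term, the exponent is $\frac12+\frac{1-4k-3(i+1)}{2}=\frac{-1-4k-3i}{2}$. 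In the diffusion term, $\frac{1-4(k-1)-3(i+2)}{2}=\frac{-1-4k-3i}{2}$. All terms therefore share the common factor $T^{\frac{-1-4k-3i}{2}}$, and cancelling it yields exactly the stated identity, with no $\nu$ factor remaining because the grid scaling has already absorbed it.

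The main obstacle is justifying the one-sided traces at $X=0$. The grid functions are homogeneous with a jump across $\x=0$ for $T>0$, so one must confirm that each $U_{k,\ell}^{\mathrm{OI}}$ is smooth up to $X=0^\pm$ on each side and that \eqref{4.5} holds up to the boundary. This follows from the homogeneous structure and the bounds $\mf D(T,0)\sim T^{1/2}>0$ for $T\ge1$ (Lemma~\ref{lem2.11}), so I expect it to be routine.
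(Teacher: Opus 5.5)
Your proof is correct. Differentiating \eqref{4.5} $i$ times in $X$, taking one-sided traces at $X=0^{\pm}$ and dividing by $i!$ gives exactly your recursion. The traces are legitimate: by Proposition~\ref{prop4.2}, all derivatives of $U_{k,\ell}^{\mathrm{OI}}$ stay bounded as $X\to0^{\pm}$, since $\mf D(T,0^{\pm})\sim T^{1/2}$. Substituting \eqref{4.15} then yields the identity exactly, and your power counting is right: every term carries the common factor $T^{\frac{-1-4k-3i}{2}}$, and no power of $\nu$ survives because \eqref{4.5} is $\nu$-free.

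The paper gives no separate argument; it only says the lemma follows directly from Lemma~\ref{lem3.5}. That is, one inserts the expansions of $V_{i,k}^{\pm}$ (Proposition~\ref{prop4.3}) and of $\dot{\mf S}/\sqrt T$ (\eqref{4.19}) into the exact identity \eqref{429} and collects powers of $\nu^{1/4}$.

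Your objection that such matching cannot give an exact identity does not apply here. The grid functions are homogeneous of degree $\ell+1-4k$, so the rescaling of Section~5.2 leaves them unchanged. Hence $V_{i,k,\ell}^{\pm}(T)$ and $\mf S_\ell(T)$ are $\nu$-independent: they are constant multiples of $T^{\ell/2}$ and $T^{(\ell+1)/2}$ respectively. Since \eqref{429} holds for every small $\nu$ at fixed $T$, uniqueness of asymptotic expansions in $\nu^{1/4}$ forces each coefficient identity.

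Both routes are therefore valid. The paper's reuses Lemma~\ref{lem3.5}, at the cost of invoking the outer--grid error bounds and this uniqueness argument. Yours works directly with the grid hierarchy \eqref{4.5} and is self-contained, which is arguably the cleaner proof.
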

Now we give the proof.
\begin{proof}[Proof of Proposition~\ref{prop4.5}]
	We argue by induction. The estimate for $V_{0,0}^{\mathrm{SI}}$ is straightforward. Assume that Proposition~\ref{prop4.5} holds for $V_{j',\ell'}^{\mathrm{SI}}$ and $\Theta_{j'-1,\ell'}$ with $j'<j$, or with $j'=j$ and $\ell'<\ell$, and that these profiles satisfy \eqref{4.14} with the exponential factor $e^{-\mu'|Y|}$ for some $\mu'>\mu$. We now construct $V_{j,\ell}^{\mathrm{SI}}$ and $\Theta_{j-1,\ell}$ satisfying the same estimates.\par
	Writing $V_{j,\ell}^{\mathrm{SI}}=\ol{V}_{j,\ell}+V_{j,\ell}^{\mathrm{cut}}$, we derive from Proposition~\ref{prop4.4} the following equation for $\ol{V}_{j,\ell}$:
	\begin{equation*}
		\begin{aligned}
			\pa_Y^2\ol{V}_{j,\ell}-\pa_Y(V_{0,0}^{\mathrm{SI}}\ol{V}_{j,\ell})=g_{\geq 1}+g_0+[T\pa_T\Theta_{j-1,\ell}-(2j-\frac 32)\Theta_{j-1,\ell}]\pa_YV_{0,0}^{\mathrm{SI}}.
		\end{aligned}
	\end{equation*}
	By the induction hypothesis, every term in $g_{\geq 1}$ contains a factor $\ol{V}_{j',\ell'}$ and hence decays exponentially; such terms can be bounded as in Section~4. It remains to analyze $g_0$:
	\begin{align}
g_0&=I_1+I_2+I_3+I_4+I_5\notag\displaybreak[1]\\
			I_1&=\frac12\pa_Y\sum_{\substack{j_1+j_2=j\\\ell_1+\ell_2=\ell}}V_{j_1,\ell_1}^{\mathrm{cut}}V_{j_2,\ell_2}^{\mathrm{cut}}\notag\displaybreak[1]\\
			I_2&=\frac Y2\pa_YV_{j-1,\ell}^{\mathrm{cut}}+T\pa_TV_{j-1,\ell}^{\mathrm{cut}}-(2j-\frac52)V_{j-1,\ell}^{\mathrm{cut}}\notag\displaybreak[1]\\
			I_3&=\sum_{\substack{j_1+j_2=j-1\\(j_2,\ell_2)\ne(0,0)\\\ell_1+\ell_2=\ell}}[T\pa_T\Theta_{j_1,\ell_1}-(2j_1+\frac 12)\Theta_{j_1,\ell_1}]\pa_YV_{j_2,\ell_2}^{\mathrm{cut}}\notag\displaybreak[1]\\
			I_4&=-\sum\limits_{\ell_1+\ell_2=\ell-1}\mf{S}_{\ell_1}\pa_YV_{j,\ell_2}^{\mathrm{cut}}\notag\\*
			I_5&=-\pa_Y^2V_{j,\ell}^{\mathrm{cut}}.
\label{4.17}
\end{align}
	To analyze $g_0$, we write $V_{j,\ell}^{\mathrm{cut}}=\chi\widetilde{V}_{j,\ell}^{\pm}$, where
	\[
	\begin{aligned}
		\widetilde{V}_{j,\ell}^{\pm}(Y)&=\sum_{p=0}^{j}\sum_{\substack{i\geq p,k\geq0\\h_1,\dots,h_p\geq0\\\ell_0,\dots,\ell_p\geq0\\
				i+k+\sum h_r=j\\\ell_0+\sum\ell_r=\ell}}
		(-1)^{p}C_i^{p}Y^{i-p}V_{i,k,\ell_0}^{\pm}\prod_{r=1}^{p}\Theta_{h_r,\ell_r},\\[4pt]
		\pa_Y\widetilde{V}_{j,\ell}^{\pm}&=\sum_{p=0}^{j}\sum_{\substack{i\geq p,k\geq0\\\sum h_r=j-i-k\\\ell_0+\sum\ell_r=\ell}}
		(-1)^{p}C_i^{p}(i-p)Y^{i-p-1}V_{i,k,\ell_0}^{\pm}\prod_{r=1}^{p}\Theta_{h_r,\ell_r}.
	\end{aligned}
	\]
	We then compute $I_1$ as follows:
	\begin{align}
\frac12\pa_Y&\sum_{\substack{j_1+j_2=j\\\ell_1+\ell_2=\ell}}V_{j_1,\ell_1}^{\mathrm{cut}}V_{j_2,\ell_2}^{\mathrm{cut}}
			\asymp
			\frac12\sum_{\substack{j_1+j_2=j\\\ell_1+\ell_2=\ell}}
			\bigl(\pa_Y\widetilde{V}_{j_1,\ell_1}^{\pm}\widetilde{V}_{j_2,\ell_2}^{\pm}
			+\widetilde{V}_{j_1,\ell_1}^{\pm}\pa_Y\widetilde{V}_{j_2,\ell_2}^{\pm}\bigr)\notag\displaybreak[1]\\
			&=\frac12\sum_{p=0}^{j-1}\sum_{\substack{a\geq p,k\geq0\\a+k+\sum h_r=j-1}}
			\sum_{\substack{\ell_0,\dots,\ell_p\geq0\\\ell_0+\sum\ell_r=\ell}}
			(-1)^{p}(a+1)C_a^{p}Y^{a-p}
			\prod_{r=1}^{p}\Theta_{h_r,\ell_r}\notag\displaybreak[1]\\
			&\times
			\sum_{\substack{i_1+i_2=a+1\\k_1+k_2=k\\\ell_0'+\ell_0''=\ell_0}}
			V_{i_1,k_1,\ell_0'}^{\pm}V_{i_2,k_2,\ell_0''}^{\pm}.
\label{nonlin_l}
\end{align}
	
	We next expand $I_2$:
	\begin{align*}
&\frac Y2\pa_YV_{j-1,\ell}^{\mathrm{cut}}+T\pa_TV_{j-1,\ell}^{\mathrm{cut}}-(2j-\frac52)V_{j-1,\ell}^{\mathrm{cut}}\notag\displaybreak[1]\\
			&\asymp\sum_{p=0}^{j-1}\sum_{\substack{i\geq p,k\geq0\\\sum h_r=j-1-i-k\\\ell_0+\sum\ell_r=\ell}}
			(-1)^{p}C_i^{p}\frac{i-p-4j+5}{2}Y^{i-p}
			V_{i,k,\ell_0}^{\pm}\prod_{r=1}^{p}\Theta_{h_r,\ell_r}\notag\displaybreak[1]\\
			&+\sum_{p=0}^{j-1}\sum_{\substack{i\geq p,k\geq0\\\sum h_r=j-1-i-k\\\ell_0+\sum\ell_r=\ell}}
			(-1)^{p}C_i^{p}Y^{i-p}\,
			T\pa_TV_{i,k,\ell_0}^{\pm}\prod_{r=1}^{p}\Theta_{h_r,\ell_r}\notag\displaybreak[1]\\
			&+\sum_{p=0}^{j-1}\sum_{\substack{i\geq p,k\geq0\\\sum h_r=j-1-i-k\\\ell_0+\sum\ell_r=\ell}}
			(-1)^{p}C_i^{p}Y^{i-p}
			V_{i,k,\ell_0}^{\pm}\,T\pa_T\Bigl(\prod_{r=1}^{p}\Theta_{h_r,\ell_r}\Bigr).
\end{align*}
	The term $I_3$ expands as follows:
	\begin{align*}
&\sum_{\substack{j_1+j_2=j-1\\(j_2,\ell_2)\ne(0,0)\\\ell_1+\ell_2=\ell}}[T\pa_T\Theta_{j_1,\ell_1}-(2j_1+\tfrac12)\Theta_{j_1,\ell_1}]\pa_YV_{j_2,\ell_2}^{\mathrm{cut}}\notag\displaybreak[1]\\
			&\asymp\sum_{\substack{j_1+j_2=j-1\\j_2>0\\\ell_1+\ell_2=\ell}}
			\sum_{p=0}^{j_2}\sum_{\substack{i\geq p,k\geq0\\\sum h_r=j_2-i-k\\\ell_0+\sum\ell_r=\ell_2}}
			(-1)^{p}C_i^{p}(i-p)Y^{i-p-1}
			V_{i,k,\ell_0}^{\pm}\prod_{r=1}^{p}\Theta_{h_r,\ell_r} T\pa_T\Theta_{j_1,\ell_1}\notag\displaybreak[1]\\
			&-\sum_{\substack{j_1+j_2=j-1\\j_2>0\\\ell_1+\ell_2=\ell}}
			(2j_1+\tfrac12)\sum_{p=0}^{j_2}\sum_{\substack{i\geq p,k\geq0\\\sum h_r=j_2-i-k\\\ell_0+\sum\ell_r=\ell_2}}
			(-1)^{p}C_i^{p}(i-p)Y^{i-p-1}
			V_{i,k,\ell_0}^{\pm}\notag\\*
			&\times\prod_{r=1}^{p}\Theta_{h_r,\ell_r}\;\Theta_{j_1,\ell_1}.
\end{align*}
	Note that the $j_2=0$ term in the expansion above vanishes identically, since it forces $p=i=k=0$ and hence the coefficient $(i-p)Y^{i-p-1}$ is zero. Therefore the restriction $j_2>0$ may be dropped in the relabeled sums below.
	
	We now compute the $T\pa_T\Theta$ terms. The term involving $T\pa_T(\prod\Theta)$ in the linear part is $\asymp I$, where $I$ is defined by
	\[
	I:=\sum_{q=0}^{j-2}\sum_{\substack{i\geq q,k\geq0\\\sum h_r=j-1-i-k\\\ell_0+\sum\ell_r=\ell}}
	(i-q)C_i^{q}(-1)^{q+1}Y^{i-q-1}
	V_{i,k,\ell_0}^{\pm}\,T\pa_T\Theta_{h_{q+1},\ell_{q+1}}\prod_{r=1}^{q}\Theta_{h_r,\ell_r}.
	\]
	Meanwhile, relabeling $j_1=h_{p+1}$ and $\ell_1=\ell_{p+1}$ in the $T\pa_T\Theta_{j_1,\ell_1}$ term of the $\Theta$-sum, we obtain
	\begin{equation*}
		\begin{aligned}
			\sum_{p=0}^{j-2}\sum_{\substack{i\geq p,k\geq0\\\sum h_r=j-1-i-k\\\ell_0+\sum\ell_r=\ell}}
			(-1)^{p}C_i^{p}(i-p)Y^{i-p-1}
			V_{i,k,\ell_0}^{\pm}\,T\pa_T\Theta_{h_{p+1},\ell_{p+1}}\prod_{r=1}^{p}\Theta_{h_r,\ell_r}
			\asymp-I.
		\end{aligned}
	\end{equation*}
	Thus the two $T\pa_T\Theta$ contributions cancel on $|Y|\geq1$.
	
	For the $(2j_1+\frac12)\Theta$ part, we relabel $j_1=h_{p+1}$ and $\ell_1=\ell_{p+1}$.
	For fixed $q\ge1$, $i,k$ and $0\le\ell_0\le\ell$, set $S:=j-1-i-k\ge0$.
	The product and the joint summation domain are invariant under simultaneous
	permutations of the pairs $(h_r,\ell_r)$, $r=1,\dots,q$. Averaging over
	these $q$ positions therefore gives
	\begin{equation}\label{rot_sym}
		\sum_{\substack{h_1,\dots,h_q\geq0\\\ell_1,\dots,\ell_q\geq0\\h_1+\cdots+h_q=S\\\ell_1+\cdots+\ell_q=\ell-\ell_0}}q\,h_q\prod_{r=1}^{q}\Theta_{h_r,\ell_r}
		=S\sum_{\substack{h_1,\dots,h_q\geq0\\\ell_1,\dots,\ell_q\geq0\\h_1+\cdots+h_q=S\\\ell_1+\cdots+\ell_q=\ell-\ell_0}}\prod_{r=1}^{q}\Theta_{h_r,\ell_r},
	\end{equation}
	so that, after this joint summation, the factor $q(2h_q+\frac12)$ is replaced by $2S+\frac q2$. Hence
	\begin{equation*}
		\begin{aligned}
			&\sum_{q=1}^{j-1}\sum_{\substack{i\geq q,k\geq0\\\sum h_r=j-1-i-k\\\ell_0+\sum\ell_r=\ell}}
			q\bigl(2h_q+\tfrac12\bigr)(-1)^{q-1}C_i^{q}Y^{i-q}
			V_{i,k,\ell_0}^{\pm}\prod_{r=1}^{q}\Theta_{h_r,\ell_r}\\
			&=\sum_{q=1}^{j-1}\sum_{\substack{i\geq q,k\geq0\\\sum h_r=j-1-i-k\\\ell_0+\sum\ell_r=\ell}}
			\bigl[\tfrac{q}{2}+2(j-1-i-k)\bigr](-1)^{q-1}C_i^{q}Y^{i-q}
			V_{i,k,\ell_0}^{\pm}\prod_{r=1}^{q}\Theta_{h_r,\ell_r}.
		\end{aligned}
	\end{equation*}
	Combining this with the linear part yields the combined coefficient
	\begin{equation*}
		\begin{aligned}
			&\frac{i-p-4j+5}{2}+\frac{p}{2}+2(j-1-i-k)
			=\frac{1-4k-3i}{2}.
		\end{aligned}
	\end{equation*}
	The term $I_5=-\pa_Y^2V_{j,\ell}^{\mathrm{cut}}$ expands, in the same way, as
	\begin{align}
-\pa_Y^2V_{j,\ell}^{\mathrm{cut}}
			&\asymp\sum_{p=0}^{j}\sum_{\substack{i\geq p,k\geq0\\\sum h_r=j-i-k\\\ell_0+\sum\ell_r=\ell}}(-1)^{p+1}C_i^{p}(i-p)(i-p-1)Y^{i-p-2}V_{i,k,\ell_0}^{\pm}\prod_{r=1}^{p}\Theta_{h_r,\ell_r}\notag\displaybreak[1]\\
			&=\sum_{p=0}^{j-1}\sum_{\substack{i\geq p+2,k\geq0\\\sum h_r=j-i-k\\\ell_0+\sum\ell_r=\ell}}(-1)^{p+1}C_{i-2}^{p}i(i-1)Y^{i-p-2}V_{i,k,\ell_0}^{\pm}\prod_{r=1}^{p}\Theta_{h_r,\ell_r}\notag\displaybreak[1]\\
			&=\sum_{p=0}^{j-1}\sum_{\substack{a\geq p,b\geq1\\a+b+\sum h_r=j-1\\\ell_0+\sum\ell_r=\ell}}(-1)^{p+1}C_{a}^{p}(a+2)(a+1)Y^{a-p}V_{a+2,b-1,\ell_0}^{\pm}\prod_{r=1}^{p}\Theta_{h_r,\ell_r},
\label{P2cut_l}
\end{align}
	where we used $C_i^{p}(i-p)(i-p-1)=i(i-1)C_{i-2}^{p}$ and the shift $(i,k)\mapsto(a+2,b-1)$.
	
	The term $I_4$ expands, in the same way, as
	\begin{equation}\label{S_cancel}
		\begin{aligned}
			-\sum_{\substack{\ell_1+\ell_2=\ell-1}}\mf{S}_{\ell_1}\pa_YV_{j,\ell_2}^{\mathrm{cut}}
			&\asymp-\sum_{p=0}^{j-1}\sum_{\substack{a\geq p,k\geq0\\a+k+\sum h_r=j-1\\\ell_1+\ell_0+\sum\ell_r=\ell-1}}(-1)^{p}(a+1)C_a^{p}Y^{a-p}\mf{S}_{\ell_1}V_{a+1,k,\ell_0}^{\pm}\\
			&\times\prod_{r=1}^{p}\Theta_{h_r,\ell_r},
		\end{aligned}
	\end{equation}
	where we used $C_{a+1}^{p}(a+1-p)=(a+1)C_a^{p}$ after the shift $i\mapsto a+1$.
	
	Thus, after the cancellations among the $T\pa_T\Theta$ and $(2j_1+\frac12)\Theta$ parts, the quadratic term~\eqref{nonlin_l} and the remaining linear terms combine by Lemma~\ref{lem4.6}, and $g_0$ reduces, in the sense of $\asymp$, to
	\begin{align}
g_0&\asymp\sum_{p=0}^{j-1}\sum_{\substack{i\geq p,k\geq0\\\sum h_r=j-1-i-k\\\ell_0+\sum\ell_r=\ell}}
			(-1)^{p}C_i^{p}Y^{i-p}\prod_{r=1}^{p}\Theta_{h_r,\ell_r}
			\Bigl[\frac{1-4k-3i}{2}V_{i,k,\ell_0}^{\pm}+T\pa_TV_{i,k,\ell_0}^{\pm}\notag\displaybreak[1]\\
			&+\frac{i+1}{2}\sum_{\substack{i_1+i_2=i+1\\k_1+k_2=k\\\ell_0'+\ell_0''=\ell_0}}
			V_{i_1,k_1,\ell_0'}^{\pm}V_{i_2,k_2,\ell_0''}^{\pm}
			-(i+2)(i+1)V_{i+2,k-1,\ell_0}^{\pm}\notag\displaybreak[1]\\
			&-(i+1)\sum_{\ell_1+\ell_2=\ell_0-1}\mf{S}_{\ell_1}V_{i+1,k,\ell_2}^{\pm}\Bigr]\notag\\*
			&=0,
\label{combined_l}
\end{align}
	where the term $-(i+2)(i+1)V_{i+2,k-1,\ell_0}^{\pm}$ in the bracket is the main part of $I_5$ (cf. \eqref{P2cut_l}, with $a=i$, $b=k$), and the $\mf{S}$-term of $I_4$ in the bracket coincides with \eqref{S_cancel} (with $a=i$ and $(\ell_1,\ell_2)=(\ell_1,\ell_0)$).
	The cutoff remainders omitted in the $\asymp$ calculations, including those from $I_5$, are supported in $|Y|\leq1$ and retained in $g_0$.
	
	Consequently, by \eqref{combined_l}, all pure $V^{\mathrm{cut}}$ polynomial terms in $g_0$ cancel on $|Y|\geq1$. Setting $\ml{F}_{j,\ell}:=g_0+g_{\geq1}$ and integrating the $\ol{V}_{j,\ell}$ equation once, we obtain
	\begin{equation}\label{int_ol}
		\pa_Y\ol{V}_{j,\ell}-V_{0,0}^{\mathrm{SI}}\ol{V}_{j,\ell}
		=\int_{-\infty}^{Y}\ml{F}_{j,\ell}(T,Y')\,\md Y'
		+\bigl[T\pa_T\Theta_{j-1,\ell}-(2j-\tfrac32)\Theta_{j-1,\ell}\bigr](V_{0,0}^{\mathrm{SI}}-V_{0,0,0}^{-}).
	\end{equation}
	Set $[[V_{0,0}^{\mathrm{SI}}]]:=V_{0,0}^{\mathrm{SI}}(T,-\infty)-V_{0,0}^{\mathrm{SI}}(T,+\infty)$. For $j=1$, set $\delta_{0,\ell}=1$ for $\ell=0$ and $\delta_{0,\ell}=0$ for $\ell\in\N$, and define
	\[
	\wh{\ml F}_{1,\ell}(T):=\int_{\R}\ml{F}_{1,\ell}|_{\Theta_{0,\ell}=0}(T,Y')\md Y'.
	\]
	Then
	\[
	\Theta_{0,\ell}(T)=\frac{1}{[[V_{0,0}^{\mathrm{SI}}]](T)}\left[\delta_{0,\ell}[[V_{0,0}^{\mathrm{SI}}]](2\nu^{-\beta_2})\Theta_0^{\mathrm{S}}(2\nu^{-\beta_2})-\int_T^{2\nu^{-\beta_2}}\frac{\wh{\ml F}_{1,\ell}(\tau)}{\tau}\md\tau\right].
	\]
	The calculation for $\Theta_0^{\mathrm{S}}$ in Section~4.4, applied coefficient by coefficient, gives
\begin{align*}
\wh{\ml F}_{1,\ell}
={}&-\frac12\sum_{\ell_1+\ell_2=\ell}
(V_{0,0,\ell_1}^--V_{0,0,\ell_1}^+)
(V_{0,1,\ell_2}^++V_{0,1,\ell_2}^-)\\
&+V_{1,0,\ell}^--V_{1,0,\ell}^+\\
&-T\pa_T\sum_{\substack{\ell_1+\ell_2=\ell\\\ell_1>0}}
(V_{0,0,\ell_1}^--V_{0,0,\ell_1}^+)\Theta_{0,\ell_2}.
\end{align*}
By Proposition~\ref{prop4.2} and \eqref{4.15}, the terms without $\Theta$ are $O(T^{\ell/2})$; for $\ell=0$ they vanish by the leading-order parity.
Since $[[V_{0,0}^{\mathrm{SI}}]]=2ab$, we obtain $\Theta_{0,0}(T)=\Theta_0^{\mathrm{S}}(2\nu^{-\beta_2})$.
Integrating the total derivative exactly and using the terminal data, Proposition~\ref{prop3.6} and induction on $\ell$, we obtain, for $\ell\geq1$,
\[
|\Theta_{0,\ell}(T)|\lesssim\nu^{-\beta_2\ell/2}
+\int_T^{2\nu^{-\beta_2}}\tau^{\ell/2-1}\,\md\tau
\lesssim\nu^{-\beta_2\ell/2}.
\]
Differentiation gives $|\pa_T^m\Theta_{0,\ell}|\lesssim\nu^{-\beta_2\ell/2}T^{-m}$. Hence, assume $j\geq2$. The only term in $V_{j,\ell}^{\mathrm{cut}}$ involving $\Theta_{j-1,\ell}$ is
	\[
	-\chi(Y)V_{1,0,0}^{\pm}(T)\Theta_{j-1,\ell}(T).
	\]
	Consequently,
	\[
	\int_{\R}\ml{F}_{j,\ell}(T,Y')\md Y'
	=\int_{\R}\ml{F}_{j,\ell}|_{\Theta_{j-1,\ell}=0}(T,Y')\md Y'
	+\frac{[[V_{0,0}^{\mathrm{SI}}]]}{2}(V_{1,0,0}^++V_{1,0,0}^-)\Theta_{j-1,\ell}(T).
	\]
	Using Lemma~\ref{lem4.6} with $(i,k,\ell)=(0,0,0)$, we have
	\[
	V_{1,0,0}^++V_{1,0,0}^-=-1-2T\frac{\pa_T[[V_{0,0}^{\mathrm{SI}}]]}{[[V_{0,0}^{\mathrm{SI}}]]}.
	\]
	To cancel the boundary contribution at $Y=+\infty$, we choose $\Theta_{j-1,\ell}$ so that
	\[
	\begin{aligned}
	0&=\int_{\R}\ml{F}_{j,\ell}(T,Y')\md Y'-[[V_{0,0}^{\mathrm{SI}}]][T\pa_T\Theta_{j-1,\ell}-(2j-\tfrac32)\Theta_{j-1,\ell}]\\
	&=\int_{\R}\ml{F}_{j,\ell}|_{\Theta_{j-1,\ell}=0}(T,Y')\md Y'
	\\
	&\quad-[[V_{0,0}^{\mathrm{SI}}]]\left[T\pa_T\Theta_{j-1,\ell}-(2j-\tfrac32)\Theta_{j-1,\ell}-\frac{V_{1,0,0}^++V_{1,0,0}^-}{2}\Theta_{j-1,\ell}\right]\\
	&=\int_{\R}\ml{F}_{j,\ell}|_{\Theta_{j-1,\ell}=0}(T,Y')\md Y'
	\\
	&\quad-T\pa_T\bigl([[V_{0,0}^{\mathrm{SI}}]]\Theta_{j-1,\ell}\bigr)+(2j-2)[[V_{0,0}^{\mathrm{SI}}]]\Theta_{j-1,\ell}.
	\end{aligned}
	\]
	Setting $\wh{\ml F}_{j,\ell}(T):=\int_{\R}\ml{F}_{j,\ell}|_{\Theta_{j-1,\ell}=0}(T,Y')\md Y'$, we solve this linear first-order differential equation with $\Theta_{j-1,\ell}(2\nu^{-\beta_2})=\delta_{0,\ell}\Theta_{j-1}^{\mathrm{S}}(2\nu^{-\beta_2})$ to obtain
	\begin{equation}\label{4.41}
\begin{aligned}
\Theta_{j-1,\ell}(T)&=\frac{T^{2j-2}}{[[V_{0,0}^{\mathrm{SI}}]](T)}\Biggl[\delta_{0,\ell}(2\nu^{-\beta_2})^{2-2j}[[V_{0,0}^{\mathrm{SI}}]](2\nu^{-\beta_2})\Theta_{j-1}^{\mathrm{S}}(2\nu^{-\beta_2})\\
&\qquad-\int_{T}^{2\nu^{-\beta_2}}\frac{\wh{\ml F}_{j,\ell}(\tau)}{\tau^{2j-1}}\md \tau\Biggr].
\end{aligned}
\end{equation}
	Then the right-hand side of \eqref{int_ol} decays exponentially as $|Y|\to\infty$. Setting $\ol{V}_{j,\ell}(T,0)=0$ and choosing $\mu''\in(\mu,\mu')$, we obtain
	\[
	|\ol{V}_{j,\ell}|\lesssim \nu^{-\beta_2\ell/2}e^{-\mu''|Y|},
	\]
	which completes the induction. Derivative estimates follow in the same way.
	
\end{proof}

\section{Proof of Lemma ~\ref{lem:52} and Proposition~\ref{prop6.1}}\label{app:prop61}

This appendix proves the short-time inner--outer matching estimate in
Lemma~\ref{lem:52} and the weighted initial error estimate in
Proposition~\ref{prop6.1}.

\begin{proof}[Proof of Lemma~\ref{lem:52}]
All time derivatives at $T=0$ are right-hand derivatives.
At fixed $\x\ne0$, Proposition~\ref{prop2.12} and \eqref{2.48}
give smooth right-hand time traces for $\mf s$. Hence \eqref{2.60}
implies that $\sqrt T\mf S_{\ell_1}$ is a polynomial of degree at most
$(\ell_1+2)/2$ for even $\ell_1$, and vanishes for odd $\ell_1$.

Use the cutoff $\chi$ from \eqref{3.24}, now evaluated at $X$, and set
\begin{equation}\label{eq:Ccutoff}
\begin{aligned}
&\chi(X)=0\quad(|X|\le\tfrac12),\qquad
\chi(X)=1\quad(|X|\ge1),\\
&P_{K,\ell}:=U_\ell^{\mathrm{in}}-\chi U_{[K],\ell}^{\mathrm{OI}}.
\end{aligned}
\end{equation}
The cutoff products are extended by zero near $X=0$ and are smooth.
Take the Cauchy solutions of \eqref{4.4} with polynomially growing spatial
derivatives, as in \cite[proof of Proposition~4.2, after (4.40)]{CG}.
For $\ell=0$, the argument of \cite[(4.22)--(4.23)]{CG}, using the cubic
profile at time $T-2$, gives on $[0,1]\times\R$
\begin{equation}\label{eq:Cleading}
\begin{aligned}
|U_0^{\mathrm{in}}|\lesssim(1+X^2)^{1/6},\qquad
|\pa_X^nU_0^{\mathrm{in}}|\lesssim1\quad(n\ge1).
\end{aligned}
\end{equation}
By \cite[Proposition~4.2]{CG}, the initial matching holds for all spatial
derivatives. Recursively evaluating \eqref{4.4} and \eqref{4.5} at $T=0^+$,
and using smoothness on $|X|\le1$, yields
\begin{equation}\label{eq:Cgridinitial}
\begin{aligned}
|\pa_T^m\pa_X^nP_{K,\ell}(0,X)|
\lesssim(1+X^2)^{(\ell-3-4K-2m-3n)/6},\qquad X\in\R.
\end{aligned}
\end{equation}

Substitution into \eqref{4.4} gives, on $[0,1]\times\R$,
\begin{equation}\label{eq:Cgridrem}
\begin{aligned}
&\pa_TP_{K,0}-\pa_X^2P_{K,0}
+\pa_X(\chi U_{[K],0}^{\mathrm{OI}}P_{K,0})
+P_{K,0}\pa_XP_{K,0}=G_{K,0},\\
&\pa_TP_{K,\ell}-\pa_X^2P_{K,\ell}
+\pa_X(U_0^{\mathrm{in}}P_{K,\ell})=G_{K,\ell},\qquad\ell\ge1,
\end{aligned}
\end{equation}
where summing \eqref{4.5} over $0\le k\le K$ gives
\begin{equation}\label{eq:Ccutforcing}
\begin{aligned}
&G_{K,\ell}=\chi\pa_X^2U_{K,\ell}^{\mathrm{OI}}
-\frac\chi2\pa_X
\sum_{\substack{0\le k_1,k_2\le K,\ k_1+k_2>K\\\ell_1+\ell_2=\ell}}
U_{k_1,\ell_1}^{\mathrm{OI}}U_{k_2,\ell_2}^{\mathrm{OI}}\\
&\quad+2\chi'\pa_XU_{[K],\ell}^{\mathrm{OI}}
+\chi''U_{[K],\ell}^{\mathrm{OI}}
+\frac{\chi(1-\chi)}2\pa_X\sum_{\ell_1+\ell_2=\ell}
U_{[K],\ell_1}^{\mathrm{OI}}U_{[K],\ell_2}^{\mathrm{OI}}\\
&\quad-\chi\chi'\sum_{\ell_1+\ell_2=\ell}
U_{[K],\ell_1}^{\mathrm{OI}}U_{[K],\ell_2}^{\mathrm{OI}}
+\sqrt T\sum_{\ell_1+\ell_2=\ell-1}\mf S_{\ell_1}
\bigl(\pa_XP_{K,\ell_2}+\chi'U_{[K],\ell_2}^{\mathrm{OI}}\bigr)\\
&\quad-\pa_X\sum_{\substack{\ell_1+\ell_2=\ell\\\ell_1\ge1}}
\chi U_{[K],\ell_1}^{\mathrm{OI}}P_{K,\ell_2}
-\tfrac12\pa_X\sum_{\substack{\ell_1+\ell_2=\ell\\\ell_1,\ell_2\ge1}}
P_{K,\ell_1}P_{K,\ell_2}.
\end{aligned}
\end{equation}
The terms containing $\chi'$, $\chi''$ or $\chi(1-\chi)$ are supported in
$\tfrac12\le|X|\le1$ and have bounded derivatives.
Induct on $\ell$, assuming the full conclusion for lower inner orders. By $\mf{D}(T,X)\sim (1+|X|)^{\frac13}$ for $T\in[0,1], |X|\geq 1$, 
Proposition~\ref{prop4.2} and the induction hypothesis give
\begin{equation}\label{eq:Cgridforce}
\begin{aligned}
|\pa_T^m\pa_X^nG_{K,\ell}|
\lesssim(1+X^2)^{(\ell-5-4K-2m-3n)/6},\qquad X\in\R.
\end{aligned}
\end{equation}
For sufficiently large $C$, $Q_{n}(T,X)^{\pm}:=\pm Ce^{CT}(1+X^2)^{(\ell-3-4K-2m-3n)/6}$
are super- and subsolutions for the equations for $\pa_T^m\pa_X^nP_{K,\ell}$  and dominate
the initial data by \eqref{eq:Cgridinitial}. The comparison principle gives
\begin{equation}\label{eq:Cgridspatial}
\begin{aligned}
|\pa_T^m\pa_X^nP_{K,\ell}|
\lesssim(1+X^2)^{(\ell-3-4K-2m-3n)/6}.
\end{aligned}
\end{equation}
So this is the desired estimate.
\end{proof}

We use the shifted coordinate \eqref{inner}: the spatial argument in the
definition of $U_{K,L}^{\mathrm{app}}$ in Section~7.1 is understood as $\x$.

\begin{proof}[Proof of Proposition~\ref{prop6.1}]
\emph{1. The rescaled equation and its forcing.}
Set $\gamma_{K,L}:=\min\{(1-2\beta)K,L\beta/3\}$ and first assume
$\gamma_{K,L}\ge21$,  $\beta=\frac12-\frac23 \beta_3$. Choose the cutoff in \eqref{0.20} with transition at
$\mf d\asymp\nu^{\beta/2}$ and
$|\pa_t^m\pa_x^n\xi|\lesssim\nu^{-\beta m-3\beta n/2}$ for the derivatives
used below. All time norms in this proof are on
$T\in [\nu^{-1/2}t_{\mathrm{in}},1]$. By \eqref{inner},
$V(T,X):=U^{(\nu)}(T,X)-U_{K,L}^{\mathrm{app}}(T,X)$ satisfies
\begin{equation}\label{eq:Ceq}
\begin{aligned}
&\pa_TV
+\pa_X\bigl((U_{K,L}^{\mathrm{app}}-\dot{\mf S})V\bigr)
+\tfrac12\pa_X(V^2)-\pa_X^2V
=\widetilde E_0,\\
&V(\nu^{-1/2}t_{\mathrm{in}},X)=0,\\
&\widetilde E_0
=-\bigl(\pa_TU_{K,L}^{\mathrm{app}}
-\dot{\mf S}\pa_XU_{K,L}^{\mathrm{app}}+U_{K,L}^{\mathrm{app}}\pa_XU_{K,L}^{\mathrm{app}}
-\pa_X^2U_{K,L}^{\mathrm{app}}\bigr).
\end{aligned}
\end{equation}
Here $\mf S=0$ for $T\le0$, as stipulated for $\mf s$ in Section~4.1.
To handle the long rescaled interval, use the weight
$\om_\nu(X):=1+\nu^{3/2}X^2$; the desired weight $1+X^2$ will be recovered
at the end.

By Assumption~\ref{ass:init}(H2), finite propagation for the transport
hierarchy \eqref{3.3} gives $u_0^{\mathrm{out}}=c$ and
$u_k^{\mathrm{out}}=0$ for $k\ge1$ outside a fixed bounded physical interval.
Since $\xi$ also has bounded spatial support, enlarging this interval gives
\begin{equation}\label{eq:Csupport}
\begin{aligned}
\operatorname{supp}_x(u_{K,L}^{\mathrm{app}}(t)-c)
\cup\operatorname{supp}_xE_0(t)
\subset[-C,C],\qquad t_{\mathrm{in}}\le t\le\sqrt\nu,
\end{aligned}
\end{equation}
with $C$ independent of $\nu,K,L$. We use this support bound to obtain
the weighted estimate below from pointwise estimates.

The outer and inner residual bounds $(\nu\mf d^{-4})^{K+1}\mf d^{-1}$ and
$(\mf d\vee\nu^{1/4})^L$ are obtained in
\cite[(5.23)--(5.24), proof of Proposition~5.4]{CG}.
For $t\le0$ and our cutoff, \cite[Proposition~4.5]{CG} gives on the matching region
\begin{equation}\label{eq:Cmismatch}
\begin{aligned}
\bigl|\pa_x^j(u_{[L]}^{\mathrm{in}}-u_{[K]}^{\mathrm{out}})\bigr|
\lesssim\nu^{\gamma_{K,L}+\frac\beta2(1-3j)},
\qquad 0\le j\le4.
\end{aligned}
\end{equation}
Differentiate the residual decomposition \cite[(5.18)--(5.20)]{CG}, using
its Corollaries~3.10 and~4.4 for the profile derivatives.
The cutoff terms use \eqref{eq:Cmismatch}: a time derivative costs
$\nu^{-\beta}$ and each spatial derivative at most $\nu^{-3/4}$;
the transport and viscous cutoff terms obey the same bound since
$\beta<1/2$. This yields the following estimate for $t\le0$.
For $0\le t\le\sqrt\nu$, the same calculation uses
Propositions~\ref{prop3.2}, \ref{prop4.2}, \ref{prop4.3},
Lemma~\ref{lem:52} and the hierarchies \eqref{3.3}, \eqref{4.4},
with \eqref{2.60} controlling the truncated shock speed:
\begin{equation}\label{eq:Cpointwise}
\begin{aligned}
\|\pa_x^jE_0(t)\|_{L^\infty_x(\R)}
\lesssim\nu^{\gamma_{K,L}-(1+3j)/4},
\qquad 0\le j\le3,\quad t_{\mathrm{in}}\le t\le\sqrt\nu.
\end{aligned}
\end{equation}
Thus \eqref{eq:Csupport} gives
$\|(1+x^2)E_0(t)\|_{H^3_x(\R)}
\lesssim\nu^{\gamma_{K,L}-5/2}$.
Under \eqref{inner}, $\widetilde E_0=\nu^{1/4}E_0$ and
$\md X=\nu^{-3/4}\md x$, so
\begin{equation}\label{eq:Cforcing}
\begin{aligned}
\|\om_\nu\widetilde E_0(T)\|_{H^3_X(\R)}
&\lesssim\nu^{-1/8}
\|(1+x^2)E_0(\nu^{1/2}T)\|_{H^3_x(\R)}
\lesssim\nu^{\gamma_{K,L}-5}.
\end{aligned}
\end{equation}
The cited profile and matching estimates also give
\begin{equation}\label{eq:Ccoeff}
\begin{aligned}
\nu^{1/4}\|U_{K,L}^{\mathrm{app}}-\dot{\mf S}\|_{L^\infty_TL^\infty_X}
+\sum_{r=1}^4\|\pa_X^rU_{K,L}^{\mathrm{app}}\|_{L^\infty_TL^\infty_X}
\lesssim1.
\end{aligned}
\end{equation}

By \eqref{eq:Cmismatch} and the cutoff bounds, the matching-region term
$(\pa_x\xi)(u_{[L]}^{\mathrm{in}}-u_{[K]}^{\mathrm{out}})$ is
$\ml O(\nu^{\gamma_{K,L}-\beta})=\ml O(\nu)$ since $\gamma_{K,L}\ge21$.
Thus the argument of \cite[Proposition~5.6, (5.30)]{CG}, after the
rescaling \eqref{inner}, gives, for $T\le-1$,
\[
\|\pa_XU_{K,L}^{\mathrm{app}}(T)\|_{L^\infty_X(\R)}
\le |T|^{-1}+C_{K,L}\bigl(\nu^{1/4}|T|^{-1/2}+|T|^{-3}\bigr).
\]
Integrating the leading term gives
$\tfrac12|\log\nu|+\log|t_{\mathrm{in}}|$, while the error terms have
uniformly bounded time integrals. Together with \eqref{eq:Ccoeff} on
$[-1,1]$, this yields
\begin{equation}\label{eq:Cslope}
\begin{aligned}
\int_{\nu^{-1/2}t_{\mathrm{in}}}^1
\|\pa_XU_{K,L}^{\mathrm{app}}(T)\|_{L^\infty_X(\R)}\,\md T
\le\tfrac12|\log\nu|+C_{K,L}.
\end{aligned}
\end{equation}
Implicit constants may depend on $K,L,\beta$, but the losses in powers of
$\nu$ below do not depend on $K,L$.

\emph{2. The weighted energy inequality.}
For $0\le j\le3$, define
\begin{equation}\label{eq:Cenergydef}
\begin{aligned}
\ml E_j(T):=\sum_{r=0}^j\|\om_\nu\pa_X^rV(T)\|_{L^2_X(\R)}^2,
\qquad \ml E_{-1}:=0.
\end{aligned}
\end{equation}
Applying $\pa_X^r$ to \eqref{eq:Ceq}, multiplying
by $\om_\nu^2\pa_X^rV$ and integrating by parts gives
\begin{align}
&\tfrac12\frac{\md}{\md T}\|\om_\nu\pa_X^rV\|_{L^2_X(\R)}^2
+\|\om_\nu\pa_X^{r+1}V\|_{L^2_X(\R)}^2\notag\\
&=-\tfrac{2r+1}{2}\int_{\R}\om_\nu^2(\pa_XU_{K,L}^{\mathrm{app}})(\pa_X^rV)^2\,\md X
+\int_{\R}\om_\nu\om_\nu'(U_{K,L}^{\mathrm{app}}-\dot{\mf S})(\pa_X^rV)^2\,\md X\notag\\
&\quad+\tfrac12\int_{\R}(\om_\nu^2)''(\pa_X^rV)^2\,\md X
-\sum_{k=2}^{r+1}\binom{r+1}{k}\int_{\R}\om_\nu^2\pa_X^rV(\pa_X^kU_{K,L}^{\mathrm{app}})\pa_X^{r+1-k}V\,\md X\notag\\
&\quad+\tfrac12\int_{\R}\pa_X(\om_\nu^2\pa_X^rV)\pa_X^r(V^2)\,\md X
+\int_{\R}\om_\nu^2\pa_X^rV\pa_X^r\widetilde E_0\,\md X.
\label{eq:Cibp}
\end{align}
Use \eqref{eq:Ccoeff}, the bounds
$|\om_\nu'|\lesssim\nu^{3/4}\om_\nu$,
$|(\om_\nu^2)''|\lesssim\nu^{3/2}\om_\nu^2$, and the elementary product estimate
$\|\om_\nu\pa_X^r(V^2)\|_{L^2_X(\R)}
\lesssim\|V\|_{L^\infty_X(\R)}\ml E_r^{1/2}$, $r\le3$.
The latter follows by Sobolev interpolation on unit intervals, where
$\om_\nu$ is comparable to a constant, and summation.
Absorb the nonlinear flux into the diffusion and apply Young's inequality
with parameter $\nu^{1/2}$ to the commutator and forcing terms. Summing gives
\begin{equation}\label{eq:Cenergy}
\begin{aligned}
&\ml E_j'+\tfrac12\sum_{r=0}^j
\|\om_\nu\pa_X^{r+1}V\|_{L^2_X(\R)}^2\\
&\le\Bigl((2j+1)\|\pa_XU_{K,L}^{\mathrm{app}}\|_{L^\infty_X(\R)}
+C_{K,L}\nu^{1/2}
+C\|V\|_{L^\infty_X(\R)}^2\Bigr)\ml E_j\\
&\quad+C_{K,L}\nu^{-1/2}\ml E_{j-1}
+C\nu^{-1/2}\|\om_\nu\widetilde E_0\|_{H^j_X(\R)}^2.
\end{aligned}
\end{equation}

\emph{3. Bootstrap.}
Assume $\|V(T)\|_{L^\infty_X(\R)}\le\nu^{1/4}$ on the bootstrap interval.
Its length is $O(\nu^{-1/2})$, so \eqref{eq:Cslope} gives, for
$\nu^{-1/2}t_{\mathrm{in}}\le\tau\le T\le1$,
\begin{equation}\label{eq:Cgronwall}
\begin{aligned}
\exp\!\int_\tau^T\Bigl((2j+1)
\|\pa_XU_{K,L}^{\mathrm{app}}\|_{L^\infty_X(\R)}
+C_{K,L}\nu^{1/2}+C\|V\|_{L^\infty_X(\R)}^2\Bigr)\,\md T'
\lesssim\nu^{-(2j+1)/2}.
\end{aligned}
\end{equation}
Since $\ml E_j(\nu^{-1/2}t_{\mathrm{in}})=0$, \eqref{eq:Cforcing},
\eqref{eq:Cenergy} and induction in $j$ imply
\begin{equation}\label{eq:Cweighted}
\begin{aligned}
\sup_T\ml E_j(T)
&\lesssim\nu^{2\gamma_{K,L}-11-j-(j+1)^2/2},
\qquad 0\le j\le3,\\
\sup_T\|V(T)\|_{L^\infty_X(\R)}
&\lesssim\sup_T\ml E_3(T)^{1/2}
\lesssim\nu^{\gamma_{K,L}-11}
\le\tfrac12\nu^{1/4}
\end{aligned}
\end{equation}
for sufficiently small $\nu$. Continuity closes the bootstrap up to $T=1$.
The weighted derivative norms are equivalent to the corresponding norms
of the weighted function, uniformly in $\nu$. Hence
\begin{equation}\label{eq:Cscaled}
\begin{aligned}
\|(1+X^2)(U^{(\nu)}-U_{K,L}^{\mathrm{app}})\|_{L^\infty_T H^3_X([0,1]\times\R)}
&\lesssim\nu^{-3/2}\sup_T\ml E_3(T)^{1/2}\\
&\lesssim\nu^{\gamma_{K,L}-25/2}
\lesssim\nu^{\gamma_{K,L}-20}.
\end{aligned}
\end{equation}

For arbitrary $K,L$, choose $K'\ge K$, $L'\ge L$ with
$\gamma_{K',L'}\ge21$, using the same cutoff. The cited profile estimates
applied to the omitted finite sums give
\begin{equation}\label{eq:Ctrunc}
\begin{aligned}
\|(1+X^2)(U_{K',L'}^{\mathrm{app}}-U_{K,L}^{\mathrm{app}})
\|_{L^\infty_T H^3_X([0,1]\times\R)}
\lesssim\nu^{\gamma_{K,L}-7}.
\end{aligned}
\end{equation}
The triangle inequality proves \eqref{eq:Cscaled} for all $K,L$.
For each fixed $K,L$, the remaining viscosities in $(0,\nu_0]$ are bounded
away from zero. For these viscosities, the estimate follows from \eqref{eq:Cenergy} with bounded coefficients.
Thus $C_1=20$ suffices in the range covered by the construction.
\end{proof}

\bigskip

\section*{Acknowledgments}
	C. Wang is partially supported by NSF of China under Grant No. 12471189. Z. Zhang is partially supported by NSF of China under Grant No. 12288101.

	\section*{Declaration of competing interest}
	
	The authors declare that they have no conflict of interest.

    \section*{AI Statements}

The authors declare that the main ideas of this paper are their own, although AI tools were used to polish the paper and to assist with parts of the technical derivations and with checking the paper. In particular, DeepSeek V4-Pro Preview and DeepSeek V4-Flash 0731 were used to assist with the derivation of the equations satisfied by the auxiliary variables in the paper and with the computation of identities similar to those in Appendices A and B; these models were also used to polish the text and to assist in checking the paper. The figure in this paper was drawn with DeepSeek V4-Flash 0731. The authors also used ChatGPT 5.6 Sol and ChatGPT 6 Astra to polish the text, to assist in checking the paper.

\end{document}